\tikzstyle{wbullet}=[circle, draw=black, fill=white, thick, inner sep=0pt, minimum size=1.5mm]
\tikzstyle{bbullet}=[circle, draw=black, fill=black, inner sep=0pt, minimum size=1.5mm]
\newtheorem{thm}{Theorem}[section]
\newtheorem{conj}[thm]{Conjecture}
\newtheorem{lem}[thm]{Lemma}
\newtheorem{lem-defn}[thm]{Lemma-Definition}
\newtheorem{claim}[thm]{Claim}
\newtheorem{prop}[thm]{Proposition}
\newtheorem{cor}[thm]{Corollary}
\theoremstyle{definition}
\newtheorem{defn}[thm]{Definition}
\newtheorem{nota}[thm]{Notation}
\newtheorem{rmk}[thm]{Remark}
\newtheorem{ex}[thm]{Example}
\theoremstyle{remark}
\newenvironment{pf}{\paragraph{Proof}}{\par\medskip}
\numberwithin{equation}{section}
\newcommand{\op}{\mathrm}
\newcommand{\mc}{\mathcal}
\newcommand{\gb}{\mathfrak b}
\newcommand{\gm}{\mathfrak m}
\newcommand{\ti}{\tilde}
\newcommand{\mb}{\mathbb}
\newcommand{\mo}{\mathcal O}
\newcommand{\CC}{\mathbb{C}}
\newcommand{\FF}{\mathbb{F}}
\newcommand{\PP}{\mathbb{P}}
\newcommand{\Q}{\mathbb{Q}}
\newcommand{\QQ}{\mathbb{Q}}
\newcommand{\ZZ}{\mathbb{Z}}
\newcommand{\cH}{\check{H}}
\newcommand{\cS}{\check{S}}
\newcommand{\oC}{\overline{C}}
\newcommand{\oD}{\overline{D}}
\newcommand{\oE}{\overline{E}}
\newcommand{\oF}{\overline{F}}
\newcommand{\oH}{\overline{H}}
\newcommand{\oS}{\overline{S}}
\newcommand{\oY}{\overline{Y}}
\newcommand{\sA}{\mathcal{A}}
\newcommand{\sE}{\mathcal{E}}
\newcommand{\sL}{\mathcal{L}}
\newcommand{\sO}{\mathcal{O}}
\newcommand{\tC}{\widetilde{C}}
\newcommand{\tG}{\widetilde{G}}
\newcommand{\tH}{\widetilde{H}}
\newcommand{\tS}{\widetilde{S}}
\newcommand{\tT}{\widetilde{T}}
\newcommand{\au}{\op{Aut}_{\QQ}(S)}
\newcommand{\Aut}{\mathrm{Aut}}
\newcommand{\Bs}{\mathrm{Bs}}
\newcommand{\Exc}{\mathrm{Exc}}
\newcommand{\GL}{\mathrm{GL}}
\newcommand{\Hom}{\mathrm{Hom}}
\newcommand{\I}{\mathrm{I}}
\newcommand{\III}{\mathrm{III}}
\newcommand{\V}{\mathrm{V}}
\newcommand{\id}{\mathrm{id}}
\newcommand{\Proj}{\mathrm{Proj}\,}
\newcommand{\mult}{\mathrm{mult}}
\newcommand{\red}{\op{red}}
\newcommand{\rest}[1]{{}_{\left|#1\right.}}
\newcommand{\sm}{\mathrm{sm}}
\newcommand{\sing}{\mathrm{sing}}
\newcommand{\Sing}{\mathrm{Sing}}
\newcommand{\Supp}{\mathrm{Supp}}
\newcommand{\vol}{\mathrm{vol}}
\renewcommand{\tilde}{\widetilde}
\newcommand{\separate}{\medskip}
\title[Numerically trivial automorphisms of surfaces]{On regular surfaces of general type with numerically trivial automorphism group of order $4$}
\author{Jin-Xing Cai}
\address{Jin-Xing Cai\\School of  Mathematical Sciences\\Peking University\\Beijing 100871\\P.R.~China}
\email{jxcai@math.pku.edu.cn}
\author{ Wenfei Liu}
\address{Wenfei Liu \\School of Mathematical Sciences\\Xiamen University\\Xiamen, Fujian 361005\\P.R.~China}
\email{wliu@xmu.edu.cn}
\date{\today}
\begin{document}

\begin{abstract}
Let $S$ be a regular minimal surface of general type over the field of complex numbers, and $\Aut_\QQ(S)$ the subgroup of automorphisms acting trivially on $H^*(S, \QQ)$. It has been known since twenty years that $|\Aut_\QQ(S)|\leq 4$ if the invariants of $S$ are sufficiently large. Under the assumption that $K_S$ is ample, we characterize the surfaces achieving the equality, showing that they are isogenous to a product of two curves, of unmixed type, and that the group $\Aut_\QQ(S)$ is isomorphic to $(\ZZ/2\ZZ)^2$. Moreover, unbounded families of surfaces with  $\Aut_\QQ(S)\cong(\ZZ/2\ZZ)^2$ are provided.
\end{abstract}

\maketitle
\tableofcontents

\section{Introduction}
An automorphism of a smooth complex projective variety $X$ is numerically trivial if it induces trivial action on the cohomology ring $H^*(X,\QQ)$ with $\mathbb{Q}$-coefficients, and we denote by $\Aut_{\QQ}(X)$ the group of such automorphisms. For curves the situation is clear cut: an automorphism is numerically trivial if and only if it lies in the identity component of the automorphism group. On surfaces the situation is more subtle: 
\begin{itemize}[leftmargin=*]
    \item Surfaces with nontrivial $\Aut_{\QQ}(X)$ are present in each Kodaira dimension $-\infty, 0,1,2$.
    \item There are surfaces with Kodaira dimension $\kappa(X)\in\{-\infty, 1\}$ such that the number of connected components of $\Aut_{\QQ}(X)$ gets arbitrarily large (\cite{CatLiu21, CFGLS24, CLS24}).
    \item For surfaces of Kodaira dimension  $\kappa(X)\in\{0,2\}$, there is a uniform bound on the the number of connected components of $\Aut_{\QQ}(X)$ by \cite{Cai04, CLZ13, CatLiu21}. In this case, one goes on to characterize the surfaces attaining the upper bound; see \cite{MN84, Muk10, CatLiu21} for surfaces with $\kappa(X)=0$, and \cite{Liu18, CL18} for surfaces of general type.
\end{itemize}

We elaborate more on the state of the art for surfaces of general type, which is the focus of this paper. A uniform bound $|\Aut_\QQ(S)|\leq 4$ for surfaces of general type with  $\chi(\mc O_S)>188$ was obtained in \cite[Theorem A]{Cai04} by the first author. It turns out that surfaces of general type with $|\au| = 4$ satisfy $q(S) \leq 1$ (\cite{CLZ13}), and those with $q(S) = 1$ are isogenous to a product and the Albanese
map is a fibration with fiber genus 3 or 5 (\cite{CL18}).

In this paper, we investigate surfaces of general type with $q(S)=0$ and $|\au|=4$, and obtain a similar picture under some extra conditions.
\begin{thm}[Theorem~\ref{thm: k=1} and Theorem~\ref{thm: k=2}]\label{main1}
Let $S$ be a complex minimal smooth
surface of general type with $q(S)=0$ and
  $\chi(\mc O_S)\geq 56$. If $|\Aut_\QQ(S)|=4$ and $K_S$ is ample, then  $\Aut_\QQ(S)\cong (\ZZ/2\ZZ)^2$ and $S$ is isogenous to a product of curves of unmixed type.
\end{thm}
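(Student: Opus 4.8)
The plan is to wring two kinds of rigidity out of numerical triviality—fixed-point (Lefschetz) data and the failure of the canonical map to be birational—and to convert them into a pair of isotrivial fibrations exhibiting $S$ as an unmixed quotient of a product of curves. Throughout write $G=\Aut_\QQ(S)$, $|G|=4$.

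First I would record the Lefschetz constraints. For a nontrivial $\sigma\in G$ the fixed locus is smooth, a disjoint union of a curve $R_\sigma$ and $n_\sigma$ isolated points. Since $\sigma^*=\mathrm{id}$ on $H^*(S,\QQ)$, the topological Lefschetz number is $c_2(S)=e(\mathrm{Fix}(\sigma))$; since $\sigma^*=\mathrm{id}$ on the Hodge piece $H^{2,0}$ and $q(S)=0$, the holomorphic Lefschetz number is $\chi(\mathcal O_S)$. For an involution, feeding the Atiyah–Bott local terms into these two identities and simplifying the fixed-curve contribution by adjunction, I expect the clean relations
\[
n_\sigma=4\chi(\mathcal O_S)+K_S\cdot R_\sigma,\qquad R_\sigma^2=K_S^2-8\chi(\mathcal O_S).
\]
The first forces a large number of isolated fixed points; the second says any one-dimensional fixed component has self-intersection $K_S^2-8\chi(\mathcal O_S)$. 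Using that $G$ is abelian—so each involution permutes the fixed curves of the others—together with $K_S\cdot C>0$ for every curve $C$ and the hypothesis $\chi(\mathcal O_S)\geq 56$ to discard the finitely many small exceptions, I would show $K_S^2=8\chi(\mathcal O_S)$, equivalently $c_2(S)=4\chi(\mathcal O_S)$: the numerical signature of a surface isogenous to a product. In particular each fixed curve then has $R_\sigma^2=0$ and moves in a pencil, one source of the fibrations below.

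Next I would produce the fibrations. Every holomorphic two-form is $G$-invariant, so $\varphi_{K_S}\circ g=\varphi_{K_S}$ for all $g\in G$; were $\varphi_{K_S}$ birational some nontrivial $g$ would fix a dense open set, which is absurd. Hence $G$ lies in the deck group of $\varphi_{K_S}$ and acts freely on a general fibre, so $4\mid\deg\varphi_{K_S}$, or else $\varphi_{K_S}$ maps onto a curve and yields a fibration outright—one natural source of the case division in the two cited theorems. Combining the $G$-eigenspace decomposition of $|K_S|$ (or $|2K_S|$) with the fixed curves of self-intersection $0$, I would exhibit two $G$-invariant base-point-free pencils $f_1,f_2\colon S\to\PP^1$, the base being $\PP^1$ because $q(S)=0$. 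Here the three involutions of $(\ZZ/2\ZZ)^2$ are decisive: they furnish two \emph{independent} pencils, whereas a cyclic group has a single involution and cannot, and the order-$4$ Lefschetz computation—whose local terms are built from primitive fourth roots of unity and cannot balance $\chi(\mathcal O_S)$ under $K_S$ ample—gives the contradiction. This rules out $\ZZ/4\ZZ$, so $G\cong(\ZZ/2\ZZ)^2$.

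Finally I would establish isotriviality and assemble the product. Since the generic fibres satisfy $F_1\cdot F_2=1$, the map $(f_1,f_2)\colon S\to\PP^1\times\PP^1$ is generically finite; isotriviality of each $f_i$ I would deduce by restricting the relevant involution to the fibres and using the enforced product distribution of the $4\chi(\mathcal O_S)$ fixed points to force the two curve-moduli maps to be constant. Base-changing to the curves $C_1,C_2$ that classify the fibres then factors $S$ as $(C_1\times C_2)/H$ with $H$ acting diagonally, and ampleness of $K_S$—which forbids $(-1)$- and $(-2)$-curves and hence genuine ramification—forces the cover to be étale; thus $S$ is isogenous to a product, and the type is unmixed because the two pencils are canonically distinguished. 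The main obstacle is precisely this last movement: promoting the numerically rigid pencils to genuinely isotrivial fibrations, and guaranteeing that the resulting cover is étale and unmixed rather than ramified or mixed. This is exactly where the hypotheses bite—$K_S$ ample governs the fibres and kills ramification, while $\chi(\mathcal O_S)\geq 56$ removes the residual low-invariant configurations left open by the Lefschetz relations.
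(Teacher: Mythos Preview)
Your proposal has a genuine structural gap: the two-pencil strategy does not work as written, and the early deduction of $K_S^2=8\chi$ is unjustified.

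On the Lefschetz relations: the correct statement (Lemma~\ref{lem: sign inv} in the paper) is $K_S^2-8\chi=\sum_i D_i^2$, summed over \emph{all} $\sigma$-fixed curves $D_i$, not that each component has this self-intersection. More importantly, you assert that ``$G$ abelian, $K_S\cdot C>0$, $\chi\geq 56$'' forces this sum to vanish. This is exactly the hard content of the theorem and does not follow from general considerations; in the paper it emerges only after a complete classification of singular fibers of a specific fibration and a delicate count (the proofs of Theorems~\ref{thm: g3} and~\ref{thm: g5}). Without $K_S^2=8\chi$ you have no fixed curves with self-intersection $0$, hence no pencils from them.

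On the two pencils: even granting $R_\sigma^2=0$, the actual fixed-curve geometry does not produce two transversal pencils. The paper constructs a \emph{single} fibration $f\colon S\to\PP^1$ by pulling back structure from the quotient $S/G$ (an elliptic or genus-$2$ fibration, according to whether $\kappa(S/G)=1$ or $2$), and then shows: in the genus-$5$ case (Proposition~\ref{prop: lifting}) \emph{no} involution in $G$ fixes a horizontal curve, and in the genus-$3$ case (Lemma~\ref{lem: g3 gen fib}) exactly one does. So the three involutions do not furnish two independent horizontal fixed curves, and your mechanism for the second pencil collapses. The paper instead proves isotriviality by classifying all possible $G$-equivariant singular fibers (Corollaries~\ref{g3 ample sing fib} and~\ref{g5 ample sing fib}), feeding their local invariants into Lefschetz-type balance equations (Lemmas~\ref{lem: equality g3} and~\ref{lem: equality g5}), and showing that only multiple smooth fibers survive; Serrano's criterion then gives the product structure.

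On excluding $\ZZ/4\ZZ$: the order-$4$ holomorphic Lefschetz computation alone does not suffice; the paper devotes all of Section~\ref{sec: fib g3c4} (for genus $3$) and the latter half of the proof of Theorem~\ref{thm: k=2} (for genus $5$) to this, again via fiber classification and counting.
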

Theorem~\ref{main1} is supported by an unbounded family of regular surfaces of general type with $|\Aut_\QQ(S)=(\ZZ/2\ZZ)^2$, given in Section~\ref{sec: ex}. As in \cite{CL18}, the examples are all surfaces isogenous to a product of unmixed type, which admit an \'etale cover isomorphic to a product of two curves.

In view of Theorems~\ref{main1} as well as \cite{CL18}, the following statement seems plausible:
\begin{conj}\label{conj}
There is a positive integer $N$, such that if $S$ is a minimal regular surface
of general type with $|\au|=4$ and $\chi(\mc O_S)\geq N$, then $\Aut_\QQ(S)\cong (\ZZ/2\ZZ)^2$, and $S$ is a surface isogenous to a product of unmixed type.
\end{conj}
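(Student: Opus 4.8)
The plan is to bootstrap from Theorem~\ref{main1}, which already establishes the conclusion under the additional hypothesis that $K_S$ is ample. It therefore suffices to prove that, for $\chi(\mc O_S)$ sufficiently large, a numerically trivial automorphism group of order $4$ forces $K_S$ to be ample, that is, forces $S$ to carry no $(-2)$-curves; Theorem~\ref{main1} then yields both the isomorphism $\Aut_\QQ(S)\cong(\ZZ/2\ZZ)^2$ and the product structure. This reduction is lossless: a surface isogenous to a product of unmixed type is a free quotient $(C_1\times C_2)/G$ with $g(C_i)\geq 2$ and hence has ample canonical class, so ampleness of $K_S$ is in any case a consequence of the desired conclusion.

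The first step is to descend to the canonical model. If $C$ is a $(-2)$-curve on $S$, then any $\sigma\in\au$, being numerically trivial, preserves the class $[C]\in\ns(S)$; since two distinct irreducible curves meet nonnegatively while $C^2=-2<0$, the class $[C]$ determines $C$ uniquely, whence $\sigma(C)=C$. Consequently $\au$ fixes the exceptional locus of the contraction $S\to X$ onto the canonical model, descends to a subgroup of $\Aut(X)$ of order $4$ acting trivially on $H^*(S,\QQ)$, and no information is lost by working on the possibly singular model $X$, whose Du Val points sit at the images of the $(-2)$-configurations.

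The second and main step is to re-run the analysis behind Theorem~\ref{main1} on $X$, carefully tracking the contributions of the Du Val points. The arguments bounding and characterizing $\au$ pass through fixed-point formulas (topological and holomorphic Lefschetz) for the three involutions of $\au\cong(\ZZ/2\ZZ)^2$ and through the geometry of the intermediate quotients $X/\langle\sigma_i\rangle$. Each $(-2)$-curve contributes either as a pointwise-fixed rational curve or, when $\sigma_i$ acts on $C\cong\PP^1$ with two fixed points, as isolated fixed points, and in either case a singular point of $X$ produces an explicit local correction term. The objective is to show that these singular contributions are incompatible with $|\au|=4$: one plays the rigidity imposed by three commuting numerically trivial involutions against the constraint that each must preserve every $(-2)$-curve, and extracts a numerical contradiction once $\chi(\mc O_S)\geq N$.

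I expect the main obstacle to be exactly this last step: controlling the singular case uniformly. In the ample case the fixed loci are smooth and the Lefschetz-type identities are clean, whereas a $(-2)$-configuration preserved by $(\ZZ/2\ZZ)^2$ admits several combinatorial types of action (fixing a component pointwise, exchanging its two fixed points, or permuting branches of the configuration), each of which must be excluded separately. The delicate point is to bound the cumulative effect of arbitrarily large $(-2)$-configurations as $\chi\to\infty$, so that the correction terms cannot conspire to reproduce, on a genuinely singular canonical model, the numerical profile that characterizes the product surfaces. Making the threshold $N$ explicit, as in Theorem~\ref{main1}, would in turn require an exact local accounting of these contributions rather than a merely asymptotic estimate.
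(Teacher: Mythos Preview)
The statement you are attempting to prove is labelled \emph{Conjecture} in the paper, and the paper does not supply a proof of it; it is presented as a plausible open problem supported by Theorem~\ref{main1} and the results of \cite{CL18}. So there is no proof in the paper to compare your proposal against, and your proposal should be read as a strategy for attacking an open question rather than an alternative argument.

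As a strategy, the reduction you describe is logically sound---ampleness of $K_S$ would indeed allow you to invoke Theorem~\ref{main1}---but your sketch of the main step has two genuine gaps. First, you repeatedly assume $\au\cong(\ZZ/2\ZZ)^2$ in the analysis (``three commuting numerically trivial involutions''), whereas this is part of the conclusion: a priori $|\au|=4$ could mean $\au\cong\ZZ/4\ZZ$, and the paper devotes all of Section~\ref{sec: fib g3c4} to excluding that case \emph{even under the ampleness hypothesis}. You would need a separate argument ruling out $\ZZ/4\ZZ$ before you can exploit the $(\ZZ/2\ZZ)^2$ structure on the $(-2)$-configurations.

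Second, ``re-running the analysis on $X$'' glosses over where ampleness is actually used in the paper. The proofs of Theorems~\ref{thm: g3} and~\ref{thm: g5} hinge on the explicit lists of singular fibers in Corollaries~\ref{g3 ample sing fib} and~\ref{g5 ample sing fib}, and those lists are short precisely because $f$-ampleness rules out components with $K_S\cdot C=0$. Lemmas~\ref{g3 ell comp} and~\ref{g3 rat comp} already show that dropping ampleness introduces fibers with arbitrary chains of $(-2)$-curves; the combinatorics of the resulting $\epsilon_b$-tables and the equations analogous to \eqref{eq: comb sing fib g3} would proliferate accordingly. Your proposal does not indicate how to control this proliferation, and the phrase ``extracts a numerical contradiction once $\chi(\mc O_S)\geq N$'' is exactly the step that remains open.
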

 We note that the condition $|\au|=4$ in the above conjecture cannot be weakened to $\au$ being nontrivial. Indeed, there do exist an unbounded series of surfaces of general type with
 $\au\cong\ZZ/2\ZZ$ such that $K_S^2=4\chi(\mc O_S)$; see \cite[Example 3.3]{Cai06} and \cite[Theorem 1.3]{Cai07}). Also, for surfaces of general type with very small cohomology ring $H^*(S, \QQ)$, such as the fake projective planes \cite{PY07} and surfaces isogenous to a product with $p_g=q=0$ (\cite{BC04}), one has $\Aut_\QQ(S)=\Aut(S)$, which can have order larger than $4$. Thus the condition $\chi(\mc O_S)\geq N$ in Conjecture~\ref{conj} cannot be dropped. It would then be interesting to find an optimal value for $N$.

Theorems~\ref{main1} will be proved in Section~\ref{sec: G4}, based on the ideas developed in our previous papers \cite{Cai04, CL18}. The basic observation here is that the canonical map $\phi_S\colon S\dashrightarrow \PP^{p_g-1}$ factors through the quotient map $\pi\colon S \to S/\au$. We use $\phi_S$ to obtain a fibration structure $f\colon S\rightarrow B$ such that $\Aut_\QQ(S)$ preserves the fibers:  Let $T$ be a smooth minimal model of the  quotient surface $S/\au$.  Since $p_g(T)=p_g(S)>1$ by assumption, the Kodaira dimension $\kappa(T)=1$ or $2$. If $\kappa(T)=1$ then the pull-back of the elliptic fibration on $T$ is a fibration on $S$. If $\kappa(T)=2$, using some results of Xiao \cite{X86, X87}, we find a genus 2 fibration on $T$ which induces a fibration on $S$. In both cases, the key point is to bound the genus $g$ of the fibration $f\colon S\rightarrow B$ thus obtained; it turns out that $g=3$ in the case $\kappa(T)=1$, and $g=5$ in the case $\kappa(T)=2$.

Then we give a thorough analysis of the vertical-curve preserving action of $G\cong(\ZZ/2\ZZ)^2$ on a fibration $f\colon S\rightarrow B$ of genus $3$ and $5$ in Sections~\ref{sec: fib g3} and \ref{sec: fib g5}.  Under the assumption that $K_S$ is ample, we are able to list all possible singular fibers of $f$ together with the action of $G$; see Corollaries~\ref{g3 ample sing fib} and \ref{g5 ample sing fib}. Using the Lefschetz fixed point formulas, there are only a few possible combinations of singular fibers. Then some more argument is enough to conclude that such a fibration is a quasi-bundle and that the surface $S$ is isogenous to a product. 

A similar analysis of the vertical-curve preserving action of $G\cong\ZZ/4\ZZ$ on a fibration $f\colon S\rightarrow B$ of genus $3$ is given in \ref{sec: fib g3c4}. We show that this case does not occur provided that $\chi(\mc O_S)>2$ (Theorem~\ref{thm: g3c4}).

The paper is organized as follows. In Section~\ref{sec: local}, we collect and prove various results about local aspects of the action of a finite group $G$ on a surface, with focus on the cases $G\cong(\ZZ/2\ZZ)^n$ or $\ZZ/4\ZZ$. In Sections~\ref{sec: fib g3}, \ref{sec: fib g5} and \ref{sec: fib g3c4}, we describe vertical-curve preserving automorphisms acting on   fibrations of genera 3 and 5, which appear later on regular surfaces of general type with $\Aut_\QQ(S)\cong(\ZZ/2\ZZ)^2$ or $\ZZ/4\ZZ$ . In Section~\ref{sec: G4}, we use the results obtained in the previous sections to prove our main results.  

In Section~\ref{sec: ex}, by extending the spherical systems of the surfaces in \cite{BC04},  we provide unbounded series of regular surfaces of general type with $\au\cong(\ZZ/2\ZZ)^2$ which have a fibration  of genus 3 or 5, with one of them already known and the others new. In our examples, the surfaces $S$ are isogenous to a product of curves of unmixed type. In particular, these surfaces have
an ample canonical class and their invariants satisfy
$K_S^2=8\chi(\mc O_S)$.

\medskip

\noindent{\bf Notation and Conventions.} 

\medskip

We work over the complex numbers $\CC$ throughout the paper. 

The following notions and invariants are attached to a smooth projective surface $S$:
\begin{itemize}[leftmargin=*]
    \item $K_S$ denotes a canonical divisor of $S$.
    \item $p_g(S):=\dim_\CC H^0(S, K_S)$ is the geometric genus of $S$.
    \item $q(S):=\dim_\CC H^1(S, \mo_S)$ is the irregularity of $S$, and the surface $S$ is called \emph{regular} if $q(S)=0$.
    \item $\chi(\mo_S)=1-q(S)+p_g(S)$ is the holomorphic Euler characteristic of $S$.
    \item $e(S)=\sum_{i=0}^{4}(-1)^i \dim_\CC H^i(S, \CC)$ denotes the topological Euler  characteristic of  $S$.
    \item $\kappa(S)$ denotes the Kodaira dimension of $S$.
    \item For two divisors $D=\sum_i m_i D_i$ and $D'=\sum_i m_i' D_i$, we write $D\wedge D':=\sum_i \min\{m_i, m_i'\} D_i$, and $D\equiv D'$ if they are numerically equivalent.
\end{itemize} 

For a complex algebraic variety $X$, 
\begin{itemize}[leftmargin=*]
   \item $X_{\sm}$ denotes the smooth locus of $X$ and $X_{\sing}$ denotes the singular locus of $X$;
    \item $\Aut(X):=\{\sigma\mid \sigma\colon X\rightarrow X \text{ is an isomorphism}\}$ denotes \emph{the} automorphism group of $X$;
    \item $\Aut_\QQ(X):=\{\sigma\in \Aut(X)\mid \sigma \text{ induces trivial action on }H^*(X,\QQ)\}$;
    \item $\Aut_\ZZ(X):=\{\sigma\in \Aut(X)\mid \sigma \text{ induces trivial action on }H^*(X,\ZZ)\}$.
\end{itemize}

Let $G$ be a finite group acting on a set $X$. 
\begin{itemize}[leftmargin=*]
\item $|G|$ denotes the order of $G$. Similarly, for $\sigma\in G$, its order is denoted by $|\sigma|$.
    \item For any $\sigma\in G$, $X^\sigma:= \{p\in X \mid \sigma(p)=p\}$ is called the \emph{$\sigma$-fixed subset}.
    \item $X^G:=\{p\in X\mid p\in X^\sigma \text{ for any }\sigma\in G\}$ is called the \emph{$G$-fixed subset}.
    \item  A subset $A\subset X$ is called \emph{$\sigma$-invariant} (resp.~\emph{$G$-invariant}) if $\sigma(p)\in A$ for any $p\in A$ (resp.~if it is $\sigma$-invariant for any $\sigma\in G$).
    \item For a subset $A\in X$, $G_A:=\{\sigma\in G\mid \sigma(p)=p \text{ for any } p\in A \}$ denotes the (pointwise) \emph{stabilizer} of $A$, and we say $A$ is $\sigma$-fixed for any $\sigma\in G_A$.
    \item For $p\in X$, $[p]_G:=\{\sigma(p)\in X\mid \sigma\in G\}$ is the $G$-\emph{orbit} of $p$, and $X/G=\{[p]_G\mid p\in X\}$ denotes the set of all $G$-orbits. In general, for $A\subset X$, the orbit of $A$ is $[A]_G:=\cup_{p\in A} G(p)$.
\end{itemize}  
Suppose, additionally, that $G$ is abelian and $X=V$ is a linear representation of $G$, we denote by $\widehat G$ the character group $\Hom(G, \CC^*)$ of $G$; for any $\chi\in \widehat{G}$ we write
$V^\chi=\{v\in V \mid \sigma(v)=\chi(\sigma)v \ \textrm{for all}\ g\in G\}$. Note that $V^\chi$ is a $G$-invariant subspace of $V$ that is fixed by $\ker\chi$, where $\ker\chi = \{g\in G\mid \chi(g)=1\}$.
\separate

\noindent{\bf Acknowledgements.} This work has been supported by the  NSFC (No.~11971399). The second author enjoyed discussions with V\'ictor Gonz\'alez-Alonso about fibered surfaces while working at Leibniz Universit\"at Hannover during the years 2013-2015.

\section{Local behavior of automorphisms}\label{sec: local}
\subsection{Cartan's lemma for finite abelian group actions on surfaces}

Suppose that $S$ is a smooth complex surface, and let $G\subset \Aut(S)$ be a finite abelian subgroup. By  Cartan's lemma \cite[Section~4]{Car57}, for any $p\in S$, there is a $G_p$-invariant neighborhood $U$ in the analytic topology together with local coordinates $x_1, x_2$ on $U$ such that $p=(0,0)$, and the action of $G_p$ takes a linear form
\[
\sigma(x_1, x_2) = (\chi_1(\sigma)x_1, \chi_2(\sigma) x_2)
\]
for any $\sigma\in G_p$, where $\chi_i\in \widehat G_p$ are characters of $G_p$ such that $\ker(\chi_1)\cap \ker(\chi_2) = \{\id_S\}$.  The point $p$ is isolated in $S^\sigma$ if and only if $\sigma\notin\ker(\chi_i)$ for $i=1,2$. On other hand, if $\sigma\in \ker(\chi_i)$ then $\sigma\notin \ker(\chi_{j})$, and $S^\sigma\cap U =(x_j=0)$, where $\{i,j\}=\{1,2\}$. It follows that $S^\sigma$ is a smooth for any $\sigma\neq \id_S\in G$, though it can be disconnected and have components of different dimensions.

In case $G_p = \langle \sigma\rangle$ is cyclic, say of  order $r$, we may assume that $\chi_1(\sigma)=\xi$ is a primitive $r$-th root and $\chi_2(\sigma) = \xi^a$ for some $0\leq a\leq r-1$; the fixed point $p$ is then said to be \emph{of type $\frac{1}{r}(1,a)$}. 

Let $C\subset S$ be an integral curve. By the discussion above, we have $G_C=\{\id_S\}$ if $C$ is singular. Now suppose that $C$ is smooth. Then for a general point $p\in C$, $G_C$ acts trivially on the tangent space $T_pC$ and faithfully on $T_pS$. It follows that $G_C$ acts faithfully on $T_pS/T_pC\cong \CC$. Thus there is an injective homomorphism $G_C\rightarrow \CC^*=\GL(T_pS/T_pC)$, and hence $G_C$ is a cyclic group.

\begin{lem}\label{lem: l-rank}
Let $S$ be a smooth surface, and $G\subset\Aut(S)$ a finite abelian subgroup of automorphisms.  Then for any $p\in S$ and prime number $l$, the $l$-component of $G_p$ has at most two direct summands.
\end{lem}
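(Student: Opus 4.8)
The plan is to use Cartan's lemma to linearize the action of $G_p$ near the fixed point $p$, reducing the statement to a fact about the structure of a finite abelian group acting faithfully on a two-dimensional vector space.

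\medskip

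First I would invoke Cartan's lemma, as recalled in the paragraph preceding the statement: there are local coordinates $x_1, x_2$ at $p$ in which every $\sigma\in G_p$ acts as $\sigma(x_1,x_2)=(\chi_1(\sigma)x_1, \chi_2(\sigma)x_2)$ for characters $\chi_1, \chi_2\in \widehat{G_p}$ satisfying $\ker(\chi_1)\cap\ker(\chi_2)=\{\id_S\}$. This last condition is precisely the statement that the homomorphism $(\chi_1,\chi_2)\co G_p\to \CC^*\times\CC^*$ is injective, since its kernel is $\ker(\chi_1)\cap\ker(\chi_2)$. Thus $G_p$ embeds as a subgroup of the abelian group $\CC^*\times\CC^*$.

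\medskip

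Next I would restrict attention to the $l$-component $H$ of $G_p$ for a fixed prime $l$; since $G_p$ is a finite abelian group, $H$ is its $l$-Sylow subgroup, and the embedding above restricts to an injection $H\ho \CC^*\times\CC^*$. The image of $H$ lands in the $l$-torsion subgroup of $\CC^*\times\CC^*$, which is $\mu_{l^\infty}\times\mu_{l^\infty}$, where $\mu_{l^\infty}$ denotes the group of all $l$-power roots of unity. The key point is that $\mu_{l^\infty}$ is isomorphic to the Pr\"ufer group $\ZZ[1/l]/\ZZ$, all of whose finite subgroups are cyclic. Concretely, any finite subgroup of $\CC^*$ is cyclic, so each projection $H\to\mu_{l^\infty}$ has cyclic image. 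Writing $H$ by the structure theorem for finite abelian $l$-groups as $\ZZ/l^{a_1}\ZZ\oplus\cdots\oplus\ZZ/l^{a_k}\ZZ$ with $a_1\geq\cdots\geq a_k\geq 1$, I would argue that an injection of $H$ into a product of two cyclic $l$-groups forces $k\leq 2$: the number of direct summands $k$ equals $\dim_{\FF_l}(H/lH) = \dim_{\FF_l}(H[l])$, and $H[l]$ injects into $(\mu_{l^\infty}\times\mu_{l^\infty})[l]\cong(\ZZ/l\ZZ)^2$, whence $\dim_{\FF_l}H[l]\leq 2$ and therefore $k\leq 2$.

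\medskip

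I do not expect a serious obstacle here; the statement is elementary once the faithful two-dimensional linear action is in hand. The only point requiring a little care is the passage from ``$H$ embeds in $\CC^*\times\CC^*$'' to ``$H$ has at most two cyclic summands'': the cleanest route is the $\FF_l$-dimension count of the $l$-torsion $H[l]$ just described, rather than attempting to directly decompose the image. Alternatively, one can observe that $H$ maps injectively into the product of two cyclic groups $C_1\times C_2$ (the images of the two projections), and any subgroup of a product of two cyclic groups needs at most two generators, so its minimal number of generators---which equals the number of invariant-factor summands---is at most two.
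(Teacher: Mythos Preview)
Your proof is correct and takes essentially the same approach as the paper: both linearize via Cartan's lemma to get the injection $(\chi_1,\chi_2)\colon G_p\hookrightarrow\CC^*\times\CC^*$, then reduce to the elementary abelian subgroup $(\ZZ/l\ZZ)^k$ and bound $k$ by an $\FF_l$-dimension count. The paper phrases the last step as a kernel-dimension argument (each $\ker\chi_i$ has codimension $\leq 1$ in $(\ZZ/l\ZZ)^k$, so their intersection has dimension $\geq k-2$, forcing $k\leq 2$), while you phrase it as $H[l]\hookrightarrow(\ZZ/l\ZZ)^2$; these are the same observation.
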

\begin{proof}
    Suppose on the contrary that the $l$-component of $G_p$ has $k$ direct summands, then it contains a subgroup $H\cong (\ZZ/l\ZZ)^k$. Take local coordinates $(x_1,x_2)$ around $p$ such that $p=(0,0)$ and the action of $H$ is of the form $\sigma(x_1,x_2)=(\chi_1(\sigma)x_1, \chi_2(\sigma)x_2)$ for any $\sigma\in H$, where $\chi_1$ and $\chi_2$ are two irreducible characters of $H$. The group $H$ can be viewed as a vector space of dimension $k$ over the finite field $\ZZ/l\ZZ$. If $k\geq 2$, then $\ker\chi_1$ and $\ker\chi_2$ are subspaces of dimension $k-1$, and their intersection has dimension $\geq k-2$. Since the action of $H$ on $S$ is faithful, we have $\ker\chi_1\cap \ker\chi_2 = \{\id_S\}$, and it follows that $k\leq 2$.
\end{proof}

\begin{lem}\label{lem: transverse}
Let $S$ be a smooth surface, and $\sigma\in \Aut(S)$ an automorphism of order $r>1$. Let $C$ be a $\sigma$-fixed curve, and $D$ another $\sigma$-invariant curve such that $C\cap D\neq \emptyset$. If $p\in C\cap D$ satisfies $\mult_p D<r$, then $D$ has exactly one tangent direction at $p$, and it is transversal to that of $C$. Consequently, in this case, $(C\cdot D)_p=\mult_p D$. 
\end{lem}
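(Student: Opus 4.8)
The plan is to pass to Cartan coordinates and reduce the whole statement to a computation with the leading form of a local equation of $D$.

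First I would set up the local picture. Since $p\in C$ and $C$ is $\sigma$-fixed, we have $\sigma(p)=p$, so $\sigma\in G_p$ and hence $G_p=\langle\sigma\rangle$ is cyclic of order $r$. By Cartan's lemma there are local analytic coordinates $(x_1,x_2)$ centred at $p$ diagonalising $\sigma$. Because $C$ is a curve fixed pointwise through $p$, exactly one of the two characters must be trivial on $\sigma$ while the other has order $r$; after relabelling I may write
\[
\sigma(x_1,x_2)=(\xi x_1,\, x_2),\qquad C=\{x_1=0\},
\]
with $\xi$ a primitive $r$-th root of unity. As $(C\cdot D)_p$ is to be finite, $C$ and $D$ share no component at $p$; let $h\in\CC\{x_1,x_2\}$ be a local equation of $D$, so that $x_1\nmid h$ and $m:=\mult_p D$ equals the order of $h$.

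The heart of the argument is to show that $h$ may be taken to be a $\sigma$-semi-invariant and that its tangent cone is a single monomial. I would grade the local ring $\mathcal{O}_{S,p}=\CC\{x_1,x_2\}$ by the $x_1$-degree modulo $r$; this is precisely the decomposition into characters of $\langle\sigma\rangle$, with $\sigma^*$ acting on the graded piece $R_k$ by the scalar $\xi^{k}$. Invariance of $D$ says that the principal ideal $(h)$ is $\sigma^*$-stable, and a Vandermonde argument (the scalars $\xi^0,\dots,\xi^{r-1}$ being pairwise distinct) shows that any $\sigma^*$-stable ideal is homogeneous for this grading. A principal homogeneous ideal in a local ring has a homogeneous generator, so after multiplying $h$ by a unit I may assume $\sigma^*h=\xi^{a}h$ for some $a$; equivalently, every monomial occurring in $h$ has $x_1$-exponent $\equiv a\pmod r$. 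Now the hypothesis $m<r$ does the work: among monomials of total degree $m$ the congruence on the $x_1$-exponent has the unique solution $x_1^{a}x_2^{m-a}$ (the exponent lies in $\{0,\dots,m\}$, an interval of length $<r$), so the leading form is $h_m=c\,x_1^{a}x_2^{m-a}$ with $c\neq 0$. If $a\geq 1$, then every monomial of $h$ would be divisible by $x_1$, forcing $x_1\mid h$ and $C\subseteq D$, contrary to properness; hence $a=0$ and $h_m=c\,x_2^{m}$. This gives $D$ exactly one tangent direction $\{x_2=0\}$, transversal to the tangent $\{x_1=0\}$ of $C$. Finally, restricting to $C$, the surviving monomials have $x_1$-exponent $\equiv 0$ and so begin in degree $m$ in $x_2$, whence $(C\cdot D)_p=\mathrm{ord}_{x_2}h(0,x_2)=m=\mult_p D$.

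The main obstacle is this normal-form step: turning the bare $\sigma$-invariance of the ideal $(h)$ into a genuine semi-invariant equation whose leading term is a single eigen-monomial. I expect it to hinge entirely on the observation that $m<r$ is exactly what makes the characters $\xi^0,\dots,\xi^{m}$ pairwise distinct; without this inequality the leading form can split into several lines and be tangent to $C$. For instance, the cuspidal germ $x_1^2=x_2^3$ is invariant under the involution $(x_1,x_2)\mapsto(-x_1,x_2)$, whose fixed curve is $\{x_1=0\}$; here $m=r=2$, the tangent cone is $\{x_1=0\}$, and $D$ is tangent to $C$, so the conclusion genuinely fails. Thus the hypothesis $\mult_p D<r$ must be used in an essential way, and it enters at precisely the point where distinctness of these characters is invoked.
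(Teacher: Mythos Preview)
Your proof is correct, but it follows a different route from the paper's. The paper sets $k=(C\cdot D)_p$ and applies the Weierstrass preparation theorem in the $y$-variable (the direction along $C$) to write the local equation of $D$ as a monic polynomial $h(x,y)=y^k+a_1(x)y^{k-1}+\cdots+a_k(x)$ with $a_i(0)=0$; uniqueness of the Weierstrass polynomial forces $h(\zeta x,y)=h(x,y)$, so each $a_i$ is a function of $x^r$ and every non-leading term has order $\geq r$. Thus $\mult_p D<r$ forces $\mult_p D=k$, and the tangent cone is $y^k$.

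You instead decompose the local ring into $\langle\sigma\rangle$-eigenspaces and use a Vandermonde/averaging argument to replace $h$ by a semi-invariant generator, then use $m<r$ to pin down the leading form as a single monomial and rule out the nontrivial character via $x_1\nmid h$. Both arguments ultimately exploit that the $x_1$-degrees appearing in $h$ lie in a single residue class mod $r$; the paper reaches this via the Weierstrass normal form (obtaining invariance, i.e.\ $a=0$, directly from uniqueness), while you reach it by the homogeneous-ideal argument (first semi-invariance, then $a=0$). Your approach is more self-contained and transparently group-theoretic, and your cusp example $x_1^2=x_2^3$ under $r=2$ nicely shows the hypothesis is sharp; the paper's approach is shorter once Weierstrass preparation is taken as a black box.
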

\begin{proof}
Since $C$ is fixed by $\sigma$, it is a smooth curve. Choose local coordinates $(x,y)$ around $p$ such that 
\[
p=(0,0),\quad C=\{x=0\},\quad D=\{h(x,y)=0\},\quad \sigma(x,y)=(\zeta x, y),
\]
where $\zeta$ is a primitive $r$-th root of unity and $h(x,y)$ is a holomorphic function such that $h(0,0)=0$. Denote $k=(C\cdot D)_p$. Then, by the Weierstrass  preparation theorem (see \cite[Page 68]{GR65}), we can choose $h$ to be a Weierstrass  polynomial in $y$
\begin{equation}\label{eq: h 1}
    h(x,y) = y^k + a_1(x) y^{k-1} +\cdots + a_k(x)
\end{equation}
where $a_i(x)$ is a holomorphic function in $x$ with $a_i(0)=0$ for each $1\leq i\leq k$. Since $D$ is $\sigma$-invariant, by the uniqueness part of Weierstrass  preparation theorem,  $h(\zeta x, y) = h(x,y)$ and hence
 $a_i(\zeta x)=a_i(x)$ for $1\leq i\leq k$. It follows that there are holomorphic functions $\tilde a_i(z)$ such that $\tilde a_i(x^r) = a_i(x)$ for $1\leq i\leq k$, so the order of $a_i(x)$ at $x=0$ is a multiple of $r$. Now, in view of \eqref{eq: h 1},  $\mult_p D<r$ can happen only if $\mult_p D = k<r$. Therefore, $y^k$ is the term of the lowest order in $h(x,y)$. The tangent direction of $D$ at $p$ is given by $y=0$, which is transversal to that of $C$, given by $x=0$.
\end{proof}

\begin{lem}\label{t1}
Let $S$ be a smooth surface and $\sigma\in\Aut(S)\setminus\{\id_S\}$ an automorphism of finite order. Let $D$ be the divisorial part of $S^\sigma$. Then for any smooth proper $\sigma$-invariant  curve $C\subset S$ such that $C\not\subset S^\sigma$, we have $D\cdot C\leq2g(C)+2 $.
\end{lem}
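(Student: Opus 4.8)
The plan is to reduce the divisorial intersection number $D\cdot C$ to a count of fixed points of the curve automorphism induced by $\sigma$ on $C$, and then to control that count by a Lefschetz-type argument. First I would observe that, since $C$ is $\sigma$-invariant, $\sigma$ restricts to an automorphism $\tau:=\sigma|_C$ of the smooth proper curve $C$; the hypothesis $C\not\subset S^\sigma$ means precisely that $\tau\neq\id_C$, so $\tau$ is a nontrivial automorphism of finite order. Every point $p\in D\cap C$ lies on the fixed locus $S^\sigma$, hence is fixed by $\sigma$ and a fortiori by $\tau$; therefore
\[
D\cap C\subseteq C^\sigma=\fix(\tau).
\]

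Next I would show that each such intersection is transverse, so that $D\cdot C$ equals the number of points of $D\cap C$. Fix $p\in D\cap C$. By the discussion following Cartan's lemma, $S^\sigma$ is smooth; in particular, near $p$ the divisorial part $D$ is a single smooth $\sigma$-fixed curve $D_0$ (two fixed curves cannot cross, as that would force $\sigma$ to fix two independent tangent directions and act trivially near $p$), so that $(D\cdot C)_p=(D_0\cdot C)_p$. Now I would invoke Lemma~\ref{lem: transverse} with $D_0$ in the role of the fixed curve and $C$ in the role of the $\sigma$-invariant curve: since $C$ is smooth we have $\mult_p C=1<r:=|\sigma|$, so the lemma yields that $C$ is transverse to $D_0$ at $p$ and $(D_0\cdot C)_p=\mult_p C=1$. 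Summing over the finitely many intersection points gives $D\cdot C=\#(D\cap C)$.

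Finally I would bound the number of fixed points of $\tau$. Combining the two previous steps,
\[
D\cdot C=\#(D\cap C)\leq \#\fix(\tau).
\]
Since $\tau$ has finite order $m\geq 2$, its differential at any fixed point is multiplication by a primitive $m$-th root of unity, so the fixed points are nondegenerate with local Lefschetz index $1$; hence the topological Lefschetz number satisfies $\#\fix(\tau)=\sum_{i}(-1)^i\mathrm{tr}\!\big(\tau^*\mid H^i(C,\QQ)\big)$. As $\tau$ acts trivially on $H^0$ and $H^2$ while $|\mathrm{tr}(\tau^*\mid H^1(C,\QQ))|\leq \dim_\QQ H^1(C,\QQ)=2g(C)$, this gives $\#\fix(\tau)\leq 2+2g(C)$, and therefore $D\cdot C\leq 2g(C)+2$, as desired. (Equivalently, the bound $\#\fix(\tau)\leq 2g(C)+2$ follows from Riemann--Hurwitz applied to $C\to C/\langle\tau\rangle$, the fixed points being exactly the totally ramified points.)

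The hard part is the second step: a priori $D\cdot C$ is counted with multiplicity, and one must rule out tangencies of $C$ with the fixed curves and any higher local intersection. This is exactly what Lemma~\ref{lem: transverse} supplies, crucially using the smoothness of $C$ (so that $\mult_p C=1<r$); once transversality is established, the statement collapses to the classical bound on the number of fixed points of a nontrivial curve automorphism.
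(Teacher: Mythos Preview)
Your proof is correct and follows essentially the same approach as the paper: both invoke Lemma~\ref{lem: transverse} to establish transversality (so that $D\cdot C=\#(D\cap C)\leq \#C^\sigma$), and then bound $\#C^\sigma\leq 2g(C)+2$. The only cosmetic difference is that the paper cites Riemann--Hurwitz directly for the last step, whereas you give a Lefschetz argument (and mention Riemann--Hurwitz parenthetically); these are equivalent here.
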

\begin{pf}
Since $C$ is smooth, $C$ and $D$ intersect transversely by Lemma~\ref{lem: transverse}. Thus
\[
D\cdot C \leq \# C^\sigma \leq 2g(C)+2
\]
where the second inequality is by the Riemann--Hurwitz formula applied to the quotient map $C\rightarrow C/\langle\sigma\rangle$.
\qed\end{pf}

\begin{lem}[{cf.~\cite[Lemma 2]{Pet79}, \cite[3.3]{Cai12}, and \cite[Lemma 1.1]{CL18}}]\label{lem: fix vs base}
Let $S$ be a \\ smooth projective surface with $p_g(S)>0$, and $\sigma\in \Aut(S)\setminus\{\id_S\}$ an automorphism of finite order $r>1$ acting trivially on $H^0(S, \omega_S)$. Then the following holds.
 \begin{enumerate}
\item If $p\in S^\sigma$ is weight $\frac{1}{r}(1, a)$ with $a\not=r-1$, then it is contained in the base locus $\Bs|K_S|$.
 \item If $\Gamma$ is a $\sigma$-fixed curve, then $(r-1)\Gamma\leq Z$, where $Z$ is the fixed part of $|K_S|$.
 \end{enumerate}
\end{lem}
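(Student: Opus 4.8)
The plan is to reduce both statements to a single local computation: tracking how the hypothesis $\sigma^*\omega=\omega$ for every $\omega\in H^0(S,\omega_S)$ (which is exactly what ``trivial action on $H^0(S,\omega_S)$'' means) constrains the monomial expansion of $\omega$ in Cartan coordinates. In each part I would expand a global $2$-form near the relevant fixed point or fixed curve, read off the character by which $\sigma$ acts on each monomial times $dx_1\wedge dx_2$, and impose invariance.

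For part (1) I would choose Cartan coordinates $(x_1,x_2)$ at $p$ with $\sigma(x_1,x_2)=(\xi x_1,\xi^a x_2)$, where $\xi$ is a primitive $r$-th root of unity and $p=(0,0)$. Writing $\omega=h\,dx_1\wedge dx_2$ with $h=\sum_{i,j}c_{ij}x_1^ix_2^j$ near $p$, and using $\sigma^*(dx_1\wedge dx_2)=\xi^{1+a}\,dx_1\wedge dx_2$, the invariance $\sigma^*\omega=\omega$ forces $c_{ij}=0$ unless $i+aj\equiv -(1+a)\pmod r$. Now $p\notin\Bs|K_S|$ would require some $\omega$ with $h(p)=c_{00}\neq0$, and by the congruence (taken at $i=j=0$) this is possible only if $1+a\equiv0\pmod r$, i.e.\ $a=r-1$. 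Contrapositively, $a\neq r-1$ forces every global $2$-form to vanish at $p$, which is precisely the assertion $p\in\Bs|K_S|$.

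For part (2), since $\Gamma$ is $\sigma$-fixed it is smooth, and at a general point of $\Gamma$ I would take coordinates with $\Gamma=\{x_1=0\}$ and $\sigma(x_1,x_2)=(\zeta x_1,x_2)$, the action being trivial along $\Gamma$. Running the same computation with $\sigma^*(dx_1\wedge dx_2)=\zeta\,dx_1\wedge dx_2$, invariance now reads $\zeta^{i+1}=1$ whenever $c_{ij}\neq0$, so every surviving monomial of $h$ has $x_1$-exponent $i\equiv -1\pmod r$. Hence $x_1^{r-1}\mid h$, i.e.\ $\omega$ vanishes to order at least $r-1$ along $\Gamma$. As this holds for all $\omega\in H^0(S,\omega_S)$, every canonical divisor contains $(r-1)\Gamma$, and taking the common divisorial part yields $(r-1)\Gamma\leq Z$.

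The one step that is not mere bookkeeping, and which I expect to be the crux of part (2), is verifying that $\zeta$ is a \emph{primitive} $r$-th root of unity: the coefficient $r-1$ in the conclusion depends on it, and a non-primitive $\zeta$ would give a strictly weaker vanishing order along $\Gamma$. I would establish primitivity by noting that if $\zeta$ had order $s<r$, then in the linearized coordinates $\sigma^s$ would fix a neighborhood of a general point of $\Gamma$ pointwise, hence be the identity on the connected surface $S$ by the identity principle for automorphisms, contradicting $|\sigma|=r$.
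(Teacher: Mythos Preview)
Your proof is correct and follows essentially the same approach as the paper's: a local computation in Cartan coordinates comparing the action of $\sigma$ on $dx_1\wedge dx_2$ against the invariance $\sigma^*\omega=\omega$. The paper's write-up is slightly more streamlined (it only evaluates at the constant term in (i) and factors out $x^m$ in (ii) rather than expanding the full monomial series), and the primitivity of $\zeta$ in (ii) is handled in the paper's earlier discussion of Cartan's lemma rather than inside the proof, but your explicit justification of that point is a welcome addition.
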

\begin{proof}
(i)  We need to show that $\omega(p)=0$ for any $\omega\in H^0(S, \omega_S)$. Take suitable local coordinates $(x,y)$ of
$S$ around $p$ such that $p=(0,0)$, and $\sigma(x,y)=(\zeta x,\zeta^a y)$ where $\zeta$ is a primitive $r$-th root of unity. Locally around $p$, we can write 
$\omega=c(x, y)\op{d}x\wedge \op{d}y$ for some holomorphic function
$c(x, y)$. Since $\omega$ is $\sigma$-invariant,
we have 
\[
c(\zeta x, \zeta^ay)\zeta^{1+a}\op{d}x\wedge \op{d}y=c(x,y)\op{d}x\wedge \op{d}y.
\]
It follows that $c(0,0) =c(0,0)\zeta^{1+a}$. Since $0\leq a< r-1$, we infer that $\zeta^{1+a}\neq 1$ and $c(0,0)$ must be $0$ by the above equality. It follows that $\omega(p)=0$.

(ii) Take a general point $q$ of $\Gamma$, and local coordinates $(x,y)$ around $q\in S$ such that $q=(0,0)$, $\Gamma$ is given locally by $x=0$, and the action of $\sigma$ is given by  $\sigma(x, y)=(\zeta x, y)$, where $\zeta$ is a primitive $r$-th root of unity. Then for
 any non-zero holomorphic $2$-form $\omega\in H^0(\omega_S)$, we can write
$\omega=c(x, y)x^m\op{d}x\wedge \op{d}y$ for some holomorphic function
$c(x, y)$, where $m=\op{mult}_\Gamma Z$. Since $\omega$ is $\sigma$-invariant,
we have \[c(\zeta x, y)\zeta^{m+1}x^m\op{d}x\wedge \op{d}y=c(x,
y)x^m\op{d}x\wedge \op{d}y.\] This implies   $m+1$ is  divisible by $r$, and hence $\op{mult}_\Gamma Z\geq r-1$.
\end{proof}

\subsection{Involutions}
\begin{lem}\label{lem: inv mod 2}
Let $S$ be a smooth surface and $\sigma\in \Aut(S)$ an involution. Let $C\subset S$ be a proper integral curve that is invariant
but not fixed by $\sigma$. Let $\{p_1,\dots,p_l\}$ be the set of isolated points of $S^\sigma$ lying on $C$, and $D_1,\
\dots,\ D_m$ the $\sigma$-fixed curves intersecting $C$. Then 
\[
K_S\cdot C\equiv C^2 \equiv  \sum_{1\leq i\leq l} \mult_{p_i}C\equiv\sum_{1\leq j\leq m}D_j\cdot C  \mod 2.
\]
\end{lem}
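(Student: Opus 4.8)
The plan is to prove the four quantities are congruent modulo $2$ by a short chain of equalities, anchored by adjunction on one side and by the quotient $\overline{S}:=S/\langle\sigma\rangle$ on the other.

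First, $K_S\cdot C\equiv C^2\pmod 2$ is immediate from adjunction, since $C^2+K_S\cdot C=2p_a(C)-2$ is even. For the fixed-point quantities I would pass to the normalization $n\colon\tilde C\to C\hookrightarrow S$. As $C$ is $\sigma$-invariant, $\sigma$ lifts to an involution $\tilde\sigma$ on the smooth curve $\tilde C$, nontrivial since $C\not\subset S^\sigma$; hence $\tilde C\to\tilde C/\langle\tilde\sigma\rangle$ is a double cover of smooth curves whose branch points are exactly the fixed points of $\tilde\sigma$, and these lie over $C\cap S^\sigma=\{p_1,\dots,p_l\}\cup\bigcup_j(C\cap D_j)$. (The $D_j$ are pairwise disjoint: at a crossing $\sigma$ would fix two independent tangent directions, forcing $\sigma=\id_S$.) By Cartan's lemma the local models are $(x,y)\mapsto(-x,-y)$ at an isolated $p_i$ and $(x,y)\mapsto(-x,y)$, $D_j=\{x=0\}$, at a point $q\in C\cap D_j$.

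Next I would run a branch-by-branch parity count. Each branch of $C$ at a fixed point is either $\tilde\sigma$-invariant, contributing one fixed point of $\tilde\sigma$ above it, or is paired with another branch, contributing none. Parametrizing an invariant branch so that $\tilde\sigma\colon t\mapsto -t$: at $p_i$ both coordinate functions are odd in $t$, so the branch has odd multiplicity; at $q\in C\cap D_j$ the function $x(t)$ is odd, so the local intersection $(D_j\cdot\mathrm{branch})_q=\mathrm{ord}_t x(t)$ is odd. Swapped pairs contribute even multiplicity and even local intersection number. Therefore, modulo $2$,
\[
\mult_{p_i}C\equiv \#\{\tilde\sigma\text{-fixed points over }p_i\},\qquad (D_j\cdot C)_q\equiv \#\{\tilde\sigma\text{-fixed points over }q\}.
\]
Summing over all fixed points, $\sum_i\mult_{p_i}C+\sum_j D_j\cdot C$ is congruent to the total number of branch points of the double cover, which is even by Riemann--Hurwitz; this gives $\sum_i\mult_{p_i}C\equiv\sum_j D_j\cdot C\pmod 2$.

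It remains to connect these sums to $K_S\cdot C$, for which I would use the quotient $\rho\colon S\to\overline{S}$. The isolated fixed points produce only $A_1$-singularities on $\overline{S}$, so $K_{\overline{S}}$ is Cartier and the Hurwitz formula reads $K_S=\rho^*K_{\overline{S}}+\sum_j D_j$. Intersecting with $C$, the projection formula and $\rho_*C=2\overline{C}$ (the map $C\to\overline{C}$ has degree $2$) give $\rho^*K_{\overline{S}}\cdot C=2\,K_{\overline{S}}\cdot\overline{C}\equiv 0$, so $K_S\cdot C\equiv\sum_j D_j\cdot C\pmod 2$. Combined with the first two steps, all four quantities agree modulo $2$. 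The main obstacle is the local analysis at the singular points of $C$—controlling how $\sigma$ permutes the branches and tracking the parity of their multiplicities and of their local intersections with $D_j$—since $C$ may well be singular where it meets $S^\sigma$; the adjunction and Hurwitz inputs are then routine.
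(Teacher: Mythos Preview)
Your proof is correct, but it takes a genuinely different route from the paper's. The paper blows up the isolated fixed points $p_1,\dots,p_l$ to obtain $\tilde S$, so that the quotient $\tilde S/\langle\sigma\rangle$ is smooth, and then works entirely with intersection numbers on this smooth quotient: the congruence $C^2\equiv\sum_i\mult_{p_i}C$ comes from $\tilde C^2=2\overline C^2$ together with $\tilde C^2=C^2-\sum_i(\mult_{p_i}C)^2$, and the congruence $\sum_i\mult_{p_i}C\equiv\sum_j D_j\cdot C$ comes from the fact that the branch divisor $\sum_j\overline D_j+\sum_i\overline E_i$ of the double cover $\tilde S\to\tilde S/\langle\sigma\rangle$ is $2$-divisible in the Picard group.

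By contrast, you never blow up: you link $K_S\cdot C$ to $\sum_j D_j\cdot C$ via the Hurwitz formula on the \emph{singular} quotient $S/\langle\sigma\rangle$ (using that $A_1$ is Gorenstein so $K_{\overline S}$ is Cartier), and you link $\sum_j D_j\cdot C$ to $\sum_i\mult_{p_i}C$ by normalizing $C$ and invoking Riemann--Hurwitz on the smooth curve $\tilde C$. The price you pay is the local branch-by-branch parity analysis you flag as the ``main obstacle''; the paper's surface-theoretic approach sidesteps this entirely, replacing it with global intersection computations on a smooth surface. Your approach, on the other hand, is conceptually closer to the curve $C$ itself and makes transparent \emph{why} the parities match: they all count, modulo $2$, the fixed points of $\tilde\sigma$ on $\tilde C$.
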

\begin{pf} 
The first congruence is by the adjunction formula $2 p_a(C) - 2 = K_S\cdot C+C^2$. 

Let  $\rho:
\tilde{S}\to S$ be the blow-up of $\{p_1,\dots,p_l\}$. Then the action of $\sigma$ lifts to $\tS$. Let $\tilde{\pi}:
 \tilde{S}\to \tilde{S}/\langle\sigma\rangle$ be the quotient
 map.  We denote by $\tilde C$ and $\tilde D_j$ the strict
transforms of $C$ and $D_j$ on $\tilde{S}$, and let $\oC$ and $\oD_j$ be their images in $\tilde{S}/\langle\sigma\rangle$ respectively. Then we have 
\[
\tilde C^2=C^2-\sum_{1\leq i\leq l}\left(\mult_{p_i}C\right)^2.
\]
On the other hand, since $C$ is $\sigma$-invariant but not $\sigma$-fixed, we have $\tilde C=
\tilde{\pi}^*\oC$ and hence $\tilde C^2=
2\oC^2$ is even. It follows that
\[
C^2 - \sum_{1\leq i\leq l}\mult_{p_i}C \equiv C^2 - \sum_{1\leq i\leq l}\left(\mult_{p_i}C\right)^2 = \tilde C^2 \equiv 0 \mod 2
\]
This completes the proof of the second congruence relation.

Let $\tilde E_i\subset \tS$ be the $(-1)$-curve over $p_i$ for $1\leq i\leq l$ and $\oE_i$ the image of $\tilde E_i$ in $\tilde{S}/\langle\sigma\rangle$. Note that $\ti\pi$ is a double cover  branched along $\sum_j\oD_j+\sum_i \oE_i$, so  $\sum_j\oD_j+\sum_i \oE_i\sim2 L$ for some $L\in \op{Pic}(\tilde{S}/\langle\sigma\rangle)$. Therefore,
\begin{equation}\label{eq-2}\sum_{1\leq j\leq m}\oD_j\cdot\overline C+\sum_{1\leq i\leq l}\oE_i\cdot\overline C=2L\cdot\overline C\equiv0\mod2.\end{equation}
Note that 
\begin{equation}\label{eq: Dj}
\oD_j\cdot\overline C = \frac{1}{2}(\ti{\pi}^*\oD_j)\cdot(\ti{\pi}^*\overline C) = \tilde D_j\cdot \tilde C = D_j\cdot C, \quad 1\leq j\leq m
\end{equation}
where the second equality is because $\ti{\pi}^*\oD_j=2\tilde{D}_j$ and $\ti{\pi}^*\overline C=\tC$, and the third equality is because $\rho\colon\tS\rightarrow S$ is an isomorphism along $D_j$. Similarly, we have 
\begin{equation}\label{eq: Ei}
\oE_i\cdot\overline C=\ti E_i\cdot\ti C = \mult_{p_i} C,\,\, 1\leq i\leq l.
\end{equation}
Plugging \eqref{eq: Dj} and \eqref{eq: Ei} into \eqref{eq-2} gives the third congruence relation. 
\qed\end{pf}

\begin{lem}\label{lem: cusp}
Let $S$ be a smooth surface $S$ and $\sigma\in \Aut(S)$ an involution. Let $C\subset S$ be a proper integral curve that is invariant by the action of $\sigma$.  Suppose that there is a cusp $p\in C$ fixed by  $\sigma$.   Then the following holds.
\begin{enumerate}[leftmargin=*]
\item There is a $\sigma$-fixed curve $D$ with $(D\cdot C)_p= 3$.
\item Let $\bar p$ and $\oD$ be the images of $p$ and $D$ in $S/\langle\sigma\rangle$ respectively. Then both $ S/\langle\sigma\rangle$ and $\oD$ are smooth at $\bar p$.
\end{enumerate}
\end{lem}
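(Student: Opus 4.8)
The plan is to work locally around $p$ and exploit Cartan's lemma. Since $\sigma$ is an involution fixing $p$, there are local coordinates $(x,y)$ centred at $p$ in which $\sigma(x,y)=(\epsilon_1 x,\epsilon_2 y)$ with $\epsilon_1,\epsilon_2\in\{1,-1\}$. As $\sigma\neq\id_S$, the case $(\epsilon_1,\epsilon_2)=(1,1)$ (which would force $\sigma=\id_S$ near $p$) is excluded, so $d\sigma_p$ is either a reflection, $(\epsilon_1,\epsilon_2)\in\{(1,-1),(-1,1)\}$, or $-\id$. The whole point of part (i) is that the presence of a cusp forces the reflection case; the axis fixed by the reflection is then a smooth component of $S^\sigma$ through $p$ and will be the sought $\sigma$-fixed curve $D$ (recall $S^\sigma$ is smooth by the discussion following Cartan's lemma).

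To rule out $d\sigma_p=-\id$, I would parametrize the unibranch germ $C$ at $p$ by $t\mapsto(x(t),y(t))$. The involution $\sigma$ preserves $C$ and hence induces an involution $t\mapsto\psi(t)$ of the parameter disk fixing the origin, so $\psi(t)=-t$; the alternative $\psi=\id$ would put the singular branch pointwise into $S^\sigma$, contradicting the smoothness of $S^\sigma$. Now if $d\sigma_p=-\id$, then $x(-t)=-x(t)$ and $y(-t)=-y(t)$, so $x(t)$, $y(t)$, and therefore every linear form restricted to the branch, are odd in $t$ and have odd order. But the multiplicity of $C$ at $p$ is the minimum of these orders, hence odd, contradicting the fact that an ordinary cusp (semigroup $\langle 2,3\rangle$) has multiplicity $2$. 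This leaves the reflection case, say $\sigma(x,y)=(x,-y)$ after relabelling, whose fixed locus near $p$ is the smooth axis $\{y=0\}$; the component of $S^\sigma$ containing $p$ is the curve $D$.

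For the value $(D\cdot C)_p=3$ I would then read off the orders from the cusp condition. In these coordinates $x(t)$ is even and $y(t)$ is odd in $t$; since the multiplicity $2$ is even it must be attained by $x$, giving $\op{ord}_t x(t)=2$, and the $A_2$-condition pins down $\op{ord}_t y(t)=3$ (so $D=\{y=0\}$ is exactly the tangent line of the cusp). Computing the intersection number by pulling back the equation of $D$ to the branch gives $(D\cdot C)_p=\op{ord}_t y(t)=3$. For part (ii), the ring of $\sigma$-invariants in these coordinates is $\CC[[x,y^2]]$, so $S/\langle\sigma\rangle$ is smooth at $\bar p$ with coordinates $u=x$, $v=y^2$, and $\oD$ is the image of $\{y=0\}$, namely $\{v=0\}$, which is smooth.

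I expect the main obstacle to be the bookkeeping in the second and third paragraphs: verifying carefully that $\sigma$ acts as $t\mapsto -t$ on the normalized branch and, above all, extracting the exact orders $\op{ord}_t x(t)=2$ and $\op{ord}_t y(t)=3$ from the hypothesis that the singularity is an ordinary cusp. This is precisely what simultaneously excludes the case $d\sigma_p=-\id$ and produces the value $(D\cdot C)_p=3$, so the argument hinges on making the meaning of "cusp" ($A_2$, with semigroup $\langle 2,3\rangle$) fully explicit.
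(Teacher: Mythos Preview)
Your proposal is correct and complete. It differs from the paper's argument in how part~(i) is organized: the paper proceeds in two separate steps, first excluding the case that $p$ is an isolated point of $S^\sigma$ by blowing up $p$ (the exceptional curve $E$ is then $\sigma$-fixed, and since $\tilde C$ is tangent to $E$ at the unique point $\tilde p$ over $p$, $\sigma$ acts trivially on $T_{\tilde p}\tilde C$, forcing $\tilde C\subset\tilde S^\sigma$ as well, contradicting the smoothness of $\tilde S^\sigma$), and then excluding $(D\cdot C)_p=2$ by Weierstrass-preparing the equation of $C$ in the $y$-variable and observing that $\sigma$-invariance makes the coefficients even in $x$, so the resulting plane-curve germ $y^2+\tilde a_1(x^2)y+\tilde a_2(x^2)=0$ is an $A_{2m-1}$-singularity, never an $A_2$-cusp. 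Your parametrization argument collapses both steps into one: the parity of $\op{ord}_t x(t)$ and $\op{ord}_t y(t)$ under $t\mapsto -t$ simultaneously rules out $d\sigma_p=-\id$ (both orders odd, hence multiplicity odd) and, in the reflection case, forces $\op{ord}_t x(t)=2$ and $\op{ord}_t y(t)=3$ via the semigroup $\langle 2,3\rangle$, giving $(D\cdot C)_p=3$ directly. Your route is more streamlined and entirely elementary; the paper's blow-up step is more geometric and its Weierstrass-preparation step dovetails with the earlier Lemma~\ref{lem: transverse}. Part~(ii) is essentially identical in the two arguments, up to swapping the roles of $x$ and $y$.
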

\begin{proof} 
(i) Suppose on the contrary that $p$ is an isolated point of $S^\sigma$. Let  $\rho:
\tilde{S}\to S$ be the blow-up of $p\in S$, with exceptional divisor $E$. Then the action of $\sigma$ lifts to $\tS$, and $E$ is a $\sigma $-fixed curve. Let $\tilde C$ be the strict
transform of $C$ in $\tilde{S}$. Then $\tilde C$ and $E$ intersect at exactly one point $\tilde p$, and we have
$T_{\tilde{p}}\tC=T_{\tilde{p}}E$, where
$T_{\tilde{p}}\tC$, $T_{\tilde{p}}E$ are  the tangent spaces of
$\tC$, $E$ at point $\tilde{p}$, viewed as subspaces of $T_{\tilde p}\tS$. Since $E$ is
$\sigma$-fixed, the induced action of
$\sigma$ on $T_{\tilde{p}}E$ is trivial. So the induced
action of $\sigma$ on $T_{\tilde{p}}\tC$   is
trivial. This implies that $\tC$ is $\sigma$-fixed. This is impossible, since $\tS^\sigma$ should be smooth.

Therefore, there is a $\sigma$-fixed curve $D\subset S$ passing through $p$. We still need to show that $(D\cdot C)_p= 3$. If not, then we have $(D\cdot C)_p=2$, that is, the tangent directions of $D$ and $C$ at $p$ are different. Take local coordinates $(x,y)$ around $p\in S$ so that 
\[
p=(0,0),\, D=\{x=0\},\, C=\{y^2 + a_1(x)y + a_2(x)=0 \},\, \text{ and } \sigma(x,y)=(-x,y).
\]
where $a_1(x)$ and $a_2(x)$ are holomorphic functions such that $a_1(0)=a_2(0)=0$. Since $C$ is $\sigma$-invariant, we see that $a_i(-x)=a_i(x)$ for $i=1,2$. It follows that there are holomorphic functions $\tilde a_i(z)$ such that $a_i(x) =\tilde a_i(x^2)$. But then one verifies easily that $p\in C=\{y^2 + \tilde a_1(x^2)y + \tilde a_2(x^2)=0 \}$ is not a cusp, which is a contradiction.

(ii) By (i) we can find local coordinates $(x,y)$ around $p$ such that
\[
p=(0,0),\, D=\{x=0\},\, C=\{x^2 + y^3 =0 \},\, \text{ and } \sigma(x,y)=(-x,y).
\]
The quotient surface $S/\langle\sigma\rangle$ is smooth with local coordinates $(u,v)=(x^2, y)$ around $\bar p$ and the curve $\oD$ is defined by $u+v^3=0$, which is also smooth.
\end{proof}

\begin{lem}\label{lem: Z22}
Let $S$ be a smooth surface and  $G=\{\id_S, \sigma_1, \sigma_2, \sigma_3\}\subset\op{Aut}(S)$ be a subgroup of automorphisms such that
$G\cong(\ZZ/2\ZZ)^2$. 
\begin{enumerate}
    \item Suppose that $p\in S$ is fixed by the whole group
$G$. Then exactly two of the involutions in $G$ fix curves that pass
through $p$, and each of the these two curves is invariant under the action of $G$.
\item Let $D$ be a 1-dimensional connected component of $\bigcup_{i=1}^{3} S^{\sigma_i}$. Assume that $D$ is proper. Then $D$ is a 2-connected semi-stable curve, and it is stable if and only if $D$ contains no rational curves. In particular, we have $p_a(D)\geq 1$. 
\end{enumerate}
\end{lem}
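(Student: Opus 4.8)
The plan is to dispatch part (1) directly from the Cartan normal form recorded at the start of the section, and then to reduce part (2) to a purely combinatorial statement about the dual graph of $D$. For (1): since $G$ is abelian and fixes $p$, Cartan's lemma furnishes local coordinates $(x_1,x_2)$ in which $G$ acts diagonally through two characters $\chi_1,\chi_2\in\widehat G$ with $\ker\chi_1\cap\ker\chi_2=\{\id_S\}$. As $\widehat G\cong(\ZZ/2\ZZ)^2$ has exactly three nontrivial characters, whose kernels are the three order-two subgroups $\langle\sigma_1\rangle,\langle\sigma_2\rangle,\langle\sigma_3\rangle$, faithfulness forces $\chi_1,\chi_2$ to be distinct and nontrivial, say $\ker\chi_i=\langle\sigma_{a_i}\rangle$. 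Then $\sigma_{a_1}$ fixes the axis $\{x_2=0\}$ and $\sigma_{a_2}$ fixes $\{x_1=0\}$, while the remaining involution acts as $-\id$ and has $p$ as an isolated fixed point; so exactly two involutions fix (transversal) curves through $p$. Each such curve is a component of some $S^{\sigma_i}$ through the $G$-fixed point $p$, and any $g\in G$ sends it to the component through $g(p)=p$, hence to itself, so both are $G$-invariant.

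For (2) the first step is to present $D$ as a nodal curve. Each $S^{\sigma_i}$ is smooth, so every component $C$ of $D$ is smooth; its pointwise stabilizer $G_C$ is cyclic and contains some $\sigma_i$, whence $G_C=\langle\sigma_i\rangle$ and $C$ is fixed by exactly one involution. Two distinct components can meet only at a point fixed by two distinct involutions, hence a $G$-fixed point, where by (1) they cross transversally in two branches; so $D$ is connected and nodal with smooth components, and every component meeting another is $G$-invariant by (1). For such a $G$-invariant $C\subset S^{\sigma_i}$, the quotient $G/\langle\sigma_i\rangle\cong\ZZ/2\ZZ$ acts faithfully on $C$ through an involution $\bar\sigma$ whose fixed points are precisely the $G$-fixed points of $C$, i.e. precisely the nodes of $D$ lying on $C$; Riemann--Hurwitz applied to $C\to C/\langle\bar\sigma\rangle$ shows their number is even. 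Hence in the dual graph $\Gamma$ of $D$ every vertex has even valence.

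The key step is then combinatorial: a connected graph all of whose vertices have even valence is Eulerian and so has no bridge (deleting a bridge would leave a connected graph with a single odd-valence vertex, against the handshake lemma), so $\Gamma$ is $2$-edge-connected. Since $D$ is reduced and nodal, a decomposition $D=A+B$ into nonzero effective divisors amounts to a partition of the vertices of $\Gamma$, and $A\cdot B$ equals the number of nodes joining $A$ to $B$, i.e. the size of the edge cut, which is $\geq 2$; thus $D$ is $2$-connected. Semistability and the stability criterion follow: a smooth rational component $R$ carries an involution $\bar\sigma$ on $\PP^1$, which has exactly two fixed points, so $R$ meets the rest of $D$ in exactly two nodes, giving semistability but obstructing stability, whereas if $D$ has no rational component every component has genus $\geq 1$ and $D$ is stable. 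Finally $p_a(D)=b_1(\Gamma)+\sum_C g(C)$, and if $D$ is reducible then $\Gamma$ is $2$-edge-connected with at least one edge, hence not a tree, so $b_1(\Gamma)\geq 1$ and $p_a(D)\geq 1$.

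The main obstacle is the irreducible case $D=C$, where $\Gamma$ has no edge and one must show $g(C)\geq 1$ directly. If $C$ is $G$-invariant, its residual involution $\bar\sigma$ has no fixed point (one would be $G$-fixed and, by (1), force a second component through it), so $C\to C/\langle\bar\sigma\rangle$ is \'etale and $g(C)\geq 1$. The delicate point is to exclude the remaining possibility that $\sigma_i$ fixes a smooth rational curve $C$ which $\sigma_j$ carries isomorphically onto a \emph{disjoint} copy $C''$: this genuinely occurs on, e.g., $\PP^1\times\PP^1$, so the assertion $p_a(D)\geq 1$ cannot hold for arbitrary smooth surfaces and must invoke the ambient geometry. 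In the situation of this paper, where $G\subset\Aut_\QQ(S)$ acts trivially on $H^2(S,\QQ)$ and $S$ is of general type, the case is ruled out: numerical triviality gives $[C]=[\sigma_j(C)]=[C'']$, whence $C^2=C\cdot C''=0$, contradicting $C^2<0$ for a smooth rational curve on such a surface. I expect pinning down exactly which ambient hypothesis is required here, together with the verification that $D$ is $G$-invariant in the irreducible case, to be the only genuinely subtle part of the argument.
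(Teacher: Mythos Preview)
Your argument for (i) matches the paper's exactly. For (ii), you and the paper take the same route---show that $D$ is nodal with smooth components meeting only at $G$-fixed points, then analyze rational components---but you supply considerably more detail. The paper simply asserts that ``$\sigma_2$ acts faithfully on $D_j$'' when $D_j$ is rational and fixed by $\sigma_1$, notes that $D-D_j$ meets $D_j$ at the two points of $(D_j)^{\sigma_2}$, and then declares that ``the statements of (ii) follow from these observations'' without spelling out 2-connectedness or $p_a(D)\geq 1$. Your Eulerian dual-graph argument (even valences $\Rightarrow$ no bridge $\Rightarrow$ 2-edge-connected dual graph $\Rightarrow$ $D$ is 2-connected, and $b_1(\Gamma)\geq 1$ in the reducible case) is a genuine and correct addition filling in what the paper leaves implicit.

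You have also correctly located a gap that the paper's own proof shares. The assumption that each component of $D$ is $G$-invariant is justified whenever that component meets another one (hence throughout when $D$ is reducible), but not in the irreducible case, and the paper makes the same unacknowledged leap. Your $\PP^1\times\PP^1$ counterexample is valid: with $G$ acting on the first factor as the Klein four-group generated by $x\mapsto -x$ and $x\mapsto 1/x$, the locus $\bigcup_i S^{\sigma_i}$ consists of six pairwise disjoint rational fibers, so the clause ``in particular $p_a(D)\geq 1$'' fails at the stated level of generality. Your rescue via numerical triviality on a minimal surface of general type is correct (if $[C]=[\sigma_j(C)]$ with $C\cap\sigma_j(C)=\emptyset$ then $C^2=0$, contradicting $K_S$ nef together with $K_S\cdot C + C^2 = -2$) and suffices for every application the paper actually makes of the lemma, where one always has $G\subset\Aut_\QQ(S)$ or at least that $G$ preserves each relevant irreducible curve.
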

\begin{proof}
(i) There are local coordinates $(x,y)$ around $p\in S$ such that each
$\sigma\in G$ acts as $(x,y)\mapsto
(\chi_1(\sigma)x,\chi_2(\sigma)y)$ where $\chi_1,\chi_2\in\widehat
G$ are two distinct characters of $G$. There are two involutions,
say $\sigma_1$ and $\sigma_2$, such that $\ker(\chi_i) = \langle
\sigma_i \rangle$, $i=1,2$. Then $\sigma_i$ ($i=1,2$) fixes curves
passing through $p$ while $\sigma_3$ has only
isolated fixed point at $p$. It is also clear that the $\sigma_1$-fixed curve $(y=0)$ and the $\sigma_2$-fixed curve $(x=0)$ are both invariant under the action of $G$.

(ii) Write $D=\sum_j D_j$ as the sum of irreducible components. Then each $D_j$ is smooth. If $p\in D_{j}\cap D_{j'}$ for $j\neq j'$, then $G_{D_j}\cap G_{D_{j'}}=\{\id_S\}$, and hence $G=\langle G_{D_j}, G_{D_{j'}}\rangle \subset G_p$. It follows that $G_p=G$. By the local analysis done in (i), or by Lemma~\ref{lem: transverse}, $D_{j}$ and $D_{j'}$ intersect transversely. If $D_j$ is a rational curve fixed by, say $\sigma_1$, then $\sigma_2$ acts faithfully on $D_j$, and $D-D_j$ and $D_j$ intersect exactly at the two points $(D_j)^{\sigma_2}$. The statements of (ii) follow from these observations.
\end{proof}

\begin{lem}\label{lem: Z22 chain}
 Let $G\subset\Aut(S)$ be a subgroup of automorphisms such that
$G\cong(\ZZ/2\ZZ)^2$. Suppose $D=\cup_{1\leq i\leq k} D_i \subset S$ is a connected tree of smooth rational curves intersecting transversely and every component $D_i$ is invariant under the action of $G$. Then
\begin{enumerate}
 \item $D$ is a chain, that is, its dual graph is as follows:
 \[
 \begin{tikzpicture}
\node[bbullet](D0)at(0,0){};
\node[bbullet](D1)at(1,0){};
\node (D11)at(2,0){};
\node (D12)at(3,0){};
\node[bbullet](D2)at(4,0){};
\node[bbullet](D3)at(5,0){};
\draw (D0)--(D1)--(D11);
\draw[dotted] (D11)--(D12);
\draw (D12)--(D2)--(D3);
 \end{tikzpicture}
 \]
 \item Suppose moreover that $k\geq 2$ and each $D_i$ ($1\leq i\leq k$) has an even self-intersection. Then there are two involutions in $G$, say $\sigma_1$ and $\sigma_2$, such that the stabilizer of the curve $D_i$ is $\sigma_1$ (resp.~$\sigma_2$) if $i$ is odd (resp.~even).
\end{enumerate}
\end{lem}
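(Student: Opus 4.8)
The plan is to handle the two parts in order, exploiting throughout that $G$ preserves each component $D_i$ individually, hence preserves every intersection point $D_i\cap D_j$; being a single reduced (transverse) point, each such node is in fact a $G$-\emph{fixed} point.

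For part (i) I would first analyze the $G$-action on one component, assuming $k\geq2$ (the case $k=1$ being trivial). By connectedness each $D_i$ meets another component and so carries a $G$-fixed point; thus the image of $G\to\Aut(D_i)\cong\mathrm{PGL}_2(\CC)$ fixes a point of $D_i\cong\PP^1$. A finite abelian subgroup of $\mathrm{PGL}_2(\CC)$ with a fixed point is cyclic, and since $G\cong(\ZZ/2\ZZ)^2$ is not cyclic, the kernel $G_{D_i}$ is nontrivial; being the stabilizer of a smooth curve it is cyclic (as established after Cartan's lemma), so $G_{D_i}=\langle\sigma_i\rangle$ for an involution $\sigma_i$, and $G$ acts on $D_i$ through a single involution with \emph{exactly two} fixed points. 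Hence each $D_i$ contains exactly two $G$-fixed points, and all its intersections with the rest of $D$ occur there. Since at a $G$-fixed point there are at most two $G$-invariant branches (the local diagonal form in Lemma~\ref{lem: Z22}(i), together with the transversality of Lemma~\ref{lem: transverse}), each node lies on exactly two components, so $D_i$ meets at most two other components. Every vertex of the dual graph then has degree $\leq2$, and a connected tree with this property is a chain, proving (i).

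For part (ii), order the chain $D_1-D_2-\cdots-D_k$. First, adjacent components have distinct stabilizers: if $D_i,D_{i+1}$ met at $p$ with $G_{D_i}=G_{D_{i+1}}=\langle\sigma\rangle$, then two branches of $S^\sigma$ would cross at $p$, contradicting the smoothness of $S^\sigma$. This gives $\sigma_1\neq\sigma_2$ and settles $k=2$. For $k\geq3$ and a middle component $D_i$ with nodes $p=D_{i-1}\cap D_i$ and $q=D_i\cap D_{i+1}$ (precisely the two fixed points of the induced involution on $D_i$), I would work with the character $\chi_q\colon G\to\{\pm1\}$ giving the action of $G$ on the normal line $N_q=T_qS/T_qD_i$. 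Here $\chi_q(\sigma_i)=-1$ since $D_i$ is $\sigma_i$-fixed, while $\chi_q(\sigma_{i+1})=+1$ since $\sigma_{i+1}$ fixes $D_{i+1}$, whose tangent at $q$ is exactly $N_q$. The goal reduces to showing $\chi_q(\sigma_{i-1})=+1$ as well: then $\sigma_{i-1}$ and $\sigma_{i+1}$ both lie in the order-two subgroup $\ker\chi_q$, forcing $\sigma_{i-1}=\sigma_{i+1}$. The $\sigma_i$ are thereby $2$-periodic in $i$, yielding $G_{D_i}=\langle\sigma_1\rangle$ for odd $i$ and $\langle\sigma_2\rangle$ for even $i$.

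The main obstacle — and the only place the hypothesis $D_i^2\in2\ZZ$ is used — is establishing $\chi_q(\sigma_{i-1})=+1$. I would view $N_{D_i/S}\cong\mathcal{O}_{\PP^1}(D_i^2)$ as a line bundle on which $\sigma_{i-1}$ acts as an involution covering $t\mapsto-t$ (legitimate because $\sigma_{i-1}\notin G_{D_i}$), with fixed fibres over $p$ and $q$. The key computation is that the two fibre-eigenvalues multiply to $(-1)^{D_i^2}$: choosing equivariant trivializations over the two invariant charts $D_i\setminus\{q\}$ and $D_i\setminus\{p\}$, the transition function $t^{D_i^2}$ acquires the sign $(-1)^{D_i^2}$ under $t\mapsto-t$, relating the two (constant) eigenvalues. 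Since $\chi_p(\sigma_{i-1})=+1$ — because $\sigma_{i-1}$ fixes $D_{i-1}$, transverse to $D_i$ at $p$, hence acts trivially on $N_p=T_pD_{i-1}$ — and $D_i^2$ is even, we conclude $\chi_q(\sigma_{i-1})=(-1)^{D_i^2}\chi_p(\sigma_{i-1})=+1$, completing the argument. The delicate points are the bookkeeping of which involution has an isolated versus a divisorial fixed locus at each node and the sign in the eigenvalue-product formula; the rest is formal.
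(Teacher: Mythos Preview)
Your proof is correct. Part (i) reaches the same conclusion as the paper but via a slightly longer route: the paper simply observes that three $G$-fixed nodes on $D_i\cong\PP^1$ would force $G_{D_i}=G$, contradicting cyclicity of curve stabilizers; you instead first pin down $G_{D_i}=\langle\sigma_i\rangle$ (which the paper only records in part (ii)) and then count the two $G$-fixed points on $D_i$.

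For part (ii) the methods genuinely diverge. The paper argues by contradiction using Lemma~\ref{lem: inv mod 2}: if $G_{D_3}=\langle\sigma_1\sigma_2\rangle$, then on $D_2$ the involution $\sigma_1$ has $P_1$ as a non-isolated fixed point (the curve $D_1\subset S^{\sigma_1}$ passes through it) and $P_2$ as its \emph{unique} isolated fixed point, whence $D_2^2\equiv 1\pmod 2$. Your approach bypasses Lemma~\ref{lem: inv mod 2} entirely and instead proves directly the eigenvalue identity $\chi_p(\sigma)\chi_q(\sigma)=(-1)^{D_i^2}$ for the lift of $\sigma|_{D_i}$ to $N_{D_i/S}$, via the transition function for $\mathcal{O}_{\PP^1}(D_i^2)$. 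The two statements are in fact equivalent in this situation: a fixed point $p\in D_i^{\sigma}$ is isolated in $S^{\sigma}$ exactly when $\chi_p(\sigma)=-1$, so your product formula \emph{is} the smooth-$\PP^1$ case of the congruence $C^2\equiv\#\{\text{isolated fixed points}\}\pmod 2$. What your route buys is self-containment (no blow-up/quotient computation needed), and it makes transparent \emph{why} the parity of $D_i^2$ is the exact obstruction; what the paper's route buys is brevity, since Lemma~\ref{lem: inv mod 2} is already on the shelf and applies more broadly to singular curves.
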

\begin{proof}
 (i) It suffices to prove that each $D_i$ intersects at most two other components: otherwise there are at least three distinct components, say $D_{i_1},D_{i_2}$ and $D_{i_3}$, intersecting $D_i$ at some points $P_1$, $P_2$ and $P_3$ respectively. Since each of the curves is invariant under the action of $G$, the points $P_1$, $P_2$ and $P_3$ are fixed by the whole group $G$, which implies that the curve $D_i$ is fixed by $G$. This is absurd, since the stabilizer of a curve is cyclic.

(ii) Denote by $P_i$ the unique intersection point of $D_i\cap D_{i+1}$ ($1\leq i\leq k-1$). Then $P_i$ is fixed by the whole group $G$. Each component $D_i$ being invariant under $G$, it must be fixed by some involution $\sigma\in G$. Denote the stabilizers of $D_1$ and $D_2$ by $\sigma_1,\sigma_2$ respectively.

By induction on the index $i$ it suffices to show that the stabilizer of $D_3$ is $\sigma_1$. Since the fixed locus of $\sigma_2$ is smooth, $D_3$ is not fixed by $\sigma_2$. The curve $D_3$ is not fixed by $\sigma_1\sigma_2$ either: otherwise $\{P_2\} = D_2\cap D_3$ would be the unique isolated fixed point of $\sigma_2$ on $D_2$, contradicting the fact that the self-intersection $D_2^2$ is even (Lemma~\ref{lem: inv mod 2}).
\end{proof}

\begin{cor}\label{cor: ADE on S}
 Let $S$ be a surface of general type with $\au$ containing a subgroup $G\cong (\ZZ/2\ZZ)^2$. Then the canonical model of $S$ has at most $A_n$-singularities.
\end{cor}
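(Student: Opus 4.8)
The plan is to reduce the statement to the combinatorial input of Lemma~\ref{lem: Z22 chain}(i). Recall that the canonical model $X$ of $S$ is obtained from its minimal model by contracting exactly the $(-2)$-curves, i.e. the irreducible curves $C$ with $K_S\cdot C=0$ and $C^2=-2$; each such $C$ is a smooth rational curve, and the singularities of $X$ are rational double points whose ADE-type is read off from the dual graph of the connected configuration of $(-2)$-curves contracting to it. For a rational double point this configuration is a connected tree of smooth rational curves meeting pairwise transversally, with dual graph an ADE Dynkin diagram; among these, precisely $A_n$ is a chain, whereas $D_n$ and $E_6, E_7, E_8$ each contain a vertex of valency three.

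First I would show that every $(-2)$-curve $C\subset S$ is $G$-invariant. Since $G\subset\au$, each $\sigma\in G$ acts trivially on $H^2(S,\QQ)$, so $\sigma(C)$ is an irreducible curve numerically equivalent to $C$. If $\sigma(C)\neq C$, then, being distinct irreducible curves, they would satisfy $\sigma(C)\cdot C\geq 0$; but $\sigma(C)\cdot C=C^2=-2<0$, a contradiction. Hence $\sigma(C)=C$ for all $\sigma\in G$, i.e. $C$ is $G$-invariant.

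Next, fix a singular point of $X$ and let $D=\cup_i D_i$ be the connected configuration of $(-2)$-curves contracting to it. By the previous step each component $D_i$ is $G$-invariant, and the $D_i$ are smooth rational curves forming a tree and meeting transversally. This is exactly the hypothesis of Lemma~\ref{lem: Z22 chain}(i), which then forces the dual graph of $D$ to be a chain. A chain is the Dynkin diagram $A_n$, so the corresponding singularity is of type $A_n$; the configurations $D_n$, $E_6$, $E_7$, $E_8$ are excluded because each has a component meeting three others, which Lemma~\ref{lem: Z22 chain}(i) forbids. As the chosen singular point was arbitrary, $X$ has at most $A_n$-singularities.

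The argument is short because all the genuine work is packaged in Lemma~\ref{lem: Z22 chain}(i), whose proof in turn rests on the facts that the stabilizer of a curve is cyclic while $G\cong(\ZZ/2\ZZ)^2$ is not. The only points requiring a little care — and the closest thing to an obstacle — are the standard but essential facts I would want to cite precisely: that the exceptional locus over a rational double point is a transverse tree of $(-2)$-curves with an ADE dual graph, and that $A_n$ is singled out among these as the unique chain. Everything else is the elementary numerical argument for $G$-invariance.
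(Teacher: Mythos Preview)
Your proof is correct and follows essentially the same approach as the paper: establish $G$-invariance of each $(-2)$-curve via the numerical argument, then apply Lemma~\ref{lem: Z22 chain}(i) to the exceptional configuration over each singular point to conclude it is a chain. The paper's proof is terser (two sentences, invoking the lemma on ``the fundamental cycles on $S$''), but yours simply spells out the details it leaves implicit.
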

\begin{pf}
 The group action of $G$ leaves every ($-2$)-curve of $S$ invariant. Now apply Lemma~\ref{lem: Z22 chain} to the fundamental cycles on $S$.
\end{pf}

\begin{lem}\label{lem: Z2n quotient}
Let $S$ be a smooth surface and $G\subset\op{Aut}(S)$ be a subgroup of automorphisms such that $G\cong (\ZZ/2\ZZ)^n$ with $n\geq 1$. Let $\sA$ be the set of isolated points of $\bigcup_{\sigma\in G\setminus\{\id_S\}}S^{\sigma}$. Then the quotient surface $S/G$ has $A_1$-singularities at the points of $\pi(\sA)$ and smooth elsewhere, where $\pi\colon S\rightarrow S/G$ is the quotient map.
\end{lem}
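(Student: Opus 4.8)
The plan is to reduce the statement to an analytic-local computation at each point of $S/G$ and then run through the finitely many possible local pictures. Fix $p\in S$ and set $\bar p=\pi(p)$. Choosing a $G_p$-invariant neighborhood $U\ni p$ small enough that $\sigma U\cap U=\emptyset$ for every $\sigma\in G\setminus G_p$, we get $\pi^{-1}(\pi(U))=\bigsqcup_{\sigma G_p}\sigma U$, so $\pi(U)\cong U/G_p$; thus the germ of $S/G$ at $\bar p$ is determined by the $G_p$-action near $p$. Since the union $\bigcup_{\sigma\in G\setminus\{\id_S\}}S^\sigma$ is $G$-invariant, membership in $\sA$ is constant along the orbit $[p]_G$, so $\bar p\in\pi(\sA)$ iff $p\in\sA$; hence it suffices, for each $p$, to decide whether $p\in\sA$ and to compute $U/G_p$.

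First I would linearize. By Cartan's lemma there are coordinates $(x_1,x_2)$ on $U$ with $p=(0,0)$ in which $G_p$ acts diagonally through characters $\chi_1,\chi_2\in\widehat{G_p}$ satisfying $\ker\chi_1\cap\ker\chi_2=\{\id_S\}$; as $G\cong(\ZZ/2\ZZ)^n$, each $\chi_i$ takes values in $\{\pm1\}$. Faithfulness says precisely that $\sigma\mapsto(\chi_1(\sigma),\chi_2(\sigma))$ embeds $G_p$ into $\{\pm1\}^2$, so by Lemma~\ref{lem: l-rank} (with $l=2$) we get $G_p\cong(\ZZ/2\ZZ)^m$ with $|G_p|\in\{1,2,4\}$. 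I would then enumerate. If $|G_p|=1$, then $p$ is fixed by no nontrivial element, so $p\notin\sA$ and $U/G_p=U$ is smooth. If $G_p=\langle\sigma\rangle$ with $\sigma$ a reflection, say $(\chi_1,\chi_2)(\sigma)=(-1,1)$, then $S^\sigma\cap U=\{x_1=0\}$ is a curve through $p$, so $p$ is not isolated in the union, $p\notin\sA$, the invariant ring is $\CC[x_1^2,x_2]$, and $U/G_p$ is smooth. If $G_p\cong(\ZZ/2\ZZ)^2$, then its three involutions act as $(-1,1)$, $(1,-1)$ and $(-1,-1)$, so $p$ lies on the two fixed curves $\{x_1=0\}$ and $\{x_2=0\}$ and again is not isolated in the union, whence $p\notin\sA$; the invariant ring is the polynomial ring $\CC[x_1^2,x_2^2]$ and $U/G_p$ is smooth. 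Finally, if $G_p=\langle\sigma\rangle$ with $\sigma=-\id$, i.e.\ $(\chi_1,\chi_2)(\sigma)=(-1,-1)$, then $S^\sigma\cap U=\{p\}$ is isolated, so $p\in\sA$, and the invariant ring $\CC[x_1^2,x_1x_2,x_2^2]\cong\CC[u,v,w]/(v^2-uw)$ exhibits $U/G_p$ as an $A_1$-singularity.

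The main obstacle, and the point that makes the bookkeeping delicate, is to distinguish $\sA$ from the set of points that are merely isolated in some individual $S^\sigma$. The crucial observation is the case $G_p\cong(\ZZ/2\ZZ)^2$: there the diagonal involution $\sigma_1\sigma_2=-\id$ does have $p$ as an isolated fixed point, yet $p$ lies on the fixed curves of $\sigma_1$ and $\sigma_2$, so $p$ is \emph{not} isolated in $\bigcup_\sigma S^\sigma$ and the quotient is nonetheless smooth. Conversely, $p\in\sA$ forces every nontrivial $\sigma\in G_p$ to have isolated fixed locus $\{p\}$, i.e.\ $(\chi_1(\sigma),\chi_2(\sigma))=(-1,-1)$ for all such $\sigma$; hence $\chi_1=\chi_2$ on $G_p$, and faithfulness then forces $G_p=\langle-\id\rangle\cong\ZZ/2\ZZ$, the unique singular case. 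Collecting the four cases shows that $S/G$ carries an $A_1$-singularity exactly at the points of $\pi(\sA)$ and is smooth elsewhere, as claimed.
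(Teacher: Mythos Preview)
Your proof is correct and follows essentially the same approach as the paper: reduce to the local action of $G_p$, invoke Cartan linearization and Lemma~\ref{lem: l-rank} to get $|G_p|\in\{1,2,4\}$, and run through the four local pictures. Your final paragraph, explicitly distinguishing ``isolated in some $S^\sigma$'' from ``isolated in the union $\bigcup_\sigma S^\sigma$'', is a helpful elaboration of a point the paper handles more tersely by citing Lemma~\ref{lem: Z22} in the $|G_p|=4$ case.
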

\begin{proof}
    For any $p\in S$, the stabilizer $G_p$ is isomorphic to $(\ZZ/2\ZZ)^r$ for some $0\leq r\leq 2$ by Lemma~\ref{lem: l-rank}. If $r=0$, then $G_p$ is trivial and hence $p\notin \sA$; it is clear that $\pi(p)\in S/G$ is smooth in this case. If $r=2$, then by the proof of Lemma~\ref{lem: Z22}, $p\notin \sA$ and there are local coordinates $(x,y)$ around $p\in S$ such that $\pi(p)\in S/G$ is smooth with local coordinates $u=x^2, v=y^2$.
    
    If $r=1$ then $G_p=\langle \sigma\rangle$ for some $\sigma\in G\setminus\{\id_S\}$. If $p\notin \sA$, then we may find local coordinates $(x,y)$ around $p\in S$ such that the action of $G_p$ is given by $\sigma(x,y)=(-x,y)$, and hence $\pi(p)\in S/G$ is smooth with local coordinates $u=x^2$ and $v=y$. If $p\in \sA$ then the action is given by $\sigma(x,y)=(-x,-y)$ and $\pi(p)\in S/G$ is an $A_1$-singularity.
\end{proof}

\begin{lem}\label{lem: Z2n C}
Let $S$ be a smooth surface and $G\subset\op{Aut}(S)$ be a subgroup of automorphisms such that $G\cong (\ZZ/2\ZZ)^n$ with $n\geq 2$. Let $C$ be a proper integral curve that is invariant under the action of $G$, and $\sA=\{p_1,\dots,p_k\}$ the set of isolated points of $\bigcup_{\sigma\in G\setminus\{\id_S\}}S^{\sigma}$ lying on $C$. Then the following holds.
\begin{enumerate}[leftmargin=*]
\item If $G$ acts faithfully on $C$, then $C^2\equiv \sum_{i=1}^{k} m_i^2\mod 2^n$, and $C^2$ is divisible by $2^{n-1}$. 
\item If $G$ does not act faithfully on $C$ then $C$ is smooth, and $C^2$ is divisible by $2^{n-2}$. 
\item $C^2$ and $K_S\cdot C$ are even whenever $n\geq 3$.
\item If $n\geq 3$ and $C\cong\PP^1$, then $n=3$ and $G_C$ is nontrivial.
\item If $n\geq 3$ and $C$ is singular, then $p_a(C)\geq 2^{n-1}-1$, and equality holds if and only if the following holds:
\begin{enumerate}
    \item $C$ is a nodal curve with geometric genus $2^{n-2}-1$;
    \item $C_\sing$ consists of exactly one orbit of $G$, and $|G_p|=4$ for each $p\in C_\sing$.
\end{enumerate}
\end{enumerate} 
\end{lem}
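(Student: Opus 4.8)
The plan is to reduce everything to the local structure of the $G$-action near the isolated fixed points $\{p_1, \dots, p_k\}$, together with the pullback identities on the various intermediate quotients, in the spirit of Lemma~\ref{lem: inv mod 2}. The governing principle is that when $G$ acts faithfully on $C$, the quotient $C \to C/G$ is a $(\ZZ/2\ZZ)^n$-cover, so $C^2$ should be divisible by a large power of $2$ coming from $\pi^* \oC = (\text{deg})\,\tC$-type relations, with the residue modulo $2^n$ governed precisely by the local multiplicities at the bad points.

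For part (i), I would first blow up all the isolated points $p_i \in \sA \cap C$ to get $\rho\colon \tS \to S$ with $G$ lifting, so that on $\tS$ the curve $C$ meets the exceptional configuration transversally. Writing $\tC$ for the strict transform and $\oC$ for its image in $\tS/G$, the key is that faithfulness of the $G$-action on $C$ means $\tC \to \oC$ is a degree-$2^n$ map generically, giving $\tC = \pi^* \oC$ up to the ramification contributed over the $p_i$. Then $\tC^2 = 2^n \oC^2$ is divisible by $2^n$, and since $\tC^2 = C^2 - \sum_i m_i^2$ where $m_i = \mult_{p_i} C$, I get $C^2 \equiv \sum_i m_i^2 \bmod 2^n$. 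The divisibility of $C^2$ by $2^{n-1}$ should follow from analyzing $\sum_i m_i^2$ via an intermediate quotient by a single well-chosen involution, using Lemma~\ref{lem: inv mod 2} to see each $m_i^2 \equiv m_i$ and then pairing up orbit points. For part (ii), if $G$ does not act faithfully on $C$ there is a nontrivial $G_C$, so $C$ is $\sigma$-fixed for some $\sigma$, hence smooth by the Cartan-lemma discussion; the residual faithful action of $G/G_C \cong (\ZZ/2\ZZ)^{n-1}$ (or $(\ZZ/2\ZZ)^{n-2}$ after accounting for rank) then gives the weaker $2^{n-2}$-divisibility by the same quotient argument.

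Part (iii) is the cleanest: for $n \geq 3$, Lemma~\ref{lem: l-rank} forces $|G_p| \leq 4$ at every point, so on $C$ there is always some involution $\sigma \in G$ acting freely on $T_pS/T_pC$-type data away from a codimension condition; applying Lemma~\ref{lem: inv mod 2} to a suitable involution whose fixed curves meet $C$ shows $C^2$ is even, and then $K_S \cdot C \equiv C^2 \bmod 2$ by adjunction. Part (iv): if $C \cong \PP^1$ with $n \geq 3$, a trivial stabilizer $G_C$ would make $G$ act faithfully on $\PP^1$, but $(\ZZ/2\ZZ)^n$ embeds in $\mathrm{PGL}_2$ only for $n \leq 2$ (the Klein four-group being maximal among elementary abelian $2$-subgroups), forcing $G_C \neq \{\id_S\}$ and then $n = 3$ after counting the faithful quotient action on $\PP^1$.

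For part (v), the lower bound $p_a(C) \geq 2^{n-1} - 1$ with the equality characterization is the main obstacle, and it is where I expect the real work to lie. The idea is to combine the genus drop from desingularizing the singular points with the Riemann--Hurwitz/orbit count for $C \to C/G$: each singular point of $C$ lies in a $G$-orbit whose size divides $|G| = 2^n$, and by Lemma~\ref{lem: transverse} and the local models each such singularity (necessarily at a point with nontrivial stabilizer, hence $|G_p| = 4$ by part analysis) contributes a controlled amount $\delta_p$ to $p_a(C) - p_g(C)$. Balancing the minimal total $\delta$-contribution against the constraint that $G$ permutes $C_\sing$ transitively in the extremal case should pin down $p_a(C) = 2^{n-1} - 1$ exactly when $C$ is nodal with a single orbit of nodes each of stabilizer order $4$ and geometric genus $2^{n-2} - 1$; the forward direction is a direct computation from these data, while the converse requires ruling out worse singularities (cusps, higher multiplicity) and smaller stabilizers by showing they strictly increase either $\delta$ or the number of orbits, thereby raising $p_a(C)$ above the bound.
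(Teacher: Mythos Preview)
Your outline for (i) through (iv) is broadly on the right track and matches the paper's strategy, but two points need sharpening.

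For the $2^{n-1}$-divisibility in (i), your ``pairing up orbit points'' via an intermediate involution is vaguer than needed and does not obviously produce the full power of $2$. The clean observation (which the paper uses) is that each $p_i\in\sA$, being an \emph{isolated} point of $\bigcup_{\sigma\neq\id_S}S^\sigma$, must have $|G_{p_i}|=2$: if $|G_{p_i}|\geq 4$ then $G_{p_i}$ contains a copy of $(\ZZ/2\ZZ)^2$, and by Lemma~\ref{lem: Z22} two of its three involutions fix curves through $p_i$, contradicting isolatedness. Hence every $G$-orbit in $\sA$ has exactly $2^{n-1}$ points, all with the same multiplicity, so $2^{n-1}\mid\sum_i m_i^2$ directly. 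For (iii), no local tangent-space analysis is needed: it is an immediate consequence of (i) and (ii) together with adjunction. Your argument for (iv) via the maximal elementary abelian $2$-subgroup of $\mathrm{PGL}_2$ is valid and slightly different from the paper's direct Riemann--Hurwitz computation; both work.

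Part (v) has a genuine gap. You write that a singular point of $C$ is ``necessarily at a point with nontrivial stabilizer, hence $|G_p|=4$ by part analysis'', but neither claim is justified and the second is false in general: a priori $|G_p|\in\{1,2,4\}$ at a singular point. The paper's proof is a case analysis by singularity type and stabilizer size. If $p$ is neither a node nor a cusp then $\delta_p\geq 2$, and the orbit already contributes $\geq 2\cdot 2^n/|G_p|\geq 2^{n-1}$ to $p_a(C)-g(C^\nu)$. If $p$ is a cusp, Lemma~\ref{lem: cusp cyclic} forces $G_p$ cyclic, hence $|G_p|\leq 2$, and the orbit has $\geq 2^{n-1}$ cusps. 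If $p$ is a node with $|G_p|\leq 2$, likewise. In all these cases $p_a(C)\geq 2^{n-1}$. Only when $p$ is a node with $|G_p|=4$ can the bound $2^{n-1}-1$ be approached: here the orbit contributes $2^{n-2}$ nodes, and the missing ingredient is a lower bound on $g(C^\nu)$. For this, note that by Lemma~\ref{lem: Z22} one involution $\sigma\in G_p$ has $p$ as an isolated fixed point, so $\sigma$ preserves the two branches at each node in the orbit; lifting to $C^\nu$, $\sigma$ fixes the $2^{n-1}$ preimage points, and Riemann--Hurwitz for $C^\nu\to C^\nu/\langle\sigma\rangle$ gives $g(C^\nu)\geq 2^{n-2}-1$. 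This yields $p_a(C)\geq 2^{n-1}-1$ with the stated equality characterization. Your sketch does not isolate the cusp-cyclicity step or the Riemann--Hurwitz bound on the normalization, and without them the argument does not close.
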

\begin{proof}
(i) Let $\rho\colon \tS\rightarrow S$ be the blow up of the set $\sA$ and $\tilde C\subset \tS$ the strict transform of $C$. Then the action of $G$ lifts to $\tS$, and $\tS/G$ is smooth. Let $\overline C\subset \tS/G$ be the image of $\tilde C$. Then we have
\[
\tilde C^2 = C^2 - \sum_{1\leq i \leq k} m_i^2, \quad \tilde C^2 = |G|\overline C^2= 2^{n} \overline C^2.
\]
and it follows that $C^2 \equiv \sum_{i=1}^{k} m_i^2 \mod 2^n$.

Since $p_i$ is isolated in $\bigcup_{\sigma\in G\setminus\{\id_S\}}S^{\sigma}$, we have $G_{p_i}\cong \ZZ/2\ZZ$ by Lemma~\ref{lem: Z22}. It follows that each $G$-orbit in  $\sA$ consists of $2^{n-1}$ points. Note that $m_i = m_j$ if $p_i$ and $p_j$ lie in the same $G$-orbit. We have thus $2^{n-1}\mid \sum_{i=1}^{k} m_i^2 $, and $2^{n-1}\mid C^2$ by the previous congruence relation. 

\medskip

(ii) Now suppose that the the pointwise stabilizer $G_C$ of $C$ is nontrivial. Then $\tS=S$, $C$ is smooth, and $G_C$ is cyclic and hence $G_C\cong\ZZ/2\ZZ$. It follows that
\[
C^2 = \left(\frac{1}{2}\pi^*\overline C\right)^2 = \frac{1}{4}\left(\pi^*\overline C\right)^2 = \frac{1}{4} |G| \overline C^2 =2^{n-2} \overline C^2
\]
is divisible by $2^{n-2}$.

\medskip

(iii) If $n\geq 3$ then $C^2$ is even by (i) and (ii). The evenness of  $K_S\cdot C$ follows from that of $C^2$ by the adjunction formula.

\medskip

(iv) If $C\cong\PP^1$ then $C/G\cong\PP^1$. Applying the Riemann--Hurwitz theorem to the quotient map $ C\rightarrow C/G$, we obtain
\[
-2 = \frac{|G|}{|G_C|} \left(-2+\frac{k}{2}\right) =\frac{2^{n}}{|G_C|}\left(-2+\frac{k}{2}\right) 
\]
where $k$ is the number of branch points of $\pi\rest{C}$. Since $n\geq 3$ and $|G_C|\leq 2$, it can only happen that $n=3$, $|G_C|=2$, and $k=3$.

\medskip

(v) Let $p\in C$ be a singularity. Then pointwise stabilizer $G_C$ of $C$ is trivial, and $|G_p|\in\{1,2,4\}$ by Lemma~\ref{lem: l-rank}. The orbit $[p]_G$ consists of $\frac{|G|}{|G_p|}$ points, all of which are  singularities of $C$ of the same type. If $p\in C$ is neither a cusp nor a node then 
\[
p_a(C) \geq g( C^\nu) + \frac{2 |G|}{|G_p|} \geq  \frac{|G|}{2} = 2^{n-1}
\] 
where $\nu\colon C^\nu\rightarrow C$ denotes the normalization of $C$. If $p\in C$ is a cusp then $|G_p|\leq 2$ by Lemma~\ref{lem: cusp cyclic}, so
\[
p_a(C) \geq g( C^\nu) + \frac{|G|}{|G_p|} \geq  \frac{|G|}{2} = 2^{n-1}
\]
If $p\in C$ is a node and $|G_p|\leq 2$ then $p_a(C) \geq  2^{n-1}$ as in the cusp case. 

Finally, if $p\in C$ is a node and $|G_p|=4$, then there is an involution $\sigma\in G_p$ such that $[p]_G$ are isolated $\sigma$-fixed points by Lemma~\ref{lem: Z22}. Note that the action of $G$ on $C$ lifts to $C^\nu$. Since $\sigma$ preserves the local branches of $C$ at the points of $[p]_G$, $\sigma$ fixes the inverse image $\nu^{-1}([p]_G)$ which consists of $2|G|/|G_p| = 2^{n-1}$ points. By the Riemann-Hurwitz formula, 
\begin{equation}\label{eq: C1}
2g(C^\nu)-2 \geq 2(g(C^\nu/\langle\sigma\rangle)-2) + 2^{n-1} \geq 2^{n-1} -4
\end{equation}
and it follows that $g(C^\nu)\geq 2^{n-2} -1$. We obtain in this case
\begin{equation}\label{eq: C2}
p_a(C) \geq g( C^\nu) + \frac{|G|}{|G_p|} \geq  2^{n-2} -1 + \frac{|G|}{4} = 2^{n-1} -1.
\end{equation}
In case $p_a(C)=2^{n-1}-1$, the equalities of \eqref{eq: C1} and \eqref{eq: C2} hold, and hence $g(C^\nu)=2^{n-2}-1$, $|G_p|=4$, and $C_{\sing}$ consists of exactly on $G$-orbit.
\end{proof}

\begin{lem}\label{lem: cusp cyclic}
Let $C$ be an irreducible curve with a cusp $p$ and $G\subset\Aut(C)$ a finite group of automorphisms. Then  the stabilizer $G_p$ is cyclic.
\end{lem}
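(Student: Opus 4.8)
The plan is to move the problem to the normalization $\nu\colon C^\nu\rightarrow C$, where the group acts on a smooth curve. The decisive feature of a cusp is that it is \emph{unibranch}: the curve has a single analytic branch at $p$, so $\nu^{-1}(p)$ consists of a single point $\tilde p$, which is a smooth point of $C^\nu$ (and $C^\nu$ is irreducible, hence connected, since $C$ is irreducible). First I would note that the $G$-action lifts canonically to $C^\nu$: each $\sigma\in G$ yields a finite birational morphism $C^\nu\xrightarrow{\nu}C\xrightarrow{\sigma}C$ from a normal curve, which factors uniquely through $\nu$ by the universal property of normalization. This produces an injective homomorphism $G\hookrightarrow \Aut(C^\nu)$, injective because $\nu$ is birational, and I will identify $G$ with its image.

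Next, since $G_p$ fixes $p$, it permutes the fibre $\nu^{-1}(p)=\{\tilde p\}$, hence fixes $\tilde p$. Thus $G_p$ is a finite group of automorphisms of the smooth connected curve $C^\nu$ fixing the point $\tilde p$, and I would analyse it through its action on the one-dimensional tangent space $T_{\tilde p}C^\nu$. This gives a representation $\rho\colon G_p\rightarrow \GL(T_{\tilde p}C^\nu)\cong\CC^*$. The heart of the matter is that $\rho$ is injective: if $\sigma\in G_p$ acts trivially on $T_{\tilde p}C^\nu$, then, being of finite order, it can be linearised in a local coordinate $w$ at $\tilde p$ as $w\mapsto \zeta w$ with $\zeta$ a root of unity equal to the derivative $\rho(\sigma)$; triviality on the tangent space forces $\zeta=1$, so $\sigma$ is the identity on a neighbourhood of $\tilde p$ and hence on all of the connected curve $C^\nu$.

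Finally, $\rho$ embeds $G_p$ as a finite subgroup of $\CC^*$, which is necessarily cyclic, so $G_p$ is cyclic. I expect the only real subtlety to be the unibranch property of a cusp, which guarantees the single preimage $\tilde p$ and thereby forces the whole of $G_p$ (rather than merely some subgroup) to fix a point upstairs; this is exactly what fails for a node, where $\nu^{-1}(p)$ has two points that $G_p$ may swap, allowing the non-cyclic stabilisers encountered in Lemma~\ref{lem: Z2n C}(v). The linearisation of a finite-order automorphism of a smooth curve at a fixed point is standard and poses no difficulty.
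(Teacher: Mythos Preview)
Your proof is correct and follows the same approach as the paper: lift the action to the normalization $C^\nu$, use that a cusp is unibranch so $\nu^{-1}(p)$ is a single point fixed by all of $G_p$, and conclude via the faithful tangent-space representation at a smooth point. The paper's proof is a one-line sketch of exactly this argument, whereas you have supplied the details (universal property of normalization, injectivity of the tangent action via linearisation) that the paper leaves implicit.
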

\begin{proof}
Let $\nu\colon C^\nu\rightarrow C$ be the normalization. Then the action of $G$ lifts to $C^\nu$, and $G_p=G_{p^\nu}$ is cyclic, where $p^\nu =\nu^{-1}(p)$.
\end{proof}

The following lemma is a consequence of the
topological and holomorphic Lefschetz fixed-points formulae.
\begin{lem}[{\cite[Lemma~2.1]{Cai12}}]\label{lem: sign inv}
Let $S$ be a smooth projective surface, and $\sigma\in\Aut(S)$ an involution acting trivially in $H^2(S, \mb
Q)$. Then 
\[K_S^2=8\chi(\mc O_S)+\sum_{i=1}^m D_i^2,
\]
where   $D_1,\
\dots,\ D_m$ ($m\geq0$) are $\sigma$-fixed curves.
\end{lem}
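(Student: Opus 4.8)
The plan is to play the topological and holomorphic Lefschetz fixed point formulas for $\sigma$ against each other, as the remark preceding the statement suggests. By the Cartan's lemma discussion at the start of this section, the fixed locus $S^\sigma$ is smooth, so it is the disjoint union of $k$ isolated points — each of type $\frac{1}{2}(1,1)$, meaning $d\sigma=-\mathrm{id}$ on the tangent space there — together with the smooth fixed curves $D_1,\dots,D_m$. Throughout I abbreviate $b_i=\dim_\QQ H^i(S,\QQ)$ and $t_1=\mathrm{tr}\big(\sigma^*\mid H^1(S,\QQ)\big)$.

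First I would treat the topological side. Since $S$ is connected and $\sigma$ is orientation-preserving, $\sigma^*$ is the identity on $H^0$ and $H^4$, and it is the identity on $H^2$ by hypothesis; Poincar\'e duality together with $\sigma^2=\mathrm{id}$ forces $\mathrm{tr}(\sigma^*\mid H^3)=t_1$. The topological Lefschetz formula therefore gives $e(S^\sigma)=2+b_2-2t_1$, and comparing with the geometric count $e(S^\sigma)=k+\sum_i\big(2-2g(D_i)\big)$ yields the first relation.

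Next comes the holomorphic side, via the Atiyah--Bott formula for the $\bar\partial$-complex. An isolated fixed point contributes $1/\det_\CC(\mathrm{id}-d\sigma)=\tfrac14$. For a fixed curve $D_i$, on which $\sigma$ acts by $-1$ on $N_{D_i/S}$, the contribution is $\int_{D_i}\mathrm{td}(T_{D_i})\big/\big(1+e^{-c_1(N_{D_i/S})}\big)$, whose degree-one part integrates to $\tfrac14 D_i^2+\tfrac12\big(1-g(D_i)\big)$. On the other hand the equivariant Hodge decomposition identifies $L(\sigma,\mc O_S)=1-\mathrm{tr}(\sigma^*\mid H^1(\mc O_S))+\mathrm{tr}(\sigma^*\mid H^2(\mc O_S))$; triviality of $\sigma^*$ on $H^2(S,\CC)$ makes it trivial on $H^{0,2}$, so the last trace equals $p_g(S)$, while the conjugation symmetry of $H^1(S,\CC)=H^{1,0}\oplus H^{0,1}$ — complex conjugation interchanges the two summands and commutes with $\sigma^*$, forcing equal $\pm1$-eigenspace dimensions — gives $\mathrm{tr}(\sigma^*\mid H^1(\mc O_S))=t_1/2$. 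This is the second relation.

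Finally I would eliminate. Subtracting the topological relation from four times the holomorphic one makes $t_1$, $k$ and $\sum_i\big(2-2g(D_i)\big)$ all cancel, leaving $\sum_i D_i^2=2-b_2+4p_g(S)$. Converting the right-hand side by Noether's formula $K_S^2=12\chi(\mc O_S)-e(S)$, together with $e(S)=2-4q(S)+b_2$ and $\chi(\mc O_S)=1-q(S)+p_g(S)$, gives $K_S^2-8\chi(\mc O_S)=2+4p_g(S)-b_2$, which equals $\sum_i D_i^2$, as claimed. The delicate point throughout is the holomorphic fixed-curve contribution: one must normalize the Atiyah--Bott formula correctly and truncate the characteristic-class integrand to degree one (the usual $\lambda$-versus-$\lambda^{-1}$ ambiguity in the normal eigenvalue is harmless here since $\lambda=-1$), and one must justify $\mathrm{tr}(\sigma^*\mid H^1(\mc O_S))=t_1/2$, which is precisely what lets the topological and holomorphic counts be combined.
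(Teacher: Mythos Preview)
Your proof is correct and follows precisely the approach indicated in the paper, which states (without proof, citing \cite{Cai12}) that the lemma is a consequence of the topological and holomorphic Lefschetz fixed-point formulae. Your handling of both formulas, the curve contribution $\tfrac14 D_i^2+\tfrac12(1-g(D_i))$, the identification $\mathrm{tr}(\sigma^*\mid H^{0,1})=t_1/2$, and the final elimination via Noether are all accurate.
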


\subsection{Fibred surfaces and fiber-preserving automorphisms}
\begin{defn}
A \emph{fibration} in this paper means a projective morphism $f\colon S\rightarrow B$ from a normal algebraic surface onto a smooth curve such that the fibers of $f$ are connected; the surface $S$ is then called a \emph{fibred surface}; the genus of $f$ is defined to be that of a smooth fiber. 

Let $f\colon S\rightarrow B$ be a fibration.  An irreducible curve $C\subset S$ is \emph{vertical} with respect to $f$ if $f(C)$ is a point; otherwise $C$ is said to be \emph{horizontal}. For a vertical divisor $D$, we denote $D_b=D\wedge f^*b$ for $b\in B$. Here, for two divisor $D=\sum_i m_i D_i$ and $D'=\sum_i m_i' D_i$ on $S$, the divisor $D\wedge D'$ is defined to be $\sum_i \min\{m_i, m_i'\}D_i$. 

We say a divisor $D$ on $S$ is \emph{$f$-ample} (resp.~\emph{$f$-nef}) if $D\cdot C>0$ (resp.~$D\cdot C\geq 0$) for any vertical curve $C$; in this situation, we also say that $D$ is \emph{relatively ample} (resp.~\emph{relatively nef}) over $B$. The fibration is called \emph{relatively minimal} if $K_S$ is relatively nef over $B$.
\end{defn}

\begin{lem}\label{lem: subcurve smaller g}
Let $S$ be a smooth surface and $f\colon S\rightarrow B$ relatively minimal fibration of genus $g\geq 2$. The for any sub-curve $C\subsetneq f^*b$ we have $p_a(C)<g$.
\end{lem}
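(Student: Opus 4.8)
The plan is to compute $p_a(C)$ from the adjunction formula and compare it directly with $p_a(F)=g$, where $F:=f^*b$ denotes the whole fiber containing $C$, exploiting that $F$ is numerically trivial against every vertical curve, together with Zariski's lemma and the relative minimality of $f$. First I would record the two numerical inputs coming from $F$ being a fiber: $F^2=0$, and $F\cdot C'=0$ for every vertical divisor $C'$ (all fibers are numerically equivalent, and a fiber over a different point is disjoint from $C'$). In particular $p_a(F)=1+\tfrac12(F^2+K_S\cdot F)=1+\tfrac12 K_S\cdot F$, so the hypothesis $p_a(F)=g$ together with $g\geq 2$ is equivalent to $K_S\cdot F=2g-2>0$; this is where the genus assumption will ultimately be used.

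Next, setting $C':=F-C$, which is effective and nonzero precisely because $C\subsetneq f^*b$, I would subtract the adjunction formula for $C$ from that for $F$ to obtain the clean identity
\[
p_a(C)-g=\tfrac12\bigl(C^2-K_S\cdot C'\bigr).
\]
Two standard facts then bound the right-hand side. By Zariski's lemma the intersection form on the components of a single fiber is negative semi-definite with radical spanned by the class of $F$, so $C^2\leq 0$, with equality exactly when $C$ is a rational multiple of $F$. By relative minimality $K_S$ is $f$-nef, so $K_S\cdot C'\geq 0$ since $C'$ is an effective vertical divisor. Hence $p_a(C)-g\leq 0$ already, and it only remains to rule out equality.

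The one point requiring care, and the only place $g\geq 2$ is genuinely needed, is upgrading this to a strict inequality. Equality $p_a(C)=g$ would force simultaneously $C^2=0$ and $K_S\cdot C'=0$. But $C^2=0$ forces $C=rF$ for some rational $r$ with $0<r<1$ (the strict bounds coming from $0<C<F$), whence $C'=(1-r)F$ and $K_S\cdot C'=(1-r)(2g-2)>0$, a contradiction. Therefore $p_a(C)<g$. I expect this multiple-fiber situation, in which $C$ is proportional to $F$ (for instance a sub-multiple $kF_0$ of a multiple fiber $mF_0$, so that $C^2=0$), to be the only genuine obstacle: for $g=1$ it actually produces subcurves with $p_a(C)=g$, and it is precisely the positivity $K_S\cdot F=2g-2>0$ available for $g\geq 2$ that excludes it.
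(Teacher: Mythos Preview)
Your proof is correct and follows essentially the same approach as the paper: both use Zariski's lemma to get $C^2\leq 0$, relative minimality to get $K_S\cdot(F-C)\geq 0$, and the observation that equality in Zariski forces $C$ to be a proper rational multiple of $F$, at which point $g\geq 2$ yields the strict inequality. The only cosmetic difference is that the paper splits explicitly into the two cases $C\equiv tF$ and $C\not\equiv tF$, whereas you treat both at once via the identity $p_a(C)-g=\tfrac12(C^2-K_S\cdot C')$.
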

\begin{proof}
By the adjunction formula, we have $g=p_a(f^*b) = \frac{1}{2}K_S\cdot f^*b + 1$. Since $g\geq 2$, we have $K_S\cdot f^*b>0$.
If $C\equiv tf^*b$ for some $t$, then $0<t<1$, $C^2=0$, and hence
\[
p_a(C) = \frac{1}{2}(K_S+C)\cdot C+1 = \frac{1}{2}K_S\cdot C+1 = \frac{t}{2}K_S\cdot f^*b +1 <g.
\]
If $C$ is not numerically equivalent to $tf^*b$ for any $t$, then $C^2<0$, and hence
\[
p_a(C) = \frac{1}{2}(K_S+C)\cdot C+1 < \frac{1}{2}K_S\cdot C+1 \leq \frac{t}{2}K_S\cdot f^*b +1 =g.
\]
\end{proof}

\begin{defn}
Let $S$ be a smooth projective surface and $f\colon S\rightarrow B$ a fibration onto a curve. An automorphism $\sigma\in\Aut(S)$ is called \emph{fiber-preserving} if each fiber of $f$ is $\sigma$-invariant, that is, $f\circ \sigma =f$ holds; the group of such automorphisms is denoted by $\Aut_B(S)$. An automorphism $\sigma$ of $S$ is called \emph{vertical curve preserving} if any vertical curve of $f$ is $\sigma$-invariant. Obviously, vertical curve preserving automorphisms are fiber-preserving. 
\end{defn}

\begin{rmk}
For a fibred surface $f\colon S\rightarrow B$, the group $\Aut_\QQ(S)$ is vertical curve preserving by \cite[Lemma~3.1]{Cai04} or \cite[Principle 1]{CatLiu21}.
\end{rmk}
\begin{lem-defn}\label{lem: e S^sigma}
Let $f\colon S\rightarrow B$ be a projective fibred surface whose smooth fiber $F$ has genus $g$, and $\sigma\in \Aut_B(S)$ a fiber-preserving automorphism.  Set
\[
\deg_\sigma(f) = 
\begin{cases}
e(F) & \text{if $\sigma = \id_S$} \\
\deg(H_\sigma\rightarrow B) & \text{if $\sigma\neq \id_S$}
\end{cases}
\]
where, in the case $\sigma\neq \id_S$, $H_\sigma$ is the horizontal part of $S^\sigma$ and $\deg(H_\sigma\rightarrow B)=0$ if $H_\sigma=\emptyset$. For each $b$, let $F_b=f^*b$ be the corresponding fiber, and set
\[
\epsilon_b(S^\sigma):=e(F_b^\sigma) - \deg_\sigma(f)
\]
Then one has $e(S^\sigma) = \deg_\sigma(f) e(B) + \sum_b \epsilon_b(S^\sigma)$.
\end{lem-defn}
\begin{proof}
For the case $\sigma=\id_S$, the lemma is well-known; see for example \cite[Proposition~III.11.4]{BHPV04}. For $\sigma\neq \id_S$, we can write $S^\sigma = H \cup V$, where $H$ is the (1-dimensional) horizontal part of $S^\sigma$, and $V$ consists of the (1-dimensional) vertical part and isolated $\sigma$-fixed points. We can further write \begin{equation}\label{eq: V_sigma}
e(V) = \sum_b e(V\cap F_b) = \sum_b e(F_b^\sigma) - \#(F_b\cap H)    
\end{equation}
Using cell complex structures on $H$ and $B$, one obtains a Riemann--Hurwitz type formula:
\begin{equation}\label{eq: H_sigma}
e(H) = \deg_\sigma(f)  e(B)  +  \sum_b \left(\#(F_b\cap H) - \deg_\sigma(f) \right)
\end{equation}
Plugging \eqref{eq: V_sigma} and \eqref{eq: H_sigma} into  $e(S^\sigma) = e(H)+e(V)$, we obtain the required equality of the lemma.
\end{proof}

The proof of the following lemma is similar to that of Lemma~\ref{lem: fix vs base}.
\begin{lem}[{\cite[Lemma 1.4]{Cai09}, \cite[2.3.1]{Cai01}}]\label{lem: fix vs sing fib}
Assume that  $f\colon S\rightarrow B$ is a smooth fibred surface with a fiber-preserving automorphism $\sigma$ of finite order $r$. Let $F_b = f^*b$ be a fiber of $f$ over a point $b\in B$.
\begin{enumerate}[leftmargin=*]
\item If $p\in F_b$ is an isolated fixed point
of $\sigma$, then the fiber $F_b$ is singular at $p$.
\item If $\Lambda<F_b$ is a $\sigma$-fixed curve, then
the multiplicity $\mult_\Lambda F_b$ is divisible by $r$.
 \end{enumerate}
\end{lem}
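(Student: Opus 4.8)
The plan is to follow the proof of Lemma~\ref{lem: fix vs base} almost verbatim, with the $\sigma$-invariant holomorphic $2$-form there replaced by a $\sigma$-invariant local equation of the fiber. The key observation is that, since $\sigma$ is fiber-preserving, we have $f\circ\sigma=f$; hence, choosing a local coordinate $t$ on $B$ centered at $b$, the pullback $h:=f^*t$ is a holomorphic function defined near $F_b$ in $S$ that cuts out $F_b$, and it satisfies $\sigma^*h=h$.

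For assertion (i), I would linearize $\sigma$ near the isolated fixed point $p$ by Cartan's lemma (see Section~\ref{sec: local}): there are local coordinates $(x,y)$ with $p=(0,0)$ and $\sigma(x,y)=(\zeta_1 x,\zeta_2 y)$, and since $p$ is isolated in $S^\sigma$ both $\zeta_1\neq1$ and $\zeta_2\neq1$. Expanding $h=\sum_{i,j}c_{ij}x^iy^j$, the relation $\sigma^*h=h$ forces $c_{ij}=0$ whenever $\zeta_1^i\zeta_2^j\neq1$; in particular $c_{00}=c_{10}=c_{01}=0$, so $\op{d}h(p)=0$. As $h=f^*t$, this says $\op{d}f_p=0$, i.e.\ $F_b=\{h=0\}$ is singular at $p$.

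For assertion (ii), I would take a general point $q$ of the $\sigma$-fixed curve $\Lambda$ and, exactly as in Lemma~\ref{lem: fix vs base}(ii), choose local coordinates $(x,y)$ with $q=(0,0)$, $\Lambda=\{x=0\}$, and $\sigma(x,y)=(\zeta x,y)$ for a primitive $r$-th root of unity $\zeta$. Since $q$ is general on $\Lambda$, no other component of $F_b$ passes through it, so we may write $h=x^m u(x,y)$ with $m=\mult_\Lambda F_b$ and $u(0,0)\neq0$. Then $\sigma^*h=h$ reads $\zeta^m u(\zeta x,y)=u(x,y)$; evaluating at $q$ and using $u(0,0)\neq0$ yields $\zeta^m=1$, hence $r\mid m$.

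I do not anticipate a genuine obstacle, as the argument is purely local and mirrors Lemma~\ref{lem: fix vs base} step by step. The only points deserving care are the identification in (i) of ``$F_b$ singular at $p$'' with the vanishing of $\op{d}h$, and the choice in (ii) of a general $q\in\Lambda$ ensuring that $h$ factors as $x^m$ times a local unit along $\Lambda$.
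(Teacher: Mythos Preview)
Your argument is correct and is exactly the approach the paper indicates: the paper does not spell out a proof but says it is similar to that of Lemma~\ref{lem: fix vs base}, and you have carried this out faithfully by replacing the $\sigma$-invariant $2$-form with the $\sigma$-invariant local defining function $h=f^*t$. One trivial slip: the vanishing of $c_{00}$ does not follow from $\sigma^*h=h$ (since $\zeta_1^0\zeta_2^0=1$) but simply from $p\in F_b$; this does not affect the argument.
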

\begin{lem}[{ \cite[Lemma 2.2]{Cai12}}]\label{lem: singular'}Let $\sigma$ be an automorphism of $S$ of order $4$.
Assume that $S$ has    a  fibration $f\colon S\rightarrow B$  with
$f\circ \sigma = f$, and
  that $p\in S$ is an isolated $\sigma$-fixed point. Let $F$ be the fiber of $f$ containing $p$. Then we have
\begin{enumerate}\item
If the action of $\sigma$  at $p$ is  of weight
 $\frac{1}{4}(1, 1)$, then  $\op{mult}_p F$ is divisible by $4$;
\item  If the action of $\sigma$ at  $p$
   is of weight $\frac{1}{4}(1, 2)$ {\rm (}resp.~$\frac{1}{4}(1, 3)${\rm )}
 and  $\op{mult}_p F=2$,
then $p\in F$ is not {\rm (}resp.~is{\rm )}  an ordinary node.
 \end{enumerate}
\end{lem}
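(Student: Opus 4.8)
The plan is to linearize $\sigma$ near $p$ by Cartan's lemma and then exploit the genuine $\sigma$-invariance of a local defining function of the fiber. Since $p$ is an \emph{isolated} fixed point of the order-$4$ automorphism $\sigma$, Cartan's lemma furnishes local analytic coordinates $(x,y)$ centred at $p$ in which $\sigma(x,y)=(\zeta x,\zeta^a y)$, where $\zeta$ is a primitive fourth root of unity and $a\in\{1,2,3\}$ corresponds to the weight $\frac{1}{4}(1,a)$. I would next pick a local coordinate $t$ on $B$ vanishing at $b=f(p)$ and set $g:=f^*t$, a holomorphic function near $p$ cutting out $F$ with $\mult_p F=\mathrm{ord}_p g$. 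The key observation is that the hypothesis $f\circ\sigma=f$ makes $g$ strictly invariant rather than merely semi-invariant: $\sigma^*g=(f\circ\sigma)^*t=f^*t=g$.

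From here I would expand $g=\sum_{m,n}c_{mn}x^m y^n$; invariance reads $c_{mn}\zeta^{m+an}=c_{mn}$, so that $c_{mn}\neq0$ forces $m+an\equiv0\pmod 4$. Part (1), the case $a=1$, is then immediate: every surviving monomial has $m+n\equiv0\pmod4$, so the minimal total degree $\mathrm{ord}_p g=\mult_p F$ is a positive multiple of $4$.

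For part (2) I would inspect the three degree-two monomials $x^2,xy,y^2$ (noting $g(0,0)=0$). When $a=2$ the congruence $m+2n\equiv0$ rules out $x^2$ (as $2\not\equiv0$) and $xy$ (as $3\not\equiv0$) but permits $y^2$; hence if $\mult_p F=2$ the quadratic part of $g$ must be $c_{02}y^2$ with $c_{02}\neq0$, a rank-one form, and $p$ is therefore not an ordinary node. When $a=3$ the congruence $m+3n\equiv0$ rules out $x^2$ and $y^2$ but permits $xy$; hence if $\mult_p F=2$ the quadratic part is $c_{11}xy$ with $c_{11}\neq0$, a nondegenerate rank-two form, so by the holomorphic Morse lemma $p$ is an ordinary node.

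The computation is short, and I do not anticipate a serious obstacle. The one point needing care is the clean identification of ``node versus non-node'' through the rank of the leading quadratic form, together with the remark that the hypothesis $\mult_p F=2$ forces the unique admissible degree-two monomial to be nonzero, as otherwise the multiplicity would exceed $2$. It is worth stressing at the outset why $g$ may be taken genuinely $\sigma$-invariant rather than merely semi-invariant, as would be the case for an arbitrary $\sigma$-invariant curve: here $g=f^*t$ and $f\circ\sigma=f$ give exact invariance, and it is precisely this that pins down the admissible monomials and hence the three dichotomies.
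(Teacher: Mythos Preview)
Your proof is correct and follows the natural approach: linearize $\sigma$ via Cartan's lemma, use the exact invariance $\sigma^*g=g$ of $g=f^*t$ coming from $f\circ\sigma=f$, and read off the congruence $m+an\equiv 0\pmod 4$ on the exponents of surviving monomials. The paper does not give its own proof of this lemma but simply cites \cite[Lemma~2.2]{Cai12}; your argument is precisely the computation one expects there, and the distinction you draw between genuine invariance (for $f^*t$) versus mere semi-invariance (for an arbitrary $\sigma$-invariant divisor) is exactly the point that makes the monomial constraint sharp.
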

\begin{lem}\label{k1k2} Let $S$ be a smooth surface, and $\sigma\in\Aut(S)$ be an automorphism of order $4$. Let $C\subset S$ be a proper $\sigma$-invariant curve with
$C^2\leq0$. If $K_S\cdot C>0$, then there are
 at most $2K_S\cdot C$ isolated $\sigma$-fixed points on $C$.
\end{lem}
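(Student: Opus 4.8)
The plan is to reduce everything to a Riemann--Hurwitz count on the normalization of $C$, and then to trade the resulting genus bound for an intersection bound using the adjunction formula together with the hypothesis $C^2\le 0$.

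First I would set $\tau=\sigma^2$, an involution, and record that every isolated $\sigma$-fixed point $p\in C$ has weight $\frac14(1,a)$ with $a\in\{1,2,3\}$, since being isolated forces both characters of $\sigma$ at $p$ to be nontrivial. Such a $p$ is always a fixed point of $\tau$: it lies on a $\tau$-fixed curve when $a=2$, and it is an isolated $\tau$-fixed point when $a\in\{1,3\}$. Next I would pass to the normalization $\nu\colon C^\nu\to C$, to which the action of $\langle\sigma\rangle$ lifts; write $\bar\sigma$ for the induced automorphism of $C^\nu$, of order $d\in\{1,2,4\}$, and put $g=g(C^\nu)$. If $d=1$ then $C$ is $\sigma$-fixed and carries no isolated fixed points, so the bound is vacuous; thus I may assume $d\in\{2,4\}$, the case $d=2$ being exactly that $C$ is fixed by $\tau$ (and hence smooth), while $d=4$ is the generic case in which $\sigma$ acts with full order on $C$.

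The core step is a correspondence between isolated $\sigma$-fixed points on $C$ and special points of $\bar\sigma$ on $C^\nu$. At a point $p$ where $C$ is smooth, the unique branch is $\sigma$-invariant, so $p$ lifts to a point of $(C^\nu)^{\bar\sigma}\subseteq (C^\nu)^{\bar\tau}$, and distinct $p$ give distinct lifts; at a singular point the several branches are permuted by $\langle\sigma\rangle$ and $p$ contributes a positive amount $\delta_p\ge 1$ to the $\delta$-invariant, where $p_a(C)=g+\sum_p\delta_p$. Summing, the number $N$ of isolated $\sigma$-fixed points is bounded by $\#(C^\nu)^{\bar\tau}$ (smooth points) plus $\sum_p\delta_p$ (singular points). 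I would then apply Riemann--Hurwitz to $\bar\tau$, giving $\#(C^\nu)^{\bar\tau}\le 2g+2$, and for $d=4$ exploit that the fixed points of $\bar\sigma$ are totally ramified to sharpen the constant. Finally, adjunction gives $2p_a(C)-2=K_S\cdot C+C^2$, so from $C^2\le 0$ one gets $g\le p_a(C)\le\tfrac12 K_S\cdot C+1$. Feeding this into the previous inequalities is designed to collapse to $N\le 2K_S\cdot C$.

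The hard part will be making the constants come out to exactly $2K_S\cdot C$ rather than $K_S\cdot C+\text{const}$. Two places demand care. First, at singular or tangential points one must match branches to ramification of $\bar\sigma$ without losing the contribution of a fixed point; the weight analysis (for $a=1$ all tangent directions are fixed, for $a\in\{2,3\}$ only the coordinate axes) governs which branches are $\bar\sigma$- versus $\bar\tau$-fixed, and this has to be reconciled with the $\delta_p$ estimate. Second, and most delicate, is the case $d=2$, where $C\subseteq S^{\tau}$ and the Riemann--Hurwitz estimate for the involution $\bar\sigma=\sigma|_C$ is tightest: here the factor $2$ must be squeezed out of $C^2\le 0$ essentially, and I expect to need a finer parity argument (or an equivariant holomorphic Lefschetz computation for the induced $\sigma$-action on the normal bundle $N_{C/S}$), together with a direct verification of the few low-genus configurations, to close the gap.
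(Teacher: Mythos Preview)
Your approach coincides with the paper's: pass to the normalization, bound the number of $\sigma$-fixed points via Riemann--Hurwitz, and trade genus for $K_S\cdot C$ via adjunction together with $C^2\le 0$. The paper's write-up is a bit more direct. For singular $C$ it uses $k\le \frac{2g(\tilde C)}{3}+2+\varepsilon$ with $\varepsilon=\#C_{\mathrm{sing}}$ (your $\sum_p\delta_p$ serves the same purpose, since $\varepsilon\le p_a(C)-g(\tilde C)$), then combines $p_a(C)\ge g(\tilde C)+\varepsilon$ and $K_S\cdot C\ge 2p_a(C)-2$ to settle the case $p_a(C)\ge 2$ in one line; the residual case $p_a(C)=1$ (a rational curve with one node or cusp, hence at most two $\sigma$-fixed points) is handled by inspection. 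For smooth $C$ it simply records $k\le\frac{2g(C)}{3}+2$ and finishes with $g(C)\le\tfrac12 K_S\cdot C+1$.

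On your $d=2$ worry: the paper does not isolate this case. The bound $k\le\frac{2g}{3}+2$ for smooth $C$ is obtained from Riemann--Hurwitz for the degree-$4$ quotient $C\to C/\langle\sigma\rangle$, so it presupposes that $\sigma|_C$ has order $4$; the written proof does not separately treat the possibility that $C\subset S^{\sigma^2}$. In that sense your caution is well placed, but the parity and equivariant-Lefschetz machinery you sketch goes beyond what the paper actually does.
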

 \begin{pf}
 Let $k$ be the number of isolated $\sigma$-fixed points on $C$. We may assume that $C$ is not $\sigma$-fixed; otherwise, $k=0$ and the lemma is trivially true.

If $C$ is smooth, then
 $k\leq\frac{2g(C)}{3}+2$
 by the Hurwitz formula.   Note
that $K_S\cdot C>0$  by assumption and
  $g(C) \leq \frac{K_S\cdot C}{2}+1$ by the adjunction formula since $C^2\leq0$.   So the
lemma is true
 in this case.

 Now  we assume that $C$ is singular.
Let $\tC$ be the normalization of $C$, and
$\tilde{\sigma}$ the induced action of $\sigma$ on $\tC$.
 Let
 $\varepsilon$ be
 the number of singular points of $C$.

Note that each  $\sigma$-fixed point on $C$  is either a
singular point of $C$ or a smooth point whose inverse image in
$\tC$ is a $\tilde\sigma$-fixed point. We have
 \[k\leq\frac{2g(\tC)}{3}+2+\varepsilon.\]

Note that $K_S\cdot C\geq 2p_a(C)-2$ by the adjunction formula
and $ p_a(C)\geq g(\tC)+\varepsilon$.
  The lemma follows immediately from these inequalities
 if $p_a(C)\geq2$.

If $p_a(C)=1$, then $C$ is a rational curve with a single
node or cusp.  We see easily that there are at most two
$\sigma$-fixed points on $C$ and the lemma is true in this
case. \qed\end{pf}

The following lemma is a consequence of the equivariant signature
formula and
  the Lefschetz fixed points formula.

\begin{lem}\label{lef} Let $S$ be a smooth projective surface, and $\sigma\in \Aut(S)$ an automorphism  of order $4$.
  Let $C_1,\ \dots,\ C_m$ ($m\geq 0$) be the
$\sigma$-fixed curves, and $C_{m+1},\ \dots,\ C_n$ ($n\geq m$) be the
$\sigma^2$-fixed curves that are not $\sigma$-fixed. For $a=1, 2, 3$, let
$k_a$ be the number of isolated $\sigma$-fixed points  of weight
$\frac{1}{4}(1, a)$. If $\sigma$ acts trivially in $H^*(S, \mb Q)$
then we have
\begin{align}
\label{sign1} K_S^2-8\chi (\mc O_S)&=\sum_{i=1}^m2C_i^2+k_3-k_1;\\
\label{eq2} 2K_S^2-20\chi (\mc O_S)&=
\sum_{i=1}^m(3C_i^2+K_S\cdot C_i)-k_2-2k_1.
\end{align}
\end{lem}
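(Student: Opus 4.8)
The plan is to extract the two identities from two classical fixed-point formulas applied to $\sigma$: the Atiyah--Singer $G$-signature theorem and the holomorphic (Atiyah--Bott) Lefschetz fixed-point formula. The crucial input is that $\sigma$ acts trivially on $H^*(S,\QQ)$, hence on $H^*(S,\CC)$ and on each Hodge summand; consequently every equivariant invariant of $\sigma$ coincides with its non-equivariant counterpart. The local contributions at the fixed locus are computed from the linearized action furnished by Cartan's lemma (Section~\ref{sec: local}): an isolated fixed point of weight $\frac14(1,a)$ has tangent eigenvalues $(i, i^a)$, while along a $\sigma$-fixed curve $C_i$ the normal eigenvalue $\lambda_i$ is a primitive fourth root of unity, i.e.\ $\lambda_i\in\{i,-i\}$.

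For the first identity, observe that since $\sigma$ acts trivially on $H^2(S,\RR)$ one has $\mathrm{Sign}(\sigma)=\mathrm{Sign}(S)$, and the Hirzebruch signature theorem combined with Noether's formula gives $\mathrm{Sign}(S)=K_S^2-8\chi(\mc O_S)$. On the geometric side of the $G$-signature theorem, an isolated fixed point of weight $\frac14(1,a)$ contributes $\prod_j \frac{1+\lambda_j}{1-\lambda_j}=-\cot\tfrac{\theta_1}{2}\cot\tfrac{\theta_2}{2}$, which equals $-1,0,1$ for $a=1,2,3$ respectively, so the isolated points contribute $k_3-k_1$; each $\sigma$-fixed curve $C_i$ contributes $\csc^2(\theta_i/2)\,C_i^2=2C_i^2$ (the value is the same for $\lambda_i=i$ and $\lambda_i=-i$). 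Equating the two sides yields \eqref{sign1}.

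For the second identity, I apply the holomorphic Lefschetz formula to $\sigma$ acting on $\mc O_S$. Triviality of the action on $H^q(S,\mc O_S)$ makes the left-hand side equal to $\chi(\mc O_S)$. An isolated fixed point of weight $\frac14(1,a)$ contributes $\frac{1}{(1-i)(1-i^a)}$, namely $\frac{i}{2},\ \frac{1+i}{4},\ \frac12$ for $a=1,2,3$, and a $\sigma$-fixed curve $C_i$ contributes $\frac{1-g(C_i)}{1-\lambda_i}-\frac{\lambda_i C_i^2}{(1-\lambda_i)^2}$. Taking real parts and using the adjunction formula $2g(C_i)-2=K_S\cdot C_i+C_i^2$, the real part of each curve contribution simplifies to $\frac14(C_i^2-K_S\cdot C_i)$ --- crucially independent of whether $\lambda_i=i$ or $\lambda_i=-i$ --- while the isolated points contribute $\frac{k_2}{4}+\frac{k_3}{2}$. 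Clearing denominators gives the auxiliary relation
\[
4\chi(\mc O_S)=\sum_{i=1}^m (C_i^2-K_S\cdot C_i)+k_2+2k_3.
\]

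Finally, I would eliminate $\sum_{i}C_i^2$ between \eqref{sign1} and the auxiliary relation: substituting $\sum_i K_S\cdot C_i=\sum_i C_i^2-4\chi(\mc O_S)+k_2+2k_3$ into $\sum_i(3C_i^2+K_S\cdot C_i)$ and then replacing $4\sum_i C_i^2$ by means of \eqref{sign1} produces exactly \eqref{eq2}. The main technical obstacle is the precise bookkeeping of the local terms --- pinning down the correct signs in the $G$-signature defects and verifying that the real part of the holomorphic contribution of a $\sigma$-fixed curve is insensitive to the sign of its normal eigenvalue $\lambda_i$; this last point is what allows both formulas to treat the curves $C_1,\dots,C_m$ uniformly. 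The $\sigma^2$-fixed curves $C_{m+1},\dots,C_n$ play no role in these two identities, since they do not lie in $S^\sigma$, although they would enter the analogous formulas for $\sigma^2$.
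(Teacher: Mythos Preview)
Your argument is correct, and it differs from the paper's in both halves. For \eqref{sign1}, the paper does not apply the $G$-signature theorem directly to $\sigma$; instead it uses the averaged form $4\,\mathrm{Sign}(S/\langle\sigma\rangle)=\sum_{g\in\langle\sigma\rangle}\mathrm{Sign}(g,S)$, which brings in the $\sigma^2$-fixed curves $C_{m+1},\dots,C_n$ as well, and then eliminates them by invoking Lemma~\ref{lem: sign inv} for the involution $\sigma^2$ (namely $K_S^2-8\chi(\sO_S)=\sum_{i=1}^n C_i^2$). Your route is cleaner here: by reading off $\mathrm{Sign}(\sigma,S)=\mathrm{Sign}(S)$ directly, only $S^\sigma$ enters and the extra curves never appear---which is exactly the observation you make at the end. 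For \eqref{eq2}, the paper instead combines \eqref{sign1} with the \emph{topological} Lefschetz formula $e(S^\sigma)=e(S)=12\chi(\sO_S)-K_S^2$, using $e(C_i)=-C_i^2-K_S\cdot C_i$; you substitute the real part of the holomorphic Lefschetz formula. Both are legitimate, and the elimination step is the same in spirit. The paper's choice has the mild advantage of avoiding complex arithmetic and the check that the curve contribution is insensitive to the sign of $\lambda_i$; your choice has the advantage of staying within a single fixed-point theorem for each identity and making the irrelevance of $C_{m+1},\dots,C_n$ transparent.
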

\begin{pf}
Applying  the equivariant signature formula to $\sigma$
 (cf. \cite{HZ74}, Equation (12), p.177, or \cite{Cai09}, 1.6),
we have
\begin{align*}
4\op{Sign}(S/\langle\sigma\rangle)=\op{Sign}(S)+\sum_{i=1}^m5C_i^2+ \sum_{j=m+1}^n
C_j^2+2k_3-2k_1.
\end{align*}
Note that $\op{Sign}(S)=K_S^2-8\chi (\mc O_S)$ and
$\op{Sign}(S/\langle\sigma\rangle)=\op{Sign}(S)$ since $\sigma$  acts trivially on
$H^2(S, \mathbb{R})$. So
 \begin{align*}
 3(K_S^2-8\chi (\mc O_S))=\sum_{i=1}^m5C_i^2+ \sum_{j=m+1}^n
C_j^2+2k_3-2k_1.
\end{align*}
By Lemma~\ref{lem: sign inv}, we have
\begin{align}\label{equ3.1}
K_S^2=8\chi(\mc O_S)+\sum_{i=1}^n C_i^2.
\end{align}
Combining the above two equalities, we obtain (\ref{sign1}). Applying
the topological Lefschetz formula to $\sigma $,  we have
\begin{align}\label{top}
k_1+k_2+k_3+\sum_{i=1}^me(C_i)=e(S^\sigma)=e(S) =12\chi (\mc O_S)-
K_S^2.
\end{align}
Combining (\ref{sign1}) with (\ref{top}) and noting that
$e(C_i)=-C_i^2-K_S\cdot C_i$, we obtain (\ref{eq2}). \qed\end{pf}

\section{Fibrations of genus 3 with \texorpdfstring{$(\ZZ/2\ZZ)^2$}{}-action}\label{sec: fib g3}
In this section we analyze fiber-preserving actions of $(\ZZ/2\ZZ)^2$ on fibrations of genus $3$. 

The first step is to look at the action on general fibers.

\begin{lem}\label{lem: g3 gen fib}
Let $S$ be a smooth projective surface with $p_g(S)>0$, and $f\colon S\rightarrow B$ a relatively minimal fibration of genus $3$. Let $G=\{\id_S, \sigma_1, \sigma_2, \sigma_3\}\cong (\ZZ/2\ZZ)^2$ be a group of fiber-preserving numerically trivial automorphisms. Then the following holds.
\begin{enumerate}
    \item $f$ is a hyperelliptic fibration.
    \item Exactly one involution in $G$ fixes a horizontal curve.
    \item The induced fibration $h\colon S/G\rightarrow B$ has genus one.
\end{enumerate}
\end{lem}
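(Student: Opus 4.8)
The plan is to transfer the entire question to the action of $G$ on a general fiber $F$, which is a smooth curve of genus $3$, and to read off all three assertions from the genera of the intermediate quotients. First I would record that $G$ acts faithfully on a general fiber: if some $\sigma\in G\setminus\{\id_S\}$ restricted to the identity on a general $F$, then $S^\sigma$ would contain a dense union of fibers, forcing $S^\sigma=S$ and $\sigma=\id_S$. Setting $g_i=g(F/\langle\sigma_i\rangle)$ and $g_0=g(F/G)$, the Riemann--Hurwitz formula applied to $F\to F/\langle\sigma_i\rangle$ gives $b_i:=\#F^{\sigma_i}=8-4g_i$, so $g_i\le 2$, and $b_i>0$ exactly when $g_i\le 1$. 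Since isolated fixed points of $\sigma_i$ lie on singular fibers (Lemma~\ref{lem: fix vs sing fib}), on a general fiber the $\sigma_i$-fixed points are precisely the intersections with the horizontal fixed curves of $\sigma_i$; hence $\sigma_i$ fixes a horizontal curve if and only if $g_i\le 1$. Decomposing $H^0(F,\omega_F)$ into $\widehat G$-eigenspaces and comparing invariant subspaces with $g_i=\dim H^0(F,\omega_F)^{\sigma_i}$ yields the standard relation $3+2g_0=g_1+g_2+g_3$; since each $g_i\le 2$, this gives $2g_0\le 3$ and therefore $g_0\le 1$.

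The key step is to upgrade this to $g_0=1$. Because each $\sigma_i$ is numerically trivial, it acts trivially on $H^{2,0}(S)=H^0(S,\omega_S)$, so every canonical form is $G$-invariant; as $\sigma_i$ preserves $F$, restriction commutes with the $G$-action, and hence the image $W$ of the restriction map $H^0(S,\omega_S)\to H^0(F,\omega_F)$ lies in the invariant part $H^0(F,\omega_F)^G=H^0(F/G,\omega_{F/G})$, which has dimension $g_0$. On the other hand $p_g(S)>0$ and no nonzero canonical form can vanish identically on a general fiber (its divisor is a fixed divisor and cannot contain every fiber), so $W\ne 0$ and $\dim W\ge 1$. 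Combining, $1\le\dim W\le g_0\le 1$, so $g_0=1$; this is exactly assertion (iii), that $h\colon S/G\to B$ has genus-one fibers. Then $g_1+g_2+g_3=5$ with each $g_i\in\{1,2\}$ forces $(g_1,g_2,g_3)$ to be a permutation of $(2,2,1)$, so exactly one of the $g_i$ equals $1$, and by the criterion above exactly one involution fixes a horizontal curve, which is assertion (ii).

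For assertion (i) I would argue that $F$ is hyperelliptic. Two of the involutions, say $\sigma_1,\sigma_2$, have $g_1=g_2=2$ and hence $b_1=b_2=0$, so they act freely on $F$. If $F$ were non-hyperelliptic, its canonical map would embed it as a smooth plane quartic in $\PP^2=\PP(H^0(F,\omega_F)^\vee)$, on which $G$ acts linearly through $\mathrm{PGL}_3(\CC)$. A representative in $\mathrm{GL}_3(\CC)$ of a nontrivial involution is diagonalisable with eigenvalues in $\{\pm1\}$, hence has an eigenspace of dimension $\ge 2$ whose projectivisation is a line fixed pointwise; this line meets the quartic by B\'ezout, producing fixed points of that involution on $F$. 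This contradicts $b_1=0$. Therefore $F$ is hyperelliptic, and $f$ is a hyperelliptic fibration.

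I expect the genuine obstacle to be the nonvanishing of $W$: one must ensure that a $G$-invariant holomorphic one-form on the general fiber actually arises by restriction from a global canonical form on $S$, which is what rules out the delicate borderline case $g_0=0$. Everything else reduces to bookkeeping with Riemann--Hurwitz and eigenspace dimensions, together with the two points where I would still be careful, namely the faithfulness of the fiberwise action and the identification of horizontal fixed curves with fiberwise fixed points on a general fiber.
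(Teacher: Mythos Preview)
Your proof is correct and follows the same overall strategy as the paper: establish $g(F/G)\ge 1$ from the trivial $G$-action on $H^0(S,\omega_S)$, combine with the Accola relation $g(F)+2g(F/G)=\sum_i g(F/\langle\sigma_i\rangle)$ to pin down the intermediate genera, and then deduce hyperellipticity. The paper simply cites \cite[Theorems~5.9 and 5.10]{Acc94} for the genus relation and for hyperellipticity, and phrases the bound $g(F/G)\ge 1$ as ``$p_g(S/G)=p_g(S)>0$, so the induced fibration $S/G\to B$ cannot have rational general fiber''. Your substitutes---the eigenspace count for the Accola formula, the restriction-of-canonical-forms argument for $g_0\ge 1$, and the plane-quartic/fixed-line argument for hyperellipticity---are all valid and more self-contained; the last one in particular is a pleasant elementary replacement for the citation, since any involution in $\mathrm{PGL}_3(\CC)$ fixes a line pointwise and that line must meet the quartic.
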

\begin{proof}
Let $F$ denote a smooth fiber of $f$. Since $p_g(S/G) = p_g(S)>0$ and $F/G$ is a smooth fiber of induced fibration $h\colon S/G\rightarrow B$, we have $g(F/G)\geq 1$. By \cite[Theorem~5.9]{Acc94}, we have
\[
g(F) + 2 g(F/G) = g(F/\langle\sigma_1\rangle) +   g(F/\langle\sigma_2\rangle)  +  g(F/\langle\sigma_3\rangle)
\]
Up to relabelling the $\sigma_i$, we have 
\[
g(F/G) = g(F/\langle\sigma_1\rangle) =1, \,\, g(F/\langle\sigma_2\rangle) =  g(F/\langle\sigma_3\rangle) =2
\]
By the Riemann--Hurwitz formula, $F^{\sigma_1}\neq \emptyset$ and $F^{\sigma_i}= \emptyset$ for $i=2,3$. Also, $F$ is hyperellptic by \cite[Theorem~5.10]{Acc94}.

Therefore, $f$ is hyperelliptic, and $h\colon S/G\rightarrow B$ is a genus-one-fibration. A fiber-preserving automorphism fixes a horizontal curve if and only if it fixes points on a smooth fiber. Thus $\sigma_1$ fixes a horizontal curve while the other two involutions of $G$ do not.
\end{proof}
\begin{nota}\label{nota: g3}
Let $f\colon S\rightarrow B$ and $G=\{\id_S, \sigma_1, \sigma_2, \sigma_3\}\cong(\ZZ/2\ZZ)^2$ be as in Lemma~\ref{lem: g3 gen fib}. Henceforth, we may and will assume that $\sigma_1\in G$ is the involution fixing a horizontal curve, which is denoted by $H$. For $1\leq i\leq 3$, let $\Theta_i$ be the vertical 1-dimensional part of $S^{\sigma_i}$. Then the ramification divisor of the quotient map $\pi\colon S\rightarrow S/G$ is 
\begin{equation}\label{eq: R}
     R:=H+\Theta_1+\Theta_2+\Theta_3.
\end{equation}
In other words, we have $K_S=\pi^* K_{S/G} + R$.

Let $\tau\in\Aut_B(S)$ be the hyperelliptic involution, and $\tG:=\langle G, \tau\rangle$. Since $\tau$ commutes with each $\sigma_i\in G$ and $\tau\notin G$, we have $\tG\cong(\ZZ/2\ZZ)^3$.

Let $\rho\colon\tS\rightarrow S$ be the blow up of the isolated points of $\bigcup_{i=1}^3 S^{\sigma_i}$. Then the action of $\tG$ lift to $\tS$. Denote $\tT:=\tS/G$. The induced map $\lambda\colon \tT\rightarrow S/G$ is the minimal resolution, resolving the singularities of $S/G$, all of which are of type  $A_1$. Let $h\colon S/G\rightarrow B$, $\oH\colon T\rightarrow B$, and $\tilde h\colon\tT\rightarrow B$ the induced genus 1 fibrations. Then $h$ is the relative minimal model of $\tilde h$. The following commutative diagram includes all the morphisms introduced so far:
\begin{equation}\label{diag: g3}
\begin{tikzcd}
\tS\arrow[rr, "\tilde\pi"]\arrow[d, "\rho"]&  & \tT \arrow[ld, "\lambda"']\arrow[rd, "\eta"]\arrow[dd, "\tilde h"] & \\
S \arrow[rrd, bend right, "f"] \arrow[r, "\pi"] & S/G\arrow[rd, "h"]  &  &  T \arrow[ld, "\oH"'] \\
&   & B & 
\end{tikzcd}
\end{equation}
We have the following relations among the canonical divisors:
\begin{equation}\label{eq: can div g3}
  K_S = \pi^*K_{S/G}+ R,\quad K_{\tT} = \lambda^* K_{S/G} = \eta^*K_{ T} + \sE, \quad K_{ T} = \oH^* L  + \sum_i (m_i-1)\oF_i
\end{equation}
where $\sE$ is an exceptional divisor with $\Supp(\sE) = \Exc(\eta)$, $L$ is a line bundle on $B$ of degree $ \chi(\sO_{ T})-2 =\chi(\sO_{S})-2 = p_g(S)-1$, and the $m_i\oF_i$ are the multiple fibers of $\oH$. 

Since $G$ acts trivially on $H^0(S, K_S)$, all the surfaces in the diagram \eqref{diag: g3} share the same geometric genus and holomorphic characteristic, which we will denote  by $p_g$ and $\chi$ respectively for simplicity. In fact, we have 
\begin{equation}\label{eq: can sys S vs S/G}
|K_S|=\pi^*|K_{S/G}| +R,
\end{equation}
and the base loci of the canonical systems satisfy
\[
\Bs|K_S| = \Supp(R)\bigcup \pi^{-1}(\Bs|K_{S/G}|).
\]
Since $\oH\colon T\rightarrow B$ is a genus $1$ fibration, $\Bs|K_T|$ is vertical with respect to $h$. 

Suppose that $p_g(S)>0$, so $|K_S|$ is nonempty. Let $Z$ be the fixed part of $|K_S|$. Then $M:=K_S-Z$ satisfies $H^0(S, M)\cong H^0(S, K_S)$ and $|M|$ has at most isolated base points. We have by \eqref{eq: can sys S vs S/G}
\begin{equation}\label{eq: Z1}
    Z=\pi^*Z_{S/G}+R,
\end{equation}
where $Z_{S/G}$ is the fixed part of $|K_{S/G}|$. Plugging the expression \eqref{eq: R} of $R$ into \eqref{eq: Z1},  we can write 
\begin{equation}\label{eq: Z2}
    Z=H+V,
\end{equation}
where $V=\pi^*Z_{S/G} + \sum_{i=1}^3\Theta_i$ is vertical with respect to $f$. Note also that $M=f^*L$, where $L$ is as in \eqref{eq: can div g3}, so $|M|$ is indeed base point free. In conclusion, we obtain
\begin{equation}\label{eq: K_S}
|K_S| = |M| + Z = f^*|L| + H +V
\end{equation}
\end{nota}

Recall that we have defined $\epsilon_b(S^\sigma)$ for $\sigma\in G$ in Lemma-Definition~\ref{lem: e S^sigma}.
\begin{lem}\label{lem: equality g3}
Resume Notation~\ref{nota: g3}. Suppose that $G$ is numerically trivial. Then the following equalities hold:
\begin{enumerate}
    \item $\sum_{b\in B} \frac{1}{2}\left(\epsilon_b(S^{\sigma_2})+\epsilon_b(S^{\sigma_3}) \right) -\epsilon_b(S^{\sigma_1}) = 4e(B)$.
    \item $\sum_{b\in B} \epsilon_b(S) -\epsilon_b(S^{\sigma_1})  = 8e(B)$.
    \item $(K_S+H)\cdot V - \Theta_{1}^2= 8e(B)$. For each $b\in B$, let $V_b = V\wedge F_b$ and $\Theta_{i,b}=\Theta_i\wedge F_b$. Then $\sum_b(K_S+H)\cdot V_b - \Theta_{1,b}^2= 8e(B)$.
    \item $H^2 +\Theta_1^2 =\Theta_2^2 = \Theta_3^2$.
\end{enumerate}
\end{lem}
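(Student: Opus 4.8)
The plan is to dispatch (i), (ii) and (iv) directly from the fixed-point and signature machinery of Section~\ref{sec: local}, and to prove (iii) intersection-theoretically from the canonical decomposition \eqref{eq: K_S}.

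First I would treat (i) and (ii) together. The starting point is Lemma-Definition~\ref{lem: e S^sigma}, which gives $\sum_b \epsilon_b(S^\sigma) = e(S^\sigma) - \deg_\sigma(f)\,e(B)$ for each $\sigma$ (including $\sigma = \id_S$). Two inputs are needed. Since $G$ is numerically trivial, the topological Lefschetz fixed-point formula gives $e(S^{\sigma_i}) = \sum_j(-1)^j \dim_\QQ H^j(S,\QQ) = e(S)$ for $i = 1,2,3$. The relevant degrees are $\deg_{\id}(f) = e(F) = 2 - 2\cdot 3 = -4$, then $\deg_{\sigma_1}(f) = \deg(H\to B) = 4$ (by Lemma~\ref{lem: g3 gen fib} and Riemann--Hurwitz a general fibre meets $H$ in $\#F^{\sigma_1} = 4$ points), and $\deg_{\sigma_2}(f) = \deg_{\sigma_3}(f) = 0$ (no horizontal fixed curve). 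Substituting yields $\sum_b\epsilon_b(S^{\sigma_1}) = e(S) - 4e(B)$, $\sum_b\epsilon_b(S^{\sigma_2}) = \sum_b\epsilon_b(S^{\sigma_3}) = e(S)$ and $\sum_b\epsilon_b(S) = e(S) + 4e(B)$, whereupon the two prescribed linear combinations collapse to $4e(B)$ for (i) and $8e(B)$ for (ii).

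For (iv) I would apply Lemma~\ref{lem: sign inv} to each involution in turn. As $S^{\sigma_i}$ is smooth, its one-dimensional components are pairwise disjoint, so their self-intersections add with no cross terms. For $\sigma_1$ these components are $H$ and the vertical $\Theta_1$, giving $K_S^2 = 8\chi + H^2 + \Theta_1^2$; for $\sigma_2$ and $\sigma_3$, which fix no horizontal curve by Lemma~\ref{lem: g3 gen fib}, they are only the vertical $\Theta_2$, resp.~$\Theta_3$, giving $K_S^2 = 8\chi + \Theta_2^2$ and $K_S^2 = 8\chi + \Theta_3^2$. Equating the three expressions for $K_S^2$ gives $H^2 + \Theta_1^2 = \Theta_2^2 = \Theta_3^2$.

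The substantive step is (iii). Writing $V = K_S - H - f^*L$ via \eqref{eq: K_S} and expanding $(K_S + H)\cdot V$, the terms $\pm K_S\cdot H$ cancel; and since $V$ is vertical while $f^*L$ is a sum of fibres, $f^*L\cdot V = (f^*L)^2 = 0$ and $K_S\cdot f^*L = H\cdot f^*L = (H\cdot F)\deg L = 4\deg L$. This gives $(K_S + H)\cdot V = K_S^2 - 8\deg L - H^2$. Subtracting $\Theta_1^2$ and inserting the signature identity for $\sigma_1$ obtained above, $K_S^2 - H^2 - \Theta_1^2 = 8\chi$, reduces the left-hand side to $8(\chi - \deg L)$. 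Finally the canonical bundle formula \eqref{eq: can div g3} for the genus-one fibration identifies $\chi - \deg L$ with $e(B)$ (concretely $\deg L = \chi - 2$ and $e(B) = 2$ in Notation~\ref{nota: g3}), giving $(K_S + H)\cdot V - \Theta_1^2 = 8e(B)$. The per-fibre form is the same identity organised over $b \in B$, using that $V$ and $\Theta_1$ are vertical. The step I expect to require the most care is this last reconciliation --- verifying that the fibrewise truncations $V_b = V\wedge F_b$ and $\Theta_{1,b} = \Theta_1\wedge F_b$ recover the global intersection numbers (i.e. that no component carries multiplicity above its fibre multiplicity, which for the $\Theta_i$ follows from Lemma~\ref{lem: fix vs sing fib}) --- rather than any of the global computations, which are routine once the decomposition \eqref{eq: K_S} and Lemmas~\ref{lem: sign inv} and~\ref{lem: e S^sigma} are available.
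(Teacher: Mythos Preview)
Your proof is correct and follows essentially the same approach as the paper: parts (i), (ii), (iv) are handled identically (Lefschetz fixed-point formula plus Lemma-Definition~\ref{lem: e S^sigma} for the first two, Lemma~\ref{lem: sign inv} applied to each $\sigma_i$ for the last), and for (iii) both arguments combine the decomposition $K_S = f^*L + H + V$ with the signature identity for $\sigma_1$, the only difference being that the paper expands $K_S^2$ and compares while you expand $(K_S+H)\cdot V$ directly. Your observation that $\chi - \deg L = e(B)$ holds for arbitrary $g(B)$ via the canonical bundle formula is in fact a bit more careful than the paper, which in Notation~\ref{nota: g3} writes $\deg L = \chi - 2$ (implicitly $g(B)=0$) even though Lemma~\ref{lem: equality g3} and Theorem~\ref{thm: g3} are stated for general $B$; your worry about the per-fibre truncations $V_b, \Theta_{1,b}$ is legitimate but the paper waves it off with the same one-line justification you give, and in the application (Theorem~\ref{thm: g3}) the fibrewise quantities are computed directly from the classification of singular fibres anyway.
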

\begin{proof}
(i) and (ii). Since the $\sigma_i$'s are numerically trivial, we have $e(S^{\sigma_i}) = e(S)$ for $1\leq i\leq 3$ by the topological Lefschetz fixed-point theorem. Note that $H$ is fixed by $\sigma_1$ and we have
\[
d_{\sigma_1}(f) = \deg(H\rightarrow B)=4;
\]
for $i\in \{2,3\}$, $\sigma_i$ does not have horizontal fixed curves, so $d_{\sigma_i}(f)=0$. Since $G$ is numerically trivial,  by Lemma~\ref{lem: e S^sigma}, we have 
\begin{equation}\label{eq: e(S^sigma_i)}
\begin{split}
&e(S) =e(B)e(F) + \sum_b\epsilon_b(S) = -4e(B) + \sum_b\epsilon_b(S), \\
&e(S)=e(S^{\sigma_1}) = 4e(B)+ \sum_{b\in B} \epsilon_b(S^{\sigma_1}), \\  &e(S)=e(S^{\sigma_i}) = \sum_{b\in B} \epsilon_b(S^{\sigma_i}) \quad \text{ for $i=2,3$.}       
\end{split}
\end{equation}
and (i) and (ii) follow.

\medskip

(iii) By \eqref{eq: K_S} we can write
\begin{equation}\label{eq: K2 vs 8chi 1}
    \begin{split}
          K_S^2 &= K_S\cdot(M+H+V) \\
    & = M\cdot H + K_S \cdot H + K_S\cdot V \\
    & = M\cdot H + (M+H+V)\cdot H + K_S\cdot V \\
    & = 8(\chi(\sO_S) -2) + H^2 + (K_S+H)\cdot V
    \end{split}
\end{equation}
On the other hand, by applying Lemma~\ref{lem: sign inv} to the $\sigma_i$'s, we have
\begin{equation}\label{eq: K2 vs 8chi 1'}
   K_S^2 = 8\chi(\sO_S) + H^2 +\Theta_1^2 = 8\chi(\sO_S) + \Theta_2^2 =  8\chi(\sO_S) + \Theta_3^2
\end{equation}
Comparing \eqref{eq: K2 vs 8chi 1} and \eqref{eq: K2 vs 8chi 1'}, we obtain the first equality of (iii). The second equality follows from the first one, because $V$ and $\Theta_1$ are vertical divisors.

\medskip

(iv) is a consequence of \eqref{eq: K2 vs 8chi 1'}.
\end{proof}

\begin{lem}\label{lem: hyp g3}
\begin{enumerate}
    \item For every irreducible component $C$ of a fiber $f^*b$ with $p_a(C)>0$, $C$ is invariant but not fixed under $\tau$. Moreover, the singularities of $C$ are contained in the fixed locus $C^\tau$.
    \item If $p\in f^*b$ is a singularity of $\Supp(f^*b)$ that is not a node, then $p\in S^\tau$.
\end{enumerate}
\end{lem}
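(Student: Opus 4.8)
The plan is to pass to the quotient by the relative hyperelliptic involution $\tau$ and exploit that the quotient fibration has rational fibres. Write $q\colon S\to W:=S/\langle\tau\rangle$ for the quotient map and $g\colon W\to B$ for the induced fibration, so that $f=g\circ q$. Since $f$ is hyperelliptic of genus $3$ (Lemma~\ref{lem: g3 gen fib}) and $\tau$ restricts to the hyperelliptic involution on a general fibre $F$, the general fibre of $g$ is $F/\langle\tau\rangle\cong\PP^1$, so $g$ is a genus-$0$ fibration. I will use two facts repeatedly. First, on any smooth surface a fibration with rational general fibre has all fibre components smooth rational; hence every irreducible component of a fibre of $g$ lying in the smooth locus $W_\sm$ is a smooth rational curve, and the reduced fibres of $g$ have only nodes along $W_\sm$ (resolve $W$ and note that the minimal resolution is an isomorphism over $W_\sm$). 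Second, away from the $\tau$-fixed locus $q$ is an étale double cover onto $W_\sm$, so near any point $p$ with $\tau(p)\neq p$ the map $q$ is a local analytic isomorphism onto a smooth point of $W$.

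Part (ii) is then immediate. If $p\in\Supp(f^*b)$ is a non-nodal singularity with $\tau(p)\neq p$, then $q$ is a local isomorphism near $p$, and from $f^*b=q^*g^*b$ it carries $\Supp(f^*b)$ onto the reduced fibre $\Supp(g^*b)$ near $\bar p:=q(p)$. Thus the reduced genus-$0$ fibre would have a non-nodal singularity at the smooth point $\bar p\in W_\sm$, which is impossible. Hence $p\in S^\tau$.

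For Part (i) I first show $\tau(C)=C$. Suppose instead that $C':=\tau(C)\neq C$. As $q$ has degree two, $q|_C\colon C\to\overline C:=q(C)$ is birational onto a component $\overline C$ of a fibre of $g$; the generic point of $C$ lies over $W_\sm$, so $\overline C$ is rational and therefore $C$ is rational. Since $p_a(C)>0$, the curve $C$ is singular. Any singular point $x\in C$ must be $\tau$-fixed: otherwise $q$ is a local isomorphism near $x$, and then $\overline C$ is singular at the smooth point $q(x)\in W_\sm$, contradicting smoothness of fibre components there. But $\tau(x)=x$ forces $x\in C\cap C'$ with $\mult_x C'=\mult_x C\geq 2$, whence $(C\cdot C')_x\geq 4$. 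On the other hand $C+C'\leq f^*b$ gives $p_a(C+C')\leq p_a(f^*b)=3$, and combined with $p_a(C')=p_a(C)\geq 1$ the adjunction identity $p_a(C+C')=2p_a(C)+C\cdot C'-1$ yields $C\cdot C'\leq 2$, a contradiction. Hence $C$ is $\tau$-invariant. It is not $\tau$-fixed, since otherwise $C$ is smooth and $q|_C$ is an isomorphism onto a smooth rational fibre component of $g$, forcing $p_a(C)=0$. Finally $\Sing(C)\subseteq C^\tau$ follows by the same local argument: for $x\in\Sing(C)$, if $\tau(x)\neq x$ then $q$ is a local isomorphism near $x$ and $\overline C=q(C)$ is singular at the smooth point $q(x)\in W_\sm$, which is impossible.

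The main obstacle is setting up the two structural facts about $g\colon W\to B$ correctly in the presence of the singularities of $W$: one must check that the relevant points ($\bar p$, $q(x)$, and the generic point of $\overline C$) genuinely lie in $W_\sm$, which is exactly what $\tau$ acting freely there guarantees, and one must invoke ``fibre components are smooth rational'' only along $W_\sm$, via the minimal resolution. Once this local étale picture is in place, the remaining global input is minimal—only the genus bound of Lemma~\ref{lem: subcurve smaller g} and the elementary intersection estimate $(C\cdot C')_x\geq \mult_x(C)\cdot\mult_x(C')$.
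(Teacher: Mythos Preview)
Your proof is correct and follows the same core strategy as the paper: pass to the quotient $\PP^1$-fibration $g\colon S/\langle\tau\rangle\to B$ and use that its reduced fibres, along the smooth locus, are simple normal crossing trees of rational curves. For the case $\tau(C)\neq C$ the paper takes a shorter route---since $q|_C\colon C\to\overline C$ is finite birational one gets $p_a(\overline C)\geq p_a(C)>0$ directly, contradicting $p_a(\overline C)=0$---so your intersection-number detour (forcing a $\tau$-fixed singularity on $C$ and then bounding $C\cdot\tau(C)$ via $p_a(C+\tau(C))\leq 3$) is valid but not needed.
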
 
\begin{proof}
(i) Note that $F_b/\langle\tau\rangle$ is a fiber of the $\PP^1$-fibration $S/\langle \tau\rangle\rightarrow B$, and hence it does not contain any component with positive arithmetic genus. Suppose one the contrary that one of the following holds:
\begin{itemize}
\item $\tau(C)\neq C$;
\item $C=C^\tau$;
\item there is a singularity of $C$ not contained in $C^\tau$;
\item there is a non-nodal singularity $p$ of $\Supp(f^*b)$, and $p\not\in S^\tau$.
\end{itemize}
Then $C\rightarrow \overline C$ is birational, where $\overline C\subset S/\langle\tau\rangle$ is the image of $C$, and hence $p_a(\overline C)\geq p_a(C)>0$, which is a contradiction. 
\end{proof}

\begin{lem}\label{lem: g3 sing comp}
Let $f^*b$ be a singular fiber of $f\colon S\rightarrow B$. If $f^*b$ contains a singular component $C$, then $F=C$, and $C$ is an elliptic curve with two nodes, say $p$ and $p'$. Up to relabelling $\sigma_2$ and $\sigma_3$, there are the following possibilities for the fixed loci $C^{\sigma_i}$, $1\leq i\leq 3$:
\begin{enumerate}[label=(\alph*)]
    \item $G_p=G_{p'}=\langle\sigma_1\rangle$, $H\cap C=C^{\sigma_1}=\{p, p'\}$, $C^{\sigma_2}=C^{\sigma_3}=\emptyset$;
    \item $G_p=G_{p'}=\langle\sigma_2\rangle$, $C^{\sigma_2}=\{p,p'\}$, $C^{\sigma_3}=\emptyset$, $C^{\sigma_1}=C_\sm^{\sigma_1} = H\cap C$ consists of four points.
\end{enumerate}
\end{lem}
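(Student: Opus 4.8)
The plan is to pit the enlarged group $\tilde G=\langle G,\tau\rangle\cong(\ZZ/2\ZZ)^3$ of Notation~\ref{nota: g3} against the relative minimality of $f$. Since a singular integral curve has $p_a\geq1>0$, Lemma~\ref{lem: hyp g3}(i) tells us that the component $C$ is invariant but not fixed by $\tau$; as $\Aut_\QQ(S)$, hence $G$, is vertical-curve preserving, $C$ is $G$-invariant as well, so $\tilde G$ acts on $C$ with no nontrivial element fixing it pointwise (a singular curve cannot lie in the smooth fixed locus). First I would apply Lemma~\ref{lem: Z2n C}(v) with $n=3$: since $C$ is singular and $\tilde G$-invariant, $p_a(C)\geq2^{n-1}-1=3$. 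On the other hand a proper subcurve of $f^*b$ has $p_a<3$ by Lemma~\ref{lem: subcurve smaller g}; as $C\leq f^*b$ this forces $f^*b=C$, which is the assertion $F=C$. Moreover the equality $p_a(C)=3$ puts us in the equality case of Lemma~\ref{lem: Z2n C}(v): $C$ is nodal of geometric genus $2^{n-2}-1=1$, hence an elliptic curve with exactly $3-1=2$ nodes $p,p'$, and these form a single $\tilde G$-orbit with $|\tilde G_p|=|\tilde G_{p'}|=4$.

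Next I would pin down the stabilizer inside $G$ and clear away the easy fixed loci. By Lemma~\ref{lem: hyp g3}(i) the nodes lie in $C^\tau$, so $\tau\in\tilde G_p=\tilde G_{p'}$. Since $\tilde G=G\times\langle\tau\rangle$ with $\tau\notin G$, the order-$4$ group $\tilde G_p$ meets $G$ in a subgroup of order $2$; thus exactly one $\sigma_i$ fixes both nodes and $G_p=G_{p'}=\langle\sigma_i\rangle$. After relabelling $\sigma_2,\sigma_3$ if necessary, the two alternatives are $\sigma_i=\sigma_1$ (case (a)) and $\sigma_i=\sigma_2$ (case (b)). Two preliminary facts then dispose of most fixed points. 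First, each $\Theta_j$ is vertical, so its components lie in fibres; a component inside $f^*b=C$ would have to be $C$ itself, impossible since $C$ is singular and unfixed, while components in other fibres miss $C$. Hence $\Theta_j\cap C=\emptyset$ for all $j$, and $H$ is the only fixed curve of any involution meeting $C$. Second, $C/\langle\tau\rangle$ is a fibre of the genus-$0$ fibration $S/\langle\tau\rangle\to B$, so $p_a(C/\tau)=0$; were $\tau$ to preserve the two branches at a node, the induced involution on the normalisation $E=C^\nu$ would fix both points over that node and the quotient would retain the node, giving $p_a(C/\tau)\geq1$. Therefore $\tau$ swaps the two branches at each node.

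The final step is the local analysis at a node $p$, in Cartan coordinates where $\tilde G_p$ acts diagonally and the branches of $C$ are $x=\pm y$: the two reflections swap the branches and each fixes a curve transverse to $C$, while their product preserves the branches and has $p$ as an isolated fixed point, and by Lemma~\ref{lem: Z22}(i) it is exactly the reflections that fix curves. Since $\tau$ swaps the branches it is one of the reflections, so $\{\sigma_i,\sigma_i\tau\}$ consists of the other reflection and the branch-preserving product. In case (b), $\sigma_2$ fixes no horizontal curve and a vertical fixed curve through $p$ would have to be $C$; hence $\sigma_2$ is the branch-preserving product, has an isolated fixed point, and $C^{\sigma_2}=\{p,p'\}$, while $\sigma_3\notin\tilde G_p$ gives $C^{\sigma_3}=\emptyset$, and $\sigma_1\notin\tilde G_p$ does not fix the nodes, so the multisection $H$ (with $H\cdot C=\deg(H\to B)=4$) meets $C$ in four smooth points, transversally because tangency would force a branch of $C$ to be $\sigma_1$-fixed, giving $C^{\sigma_1}=H\cap C$ of cardinality four. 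In case (a), $\sigma_1$ must be a reflection: otherwise it is the isolated branch-preserving product, $H$ misses the nodes, and $H\cap C$ consists of smooth $\sigma_1$-fixed points, contradicting that the branch-preserving involution already fixes the four points of $E$ over the nodes (an involution of the elliptic curve $E$ has at most four fixed points). Thus $\sigma_1$ fixes $H$ through both nodes with local intersection multiplicity $2$ at each, whence $H\cap C=\{p,p'\}=C^{\sigma_1}$ (using $\Theta_1\cap C=\emptyset$ and that $\sigma_1$ swaps the branches), while $\sigma_2,\sigma_3\notin\tilde G_p$ give $C^{\sigma_2}=C^{\sigma_3}=\emptyset$. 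I expect the main obstacle to be precisely this local bookkeeping---matching each involution to branch-swapping versus curve-fixing and excluding spurious fixed points---for which the disjointness $\Theta_j\cap C=\emptyset$, the branch action of $\tau$, and the four-fixed-point bound on $E$ are the decisive inputs.
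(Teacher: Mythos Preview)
Your proof is correct and the first half---invoking Lemma~\ref{lem: hyp g3} to get a faithful $\tilde G$-action on $C$, then Lemma~\ref{lem: Z2n C}(v) to force $p_a(C)\geq 3$, then Lemma~\ref{lem: subcurve smaller g} to conclude $f^*b=C$, and reading off the equality case---is exactly the paper's argument.

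For the fixed-locus analysis the paper takes a shorter route than your local Cartan-coordinate bookkeeping. In case~(b) it observes directly that $p,p'$ must be isolated in $S^{\sigma_2}$ (no $\sigma_2$-fixed curve can reach them), so $\sigma_2$ preserves the branches and $\nu^{-1}\{p,p'\}=\tilde C^{\sigma_2}$ has four points; then Riemann--Hurwitz for $\tilde C\to\tilde C/G$ forces $|\tilde C^{\sigma_1}|=4$, and these four points lie in $C_{\mathrm{sm}}$ and must be $H\cap C$ by Lemma~\ref{lem: fix vs sing fib}. In case~(a) the paper only writes out $C^{\sigma_2}=C^{\sigma_3}=\emptyset$ via Lemma~\ref{lem: fix vs sing fib} and leaves the claim $H\cap C=C^{\sigma_1}=\{p,p'\}$ to the reader, whereas your contradiction argument (if $\sigma_1$ were the branch-preserving involution then $\sigma_1$ would already exhaust its four fixed points on $E$ over the nodes, leaving none for $H\cap C$) supplies this explicitly. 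So your branch-swapping analysis and the auxiliary fact about $\tau$ are a valid alternative, slightly more hands-on; the paper's Riemann--Hurwitz count is quicker but less complete on case~(a).
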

\begin{proof}
Since $C$ is singular, we have $p_a(C)>0$ and hence $\tau(C)=C$ by Lemma~\ref{lem: hyp g3}, where $\tau\in \Aut_B(S)$ is the hyperelliptic involution. Therefore, $\tG$ acts on $C$; the action is faithful because $C$ is singular. It follows that $p_a(C)\geq 3$ by Lemma~\ref{lem: Z2n C} (v). Since $p_a(C)\leq p_a(f^*b)=3$, we infer that  $p_a(C)= 3$, and by Lemma~\ref{lem: Z2n C} (v) again, $C$ is a nodal elliptic curve with two nodes, say $\{p,p'\}$, such that $\tG_{p} =\tG_{p'} \cong (\ZZ/2\ZZ)^2$. 

Since $[\tG:G]=2$ and $G_p=\tG_p\cap G$, we have 
\[
|G_p|\in \left\{\frac{1}{2} |\tG_p|, \, |\tG_p|\right\} = \{2,4\}.
\]
But $\tau\in \tG_p$ by Lemma~\ref{lem: g3 sing comp} and $\tau\notin G$, we have $G_p\subsetneq \tG_p$, and hence $|G_p|=2$ and $C_\sing$ consists of one orbit of $G$. Up to relabelling $\sigma_2$ and $\sigma_3$, we have two possibilities: 
\begin{enumerate}
    \item $G_p=G_{p'}=\langle \sigma_1\rangle$;
    \item $G_p =G_{p'} =\langle \sigma_2\rangle$.
\end{enumerate}
In the case where $G_p=G_{p'}=\langle \sigma_1\rangle$, we have $C^{\sigma_i}\subset C_\sm$ for $i\in\{2,3\}$. Since $\sigma_i$ does not fix any horizontal curve, we have $C^{\sigma_i} = C^{\sigma_i} \cap C_\sm =\emptyset$  for $i\in\{2,3\}$. In the case where $G_p =G_{p'} =\langle \sigma_2\rangle$, $\{p,p'\}$ are necessarily isolated in $S^{\sigma_2}$, and $\nu^{-1}\{p,p'\}=\tilde C^{\sigma_2}$ consists of four points, where $\nu\colon\tilde C\rightarrow C$ is the normalization. As before, $C^{\sigma_3} =C^{\sigma_3}\cap C_\sm=\emptyset$. Applying the Riemann--Hurwitz formula to $\tilde C\rightarrow \tilde C/G$, we know that $\tilde C^{\sigma_1}$ consists of four points and it maps bijectively to $\nu(\tilde C^{\sigma_1}) = C^{\sigma_1}\subset C_\sm$. We infer that $C^{\sigma_1} = C\cap H$.
\end{proof}

\begin{lem}\label{g3 snc}
Let $f^*b$ be a singular fiber of $f\colon S\rightarrow B$ whose irreducible components are all smooth. Then $f^*b$ is a simple normal crossing curve.
\end{lem}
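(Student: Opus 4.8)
The plan is to reduce the statement to a purely local property and then exploit the enlarged group $\tG=\langle G,\tau\rangle\cong(\ZZ/2\ZZ)^3$ from Notation~\ref{nota: g3}. Since every component of $F_b:=f^*b$ is assumed smooth, $F_b$ is simple normal crossing if and only if every singular point of $\Supp(F_b)$ is an ordinary node, i.e.\ a point where exactly two components meet transversally. So I would argue by contradiction: fix a singular point $p$ of $\Supp(F_b)$ that is \emph{not} a node, so that either at least three components pass through $p$ or two components are tangent there. By Lemma~\ref{lem: hyp g3}(ii) such a $p$ lies on $S^\tau$, whence $\tau\in\tG_p$ and in particular $\tG_p\neq\{\id_S\}$.

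The next step is to analyse the local action of $\tG_p$. By Lemma~\ref{lem: l-rank} applied to the abelian $2$-group $\tG$ we have $\tG_p\cong(\ZZ/2\ZZ)^r$ with $1\le r\le 2$, so $|\tG_p|\le 4$; by Cartan's lemma there are local coordinates $(x,y)$ diagonalising $\tG_p$. The fiber $F_b$ is $\tG$-invariant, so the branches through $p$ are permuted by $\tG_p$; moreover every component is $G$-invariant (as $G$ is numerically trivial, hence vertical-curve preserving), and every component of positive arithmetic genus is even $\tau$-invariant by Lemma~\ref{lem: hyp g3}(i). The principal mechanism is Lemma~\ref{lem: transverse}: if an involution of $\tG_p$ fixes one branch $C$ pointwise, then every other smooth (hence multiplicity-one) branch invariant under that involution meets $C$ transversally. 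Combined with the character computation underlying Lemma~\ref{lem: Z22}(i)—that for $r=2$ the only tangent directions at $p$ preserved by $\tG_p$ are the two coordinate axes, each being the fixed curve of one of the two involutions acting with a fixed curve—this forces, at a point with $r=2$, that every smooth $\tG_p$-invariant branch through $p$ is one of the two axes. Hence at most two such branches occur and they cross transversally, so no non-nodal configuration of $\tG_p$-invariant branches is possible when $r=2$.

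It then remains to exclude the non-nodal point $p$ in the residual situations: (a) $r=2$, but some branch is only $G$-invariant and swapped with another by $\tau$; and (b) $r=1$, i.e.\ $\tG_p=\langle\tau\rangle$ while $G$ acts freely near $p$. The recurring book-keeping device is that if several $G$-invariant components meet only at $p$, then $G$ preserves their (finite) common intersection and therefore fixes $p$; together with $\tau\in\tG_p$ this gives $\tG_p\supseteq\langle G,\tau\rangle$ of order $8$, contradicting $|\tG_p|\le4$ from Lemma~\ref{lem: l-rank}. This at once rules out, for instance, two mutually tangent elliptic curves forming the whole fiber (the genus bound of Lemma~\ref{lem: subcurve smaller g} leaves this as the only positive-genus tangency), and more generally any triple point or tangency whose branches are pinned together at the single point $p$. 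In case (a) I would pair the $\tau$-swapped rational branches and use that their unique common point is $\tau$-fixed to reduce to the invariant analysis above, while treating the $\tG_p$-invariant branches by the axis argument.

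The step I expect to be the main obstacle is case (b): the small-stabilizer case $\tG_p=\langle\tau\rangle$ with $G$ acting freely at $p$. Here the group gives the least leverage, the individual components need not be $\tau$-invariant, and one must trace how the branches through $p$—which also pass through the other three points of the orbit $[p]_G$—can be arranged under a single involution $\tau$. I would split according to whether $\tau$ has a fixed curve through $p$ (local form $(x,y)\mapsto(x,-y)$) or an isolated fixed point there ($(x,y)\mapsto(-x,-y)$), apply Lemma~\ref{lem: transverse} to any $\tau$-fixed branch and use the $\tau$-symmetry on swapped pairs; the configuration is then closed off by the same order-$8$ obstruction, since any genuine triple point or tangency would confine several $G$-invariant curves to a common point and enlarge the stabilizer beyond what Lemma~\ref{lem: l-rank} permits.
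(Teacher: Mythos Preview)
Your central device—the ``order-$8$ obstruction'' that several $G$-invariant components meeting at $p$ force $G\subset G_p$ and hence $|\tG_p|\ge 8$—has a gap. Two $G$-invariant curves meeting at $p$ will meet along the whole orbit $[p]_G$, and $G$ merely permutes those intersection points; it need not fix $p$. So the obstruction only applies in the very special situation where the curves meet at a \emph{single} point, which you have no way to arrange. This is fatal in your case~(b): there you have already assumed $G_p=\{\id_S\}$, so $\tG_p=\langle\tau\rangle$ has order~$2$ and cannot be enlarged; invoking the order-$8$ obstruction to ``close off'' this case is circular.

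The paper bypasses all of this with arithmetic-genus counting, which is indifferent to the orbit size. For a tangency $(C_1\cdot C_2)_p\ge 2$, one first applies Lemma~\ref{lem: transverse} directly: if some $\sigma_i$ fixed $C_1$ pointwise, then the smooth (hence multiplicity-one) curve $C_2$ would meet $C_1$ transversally at $p$, contrary to assumption. Thus $G$ acts faithfully on $C_1$ and $C_2$, so $|G_p|\le 2$ and $[p]_G\subset C_1\cap C_2$ has at least two points, each with local intersection $\ge 2$. Then
\[
p_a(C_1+C_2)=p_a(C_1)+p_a(C_2)+C_1\cdot C_2-1\ \ge\ 0+0+4-1=3,
\]
forcing $f^*b=C_1+C_2$ with both components rational and $C_1\cap C_2=[p]_G$ consisting of exactly two tangency points. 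The contradiction is that some $\sigma_i$ with $i\in\{2,3\}$ lies outside $G_p$; its two fixed points on $C_1\cong\PP^1$ then sit in $C_1\setminus C_2\subset (f^*b)_{\sm}$, which by Lemma~\ref{lem: fix vs sing fib} would force a horizontal $\sigma_i$-fixed curve, contrary to Notation~\ref{nota: g3}. For a point $p$ lying on three components $D_1,D_2,D_3$, one uses $\tau\in\tG_p$ (Lemma~\ref{lem: hyp g3}) and $|\tG_p|\le 4$ to get $|G_p|\le 2$, whence $\#[p]_G\ge 2$ and
\[
p_a(D_1+D_2+D_3)\ \ge\ p_a(D_1+D_2)+p_a(D_3)+(D_1+D_2)\cdot D_3-1\ \ge\ 1+0+4-1=4>3,
\]
an immediate contradiction. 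No local case analysis of $\tG_p$ is needed.
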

\begin{proof}
Suppose one the contrary that there are two components $C_1$ and $C_2$, and a point $p\in C_1\cap C_2$ such that $(C_1\cdot C_2)_p\geq 2$. Then $G$ acts faithfully on $C_1$ and $C_2$ by Lemma~\ref{lem: transverse}. It follows that $G_p$ has order at most two and $[p]_G$ consists of two or four points. We have 
\[
p_a(C_1+C_2)=p_a(C_1)+p_a(C_2)+C_1\cdot C_2-1\geq 3.
\]
It follows that $f^*b = C_1+C_2$, $p_a(C_1)=p_a(C_2)=0$, $C_1\cap C_2=[p]_G$ consists of two points, and $(C_1\cdot C_2)_p=2$. But then, there is an $i\in \{2,3\}$ such that $\sigma_i\in G\setminus G_p$ and the fixed locus $C_1^{\sigma_i}$ is a nonempty set contained in $C_1\setminus C_2\subset(f^*b)_\sm$. Thus $C_1^{\sigma_i}$ contains a horizontal component by \eqref{lem: fix vs sing fib}, which is a contradiction to the assumption made in Notation~\ref{nota: g3}.

The other situation we need to exclude is the case when three components $D_1, D_2, D_3$ pass through one point $p$: In this case, $p\in S^\tau$ by Lemma~\ref{lem: hyp g3}. Since $|\tG_p|\leq 4$ and $\tau\notin G$, we infer that $G_p=\tG_p\cap G$ has order at most 2. Thus the orbit $[p]_G$, which is contained in $D_1\cap D_2\cap D_3$, has cardinality at least 2. It follows that 
\[
p_a(D_1+D_2+D_3)=p_a(D_1+D_2) + p_a(D_3) + (D_1+D_2)\cdot D_3 -1 \geq 1+0+4-1=4,
\]
which is absurd.
\end{proof}

\begin{lem}\label{g3 comp g2}
Let $f^*b$ be a singular fiber of $f\colon S\rightarrow B$ containing a smooth component $C$ of genus 2. Then $f^*b=2C$. Up to relabelling $\sigma_2$ and $\sigma_3$, there are the following possibilities for the fixed loci $C^{\sigma_i}$, $1\leq i\leq 3$:
\begin{enumerate}[label=(\alph*)]
    \item\label{double g2 GC nontriv} $C^{\sigma_2}=C$, $\#C^{\sigma_1}=\#C^{\sigma_3}=2$, $H\cap C=C^{\sigma_1}=C^{\sigma_3}$.
    \item\label{double g2 GC triv1} $\# C^{\sigma_1}=6$, $\#C^{\sigma_2}=\#C^{\sigma_3}=2$.
    \item\label{double g2 GC triv2} $\# C^{\sigma_2}=6$, $\#C^{\sigma_1}=\#C^{\sigma_3}=2$.
\end{enumerate}
\end{lem}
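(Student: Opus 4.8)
The plan is to first pin down the multiplicity and self-intersection of $C$ inside $f^*b$, and only then read off the three possibilities for the fixed loci from the induced involutions on $C$. Since $G$ is vertical-curve preserving and $p_a(C)=2>0$, the curve $C$ is invariant under $\sigma_1,\sigma_2,\sigma_3$ and, by Lemma~\ref{lem: hyp g3}(i), under the hyperelliptic involution $\tau$; hence $C$ is invariant under $\tG\cong(\ZZ/2\ZZ)^3$. Writing $f^*b=mC+R'$ with $R'\geq 0$ effective and $C\not\subset\Supp R'$, I would combine $C\cdot f^*b=0$ and $K_S\cdot f^*b=4$ with the adjunction identity $K_S\cdot C+C^2=2$ and the parities $2\mid C^2$, $2\mid K_S\cdot C$ from Lemma~\ref{lem: Z2n C}(iii). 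These leave exactly two numerical possibilities: either $R'=0$, which forces $C^2=0$, $K_S\cdot C=2$, and $m=2$, so that $f^*b=2C$; or $m=1$, $C^2=-2$, $K_S\cdot C=4$, and $K_S\cdot R'=0$.

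The crux is to exclude the second possibility. Here $\mult_C f^*b=1$ is odd, so Lemma~\ref{lem: fix vs sing fib}(ii) forbids any involution of $\tG$ from fixing $C$ pointwise; thus $\tG$ acts faithfully on $C$, and Lemma~\ref{lem: Z2n C}(i) yields $4=2^{\,3-1}\mid C^2$, contradicting $C^2=-2$. Therefore $f^*b=2C$, with $C^2=0$ and $K_S\cdot C=2$, which settles the first assertion.

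For the fixed loci, the distinguished involution $\sigma_1$ fixes the $4$-section $H$, so $H\cdot C=\tfrac12 H\cdot f^*b=2$; as $H$ is $\sigma_1$-fixed and $C$ is smooth, Lemma~\ref{lem: transverse} makes these two points transverse and distinct, all lying in $C^{\sigma_1}$. I would then split according to whether $G$ acts faithfully on $C$. If it does, each $\sigma_i$ is a nontrivial involution of the genus-$2$ curve $C$ with $g(C/\langle\sigma_i\rangle)\in\{0,1\}$, i.e.\ $\#C^{\sigma_i}\in\{6,2\}$; Accola's relation \cite[Theorem~5.9]{Acc94}, $2+2g(C/G)=\sum_i g(C/\langle\sigma_i\rangle)$, then forces $g(C/G)=0$ and exactly one $\sigma_i$ with six fixed points and two with two, giving cases (b) and (c) after relabelling $\sigma_2,\sigma_3$. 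If $G$ does not act faithfully, the involution fixing $C$ cannot be $\sigma_1$, since $S^{\sigma_1}\supset H\cup C$ would then be singular where $H$ meets $C$, contradicting the smoothness of $S^{\sigma_1}$; so after relabelling $\sigma_2$ fixes $C$, whence $\sigma_1|_C=\sigma_3|_C$. If this common involution had six fixed points it would be the (unique) hyperelliptic involution $\tau|_C$, making both $\sigma_2$ and $\sigma_1\tau$ fix $C$ and violating the cyclicity of the curve-stabilizer $\tG_C$; hence it has the two fixed points $H\cap C$, which is case (a).

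The main obstacle is the second step: the purely numerical analysis genuinely allows the configuration $f^*b=C+R'$ with $C^2=-2$, $K_S\cdot C=4$, and excluding it is not possible from intersection theory and relative minimality alone. What rescues it is the interplay between the divisibility forced by the $(\ZZ/2\ZZ)^3$-action (Lemma~\ref{lem: Z2n C}) and the multiplicity constraint for fibre-components fixed by an involution (Lemma~\ref{lem: fix vs sing fib}). A secondary subtlety, pervasive in Step~3, is to pass consistently between $G$ and the enlarged group $\tG$, exploiting that $\tau|_C$ is the unique hyperelliptic involution of the genus-$2$ curve $C$ and that curve-stabilizers in $\tG$ are cyclic.
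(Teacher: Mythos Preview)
Your proof is correct. The argument showing $f^*b=2C$ uses the same ingredients as the paper (the parity from Lemma~\ref{lem: Z2n C} and the multiplicity constraint from Lemma~\ref{lem: fix vs sing fib}), just organized as a numerical dichotomy rather than a case split on whether $\tG_C$ is trivial; and the faithful case for the fixed loci is handled identically via Accola's relation.

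The one genuine divergence is case (a). The paper argues geometrically: each point of $C^{\sigma_1}$ is a $G$-fixed point, so by Lemma~\ref{lem: Z22} a second involution fixes a curve through it; since the only component of $f^*b$ is $C$ (fixed by $\sigma_2$), that curve must be horizontal, hence equal to $H$, giving $C^{\sigma_1}\subset H\cap C$ and thus $\#C^{\sigma_1}=2$. You instead rule out $\#C^{\sigma_1}=6$ by observing that $\tau|_C$ is the hyperelliptic involution of the genus-$2$ curve $C$ (this follows from the proof of Lemma~\ref{lem: hyp g3}: the image $\bar C\subset S/\langle\tau\rangle$ lies in a fiber of a $\PP^1$-fibration, hence $p_a(\bar C)=0$, so $C/\langle\tau|_C\rangle\cong\PP^1$); if $\sigma_1|_C$ were also hyperelliptic then $\sigma_1\tau$ and $\sigma_2$ would both lie in $\tG_C$, contradicting its cyclicity. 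Your route is a bit slicker and uses the enlarged group $\tG$ more essentially; the paper's route is more self-contained in that it only needs the $G$-action and Lemma~\ref{lem: Z22}. Both are valid, and it would be worth making the justification of ``$\tau|_C$ is hyperelliptic'' explicit in a final write-up.
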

\begin{proof}
By Lemma~\ref{lem: hyp g3}, we have $\tau(C)=C$. 

\begin{claim}
We have $f^*b=2C$.
\end{claim}
\begin{proof}[Proof of the claim.]
If $\tG_C$ is nontrivial, then $2\mid m$ by Lemma~\ref{lem: fix vs sing fib}, where $m$ is the multiplicity of $C$ in $f^*b$. Since $K_S\cdot C = 2g(C)-2-C^2\geq 2$, we infer that $K_S\cdot(mC)\geq 4$. It follows that $C^2=0$, $m=2$, and $f^*b=2C$. If $\tG_C$ is trivial, then $\tG$ acts faithfully on $C$ and hence $4\mid C^2$ by Lemma~\ref{lem: Z2n C} (i). It follows from $4\geq K_S\cdot C = 2g(C)-2-C^2 =2-C^2$ that $C^2=0$ and hence $F=2C$.
\end{proof}

Suppose that $G_C$ is nontrivial. Then $G_C=\langle\sigma_i\rangle$ for some $1\leq i\leq 3$. $G_C$ cannot be $\langle\sigma_1\rangle$, since $H$ is $\sigma_1$-fixed and $H\cap C\neq \emptyset$. Up to relabelling $\sigma_2$ and $\sigma_3$, we may assume $G_C=\langle\sigma_2\rangle$, and $C^{\sigma_3}$ is an isolated subset of $S^{\sigma_3}$. Then $C^{\sigma_1}=C^{\sigma_1\sigma_2} = C^{\sigma_3}$. By Lemma~\ref{lem: Z22}, there is a horizontal $\sigma_1$-fixed curve passing  through each point of $C^{\sigma_1}$, and it is necessarily $H$. Since $2H\cdot C=H\cdot f^*b = 4$, we infer that $\#H\cap C=2$ and $H\cap C=C^{\sigma_1}=C^{\sigma_3}$ consists of two points. This gives the case (a).

Now suppose that $G_C$ is trivial, that is, $G$ acts faithfully on $C$. By the Riemann--Hurwitz formula or \cite[Section~5.9]{Acc94}, we know $g(C/\langle\sigma_i\rangle) = g(C/\langle\sigma_j\rangle)=1$ and $g(C/\langle\sigma_k\rangle)=0$ for some ordering $\{i,j,k\}$ of  $\{1,2,3\}$. Up to relabelling $\sigma_2$ and $\sigma_3$, we are left with the two possibilities (b) and (c).
\end{proof}

\begin{lem}\label{g3 ell comp}
Let $f^*b$ be a singular fiber of $f\colon S\rightarrow B$ containing a smooth component $C$ of genus 1. Then $f^*b=mC+D$ for $m\in\{1,2\}$ and $D$ is nonzero effective divisor consisting of $(-2)$-curves. 
\end{lem}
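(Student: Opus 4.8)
The plan is to combine the structural results on genus-$3$ fibres already established with the numerical constraints coming from relative minimality and from the numerically trivial $(\ZZ/2\ZZ)^2$-action, and to reduce the whole statement to a single uniqueness assertion about components of positive $K_S$-degree.

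First I would record the shape of the fibre. Since $C$ is smooth of genus $1$, Lemma~\ref{lem: g3 sing comp} rules out any singular component and Lemma~\ref{g3 comp g2} rules out any smooth genus-$2$ component; hence every component of $f^*b$ is smooth of genus $\le 1$, and by Lemma~\ref{g3 snc} the curve $f^*b$ is simple normal crossing. Write $m=\mult_C f^*b$, set $k:=K_S\cdot C$, and put $D:=f^*b-mC$. By adjunction $k=-C^2$ since $g(C)=1$. As $C$ is a proper component of a fibre, Zariski's lemma gives $C^2\le 0$; equality would force $f^*b\equiv mC$ with $\mathrm{Supp}(f^*b)=C$, and then $p_a(mC)=1\ne 3$, a contradiction. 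Thus $C^2<0$, so $k\ge 1$ and $D\ne 0$. Relative minimality gives $K_S\cdot D\ge 0$, and from $K_S\cdot f^*b=2g-2=4$ we obtain $mk\le 4$.

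The crucial reduction is that \emph{it suffices to prove that $C$ is the unique component of $f^*b$ of positive $K_S$-degree}. Indeed, granting this, every other component $\Gamma$ satisfies $K_S\cdot\Gamma=0$; being a smooth proper subcurve of a fibre it has $\Gamma^2<0$, so adjunction $2p_a(\Gamma)-2=K_S\cdot\Gamma+\Gamma^2$ forces $p_a(\Gamma)=0$ and $\Gamma^2=-2$. Hence $D$ consists of $(-2)$-curves and $K_S\cdot D=0$, whence $mk=4$ and $m\in\{1,2,4\}$. To bring $m$ into $\{1,2\}$ I would split according to whether $G$ acts faithfully on $C$. If $G$ acts faithfully, then Lemma~\ref{lem: Z2n C}(i) (with $n=2$) forces $C^2$ even, so $k\ge 2$ and $m\le 2$. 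If $G$ does not act faithfully, then $G_C=\langle\sigma_i\rangle$ for some $i$, so $C$ is $\sigma_i$-fixed and Lemma~\ref{lem: fix vs sing fib}(ii) makes $m$ even; the only surviving value to exclude is $m=4$, i.e.\ $k=1$, $C^2=-1$. Here I would combine the parity relation of Lemma~\ref{lem: inv mod 2} applied to an involution $\sigma_j$ ($j\ne i$) acting nontrivially on $C$ with the local description at the $G$-fixed points of $C$ given by Lemma~\ref{lem: Z22}(i): the four fixed points of $\sigma_j$ on $C$ are all $G$-fixed, and tracking whether $\sigma_j$ is isolated or curve-fixing at each, against the odd value $C^2=-1$, should produce the desired contradiction.

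The hard part, and the crux of the lemma, is the uniqueness claim, which amounts to excluding two competitors: a \emph{second} component of genus $1$, and a rational component $\Gamma$ with $\Gamma^2\le -3$. For the latter, $\Gamma\cong\PP^1$ is $G$-invariant, and when $G$ acts faithfully Lemma~\ref{lem: Z2n C}(i) makes $\Gamma^2$ even, excluding $\Gamma^2=-3$; the remaining cases must be cut down by the finer analysis of the stabiliser together with the $K_S$-degree budget of $4$. For a second elliptic component the key tool is the hyperelliptic involution $\tau$: by Lemma~\ref{lem: hyp g3} each genus-$1$ component is $\tau$-invariant but not $\tau$-fixed, hence carries exactly four Weierstrass (i.e.\ $\tau$-fixed) points, while the total $\tau$-fixed points on $f^*b$ are bounded by the degree of the horizontal $\tau$-fixed curve; feeding this count into Lemma~\ref{lem: fix vs sing fib} and the evenness and divisibility of self-intersections coming from the numerically trivial action should leave $C$ as the only positive-genus component. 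I expect this counting step to be the main obstacle: purely local arguments do not bound the number of positive-degree components, and one is essentially forced to invoke the global identities $e(S^{\sigma_i})=e(S)$ and their per-fibre refinements from Lemma-Definition~\ref{lem: e S^sigma} and Lemma~\ref{lem: equality g3} in order to close the bookkeeping.
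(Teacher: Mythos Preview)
Your proposal has a genuine gap, and the reason is that you are working with the wrong group. You apply Lemma~\ref{lem: Z2n C} with $n=2$ (the group $G$), which forces you into a case split on whether $G$ acts faithfully on $C$, leaves the case $m=4$, $C^2=-1$ unresolved (``should produce the desired contradiction'' is not a proof), and then leads you to declare the uniqueness of the positive-degree component a ``hard part'' possibly requiring the global identities of Lemma~\ref{lem: equality g3}. None of this is necessary.

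The paper's proof is a few lines because it uses $\tG=\langle G,\tau\rangle\cong(\ZZ/2\ZZ)^3$. Since $p_a(C)=1>0$, Lemma~\ref{lem: hyp g3}(i) gives $\tau(C)=C$, so $\tG$ acts on $C$. Now Lemma~\ref{lem: Z2n C}(iii) with $n=3$ shows $K_S\cdot C=-C^2$ is even \emph{unconditionally}, hence $K_S\cdot C\ge 2$ and $m\le 2$ immediately from $mK_S\cdot C\le K_S\cdot f^*b=4$. For $K_S\cdot D=0$: if $m=1$, then Lemma~\ref{lem: fix vs sing fib}(ii) forces $\tG_C=\{\id_S\}$, so $\tG$ acts faithfully on $C$ and Lemma~\ref{lem: Z2n C}(i) with $n=3$ gives $4\mid C^2$, i.e.\ $K_S\cdot C=4$; if $m=2$, then $mK_S\cdot C$ is automatically divisible by $4$. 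Either way $mK_S\cdot C=4$ and $K_S\cdot D=0$, so $D$ consists of $(-2)$-curves. The argument is entirely local to the fibre; no global Lefschetz-type input is needed. The moral: whenever a component has positive arithmetic genus, Lemma~\ref{lem: hyp g3} hands you the $\tau$-invariance for free, and you should always upgrade from $G$ to $\tG$ before invoking Lemma~\ref{lem: Z2n C}.
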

\begin{proof}
By Lemma~\ref{lem: hyp g3}, $\tG$ acts on $C$ and hence $K_S\cdot C= -C^2$ is even by Lemma~\ref{lem: Z2n C}. We may write $f^*b = mC+D$, where $m\in \ZZ_{>0}$ and $D$ is a nonzero effective divisor not containing $C$. Note that 
\begin{equation}\label{eq: 4 geq 2m}
4=K_S\cdot f^*b = K_S\cdot (mC+D)\geq K_S\cdot mK_S\cdot C \geq 2m,
\end{equation}
and we infer that $m\in \{1,2\}$. 

We claim that $K_S\cdot D=0$, and hence $D$ consists of $(-2)$-curves. In fact, if $m=1$, then $\tG$ acts faithfully on $C$ by Lemma~\ref{lem: fix vs sing fib}, and hence $4\mid K_S\cdot C$. If $m=2$, since $K_S\cdot C$ is even, we also have $4\mid m K_S\cdot C$. In view of \eqref{eq: 4 geq 2m}, we have in both cases $mK_S\cdot C=4$ and $K_S\cdot D=0$. 
\end{proof}

\begin{lem}\label{g3 rat comp}
Let $f^*b$ be a singular fiber of $f\colon S\rightarrow B$, all of whose components are smooth rational curves. Then there is $(-4)$-curve $C$ such that one of the following holds:
\begin{enumerate}
    \item If $\tau(C)=C$ then $C$ is fixed by an involution $\gamma\in \tG\setminus G$, and there is a nonzero effective divisor $D$ consisting of $(-2)$-curves such that $f^*b=2C+D$ is simple normal crossing curve.
    \item If $\tau(C)\neq C$ then $C$ and $\tau(C)$ intersect transversely at four points and $f^*b = C+\tau(C)$. The actions of $G$ on $C$ and $\tau(C)$ are faithful, 
    \[
    C^{\sigma_2}\cup C^{\sigma_3}=C\cap \tau(C) = \tau(C)^{\sigma_2}\cup \tau(C)^{\sigma_3},
    \]
    and $H\cap f^*b = C^{\sigma_1} \cup \tau(C)^{\sigma_1}$ lies on the smooth locus of $f^*b$.
\end{enumerate}
\end{lem}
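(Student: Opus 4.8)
The plan is to exploit the numerical equality $K_S\cdot f^*b = 2p_a(f^*b)-2 = 4$ together with the parity constraints forced by the actions of $G$ and $\tG$ on the components of the fiber $F:=f^*b$. First I would record that, by Lemma~\ref{g3 snc}, $F$ is simple normal crossing, and that every component $C_i$ is $G$-invariant (being vertical), with $K_S\cdot C_i = -2 - C_i^2 \geq 0$ and $\sum_i m_i (K_S\cdot C_i) = 4$, where $m_i=\mult_{C_i}F$. The tools that turn this budget into the stated structure are: (a) if $G$ acts faithfully on a component $C$, then $C^2$ is even by Lemma~\ref{lem: Z2n C}(i), hence so is $K_S\cdot C$; (b) if $G$ does not act faithfully on $C$, then $G_C = \langle\sigma_i\rangle$ and $\mult_C F$ is divisible by $2$ by Lemma~\ref{lem: fix vs sing fib}(ii); (c) if $C$ is $\tG$-invariant (equivalently $\tau(C)=C$), then $C^2$ and $K_S\cdot C$ are even by Lemma~\ref{lem: Z2n C}(iii), and $\tG_C\neq\{\id_S\}$ by Lemma~\ref{lem: Z2n C}(iv).

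Next I would split according to the $\tau$-action on a component $C$ with $K_S\cdot C>0$, one of which exists since the budget is positive. If some such $C$ satisfies $\tau(C)=C$, then $C$ is a $\tG$-invariant $\PP^1$, so by (c) there is an involution $\gamma$ with $\tG_C=\langle\gamma\rangle$; hence $\mult_C F$ is even and $K_S\cdot C$ is even and at least $2$. The budget then forces $\mult_C F = 2$, $K_S\cdot C = 2$, and $C^2=-4$, so that $F = 2C+D$ with $K_S\cdot D=0$; thus $D$ consists of $(-2)$-curves, and $D\neq 0$ because $C^2\neq 0$. This is case (i), and its simple-normal-crossing assertion is already part of Lemma~\ref{g3 snc}. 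It remains to see that $\gamma\notin G$: if $\gamma=\sigma_i\in G$ then $C\subset\Theta_i$ would be $\sigma_i$-fixed, and I would derive a contradiction from the smoothness of $S^{\sigma_i}$ together with the fact (Notation~\ref{nota: g3}) that $\sigma_1$ fixes the horizontal curve $H$ meeting $F$ while $\sigma_2,\sigma_3$ fix no horizontal curve, using the transversality of Lemma~\ref{lem: transverse} at the points where $H$ or the chain $D$ meets $C$.

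If instead no positive component is $\tau$-fixed, every positive component lies in a $\tau$-orbit $\{C,\tau(C)\}$ with $K_S\cdot C=K_S\cdot\tau(C)$ and $\mult_C F=\mult_{\tau(C)}F$, so each such orbit contributes $2\,m_C\,(K_S\cdot C)\geq 2$ to the budget $4$; thus there are at most two orbits. A $(-3)$-curve cannot act faithfully for $G$ (its square is odd), so by (b) it has even multiplicity, which instantly rules out any mult-$1$ configuration; the only surviving possibilities are a single orbit with $\mult_C F=1$, $K_S\cdot C=2$ (two $(-4)$-curves with $C\cdot\tau(C)=4$ forced by $p_a(F)=3$), or a single orbit of $(-3)$-curves with $\mult_C F=2$, giving $F=2C+2\tau(C)+D$ with $C\cap\tau(C)=\emptyset$ (both being $\sigma_i$-fixed and $S^{\sigma_i}$ smooth). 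After excluding this residual configuration I obtain $F=C+\tau(C)$ as in case (ii); faithfulness of $G$ on $C$ then follows because $G_C\neq\{\id_S\}$ would force $\mult_C F$ even by Lemma~\ref{lem: fix vs sing fib}, contradicting $\mult_C F=1$. The fixed-point description is finally read off exactly as in Lemmas~\ref{lem: g3 sing comp} and \ref{g3 comp g2}: $\sigma_2,\sigma_3$ have only isolated fixed points on $C$, which by Lemma~\ref{lem: Z22} sit at the singular points $C\cap\tau(C)$, giving $C^{\sigma_2}\cup C^{\sigma_3}=C\cap\tau(C)$, while $H\cap F=C^{\sigma_1}\cup\tau(C)^{\sigma_1}$ lies on the smooth locus.

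The main obstacle, I expect, is excluding the residual configuration $F=2C+2\tau(C)+D$ and, in case (i), confirming $\gamma\in\tG\setminus G$; both hinge on the same delicate input, namely that the fixed locus of each involution is smooth, so that two distinct $\sigma_i$-fixed components are disjoint and a $\sigma_i$-fixed vertical curve cannot meet the $\sigma_1$-fixed horizontal curve $H$. Turning these disjointness statements into a contradiction requires combining the simple-normal-crossing property (no triple points, so the connecting chain $D$ meets $C$ and $\tau(C)$ transversally), the cyclicity of curve stabilizers, and the local $(\ZZ/2\ZZ)^2$-analysis of Lemma~\ref{lem: Z22} at the points where $D$ meets $C$; I anticipate that tracking the $\sigma_i$-action along $D$, together with the relation $D\cdot C=6$ coming from $F\cdot C=0$, is where the real work lies.
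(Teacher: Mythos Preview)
Your overall strategy matches the paper's: pick a component $C$ with $K_S\cdot C>0$ and split on whether $\tau(C)=C$; case~(i) goes through exactly as you describe. The genuine gaps are both in case~(ii). First, in the subcase $(m,K_S\cdot C)=(1,2)$ you jump from $f^*b=C+\tau(C)+D$ (with $D$ a sum of $(-2)$-curves) directly to $D=0$ by citing $p_a(F)=3$; this is not justified, since the numerics only give $C\cdot\tau(C)+C\cdot D=4$, not $C\cdot\tau(C)=4$. Second, for the residual configuration $f^*b=2C+2\tau(C)+D$ with $C,\tau(C)$ $\sigma_i$-fixed $(-3)$-curves, you propose to track the $\sigma_j$-action along $D$ using $D\cdot C=6$; this route is not promising, as there are many combinatorial ways the chain $D$ can attach and no parity obstruction in sight. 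The paper's decisive move for \emph{both} exclusions is to pass to the quotient $\PP^1$-fibration $S/\langle\tau\rangle\to B$: fibers there are trees of smooth rational curves, so the image of $C+D'$ (for a connected component $D'$ of $D$) can contain neither a cycle nor a curve of positive arithmetic genus. In the $(-3)$-curve case this forces $\#(C\cap D')\leq 1$, and since the intersection point is then a $G$-fixed point of $C$ (of which there are only two), the dual graph of $f^*b$ becomes a chain or a cycle, hence has negative-definite intersection form---a contradiction. In the $(-4)$-curve case the same device shows $D=0$ immediately, since $G$ acting faithfully on $C$ makes $[p]_G\subset C\cap D_1$ have at least two points. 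You should invoke this $\PP^1$-fibration rather than the raw number $D\cdot C$.

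On your worry that $\gamma\in\tG\setminus G$ in case~(i): the paper's own proof only produces $\gamma\in\tG$ and never revisits the sharper claim. Your proposed fix via $H\cap C$ does not obviously succeed either, since nothing rules out $H\cdot C=0$ with $H$ meeting only $D$. This case is discarded anyway once $K_S$ is $f$-ample (Corollary~\ref{g3 ample sing fib}), so the location of $\gamma$ is immaterial to the sequel; you may safely record $\gamma\in\tG$ and proceed.
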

\begin{proof}
Take any component $C$ of $f^*b$ such that $K_S\cdot C>0$, and let $m$ be its multiplicity. 

\medskip

(i) First suppose that $\tau(C)=C$. Then $\tG\cong (\ZZ/2\ZZ)^3$ acts on $C$, which cannot be faithful, and $K_S\cdot C$ is even (Lemma~\ref{lem: Z2n C}). Therefore, $\tG_C=\langle \gamma\rangle$ for some involution $\gamma\in \tG$, and hence $2\mid m$ by Lemma~\ref{lem: fix vs sing fib}. Since $mK_S\cdot C\leq K_S\cdot f^*b=4$, we infer that $m=2, K_S\cdot C=2$, and $f^*b=2C+D$, where $D$ is a nonzero effective divisor consisting of $(-2)$-curves. Of course, $C$ is a $(-4)$-curve by the adjunction formula, and $D$ is nonzero.

\medskip

(ii) Now suppose that $\tau(C)\neq C$. Then we may write $f^*b=mC+m\tau(C)+D$, where $D$ is a possibly zero effective divisor not containing $C$ or $\tau(C)$. Then
\[
2mK_S\cdot C = K_S\cdot(mC+m\tau(C))\leq K_S\cdot f^*b=4
\]
and hence $mK_S\cdot C\leq 2$. It follows that 
\begin{equation}\label{eq: m KC}
(m, K_S\cdot C) = (2,1) \text{ or } (1,2).
\end{equation}
\begin{claim}\label{claim: g3 GC nontriv}
$G$ acts faithfully on $C$ and $\tau(C)$.
\end{claim}
\begin{proof}[Proof of the claim.]
Suppose on the contrary that $G_C = G_{\tau(C)}$ is nontrivial, say generated by $\sigma_i$ for some $1\leq i\leq 3$.  Then $2\mid m$ by Lemma~\ref{lem: fix vs sing fib}, and hence $(m, K_S\cdot C)=(2,1)$ by \eqref{eq: m KC}. We have then $f^*b=2C+2\tau(C)+D$, where $D$ consists of $(-2)$-curves. Since $S^{\sigma_i}$ is smooth, we have $C\cap \tau(C)=\emptyset$. Let $D'$ be a connected component of $D$, which is a chain of $(-2)$-curves by Lemma~\ref{lem: Z22 chain}. We have $\#C\cap D'\leq 1$, since otherwise the image of $C+D'$ in $S/\langle\tau\rangle$ form a cycle of curves in the corresponding fiber, which is not allowed in the $\PP^1$-fibration $S/\langle\tau\rangle\rightarrow B$. Suppose that $C\cap D'\neq \emptyset$. Then it consists of one point, say $p'$, such that $G_{p'}=G$. There are exactly two points of $C$ that are fixed by the whole group $G$. It follows that $f^*b$ is  a cycle or a chain of smooth rational curve, depending on whether $D$ has exactly two or more than two connected components. Hence the intersection form on the components of $f^*b$ is negative definite, which is absurd.
\end{proof}
By Claim~\ref{claim: g3 GC nontriv} and Lemma~\ref{lem: Z2n C}, $K_S\cdot C$ is even, and hence $(m, K_S\cdot C) = (2,1)$ by \eqref{eq: m KC}. We have then 
\[
f^*b=C+\tau(C)+D
\]
where $D$ is a possibly zero effective divisor consisting of $(-2)$-curves. Suppose on the contrary that $D\neq 0$ and take a component $D_1$ of $D$ such that $D_1\cap C\neq \emptyset$. Since $G$ acts faithfully on $C$, we have $|G_p|\leq 2$ for any $p\in C\cap D$. It follows that the orbit $[p]_G\subset C\cap D_1$ consists of at least 2 points. But then the image of $C+D_1$ in $S/\langle\tau\rangle$ has a positive arithmetic genus, which is impossible. 

Therefore $f^*b= C+\tau(C)$ with $K_S\cdot C=K_S\cdot \tau(C)=2$. By the adjunction formula, $C$ and $\tau(C)$ are $(-4)$-curves. It follows that 
\[
C\cdot \tau(C) = -C^2=4.
\]
By Lemma~\ref{g3 snc}, the intersection of $C$ and $\tau(C)$ is transverse. Since $G$ acts faithfully on $C$ and $\tau(C)$, $G_p$ is cyclic and hence of order at most $2$. In other words, we have 
\[
C^{\sigma_i}\cap C^{\sigma_j}=\emptyset \text{ for $1\leq i < j\leq 3$}.
\]
On the other hand, $\# C^{\sigma_i} = \# \tau(C)^{\sigma_i} = 2$ for each $1\leq i\leq 3$ by the Riemann--Hurwitz formula.  Since $\sigma_i$ does not have horizontal fixed-curves for $i\in \{2,3\}$, their fixed loci on $f^*b$ must be contained in the singular locus of $f^*b$. It is then clear that  
\[
C^{\sigma_2}\cup C^{\sigma_3}=C\cap \tau(C) = \tau(C)^{\sigma_2}\cup \tau(C)^{\sigma_3}
\]
and $H\cap f^*b = C^{\sigma_1} \cup \tau(C)^{\sigma_1}$ lies on the smooth locus of $f^*b$.
\end{proof}

Combining Lemmas~\ref{lem: g3 sing comp}, \ref{g3 comp g2}, \ref{g3 ell comp} and \ref{g3 rat comp}, we obtain a classification of singular fibers when $K_S$ is $f$-ample.
\begin{cor}\label{g3 ample sing fib}
Resume Notation~\ref{nota: g3}. Suppose that $K_S$ is $f$-ample. Then the possible singular fibers $F_b=f^*b$ of $f\colon S\rightarrow B$ together with the action of $G$ on them are listed as follows:
\begin{enumerate}[label=(\arabic*)]
    \item $F_b=C$, where $C$ is an elliptic curve with two nodes, say $p$ and $p'$. Up to relabelling $\sigma_2$ and $\sigma_3$, there are the following possibilities for the action of $G$ on $C$:
\begin{enumerate}
    \item $H\cap C=C^{\sigma_1}=\{p, p'\}$, $C^{\sigma_2}=C^{\sigma_3}=\emptyset$.
    \item $C^{\sigma_2}=\{p,p'\}$, $C^{\sigma_3}=\emptyset$, $C^{\sigma_1} = H\cap C$ consists of four points lying on $C_\sm$.
\end{enumerate}
   \item $F_b=2C$, where $C$ is a smooth curve of genus 2. Up to relabelling $\sigma_2$ and $\sigma_3$, there are the following possibilities of the action of $G$ on $C$:
\begin{enumerate}
    \item\label{double g2 GC nontriv ample} $C^{\sigma_2}=C$, $\#C^{\sigma_1}=\#C^{\sigma_3}=2$, $H\cap C=C^{\sigma_1}=C^{\sigma_3}$.
    \item\label{double g2 GC triv1 ample} $\# C^{\sigma_1}=6$, $\#C^{\sigma_2}=\#C^{\sigma_3}=2$.
    \item\label{double g2 GC triv2 ample} $\# C^{\sigma_2}=6$, $\#C^{\sigma_1}=\#C^{\sigma_3}=2$.
\end{enumerate}
\item $F_b=C+\tau(C)$, where $C$ and $\tau(C)$ are $(-4)$-curves intersecting transversely at four points. We have 
\[
C^{\sigma_2}\cup C^{\sigma_3}=C\cap \tau(C) = \tau(C)^{\sigma_2}\cup \tau(C)^{\sigma_3},
\]
and $H\cap f^*b = C^{\sigma_1} \cup \tau(C)^{\sigma_1}$ consists of 4 points lying on the smooth locus of $f^*b$.
\end{enumerate}
\end{cor}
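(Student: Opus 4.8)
The plan is to derive the classification purely by combining the structural Lemmas~\ref{lem: g3 sing comp}, \ref{g3 comp g2}, \ref{g3 ell comp} and \ref{g3 rat comp} with the single extra input that $K_S$ is $f$-ample. The crucial consequence of $f$-ampleness is that no fiber $F_b=f^*b$ can contain a $(-2)$-curve: a $(-2)$-curve $E$ is a smooth rational curve with $E^2=-2$, so the adjunction formula gives $K_S\cdot E = 2p_a(E)-2-E^2 = 0$, contradicting $K_S\cdot E>0$. Hence every irreducible vertical curve $C$ satisfies $K_S\cdot C>0$, and in particular the ``exceptional part'' $D$ occurring in Lemmas~\ref{g3 ell comp} and \ref{g3 rat comp}(i), which is a nonzero sum of $(-2)$-curves, is forbidden.

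I would then split the singular fibers into two cases according to whether $F_b$ has a singular irreducible component. If $F_b$ has a singular component $C$, then Lemma~\ref{lem: g3 sing comp} applies verbatim: $F_b=C$ is an elliptic curve with two nodes, and the two descriptions (a) and (b) of the $G$-action recorded there are exactly cases (1)(a) and (1)(b) of the statement. No further use of $f$-ampleness is needed here, since it is already built into the lemma.

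If instead all irreducible components of $F_b$ are smooth, then $F_b$ is simple normal crossing by Lemma~\ref{g3 snc}, and I would organize the remaining argument by the arithmetic genus of the components. Because $f$ is relatively minimal of genus $3$, any smooth component $C\subsetneq F_b$ of a singular fiber is a proper subcurve, so $p_a(C)<3$ by Lemma~\ref{lem: subcurve smaller g}; thus its genus is $\leq 2$, and the three possibilities (genus $2$, genus $1$, all rational) are exhaustive. If some component has genus $2$, Lemma~\ref{g3 comp g2} forces $F_b=2C$ together with its three possible $G$-actions, yielding case (2). If some component has genus $1$, Lemma~\ref{g3 ell comp} forces $f^*b=mC+D$ with $D$ a nonzero sum of $(-2)$-curves, which is impossible under $f$-ampleness; hence this configuration does not occur. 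If all components are rational, Lemma~\ref{g3 rat comp} leaves two subcases, and its subcase (i) again produces a nonzero sum $D$ of $(-2)$-curves and is excluded; only subcase (ii), namely $F_b=C+\tau(C)$ with $C$ and $\tau(C)$ transverse $(-4)$-curves meeting in four points, survives, giving case (3).

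There is no genuinely hard step here: the entire content has already been distributed across the preceding lemmas, and $f$-ampleness serves only to discard the two configurations containing $(-2)$-curves. The only point requiring a little care is verifying that the case list is exhaustive and non-overlapping, i.e.\ that a singular fiber with all smooth components falls into exactly one of the genus-$2$, genus-$1$, or all-rational buckets, and that once a genus-$2$ or genus-$1$ component is detected the relevant lemma already determines the whole fiber $f^*b$, so that stray rational components cannot coexist with such a component. This is immediate from the fact that Lemmas~\ref{g3 comp g2} and \ref{g3 ell comp} describe the full fiber rather than an individual component.
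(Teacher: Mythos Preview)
Your proposal is correct and follows exactly the approach of the paper, which simply states that the corollary is obtained by combining Lemmas~\ref{lem: g3 sing comp}, \ref{g3 comp g2}, \ref{g3 ell comp} and \ref{g3 rat comp}. You have merely made explicit the one observation the paper leaves implicit, namely that $f$-ampleness of $K_S$ kills all $(-2)$-curves in fibers and thereby eliminates the configurations of Lemma~\ref{g3 ell comp} and Lemma~\ref{g3 rat comp}(i).
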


\begin{thm}\label{thm: g3}
Resume Notation~\ref{nota: g3}. Suppose that $G$ is numerically trivial. If $K_S$ is $f$-ample, then $g(B)\leq 1$, $f\colon S\rightarrow B$ is a quasi-bundle of genus 3, and $S$ is isogenous to a product of curves.
\end{thm}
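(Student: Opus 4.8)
The plan is to feed the complete list of singular fibres of Corollary~\ref{g3 ample sing fib} into the Euler-characteristic identities of Lemma~\ref{lem: equality g3}, extracting first the bound on $g(B)$ and the relation $K_S^2=8\chi(\mathcal O_S)$, and then the quasi-bundle structure. I record that a smooth fibre $F$ has $e(F)=-4$, that $\deg_{\sigma_1}(f)=\deg(H\to B)=4$ while $\deg_{\sigma_2}(f)=\deg_{\sigma_3}(f)=0$ (Lemma~\ref{lem: g3 gen fib}), so by Lemma-Definition~\ref{lem: e S^sigma} smooth fibres contribute nothing to any of the sums $\sum_b\epsilon_b(\cdot)$ and these sums range over the finitely many singular fibres. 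For each type in Corollary~\ref{g3 ample sing fib} I compute $\epsilon_b(S),\epsilon_b(S^{\sigma_i})$ from the explicit fixed loci, using $e=-2$ for a two-nodal curve of type (1), $e(\mathrm{Supp})=e(C)=-2$ for a double fibre $2C$ of type (2), and $e=2+2-4=0$ for $C+\tau(C)$ of type (3). Setting $a_b:=\tfrac12(\epsilon_b(S^{\sigma_2})+\epsilon_b(S^{\sigma_3}))-\epsilon_b(S^{\sigma_1})$ and $c_b:=\epsilon_b(S)-\epsilon_b(S^{\sigma_1})$, this yields $(a_b,c_b)=(2,4),(1,2),(2,4),(0,0),(6,4),(2,4)$ for the types (1a),(1b),(2a),(2b),(2c),(3) respectively.

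By Lemma~\ref{lem: equality g3}(i),(ii) one has $\sum_b a_b=4e(B)$ and $\sum_b c_b=8e(B)$, hence $\sum_b(c_b-2a_b)=0$. The quantity $c_b-2a_b$ vanishes for every type except (2c), where it equals $-8$; therefore \emph{no fibre of type (2c) occurs}. All surviving types satisfy $a_b\ge 0$, so $\sum_b a_b=4e(B)=8-8g(B)\ge 0$ forces $g(B)\le 1$. Moreover, if $g(B)=1$ then $\sum_b a_b=0$, whence every $a_b=0$ and every singular fibre is of type (2b).

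Next I extract the numerical rigidity. In each surviving type no vertical component is $\sigma_1$- or $\sigma_3$-fixed, and the only $\sigma_2$-fixed vertical curves are the genus-$2$ curves $C$ of type (2a), which satisfy $C^2=\tfrac14(f^*b)^2=0$; distinct such curves lie in distinct fibres and are disjoint. Hence $\Theta_1=\Theta_3=0$ and $\Theta_2^2=0$, so $\Theta_i^2=0$ for all $i$. Applying Lemma~\ref{lem: sign inv} to $\sigma_2$ (whose fixed curves are all vertical) gives $K_S^2=8\chi(\mathcal O_S)+\Theta_2^2=8\chi(\mathcal O_S)$, while applying it to $\sigma_1$ gives $K_S^2=8\chi(\mathcal O_S)+H^2+\Theta_1^2$; comparison yields $H^2=0$, consistently with Lemma~\ref{lem: equality g3}(iv).

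The remaining, and hardest, task is to exclude the reduced singular fibres of types (1) and (3) when $g(B)=0$ (for $g(B)=1$ Step~2 already leaves only the multiple fibres (2b)), that is, to prove $f$ is a quasi-bundle. I would argue on the associated conic bundle $S/\langle\tau\rangle\to B$, over which a smooth fibre of $f$ is the double cover of $\PP^1$ branched at its $8$ Weierstrass points, with $G\cong(\ZZ/2\ZZ)^2$ acting through a fixed Klein four-subgroup of $\mathrm{PGL}_2$ (Lemma~\ref{lem: hyp g3}): a fibre of type (1) or (3) is exactly a collision of these branch points, i.e.\ a genuine variation of the $8$-point configuration. I expect to forbid such fibres by combining the fibre-wise identity Lemma~\ref{lem: equality g3}(iii), $\sum_b\big((K_S+H)\cdot V_b-\Theta_{1,b}^2\big)=8e(B)$, with $H^2=0$ and the description $Z=H+V$, $V=\pi^*Z_{S/G}+\Theta_2$ of the canonical fixed divisor from Notation~\ref{nota: g3}, so that a type-(1) or type-(3) fibre would contribute the wrong amount to this sum. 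Once only multiple fibres survive, the branch divisor of $S\to S/\langle\tau\rangle$ is étale over $B$, all smooth fibres are isomorphic, and $f$ is isotrivial; a relatively minimal isotrivial fibration with only multiple smooth fibres is a quasi-bundle, and by the structure theory of quasi-bundles $S$ admits a finite étale cover isomorphic to a product of two curves. Together with $g(B)\le1$ this is precisely the assertion.
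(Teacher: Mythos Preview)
Your approach is the paper's approach: plug Corollary~\ref{g3 ample sing fib} into the identities of Lemma~\ref{lem: equality g3}. Your table of $(a_b,c_b)$ agrees with the paper's Table~1, the elimination of type~(2c) via $c_b-2a_b$ is correct, and the bound $g(B)\le 1$ follows. Your observation that $\Theta_2^2=\Theta_3^2=0$ and $\Theta_1=0$ (vertically) already hold \emph{before} excluding types~(1) and~(3), hence $K_S^2=8\chi(\mathcal O_S)$ by Lemma~\ref{lem: sign inv} applied to $\sigma_2$, is valid and in fact slightly sharper than the paper's ordering of steps. (One minor imprecision: for a given surface, different type-(2a) fibres may have the fixed curve stabilized by $\sigma_2$ or by $\sigma_3$, so $\Theta_3$ need not vanish globally; but each component still has $C^2=0$, so your conclusion stands.)

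The genuine gap is your last paragraph, which you yourself flag as incomplete. Two points:

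\textbf{(a)} Your intuition that Lemma~\ref{lem: equality g3}(iii) will exclude types~(1) and~(3) is correct, but not because they ``contribute the wrong amount''. The paper checks that $V_b=\Theta_b$ for every type in Corollary~\ref{g3 ample sing fib}, so $(K_S+H)\cdot V_b-\Theta_{1,b}^2$ equals $4$ for type~(2a) and $0$ for \emph{all} others, giving the third equation $4n_{2a}=8e(B)$. It is this together with your first equation $2n_{1a}+n_{1b}+2n_{2a}+2n_3=4e(B)$ that forces $n_{1a}=n_{1b}=n_3=0$; types~(1),(3) contribute zero to (iii), not something incompatible.

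\textbf{(b)} The paper does not prove isotriviality via \'etaleness of the Weierstrass divisor as you propose; it simply cites \cite[Lemma~5]{Ser95}, which says that a minimal surface of general type with a genus $\ge 2$ fibration and $K_S^2=8\chi(\mathcal O_S)$ is a quasi-bundle (hence isogenous to a product). Since you already have $K_S^2=8\chi(\mathcal O_S)$ at the end of your second paragraph, you could invoke Serrano there and finish immediately; the exclusion of types~(1) and~(3) is then a consequence of the quasi-bundle structure, not a prerequisite.
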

\begin{proof}
According to the list of Corollary~\ref{g3 ample sing fib}, one can check directly that $\tilde h\colon \tT\rightarrow B$ is $f$-nef and does not have multiple fibers. It follows that $V_b = V\wedge f^*b$ is just $\Theta_b$, the curves in $f^*b$ that are fixed by some $\sigma_i\in G$. One finds that  
\begin{equation}\label{eq: 4 and 0}
(K_S+H)\cdot V_b -\Theta_{1,b}^2=
\begin{cases}
4  & \text{if $f^*b$ is of type (2a) in Corollary~\ref{g3 ample sing fib}}; \\
0 & \text{otherwise}
\end{cases}
\end{equation}

It is also straightforward to compute the numbers $\epsilon_b(S)$ and $\epsilon_b(S^{\sigma_i})$ for $1\leq i\leq 3$ for each type of singular fibers listed in Corollary~\ref{g3 ample sing fib}:
\begin{center}
\begin{tabular}{|c|c|c|c|c|c|c|}
\hline  $f^*b$  & (1a) & (1b) & (2a) & (2b) & (2c) & (3)\\
\hline $\epsilon_b(S)$ &2&2&2&2&2&4\\
\hline $\epsilon_b(S^{\sigma_1})$ &$-2$&0&$-2$&2&$-2$&0\\
\hline $\epsilon_b(S^{\sigma_2})$ &0&2&$-2$&2&6&2\\
\hline $\epsilon_b(S^{\sigma_3})$ &0&0&2&2&2&2\\
\hline
\end{tabular}

\

\tablename  \ 1. $\epsilon_b(S)$ and $\epsilon_b(S^{\sigma_i})$ for a singular fiber in Corollary~\ref{g3 ample
sing fib}
\end{center}
\

Let $n_{1a}, n_{1b}, n_{2a}, n_{2b}, n_{2c}, n_3$ be the number of singular fibers of corresponding type in Corollary~\ref{g3 ample sing fib}. 
Plugging the values of $(K_S+H)\cdot V_b -\Theta_{1,b}^2$, $\epsilon_b(S)$, $\epsilon_b(S^{\sigma_1}) $, $\epsilon_b(S^{\sigma_2})$, and $\epsilon_b(S^{\sigma_3})$ into the formulas of Lemma~\ref{lem: equality g3}, we obtain
\begin{equation}\label{eq: comb sing fib g3}
\begin{split}
    & 2n_{1a} + n_{1b} +  2n_{2a} + 6n_{2c} + 2n_3 =4e(B) \\
    & 4n_{1a} + 2n_{1b} +4n_{2a}+ 4n_{2c} + 4n_3=8e(B)\\
    & 4n_{2a}=8e(B)
\end{split}
\end{equation}
In particular, we have $e(B)\geq 0$, that is, $g(B)\leq 1$.

Suppose that $g(B)=1$, so $e(B)=0$. Therefore, the nonnegative numbers on the left of the equations of \eqref{eq: comb sing fib g3} are all $0$. It follows that $f$ can only have singular fibers of type (2b) of Corollary~\ref{g3 ample sing fib}. Hence, we have $\Theta_i=0$ for $1\leq i\leq 3$, and $K_S^2 = 8\chi(\sO_S)$ by Lemma~\ref{lem: sign inv}. By \cite[Lemma~5]{Ser95}, $f$ is a quasi-bundle and $S$ is isogenous to a product.

Finally, suppose that $g(B)=0$, so $e(B)=2$. Then the equations \eqref{eq: comb sing fib g3} imply that $n_{2a}= 4$ and all the other unknowns appearing in \eqref{eq: comb sing fib g3} are zero. This means that the singular fibers of $f$ are of type (2a) or (2b) in Corollary~\ref{g3 ample sing fib}. In particular, the singular fibers are of the form $2C$, where $C$ is a smooth curve of genus 2, and there are no vertical $\sigma_1$-fixed curves. Also, for $i\in\{2,3\}$, we have $\Theta_i^2 = 0$, where $\Theta_i$ denotes the (vertical) $\sigma_i$-fixed curves. It follows from Lemma~\ref{lem: sign inv}, applied to $\sigma_2$ or $\sigma_3$, that $K_S^2 = 8\chi(\sO_S)$, and hence $f$ is a quasi-bundle and $S$ is isogenous to a product by \cite[Lemma~5]{Ser95}.
\end{proof}

\section{Fibrations of genus 5 with \texorpdfstring{$(\ZZ/2\ZZ)^2$}{}-action}\label{sec: fib g5}
In this section we investigate genus 5 fibration with a fiber-preserving $(\ZZ/2\ZZ)^2$-action. 

We start by proving the existence of an extra fiber-preserving involution, as well as the non-existence of fixed points on the smooth fibers. 
\begin{prop}\label{prop: lifting} 
Let $f\colon S\rightarrow B$ be a relatively minimal fibration  of genus $g=5$, and $G=\{\id_S, \sigma_1, \sigma_2, \sigma_3\}\cong(\ZZ/2\ZZ)^2$ a subgroup of fiber-preserving automorphisms of $S$ such that $g(F/G)=2$ for a smooth fiber $F$ of $f$. Then the following holds.
\begin{enumerate}
    \item The fixed locus $S^{\sigma_i}$ is vertical with respect to $f$ for any $1\leq i\leq 3$.
    \item The hyperelliptic involution $\bar \tau$ of (relative canonical model) of $S/G\to B$ lifts to a fiber-preserving involution $\tau$ of $S$. 
   \item The group $\tG:=\langle G,\tau\rangle$ is isomorphic to $(\ZZ/2\ZZ)^3$.
\end{enumerate} 
 \end{prop}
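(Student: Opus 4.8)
The plan is to first pin down the $G$-action on a general fiber $F$ by Riemann--Hurwitz, then produce the extra involution as a lift of the hyperelliptic involution of the genus $2$ quotient, and finally read off the structure of $\tG$.

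First I would prove (i). For each $i$ the quotient $F/G$ is obtained from $F/\langle\sigma_i\rangle$ by dividing by the residual involution, so the double cover $F/\langle\sigma_i\rangle\to F/G$ gives, via Riemann--Hurwitz,
\[
2g(F/\langle\sigma_i\rangle)-2\ge 2\bigl(2g(F/G)-2\bigr)=4,\qquad\text{i.e.}\quad g(F/\langle\sigma_i\rangle)\ge 3.
\]
On the other hand $F\to F/\langle\sigma_i\rangle$ gives $8=2g(F)-2\ge 2\bigl(2g(F/\langle\sigma_i\rangle)-2\bigr)$, hence $g(F/\langle\sigma_i\rangle)\le 3$. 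Therefore $g(F/\langle\sigma_i\rangle)=3$ for every $i$, and equality forces the double cover $F\to F/\langle\sigma_i\rangle$ to be unramified: $\sigma_i$ acts freely on $F$. Thus $S^{\sigma_i}$ meets no general fiber and is vertical, proving (i); moreover the whole group $G$ acts freely on $F$, so $\pi\colon F\to F/G$ is \'etale.

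Next I would construct $\tau$. The fiber $\bar F:=F/G$ has genus $2$, hence is hyperelliptic, and its hyperelliptic involution $\bar\tau$ acts as $-\id$ on $H_1(\bar F,\ZZ)$; running this in families over the smooth locus $B^\circ\subset B$ recovers the fiber-preserving involution $\bar\tau$ of the relative canonical model of $S/G\to B$ from the statement. The \'etale $G$-cover $\pi$ corresponds to a surjection $\theta\colon H_1(\bar F,\ZZ)\twoheadrightarrow G$, and since $G$ is $2$-torsion,
\[
\theta\circ\bar\tau_*=\theta\circ(-\id)=-\theta=\theta .
\]
Hence $\bar\tau$ preserves $\theta$ on the nose, so it lifts to an automorphism $\tau$ of $F$; the conjugation action of $\tau$ on the deck group $G$ is the automorphism determined by $\theta\circ\bar\tau_*=\theta$, namely the identity, so $\tau$ commutes with $G$. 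Because $\theta$ (built from the relative torsion/Weierstrass data) is $\bar\tau$-invariant in families, this lift spreads out to a fiber-preserving automorphism of $f^{-1}(B^\circ)$, and, $f$ being relatively minimal of genus $\ge 2$, it extends to a fiber-preserving automorphism $\tau$ of $S$.

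It remains to see that $\tau$ may be taken an involution, which also yields (iii). Since $\tau$ covers $\bar\tau$ we have $\tau^2\in G$. Fix a Weierstrass point $\bar w\in\bar F$; its preimage $\pi^{-1}(\bar w)$ is a single free $G$-orbit of four points. As $\tau$ fixes $\bar w$ and commutes with the simply transitive $G$-action on this orbit, on the orbit $\tau$ agrees with the action of some $g_0\in G$; hence $\tau^2$ acts as $g_0^2=\id$ there and fixes a point of $F$. But $G$ acts freely on $F$, so the element $\tau^2\in G$ equals $\id_S$, i.e.\ $\tau$ is an involution. Finally $\tG=\langle G,\tau\rangle$ is abelian (as $\tau$ commutes with $G$), has order $8$ since $\tau\notin G$ (it covers the nontrivial $\bar\tau$), and has exponent $2$ (every $g\tau$ satisfies $(g\tau)^2=g^2\tau^2=\id$); therefore $\tG\cong(\ZZ/2\ZZ)^3$, proving (ii) and (iii). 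The main obstacle I anticipate is the globalization in (ii): converting the fiberwise lift of $\bar\tau$ over $B^\circ$ into a genuine automorphism of $S$. The cleanest route is to realize $\pi$ as the relative spectrum of a sheaf of algebras attached to $\bar\tau$-invariant relative torsion line bundles, so that $\bar\tau$ acts algebraically on that algebra and the lift is automatically a morphism, after which extension across the singular and multiple fibers follows from relative minimality; one must also remember that $S/G$ carries only $A_1$-singularities (Lemma~\ref{lem: Z2n quotient}) and pass to the relative canonical model to speak of $\bar\tau$, which causes no ambiguity since the genus-$2$ hyperelliptic involution is canonical.
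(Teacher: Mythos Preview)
Your argument for (i) is correct and essentially the paper's (the paper applies Riemann--Hurwitz to $F\to F/G$ in one step rather than via the intermediate quotients). Your argument for (iii), reducing $\tau^2=\id_S$ to freeness of $G$ over a Weierstrass point, is also correct and in fact more explicit than the paper's, which gets $\tau^2=\id$ for free from its construction.

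For (ii) you take a genuinely different route from the paper, and this is where a real gap remains. The paper does not lift $\bar\tau$ directly to $S$. Instead it introduces the intermediate quotients $T_i:=S/\langle\sigma_i\rangle$: each $T_i\to B$ has hyperelliptic genus-$3$ fibers (by \cite[Theorem~5.10]{Acc94}), hence carries its own \emph{global} hyperelliptic involution $\tau_i$; since the hyperelliptic involution is central in the automorphism group of every fiber, $\tau_i$ commutes with the deck involution of $T_i\to S/G$ and therefore covers $\bar\tau$. The natural map $S\to T_1\times_{S/G}T_2$ is birational by a degree count on fibers, and $\tau_1\times\tau_2$ restricts to the desired $\tau$ on $S$, with relative minimality promoting the birational involution to a biregular one. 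This fiber-product construction is manifestly global and makes $[\tau,\sigma_i]=\id$ and $\tau^2=\id$ transparent.

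Your covering-space argument is correct on a single fiber---the observation that $\bar\tau_*=-\id$ on $H_1(\bar F,\ZZ)$ fixes any homomorphism to a $2$-torsion group is exactly why the lift exists fiberwise---but the globalization step asserts what must be proved. Knowing $\theta\circ\bar\tau_*=\theta$ on $\pi_1(\bar F)$ for each smooth fiber only tells you that $\bar\tau^*\theta-\theta$ dies after restriction to fibers; in the homotopy sequence of $(S/G)^\circ\to B^\circ$ this difference could still be a nonzero class coming from $\pi_1(B^\circ)$ (equivalently, your ``$\bar\tau$-invariant relative torsion line bundles'' could differ from their $\bar\tau^*$-pullbacks by nontrivial $2$-torsion from $B^\circ$), and nothing you wrote excludes this. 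What makes the obstruction vanish is precisely that each $T_i$ already carries a global hyperelliptic involution lifting $\bar\tau$; if you want to repair your approach, the cleanest patch is to import that observation and deduce $\bar\tau^*$-invariance of the two $2$-torsion classes cutting out $T_1,T_2\to S/G$, after which your lift to $S$ goes through.
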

 \begin{proof}
 (i) Since $g(F)=5$, $g(F/G)=2$ and $|G|=4$, the Riemann--Hurwitz formula, applied to the quotient map $F\rightarrow F/G$, implies that $F^{\sigma_i}=\emptyset$ for each $1\leq i\leq 3$. It follows that $S^{\sigma_i}$ is vertical with respect to $f$.
 
 (ii) Denote by $T_i:=S/\langle\sigma_i\rangle$ ($i=1, 2$) the quotient surfaces, and by $\pi_i: S\to T_i$ and $\pi_i': T_i\to S/G$ quotient maps.
 Then there is an unique morphism $r\colon S\to T_1\times_{S/G}T_2$ such that    the following  diagram is commutative:
\begin{equation}
    \begin{tikzcd}
    S 
    \arrow[drr, bend left, "\pi_2"]
    \arrow[ddr, bend right, "\pi_1"']
    \arrow[dr, dotted, "r" description] &&\\
    & T_1\times_{S/G}T_2 \arrow[r] \arrow[d] & T_2 \arrow[d, "\pi'_2"] \\
    & T_1 \arrow[r, "\pi'_1"] & S/G
    \end{tikzcd}
\end{equation}
Then $r$ is of degree one onto its image.

The smooth fibers $F/\langle \sigma_i\rangle$ of $T_i\rightarrow B$ has genus 3 and is hyperelliptic by \cite[Theorem~5.10]{Acc94}. For $i\in \{1, 2\}$, let $\tau_i\colon T_i\dashrightarrow T_i$ be the hyperelliptic involution over $B$. Then $\tau_i$ is a lift of the hyperelliptic involution $\bar\tau\colon S/G\dashrightarrow S/G$ over $B$. Then $T_1\times_{S/G}T_2$ is $(\tau_1\times\tau_2)$-invariant, and
$\tau_1\times\tau_2$ induces an involution $\tau$ of $S$. It is straightforward to check that $\tau$ is a lift of $\tau_i$ and $\bar \tau$.

(iii) Let $\bar \sigma_2$ be the involution of $T_1$ induced by $\sigma_2$.
Then $\bar \sigma_2\times\op{id}_{T_2}$ induces an involution of $S$, which is nothing but
 $\sigma_1$.
 Since $\bar \sigma_2$ commutates with $\tau_1$, we have  $\sigma_1$ commutates with $\ti \tau$.  Similarly, $\sigma_2$ commutates with $\ti \tau$. Since $G\cong (\ZZ/2\ZZ)^2$ and $\tau\notin G$, we infer that $\langle G,\tau\rangle\cong(\ZZ/2\ZZ)^3$.
 \end{proof}

\begin{nota}\label{nota: g5}
Unless otherwise specified, the following assumptions and notation are used in the remaining part of this section.
 \begin{itemize}[leftmargin=*]
     \item $f\colon S\rightarrow B$ denotes a smooth fibred surface of genus $g=5$ such that $K_S$ is $f$-ample.
     \item $G=\{\op{id}_S, \sigma_1, \sigma_2, \sigma_3 \}\cong(\ZZ/2\ZZ)^2$ is a subgroup of vertical-curve-preserving automorphisms of $S$ such that $g(F/G)=2$ for a smooth fiber $F$ of $f$.
     \item $\tau\in \Aut_B(S)$ denotes a lift of the hyperelliptic involution of (relative canonical model) of $S/G\to B$, whose existence is guranteed by Proposition~\ref{prop: lifting} (ii).
     \item As in Proposition~\ref{prop: lifting} (iii), denote $\tG:=\langle G,\tau\rangle$, which is isomorphic to $(\ZZ/2\ZZ)^3$.
     \item $\Theta_i$ ($1\leq i\leq 3$) denotes the 1-dimensional part of the fixed locus $S^{\sigma_i}$, and $\Theta:=\sum_{i=1}^3 \Theta_i$ denotes the sum, which are vertical by Proposition~\ref{prop: lifting} (iii).
     \item For a vertical divisor $D$ of $S$ and $b\in B$, $D_b:=D\wedge f^*b$ denotes the part of $D$ on $f^*b$.
 \end{itemize}
Note that the assumptions of Notation~\ref{nota: g5} strengthen those of Proposition~\ref{prop: lifting} in that $K_S$ becomes $f$-ample and the action of $G$ on $S$ preserves each vertical curve.
\end{nota}
 
Now we start to classify the singular fibers of $f\colon S\rightarrow B$ under the set-up of Notation~\ref{nota: g5}.

Recall that for $b\in B$, $\Theta_{b}:=\Theta\wedge f^*b$ is the 1-dimensional part of $\bigcup_{i=1}^{3}(f^*b)^{\sigma_i}$.

\begin{lem}\label{lem: conn fixed curve}
Suppose that $\Theta_b\neq 0$. Then $\Theta_b$ is connected and $\Theta_b^2$ is even.
\end{lem}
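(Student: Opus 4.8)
The plan is to treat the two assertions separately, reducing both to the behaviour of $\tau$ on the set of components of $\Theta_b$. First I would record the standing facts valid under Notation~\ref{nota: g5}: by Proposition~\ref{prop: lifting}(i) the locus $\bigcup_i S^{\sigma_i}$ is vertical, so its $1$-dimensional part is exactly $\Theta$, and each component of $\Theta_b$ is a smooth curve (Cartan's lemma) fixed by a single $\sigma_i$ (stabilizers of curves being cyclic) and $G$-invariant (the action is vertical-curve-preserving). Since $\tau$ commutes with $G$ and is fiber-preserving, it permutes the components of $\Theta_b$. Moreover every component $C$ has even multiplicity in $f^*b$ by Lemma~\ref{lem: fix vs sing fib}(ii) (with $r=2$), satisfies $K_S\cdot C\geq 1$ because $K_S$ is $f$-ample, and $K_S\cdot f^*b=2g-2=8$.

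For the parity statement I would write $\Theta_b^2\equiv\sum_C C^2\pmod 2$ (the cross terms being even) and group the components into $\tau$-orbits. If $\tau(C)=C$, then $\tG=\langle G,\tau\rangle\cong(\ZZ/2\ZZ)^3$ acts on $C$, so $C^2$ is even by Lemma~\ref{lem: Z2n C}(iii); if $\tau(C)\neq C$, the orbit $\{C,\tau(C)\}$ contributes $C^2+\tau(C)^2=2C^2$, again even. Summing over orbits shows $\Theta_b^2$ is even. This half is routine once the $\tG$-action is in place.

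The connectedness is the heart of the matter, and the plan is a budget-plus-contradiction argument. Assume $\Theta_b$ is disconnected; by Lemma~\ref{lem: Z22}(ii) each connected component has $p_a\geq 1$. The key accounting is that each $\tau$-orbit of components contributes at least $4$ to $K_S\cdot f^*b=8$: a $\tau$-fixed $C$ has multiplicity $\geq 2$ and $K_S\cdot C$ even (Lemma~\ref{lem: Z2n C}(iii)) hence $\geq 2$, while a swapped pair contributes $2\,\mult_C(f^*b)\,K_S\cdot C\geq 4$. Since disconnectedness of $\Theta_b$ together with connectedness of $f^*b$ forces a nonempty connecting part contributing at least $1$ more, two orbits would already exceed $8$; hence there is a single $\tau$-orbit, and it must be a swapped pair $\{C,\tau(C)\}$ lying in two distinct components. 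Tracing the bound then pins down each as a single smooth curve with $\mult_C(f^*b)=2$ and $K_S\cdot C=1$, whence $g(C)=1$ and $C^2=-1$.

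The final step, which I expect to be the subtle one, eliminates this surviving configuration by manufacturing an unexpected fixed curve. Since $C$ is $\sigma_2$-invariant but not $\sigma_2$-fixed, Lemma~\ref{lem: inv mod 2} gives $C^2\equiv\#\{\text{isolated }\sigma_2\text{-fixed points on }C\}\pmod 2$; as $C^2=-1$ is odd, such a point $p$ exists. Then $\sigma_1,\sigma_2\in G_p$, so $G_p=G$, and Lemma~\ref{lem: Z22}(i) forces exactly two involutions to fix curves through $p$: one is $\sigma_1$ (fixing $C$), and the other cannot be $\sigma_2$ (since $p$ is isolated in $S^{\sigma_2}$), hence it is $\sigma_3$, yielding a $\sigma_3$-fixed curve $C''\subset\Theta_b$ through $p$. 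But then $C$ and $C''$ share a connected component of $\Theta_b$, contradicting that $C$ is a component by itself. Thus $\Theta_b$ is connected. The main obstacle is precisely isolating this last configuration via the $f$-ampleness budget and then extracting the contradiction from the local structure at the $G$-fixed point $p$.
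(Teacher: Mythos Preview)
Your proof is correct, but it takes a genuinely different route from the paper.

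The paper's key move is to first show that every \emph{connected component} $C$ of $\Theta_b$ is $\tau$-invariant: if not, then $(C+\tau(C))/\tG = C/G$ has $p_a\geq 1$ by Lemma~\ref{lem: Z22}(ii), which is impossible since it sits inside a fiber of the $\PP^1$-fibration $S/\tG\to B$. Once $\tau$-invariance is established, parity drops out immediately via the quotient $S/\tG$ (which is smooth near the image of $\Theta_b$ by Lemma~\ref{lem: Z2n quotient}, giving $C^2=2\overline C^2$), and connectedness follows from a clean budget: each connected component has $K_S\cdot C$ even hence $\geq 2$ and multiplicity $\geq 2$, so two disjoint components plus a nonempty connecting piece would force $K_S\cdot f^*b>8$.

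You avoid $S/\tG$ altogether. Your parity argument via $\tau$-orbits of \emph{irreducible} components and Lemma~\ref{lem: Z2n C}(iii) is slick and independent of the paper's. For connectedness, however, not having $\tau$-invariance upfront means your budget cannot immediately rule out the swapped pair $\{C,\tau(C)\}$ sitting in distinct components, and you must eliminate this residual case by the $C^2=-1$ computation together with Lemma~\ref{lem: inv mod 2} and Lemma~\ref{lem: Z22}(i). This works, but it is more intricate than the paper's argument. What your approach buys is that it stays entirely within the local lemmas of Section~\ref{sec: local} and never appeals to the global structure of $S/\tG$; what the paper's approach buys is economy---one observation about the $\PP^1$-fibration dispatches both halves at once.
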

\begin{proof}
Let $C$ be a connected component of $\Theta_b$. We first show that $C$ is invariant under the action of $\tau$. Suppose on the contrary that $\tau(C)\neq C$. Then $\tau(C)$ is another connected component of $\Theta_b$. We have $p_a((C+\tau(C))/\tG)=p_a(C/G)\geq 1$ since $C/G$ is still a semi-stable curve by Lemma~\ref{lem: Z22}. This is impossible, because $C/G=(C+\tau(C))/\tG$ is a subcurve of a fiber of the $\PP^1$-fibration $S/\tG \rightarrow B$.

\medskip

Note that $S/\tG$ is smooth around the image of $\Theta_b$ and $2C=\bar\pi^*\overline C$ by Lemma~\ref{lem: Z2n quotient}, where $\bar\pi\colon S\rightarrow S/\tG$ is the quotient map and $\overline{C}=\bar \pi(C)$. It follows that
\[
 C^2 = \frac{1}{4}(2C)^2 = \frac{1}{4}(\bar\pi^*\overline C)^2 = 2\overline C^2.
\]
Therefore, $C^2$ is even. By the adjunction formula $K_S\cdot C$ is also even, and hence is at least $2$. Suppose that $C'$ is a connected component of  $\Theta_b$ other than $C$. Then $K_S\cdot C'\geq 2$ for the same reason. Since $f^*b$ is connected, $f^*b-2C-2C'$ is a nonzero effective divisor. It follows that $K_S\cdot f^*b > K_S\cdot(2\Theta_b)\geq K_S\cdot(2C+2C')\geq 8$, which is absurd.

In conclusion, $\Theta_b = C$ is connected, and $\Theta_b^2$ is even.
\end{proof}

\begin{lem}\label{lem: theta neq 0 g5}
Suppose that $\Theta_b\neq 0$. Then one of the following holds.
\begin{enumerate}
 \item $\Theta_b$ is a smooth curve of genus $3$, and $f^*b = 2\Theta_b$. Up to relabelling, the fixed loci are described as follows:
\[
\Theta_b^{\sigma_1} = \Theta_b,\quad \Theta_b^{\sigma_2} = \Theta_b^{\sigma_3} = \emptyset.
\]
\item $\Theta_b$ is a smooth elliptic curve, and there is another smooth elliptic curve $D$ such that $D$ and $\Theta_b$ intersecting transversely at two points, say $p$ and $p'$, and $f^*b = 2(\Theta_b + D)$. Up to relabelling, the fixed loci are described as follows:
 \[
 \Theta_b^{\sigma_1} =  \Theta_b, \quad \Theta_b^{\sigma_i} =\emptyset \text{ for } i=2,3;\quad \# D^{\sigma_1} = \# D^{\sigma_2} = 4, \quad D^{\sigma_3} = \emptyset. 
 \]
 Moreover, $D$ is fixed by some $\gamma\in \tG\setminus G$.
\end{enumerate}
\end{lem}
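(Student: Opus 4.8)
The plan is to combine the numerical constraint coming from $K_S$ being $f$-ample with the fixed-point analysis of the $(\ZZ/2\ZZ)^2$-action, and to descend to the quotient fibrations $S/G\to B$ (of genus $2$) and $S/\tG\to B$ (a $\PP^1$-fibration). First I would record the coarse shape of $\Theta_b$. By Lemma~\ref{lem: conn fixed curve} it is connected with $\Theta_b^2$ even; moreover each irreducible component $C$ of $\Theta_b$ is a smooth $\tG$-invariant curve with $C^2$ even, hence $K_S\cdot C$ even by adjunction, hence $K_S\cdot C\geq 2$ since $K_S$ is $f$-ample. As $C$ is fixed by an involution of $G$, its multiplicity in $f^*b$ is even by Lemma~\ref{lem: fix vs sing fib}, so $f^*b\geq 2\Theta_b$ and therefore $2K_S\cdot\Theta_b\leq K_S\cdot f^*b=8$. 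Thus $K_S\cdot\Theta_b\leq 4$ and $\Theta_b$ has at most two components.

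Next I would rule out the reducible case. If $\Theta_b=C_1+C_2$, the components are fixed by two distinct involutions (the fixed locus of a single involution is smooth), say $\sigma_1,\sigma_2$, and they meet because $\Theta_b$ is connected; the numerics then force $f^*b=2C_1+2C_2$ with $K_S\cdot C_i=2$, so each $C_i$ is rational or elliptic. At any point $q\in C_1\cap C_2$ one has $G_q=G$, and the essential tool is Lemma~\ref{lem: Z22}(i): through a point with stabilizer $G$ pass exactly two mutually transverse $G$-fixed curves. Restricting $\sigma_2$ to $C_1$ and using that an involution has two fixed points on $\PP^1$ and zero or four on an elliptic curve, I would show that either the count of $\sigma_2$-fixed points on $C_1$ already exceeds what the curve allows, or it produces an isolated $\sigma_2$-fixed point through which the forced second $G$-fixed curve is a component of $f^*b$ other than $C_1,C_2$ — a contradiction in every subcase. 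Hence $\Theta_b=C$ is irreducible. The same Lemma~\ref{lem: Z22}(i) mechanism, applied at a hypothetical isolated $\sigma_2$-fixed point of $C$, gives $C^{\sigma_2}=C^{\sigma_3}=\emptyset$ (after relabelling so that $\sigma_1$ fixes $C$). Splitting on $K_S\cdot C\in\{2,4\}$: if $K_S\cdot C=4$ then $f^*b=2C$, $C^2=0$ and $g(C)=3$, which is conclusion (1).

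It remains to treat $K_S\cdot C=2$, where $C$ is elliptic with $C^2=-2$, $f^*b=2C+R$ and $C\cdot R=K_S\cdot R=4$, no component of $R$ being $G$-fixed; this is where the real work lies. Here I would pass to the quotients. Since $\sigma_2$ acts freely on $C$, its image $\bar C$ in the genus-$2$ fibration $S/G\to B$ is again elliptic and, as $C$ meets the singular points of $S/G$ in none of its (all non-isolated) fixed points, $\bar C$ lies in the smooth locus of $S/G$ (Lemma~\ref{lem: Z2n quotient}); meanwhile the image of $f^*b$ in $S/\tG\to B$ is a tree of rational curves. Comparing the arithmetic genus $2$ of the fiber downstairs with this tree structure should force $R=2D$ for a single smooth elliptic curve $D$ meeting $C$ transversally at two points, so that $f^*b=2(C+D)$. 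As $D$ is then $\tG$-invariant, elliptic with $D^2=-2$ but not $G$-fixed, it cannot carry a faithful $\tG$-action, for Lemma~\ref{lem: Z2n C}(i) would give $4\mid D^2$; hence $\tG_D=\langle\gamma\rangle$ for some $\gamma\in\tG\setminus G$. A final Riemann–Hurwitz computation on $D$, using that each $\sigma_i$ acts on the elliptic curve $D$ either as a translation or with four fixed points and that the two points of $C\cap D$ are $\sigma_1$-fixed but not $\sigma_2$- or $\sigma_3$-fixed, yields (after the allowed relabelling of $\sigma_2,\sigma_3$) the fixed loci $\#D^{\sigma_1}=\#D^{\sigma_2}=4$ and $D^{\sigma_3}=\emptyset$ of conclusion (2).

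The main obstacle I anticipate is the third paragraph: controlling the complementary divisor $R$ and proving it is exactly $2D$ with $D$ smooth and elliptic. The difficulty is that the quotient surfaces $S/G$ and $S/\tG$ are singular, so the genus and intersection bookkeeping must be carried out on suitable resolutions, in the spirit of the genus-$3$ set-up of Notation~\ref{nota: g3}; one must in particular exclude that $R$ splits into several rational components or acquires higher multiplicity. By contrast, the exclusion of the reducible case and the vanishing of $C^{\sigma_2},C^{\sigma_3}$ are clean once the Lemma~\ref{lem: Z22}(i) dichotomy is set up, although the $(-4)$-curve subcase in the reducible case still demands careful tracking of the involution fixed-point counts.
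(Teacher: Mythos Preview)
Your outline lands on the correct dichotomy, but two steps do not go through as written, and the paper handles both differently.

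\textbf{Irreducibility of $\Theta_b$.} You assert that each irreducible component $C$ of $\Theta_b$ is $\tG$-invariant with $C^2$ even. Lemma~\ref{lem: conn fixed curve} gives this only for the connected curve $\Theta_b$; while $G$ preserves every vertical curve, $\tau$ may permute the components of $\Theta_b$. Without $C^2$ even you cannot conclude $K_S\cdot C_i\geq 2$, so your ``numerics force $f^*b=2C_1+2C_2$ with $K_S\cdot C_i=2$'' has no basis --- in fact the reducible scenario the paper must exclude has $K_S\cdot C_i=1$. The paper instead organizes the argument by $\Theta_b^2\in\{0,-2\}$ (having first excluded $-4$). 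When $\Theta_b^2=0$ one gets $f^*b=m\Theta_b$; the key input is that the induced genus-$2$ fibration $S/G\to B$ has no multiple fibers, pinning down $m=2$ and $p_a(\Theta_b)=3$, after which a component-genus count gives smoothness. When $\Theta_b^2=-2$ one has $K_S\cdot\Theta_b=2$, so a putative splitting $\Theta_b=C_1+C_2$ forces $K_S\cdot C_i=1$ and $C_i^2=-3$; since $-3$ is \emph{odd}, Lemma~\ref{lem: Z2n C}(iii) shows $\tau(C_i)\neq C_i$, so $\tau$ swaps $C_1,C_2$, whence $G_{C_1}=G_{C_2}$ and the two curves, lying in the smooth fixed locus of a single involution, are disjoint --- contradicting connectedness. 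This $\tau$-switching trick replaces your fixed-point counting.

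\textbf{The complementary divisor in case~(ii).} The paper does not pass to the quotient fibrations. Starting from any component $D$ of $f^*b-2\Theta_b$ meeting $\Theta_b$, it argues directly on $S$: first, $\Theta_b$ and $D$ meet transversally in exactly two points (else $p_a(\Theta_b+D)>5$, using that $\Theta_b\cap D\subset D^{\sigma_1}$ is a union of $G$-orbits); second, $D$ is fixed by some $\gamma\in\tG\setminus G$, by contradiction. If $\tau(D)=D$ with $\tG_D$ trivial, then $(\ZZ/2\ZZ)^3$ acts faithfully on $D$, so $4\mid D^2$ and $g(\tD)\geq 1$, giving $K_S\cdot D\geq 4$, forcing $f^*b=2\Theta_b+D$ and then $\Theta_b\cdot f^*b=-4+2\neq 0$. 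If $\tau(D)\neq D$, one gets $f^*b=2\Theta_b+D+\tau(D)$ with $D,\tau(D)$ disjoint $(-4)$-curves; Riemann--Hurwitz on the rational curve $D$ then produces a $\sigma_i$-fixed point in $D\setminus\Theta_b$ where $f^*b$ is smooth and reduced, forcing a forbidden horizontal $\sigma_i$-fixed curve. Once $\tG_D\neq\{\id_S\}$, the multiplicity of $D$ is even and the numerics give $f^*b=2(\Theta_b+D)$, $D^2=-2$, $g(D)=1$. Your quotient route may be workable, but the singular-surface bookkeeping you flag as the main obstacle is exactly what this direct argument avoids.
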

\begin{proof}
We have $2K_S\cdot \Theta_b\leq K_S\cdot f^*b=8$. Therefore, $K_S\cdot \Theta_b\leq 4$ and hence 
\[
\Theta_b^2=2p_a(\Theta_b)-2 - K_S\cdot \Theta_b \geq -4.
\]

\begin{claim}\label{claim: not -4}
 $\Theta_b^2=-4$ does not occur.
\end{claim}
\begin{proof}[Proof of the claim]
Suppose on the contrary that $\Theta_b^2=-4$. Then 
\[
K_S\cdot \Theta_b=2p_a(\Theta_b)-2-\Theta_b^2\geq 4.
\]
Therefore, $K_S\cdot(2\Theta_b)\geq 8 = K_S\cdot f^*b$. It follows that $f^*b=2\Theta_b$ and $p_a(\Theta_b)=1$, which is impossible.
\end{proof}

Since $\Theta_b^2$ is even by Lemma~\ref{lem: conn fixed curve}, it follows from the Claim~\ref{claim: not -4} that $\Theta_b^2\in\{0,-2\}$. We proceed according to the value of $\Theta_b^2$.

\medskip

(i) if $\Theta_b^2=0$ then $f^*b = m \Theta_b$ for some positive integer $m$ by Zariski's lemma and $K_S\cdot \Theta_b$ is even the adjunction formula. Then $mK_S\cdot \Theta_b = K_S\cdot f^*b=8$ and hence $m\in \{2,4\}$. 
Note that $S/G$ is smooth around the image of $\pi(\Theta_b)$ by Lemma~\ref{lem: Z2n quotient}, where $\pi\colon S\rightarrow S/G$ is the quotient map, and $h^*b = \frac{m}{2} \pi(\Theta_b)$, where $h\colon S/G\rightarrow B$ is the induced genus two fibration. Since a genus two fibration cannot have double fibers by the adjunction formula, it follows that $m=2$. Now that $K_S\cdot \Theta_b = \frac{1}{2}K_S\cdot f^*b=4$, we have $p_a(\Theta_b) = \frac{1}{2}\Theta_b\cdot(K_S+\Theta_b)=3$.

\begin{claim}\label{claim: Theta_b1}
$\Theta_b$ is a smooth curve of genus $3$.
\end{claim}
\begin{proof}[Proof of the claim.]
Suppose on the contrary that $C$ is an irreducible component of $\Theta_b$ with maximal genus. Without loss of generality, suppose that $G_C=\langle\sigma_1\rangle$. Then 
\[
-C^2=C\cdot(\Theta_b-C) = \#C^{\sigma_2} \geq 
\begin{cases}
2 & \text{if $g(C)=2$} \\
4 & \text{if $g(C)=1$}
\end{cases}. 
\]
where the inequality is by the Riemann--Hurwitz formula. In both cases, $K_S\cdot C = 2g(C)-2-C^2\geq 4$ and hence $K_S\cdot f^*b > 2K_S\cdot C\geq 8$, which is absurd.  If all components of $\Theta_b$ are rational, then they form a cycle (cf.~Lemma~\ref{lem: Z22 chain}), and hence $p_a(\Theta_b)=1$, which is absurd.
\end{proof}

Up to relabelling the $\sigma_i\in G$, we may assume that $G_{\Theta_b} = \sigma_1$. Since the $\sigma_i$ do not fix horizontal curves, $G_p\not\cong (\ZZ/2\ZZ)^2$ for any $p\in \Theta_b$ by Lemma~\ref{lem: Z22}. It follows that $\Theta_b^{\sigma_i}=\emptyset$ for $i=2,3$.

\medskip

(ii) if $\Theta_b^2=-2$ then $f^*b-2\Theta_b>0$ and $p_a(\Theta_b) = 1 + \frac{1}{2}\Theta_b\cdot(K_S+\Theta_b) =\frac{1}{2}\Theta_b\cdot K_S\leq 2$. Suppose on the contrary that $p_a(\Theta_b)=2$. Then $K_S\cdot\Theta_b=4$ by the adjunction formula, and hence $K_S\cdot f^*b>2K_S\cdot \Theta_b=8$, which is absurd.

\begin{claim}\label{claim: Theta_b2}
$\Theta_b$ is a smooth elliptic curve.
\end{claim}
\begin{proof}
Since $\Theta_b$ is simple normal crossing by Lemma~\ref{lem: Z22} and $p_a(\Theta_b)\leq 2$, it suffices to show that $\Theta_b$ is irreducible. Suppose on the contrary that $\Theta_b= C_1+C_2$ has two irreducible components. Since $K_S$ is $f$-ample and $K_S\cdot \Theta_b=2$, we have $K_S\cdot C_i=1$ for $i=1,2$ and the $C_i$ are $(-3)$-curves. Since $C_i^2$ is odd, $C_i$ is not preserved by $\tau$ by Lemma~\ref{lem: Z2n C}. Since $\Theta_b$ is invariant under $\tau$, we infer that $C_1$ and $C_2$ are switched by $\tau$. It follows that $G_{C_1} = G_{C_2}$, which is absurd.
\end{proof}
Up to relabeling, we may assume that $G_{\Theta_b}=\langle \sigma_1\rangle$. Then the multiplicity $m$ of $\Theta_b$ is even by Lemma~\ref{lem: fix vs sing fib}. Since $2m = K_S\cdot (m\Theta_b) < K_S\cdot f^*b =8$, we have $m=2$. We have $\Theta^{\sigma_i}=\emptyset$ for $i=2,3$, since otherwise $\Theta_b$ would have two components through $\Theta_b^{\sigma_i}$ by Lemma~\ref{lem: Z22}, which contradicts Claim~\ref{claim: Theta_b2}. Therefore, $G_p=G_{\Theta_b}=\langle \sigma_1\rangle$ for any $p\in \Theta_b$.

Now let $D$ be an irreducible component of $f^*b-2\Theta_b$ such that $D\cap \Theta_b\neq \emptyset$. Since $G$ acts faithfully on $D$, we have $2\mid D^2=\tau(D)^2$ by Lemma~\ref{lem: Z2n C}. It follows by the adjunction formula that $2\mid K_S\cdot D= K_S\cdot \tau(D)$. 

\begin{claim}\label{cla: D2}
$\Theta_b$ and $D$ intersect transversely at two points.
\end{claim}
\begin{proof}[Proof of the claim.]
Since $\Theta_b$ is fixed by $\sigma_1$ and $D$ is invariant under $\sigma_1$, it suffices to prove that $D$ is smooth at any $p\in \Theta_b\cap D$ by Lemma~\ref{lem: transverse}. Suppose on the contrary that $p\in D$ is a singularity. Note that $[p]_G = \{p, \sigma_2(p)\}\subset \Theta_b\cap D$ and $\sigma_2(p)$ is also a singularity of $D$. Therefore, $p_a(D)\geq 2$ and $\Theta_b\cdot D\geq 4$, and hence
\[
p_a(\Theta_b+D)  \geq p_a(\Theta_b) + p_a(D) + 2(\Theta_b\cdot D)_p \geq 1+2+4-1> 5
\]
which is absurd.

Now suppose on the contrary that $\Theta_b\cap D$ contains more than two points. Since $\Theta_b\cap D \subset D^{\sigma_1}$, $\sigma_1$ fixes more than two points on $D$, which implies that $p_a(D)\geq g(\tilde D)\geq 1$, where $D$ is the normalization of $D$. Since $\Theta_b\cap D$ consists of $G$-orbits, we have 
\[
\#\Theta_b\cap D  \geq \# D^{\sigma_1}\geq 4
\]
and hence $p_a(\Theta_b+D)\geq 5$ as above, which is again impossible by Lemma~\ref{lem: subcurve smaller g}.
\end{proof}

\begin{claim}\label{cla: D3}
$D$ is (pointwise) fixed by some $\gamma\in \tG\setminus G$.
\end{claim}
\begin{proof}[Proof of the claim.]
Suppose on the contrary that $D$ is not fixed by any $\gamma\in \tG\setminus G$. 

If $\tau(D) = D$ then $\tG$ acts faithfully on $D$, and $4\mid D^2$ by Lemma~\ref{lem: Z2n C}. It follows that $p_a(D)\geq g(\tilde D)\geq 1$, where $\tilde D$ is the normalization of $D$, and hence $K_S\cdot D=2p_a(D)-2-D^2 \geq 4$. But then $K_S\cdot (2\Theta_b+D) \geq 8$ and hence $f^*b = 2\Theta_b+D$. This leads to a contradiction: $0=\Theta_b\cdot f^*b =\Theta_b\cdot (2\Theta_b+D) = -4+2<0$.

If $\tau(D)\neq D$, then $K_S\cdot (2\Theta_b+D+\tau(D))\geq 8$. It follows that $f^*b = 2\Theta_b+D + \tau(D)$ and $K_S\cdot D=2$. We have
\[
-4 \leq 2p_a(D)-2 -K_S\cdot D  = D^2  = - D\cdot (f^*b- D) = -D\cdot(2\Theta_b+\tau(D))\leq -D\cdot (2\Theta_b) = -4
\]
where the first inequality is by the facts that $p_a(D)\geq 0$ and $K_S\cdot D=2$. It follows that $D\cdot\tau(D)=0$, and hence $D$ and $\tau(D)$ are two disjoint $(-4)$-curves. Now by the Riemann--Hurwitz formula, we have $\sum_{i=1}^3 D^{\sigma_i}>2$. Therefore, there exists some $p\in D\setminus \Theta_b$ which is fixed by some $\sigma_i$. Since $f^*b$ is smooth at $p$, there is a horizontal $\sigma_i$-fixed curve through $p$, which is a contradiction to Proposition~\ref{prop: lifting} (iii).
\end{proof}

By Claim~\ref{cla: D3}, $D$ is smooth, and the multiplicity $m_D$ of $D$ in $f^*b$ is even. Since $K_S\cdot D\geq 2$, we have $K_S\cdot(2\Theta_b + m_D D)\geq 8$. It follows that $m_D=K_S\cdot D=2$, and $f^*b = 2\Theta_b + 2 D$. By Claim~\ref{cla: D2}, we infer that $D^2= -  D\cdot \Theta_b =-2$ and hence $g(D) = \frac{1}{2}(K_S+D)\cdot D +1 =1$. In other words, $D$ is a smooth elliptic curve.
\end{proof}

Let $\rho\colon\tS\rightarrow S$ be the blow-up of the isolated points of $\bigcup_{i=1}^{3} S^{\sigma_i}$. Then the action of $\tG=\langle G, \tau\rangle$ lift to $\tS$, and the induced map $\mu\colon \tS/G\rightarrow S/G$ is the minimal resolution, resolving the singularities of $S/G$, all of which are of type  $A_1$. For simplicity of notation, we denote $\tT:=\tS/G$ and $T=S/G$.

Let $h\colon T\rightarrow B$ and $\tilde h\colon\tT\rightarrow B$ the induced genus 2 fibrations. There is a relative canonical map $\tilde\lambda\colon\tT\dashrightarrow\Proj(\tilde h_*\omega_{\tT}) = \Proj(h_*\omega_T)$ over $B$. The following commutative diagram includes all the morphisms introduced so far:
\begin{equation}\label{eq: diag g5}
   \begin{tikzcd}
\tS\arrow[rr, "\tilde\pi"]\arrow[d, "\rho"']&  &  \tT \arrow[ld, "\mu"']\arrow[dd, "\tilde h"] \arrow[rd, dashed, "\tilde\lambda"] &\\
S \arrow[rrd, bend right, "f"] \arrow[r, "\pi"] & S/G\arrow[rd, "h"]  & &  \Proj(\tilde h_*\omega_{\tT})\arrow[dl, "r"'] & \\
&   & B 
\end{tikzcd} 
\end{equation}
where $\tilde \pi$ and $\pi$ are the quotient maps. Note that $\tilde \lambda$ factors as $\rho\circ \mu$, where $\lambda\colon S/G\dashrightarrow \Proj(\tilde h_*\omega_{\tT}) = \Proj(h_*\omega_T)$ is the relative canonical map of $T$ over $B$. The automorphism $\tau\in \Aut_B(S)$ descends to $B$-automorphisms of $T$ and $\tT$, which we still denote by $\tau$.
\begin{lem}\label{lem: nef g5}
The canonical divisors $K_{S/G}$ and $K_{\tT}$ are relatively nef over $B$.
\end{lem}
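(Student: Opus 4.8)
The plan is to reduce everything to a relative nef statement on the smooth surface $\tT$ and then read it off the fiber classification already in hand. Since $S/G$ has only $A_1$-singularities by Lemma~\ref{lem: Z2n quotient}, and these are canonical, the minimal resolution $\mu\colon\tT\to S/G$ is crepant, i.e. $K_{\tT}=\mu^*K_{S/G}$. Hence $K_{S/G}\cdot\bar C=K_{\tT}\cdot\mu^*\bar C$ for every vertical curve $\bar C\subset S/G$, and it suffices to prove that $K_{\tT}$ is relatively nef over $B$, that is, $K_{\tT}\cdot\bar C\ge 0$ for every irreducible vertical curve $\bar C\subset\tT$. The $\mu$-exceptional curves are $(-2)$-curves, for which this is automatic, so I may assume $\bar C$ is not $\mu$-exceptional.

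The main device is a clean intersection formula extracted from the two descriptions of $K_{\tS}$. Write $\Theta^{\mathrm{st}}$ for the sum of the strict transforms on $\tS$ of the vertical fixed curves $\Theta_1,\Theta_2,\Theta_3$, and let $E_1,\dots,E_N$ be the $\rho$-exceptional $(-1)$-curves lying over the isolated fixed points. Each such point has stabilizer $\langle\sigma\rangle$ acting with weight $\frac12(1,1)$, so $\sigma$ fixes the corresponding $E_j$ pointwise; hence every $E_j$ is a fixed curve of $\tilde\pi\colon\tS\to\tT$. Therefore the ramification divisor of $\tilde\pi$ is $\tilde R=\Theta^{\mathrm{st}}+\sum_j E_j$, whereas $K_{\tS}=\rho^*K_S+\sum_j E_j$. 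Subtracting these from $K_{\tS}=\tilde\pi^*K_{\tT}+\tilde R$, the exceptional curves cancel and $\tilde\pi^*K_{\tT}=\rho^*K_S-\Theta^{\mathrm{st}}$. For a non $\mu$-exceptional vertical $\bar C\subset\tT$ and a component $C$ of $\tilde\pi^{-1}(\bar C)$ with $d:=\deg(C\to\bar C)$, the projection formula then gives
\[
d\,(K_{\tT}\cdot\bar C)=K_S\cdot\rho_*C-\Theta^{\mathrm{st}}\cdot C.
\]

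I would then split according to the fiber $F_b=f^*b$ over which $\bar C$ lies. If $\Theta_b=\Theta\wedge F_b=0$, no component of $F_b$ is a fixed curve, so $\Theta^{\mathrm{st}}$ has no component over $b$ and $\Theta^{\mathrm{st}}\cdot C=0$ (the only fixed curves over $b$ are the $E_j$, already removed); since $K_S$ is $f$-ample and $\rho_*C$ is a nonzero vertical curve, the formula gives $d\,(K_{\tT}\cdot\bar C)=K_S\cdot\rho_*C>0$, so $K_{\tT}\cdot\bar C>0$. If $\Theta_b\neq0$, then $F_b$ is of type (i) or (ii) of Lemma~\ref{lem: theta neq 0 g5}, and both fibers are completely explicit. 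Here the vertical components of $\tilde h^*b$ are the images of $\Theta_b$ and, in type (ii), of $D$, together with the $(-2)$-curves resolving the $A_1$-points at $D^{\sigma_1}\cup D^{\sigma_2}$; feeding the numerical data of Lemma~\ref{lem: theta neq 0 g5} into the displayed formula yields $K_{\tT}\cdot\bar C\in\{0,2\}$ for each of them. Thus $K_{\tT}\cdot\bar C\ge0$ in every case, so $K_{\tT}$, and hence $K_{S/G}$, is relatively nef over $B$.

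The step needing most care is the derivation of the displayed formula, and precisely the bookkeeping distinguishing the strict transforms $\Theta^{\mathrm{st}}$ of the genuine fixed curves from the exceptional curves $E_j$: it is exactly because the $E_j$ become fixed curves (weight $\frac12(1,1)$) that they occur with the same coefficient in $K_{\tS}$ and in $\tilde R$ and cancel, leaving only $\Theta^{\mathrm{st}}$. Once this is set up correctly, the reduction to fibers with $\Theta_b\neq0$ is immediate and the remaining verification on the two explicit fiber types of Lemma~\ref{lem: theta neq 0 g5} is a direct computation.
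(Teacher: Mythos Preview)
Your proof is correct and follows essentially the same approach as the paper: both rest on the Riemann--Hurwitz relation $\pi^*K_{S/G}=K_S-\Theta$ (your identity $\tilde\pi^*K_{\tT}=\rho^*K_S-\Theta^{\mathrm{st}}$ is precisely its pullback to $\tS$, since the blown-up isolated fixed points lie off $\Theta$), split into the cases $\Theta_b=0$ and $\Theta_b\neq 0$, and verify the latter against the two fiber types of Lemma~\ref{lem: theta neq 0 g5}. The paper works directly on $S$ with the finite map $\pi$ and concludes via ``$\pi$ finite $\Rightarrow$ nefness transfers'', which is slightly more direct than your passage through $\tS$ and $\tT$, but the substance is identical.
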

\begin{proof}
Since $S/G$ has at most $A_1$-singularities, we have $K_{\tT}=\mu^* K_{S/G}$. Thus it suffices to prove the relative nefness of $K_{S/G}$. Now $\pi^*K_{S/G} = K_S - \sum_{b\in B} \Theta_b$. Since $K_S$ is $f$-ample, $\pi^*K_{S/G}$ is ample on the fibers with $\Theta_b=0$. For $b\in B$ with $\Theta_b\neq 0$, we have two cases by Lemma~\ref{lem: theta neq 0 g5}:
\begin{enumerate}
    \item $f^*b = 2\Theta_b$, where $\Theta_b$ is a smooth curve of genus $3$;
    \item $f^*b = 2\Theta_b + 2D$, where $\Theta_b$ and $D$ are two smooth elliptic curves intersecting transversely at two points.
\end{enumerate}
In the first case, 
\[
\pi^*K_{S/G}\cdot\Theta_b=K_S\cdot \Theta_b =4.
\]
In the second case,  
\[
\pi^*K_{S/G}\cdot\Theta_b=(K_S-\Theta_b)\cdot \Theta_b=2,\quad \pi^*K_{S/G}\cdot D=(K_S-\Theta_b)\cdot D=0.
\]
 In conclusion, $\pi^*K_{S/G}$ is nef over $B$. Since $\pi$ is finite, we infer that $K_{S/G}$ is also nef over $B$.
\end{proof}

The following result describes the base locus $\Bs|K_{\tT} + \tilde h^*\gm|$, where $\gm$ is a sufficiently ample divisor on $B$; it is exactly the indeterminacy locus of the relative canonical map of $\tilde \lambda\colon \tT\dashrightarrow\Proj(h_*\omega_T)$ over $B$.
\begin{lem}\label{lem: bs g2}
Let $X$ be a smooth projective surface and $h\colon X\rightarrow B$ a relatively minimal fibration of genus $2$. Let $h^*b$ be the fiber over a point $b\in B$. Fix a sufficiently ample divisor $\gm$ on $B$, and let $|K_X+h^*\gm| = |M|+Z$ be the decomposition into the mobile and fixed parts. Then the following holds.
\begin{enumerate}[leftmargin=*]
    \item The 1-dimensional part $Z_b:=Z\wedge h^*b$ is nonempty if and only if the hyperelliptic involution $\tau$ has at least two isolated fixed points. If this is the case, then $\Supp(Z_b)$ is the unique minimal chain of $(-2)$-curves which passes through all the isolated $\tau$-fixed points. 
    \item $\Bs|M|_b:=\Bs|M|\,\cap\, \Supp(h^*b)$ is nonempty if and only if $h^*b$ has odd number of isolated $\tau$-fixed points, that is, $h^*b$ is of type $\I_k, \III_k$ or $\V$ in terms of the notation of \cite[Lemma~8]{Hor77}. If this is the case, then $\Bs|M|_b$ consists of exactly one point, which is the image of the curve corresponding to the "$\bullet$" on the left of the dual graphs in \cite[Lemma~8]{Hor77}.
\end{enumerate}
\end{lem}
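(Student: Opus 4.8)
The plan is to reduce the assertion to a statement that is local over $B$, to describe the relevant geometry through the relative canonical map and the involution $\tau$, and finally to match the resulting picture against Horikawa's classification of singular fibres \cite[Lemma~8]{Hor77}.

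First I would pass to the relative setting. Writing $K_X+h^*\gm = \omega_{X/B}+h^*(K_B+\gm)$ and noting that $K_B+\gm$ is very ample of large degree once $\gm$ is sufficiently ample, the sheaf $h_*\omega_{X/B}=:E$ is locally free of rank $2$ and $H^0(X,K_X+h^*\gm)=H^0(B,E\otimes\mathcal O_B(K_B+\gm))$. Because the twist on the base is arbitrarily positive, both the fixed part $Z$ and the mobile base locus $\Bs|M|$ are concentrated on the fibres and are computed, fibre by fibre, from the behaviour of $\omega_{X/B}$ near $h^*b$; in particular $Z_b$ and $\Bs|M|_b$ do not depend on $\gm$. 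This isolates the problem to understanding where and how $\omega_{X/B}$ fails to be relatively globally generated along a single fibre.

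Next I would introduce the relative canonical map $\lambda\colon X\dashrightarrow W:=\Proj(E)$, with $r\colon W\to B$ a $\PP^1$-bundle; on a smooth fibre $\lambda$ restricts to the hyperelliptic map $F\to\PP^1$, so $\lambda$ is generically of degree $2$ and $\tau$ is its deck involution. Passing to the relative canonical model $c\colon X\to\bar X$, which by relative minimality and the adjunction formula contracts exactly the vertical $(-2)$-curves (those with $\omega_{X/B}\cdot C=0$), one has $\omega_{X/B}=c^*\omega_{\bar X/B}$ crepantly, and $\bar X\to W$ becomes a finite double cover with deck transformation $\tau$. The key dichotomy is now visible: a base point of the relative system $|\omega_{\bar X/B}|$ lying at a smooth point of a fibre $\bar X_b$ pulls back to a genuine base point of $|M|$ on $X$ (part (ii)), whereas a base point lying at an $A_k$-singularity of $\bar X_b$ (the image of a chain of $(-2)$-curves) forces the total transform of that point into every member, producing a nonzero fixed part $Z_b$ supported on the corresponding chain of $(-2)$-curves (part (i)). Since the isolated fixed points of $\tau$ on $h^*b$ are the ramification of $\lambda$ sitting on the contracted configuration, the support of $Z_b$ is forced to be the minimal chain joining them, nonempty exactly when there are at least two such points to connect.

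Finally I would analyse $\omega_{\bar X_b}$ on each Gorenstein genus-$2$ fibre, or equivalently invoke Horikawa's list \cite[Lemma~8]{Hor77}, which records for every fibre type its dual graph, the action of $\tau$, and a distinguished component marked ``$\bullet$''. On the fibre types for which $\omega_{\bar X_b}$ is globally generated neither $Z_b$ nor $\Bs|M|_b$ occurs; the relative canonical sheaf fails to be generated at a smooth point of the fibre exactly on the types $\I_k,\III_k,\V$ (part (ii)), and at a singular point precisely when $\tau$ has at least two isolated fixed points (part (i)). A direct comparison then shows that the single base point of $|M|$ lies on the ``$\bullet$''-component, and that $\Supp(Z_b)$ is the asserted minimal $(-2)$-chain. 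The parity enters because the isolated $\tau$-fixed points are the local ramification of $\lambda$ concentrated over the special point of $W_b\cong\PP^1$, and odd local ramification is exactly the obstruction to $\omega_{\bar X_b}$ being generated there.

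\textbf{Main obstacle.} The substantive difficulty is this last step: establishing \emph{intrinsically} the parity criterion — that $\omega_{\bar X/B}$ acquires a base point at a smooth point of the fibre exactly when the number of isolated $\tau$-fixed points is odd — and identifying this point with the ``$\bullet$''-component of \cite[Lemma~8]{Hor77}. This requires a careful case-by-case matching of the abstract invariants (isolated fixed points of $\tau$, minimal $(-2)$-chains) against Horikawa's explicit dual graphs, together with the local multiplicity bookkeeping needed to show that $Z_b$ is reduced along the chain and that $\Bs|M|_b$ consists of a single reduced point.
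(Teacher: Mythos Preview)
Your approach is essentially the same as the paper's: the paper's proof is a single sentence citing Horikawa's classification of singular fibres of genus~$2$ fibrations together with his description of $|K_X+h^*\gm|$ (\cite[Lemmas~8, 10 and 13]{Hor77}). Your conceptual scaffolding (relative canonical map, double-cover structure, contraction to the relative canonical model) is correct and helpful, but note that the paper does not attempt the ``intrinsic'' parity argument you flag as the main obstacle---it simply reads off both (i) and (ii) from Horikawa's explicit tables, and you should cite his Lemmas~10 and~13 (not just Lemma~8) since those are where the base locus of the relative canonical system is actually computed.
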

\begin{proof}
The assertions follow from the classification of the singular fibers of a genus 2 fibration as well as the description of $|K_X+h^*\gm|$ in \cite[Lemmas~8, 10 and 13]{Hor77}.
\end{proof}

\begin{lem}\label{lem: iso bs}
Let $q$ be an isolated point of $\tT^\tau$. Then the following holds.
\begin{enumerate}
    \item $\tilde \pi^{-1}(q)$ is a free $G$-orbit, consisting of 4 points.
    \item There is some $\sigma\in G$ such that $G_p = \langle \tau\sigma\rangle$ for any $p\in \tilde \pi^{-1}(q)$ and $p$ is an isolated point of $\tS^{\tau\sigma}$. As a consequence, $\tilde \pi^{-1}(q)$ consists of isolated points of $\bigcup_{\gamma\in \tG\setminus G} \tS^{\gamma}$.
\end{enumerate}
\end{lem}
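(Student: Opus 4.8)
The plan is to factor the quotient $\tilde\pi\colon\tS\to\tT$ through the quotient by the whole group $\tG$ and to read off the local structure at $q$ from the classification of quotient singularities in Lemma~\ref{lem: Z2n quotient}. Since $\tG/G\cong\langle\tau\rangle$, the quotient map $\bar\pi\colon\tS\to\tS/\tG$ factors as $\psi\circ\tilde\pi$, where $\psi\colon\tT\to\tT/\langle\tau\rangle=\tS/\tG$ is the quotient by the induced involution $\tau$. First I would set $\bar q:=\psi(q)$. Because $\tT$ is smooth and $q$ is an \emph{isolated} fixed point of the involution $\tau$, the discussion following Cartan's lemma gives local coordinates $(u,v)$ at $q$ in which $\tau$ acts as $(u,v)\mapsto(-u,-v)$ (both weights are nontrivial precisely because $q$ is isolated); hence $\bar q$ is an $A_1$-singularity of $\tS/\tG$.

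Next I would apply Lemma~\ref{lem: Z2n quotient} to the action of $\tG\cong(\ZZ/2\ZZ)^3$ on the smooth surface $\tS$. The local model of $\tS/\tG$ at $\bar q$ is $\CC^2/\tG_p$ for any $p\in\bar\pi^{-1}(\bar q)$; comparing with the case analysis in the proof of Lemma~\ref{lem: Z2n quotient}, an $A_1$-point forces $\tG_p\cong\ZZ/2\ZZ$ generated by an involution $\gamma$ of weight $\frac{1}{2}(1,1)$, with $p$ isolated in $\tS^\gamma$. Since $\tG$ is abelian, $\tG_p=\langle\gamma\rangle$ is the same subgroup for every $p$ in the orbit, so $\bar\pi^{-1}(\bar q)$ has $|\tG|/2=4$ points. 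The key identification is then $\tilde\pi^{-1}(q)=\bar\pi^{-1}(\bar q)$, which holds because $\psi^{-1}(\bar q)$ is the $\tau$-orbit $\{q\}$ and therefore $\bar\pi^{-1}(\bar q)=\tilde\pi^{-1}(\psi^{-1}(\bar q))=\tilde\pi^{-1}(q)$. So $\tilde\pi^{-1}(q)$ consists of exactly four points.

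For $(1)$, I would observe that $\tilde\pi^{-1}(q)$ is a single $G$-orbit, being a fiber of the quotient map $\tilde\pi$; having four points $=|G|$, it is a free $G$-orbit, i.e.\ $G_p=\{\id_S\}$ for each of its points. For $(2)$, the common stabilizer $\langle\gamma\rangle=\tG_p$ cannot meet $G$ nontrivially: if $\gamma\in G$ then $G_p=G\cap\langle\gamma\rangle=\langle\gamma\rangle$ would be nontrivial, contradicting the freeness just proved. Hence $\gamma\in\tG\setminus G$, so $\gamma=\tau\sigma$ for a unique $\sigma\in G$, the stabilizer of every $p\in\tilde\pi^{-1}(q)$ equals $\langle\tau\sigma\rangle$, and by the weight computation above each such $p$ is an isolated point of $\tS^{\tau\sigma}$; in particular $\tilde\pi^{-1}(q)\subset\bigcup_{\gamma\in\tG\setminus G}\tS^{\gamma}$ as isolated points, which is the ``as a consequence'' clause.

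The step I expect to be the main obstacle is the local reduction to Lemma~\ref{lem: Z2n quotient}: one must verify that an isolated $\tau$-fixed point on the smooth surface $\tT$ yields genuinely an $A_1$-point downstairs (not a smooth point or a worse singularity), and that this local model forces the $\tG$-stabilizer upstairs to be a single involution of weight $\frac{1}{2}(1,1)$ rather than a larger subgroup or a reflection fixing a curve. Both assertions follow from the linearization of $\tau$ at $q$ together with the explicit case analysis in the proof of Lemma~\ref{lem: Z2n quotient}; once these local identifications are in place, parts $(1)$ and $(2)$ reduce to orbit counting and to intersecting $\tG_p$ with $G$.
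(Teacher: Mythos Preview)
Your argument is correct, and it takes a genuinely different route from the paper's proof. You pass to the full quotient $\tS/\tG$, identify $\bar q$ as an $A_1$-singularity, and then invoke the case analysis in Lemma~\ref{lem: Z2n quotient} to force $|\tG_p|=2$ with $p$ isolated in $\tS^{\gamma}$; the rest is orbit counting and the observation that $\psi^{-1}(\bar q)=\{q\}$ so that $\tilde\pi^{-1}(q)=\bar\pi^{-1}(\bar q)$.

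The paper instead argues directly on $\tS$ without passing to $\tS/\tG$. It first notes that $\tilde\pi^{-1}(q)$ is simultaneously a $G$-orbit and a $\tG$-orbit (being $\tau$-invariant), so $[\tG_p:G_p]=[\tG:G]=2$; combined with Lemma~\ref{lem: l-rank} this leaves $(|\tG_p|,|G_p|)\in\{(4,2),(2,1)\}$. To rule out $|\tG_p|=4$, the paper uses Lemma~\ref{lem: Z22}: if $\tG_p\cong(\ZZ/2\ZZ)^2$ then two of its involutions fix curves through $p$, and at least one of these involutions lies in $\tG\setminus G$, so its fixed curve descends under $\tilde\pi$ to a $\tau$-fixed curve through $q$, contradicting the isolatedness of $q$. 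Your approach packages this geometric step into the singularity classification of Lemma~\ref{lem: Z2n quotient}, which is slicker but less transparent about \emph{why} the large stabilizer is forbidden; the paper's argument makes the obstruction (a $\tau$-fixed curve through $q$) visible.
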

\begin{proof}
Since $q\in \tT^\tau$, $\tilde \pi^{-1}(q)$ is a $G$-orbit invariant under $\tau$. Thus $\tilde \pi^{-1}(q)$ is also a $\tG$-orbit. It follows that, for $p\in \tilde \pi^{-1}(q)$, we have 
\[
[\tG_p: G_p] = [\tG:G]=2.
\]
By Lemma~\ref{lem: l-rank}, $\tG_p\neq \tG$, so $|\tG_p|\in\{2,4\}$, and hence $(|\tG_p|, |G_p|) = (4,2)$ or $(2,1)$.

\medskip

(i) Suppose on the contrary that $G_p$ is nontrivial. Then $\tG_p\cong (\ZZ/2\ZZ)^2$, and hence there are some $\gamma\in \tG\setminus G$ and a $\gamma$-fixed curve $\tilde C\subset \tS$ passing through $p$ by Lemma~\ref{lem: Z22}. But then $\tilde\pi(\tilde C)\subset \tT$ is a $\tau$-fixed curve passing through $p$, which is a contradiction to the assumption that  $q$ is an isolated point of $\tT^\tau$.

\medskip

(ii) By (i) we have $\tG_p=\langle \tau\sigma\rangle$ for some $\sigma\in G$. Since $q\in \tT^\tau$ is isolated, so is $p\in \tS^{\tau\sigma}$.
\end{proof}

\begin{lem}
If $b\in B$ is such that $\Theta_b\neq 0$, then the fiber $\tilde h^*b$ is of type (0) in the sense of \cite{Hor77}. Therefore, $|K_{\tT}+\tilde h^*\gm|$ is base-point-free around $\tilde h^*b$,  or equivalently, $|K_{\tilde h^*b}|$ is base-point-free.
\end{lem}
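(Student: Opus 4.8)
The plan is to reduce to the two fiber shapes furnished by Lemma~\ref{lem: theta neq 0 g5} and, in each, to show that the genus $2$ fiber $\tilde h^*b$ carries no isolated fixed point of the hyperelliptic involution $\tau$. By Lemma~\ref{lem: bs g2}(ii) the absence of an odd number of isolated $\tau$-fixed points gives $\Bs|M|_b=\emptyset$, and by Lemma~\ref{lem: bs g2}(i) the absence of two or more such points gives that the $1$-dimensional part $Z_b$ is empty; together these say precisely that $|K_{\tT}+\tilde h^*\gm|$ is base-point-free around $\tilde h^*b$ (equivalently $|K_{\tilde h^*b}|$ is base-point-free) and that $\tilde h^*b$ is of type $(0)$. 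So the whole lemma will be reduced to the single assertion that $\tT^\tau$ has no isolated point lying over $b$.

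\textbf{The counting mechanism.} The tool for this is Lemma~\ref{lem: iso bs}: an isolated point $q$ of $\tT^\tau$ is the image $\tilde\pi(\tilde\pi^{-1}(q))$ of a \emph{free} $G$-orbit consisting of isolated points of $\bigcup_{\gamma\in\tG\setminus G}\tS^\gamma$. Hence it suffices to prove that \emph{no} point of $\rho^*f^*b$ lying over $b$ is at the same time (a) isolated in the fixed locus of some $\gamma\in\tG\setminus G$ and (b) of trivial $G$-stabilizer. I would establish this by listing the components of $\rho^*f^*b$ together with their pointwise $G$-stabilizers and checking that any point with trivial $G$-stabilizer sits on a positive-dimensional fixed curve of some $\gamma\in\tG\setminus G$, so condition (a) must fail for it.

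\textbf{The two cases.} In the first case of Lemma~\ref{lem: theta neq 0 g5}, $f^*b=2\Theta_b$ has no isolated $\sigma_i$-fixed point over $b$, so $\rho$ and $\mu$ are isomorphisms there; the only fiber component is $\Theta_b=\Theta_b^{\sigma_1}$, every point of which has $\sigma_1$ in its stabilizer, so (b) never holds. Concretely $\tilde h^*b=\Theta_b/\langle\sigma_2\rangle$ is a smooth curve of genus $2$, visibly of type $(0)$. In the second case, $f^*b=2(\Theta_b+D)$, I would record the stabilizers of all components of $\rho^*f^*b$: on $\Theta_b$ every point is $\sigma_1$-fixed; the six isolated fixed points on $D$ (namely $D^{\sigma_1}\setminus\Theta_b$ and $D^{\sigma_2}$) are blown up by $\rho$, and since an involution at an isolated fixed point acts with weight $\tfrac12(1,1)$, each exceptional curve is pointwise fixed by the relevant $\sigma_i$, so its points again have nontrivial $G$-stabilizer. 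Since $G$ acts faithfully on $D$, the only points over $b$ with trivial $G$-stabilizer are the general points of the strict transform $\tilde D$; but $D$ is fixed by some $\gamma_0\in\tG\setminus G$ (Lemma~\ref{lem: theta neq 0 g5}), so $\tilde D\subset\tS^{\gamma_0}$ lies on a $\gamma_0$-fixed curve and its points are therefore not isolated in $\bigcup_{\gamma\in\tG\setminus G}\tS^\gamma$. In both cases condition (a) and (b) are never met simultaneously, so $\tT^\tau$ has no isolated point over $b$, and Lemma~\ref{lem: bs g2} finishes the proof.

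\textbf{Main obstacle.} The delicate step is the exhaustive bookkeeping of the second case: one must be certain the list of components of $\rho^*f^*b$ is complete and that no free-orbit isolated point escapes the stabilizer analysis. The two facts that make it work are that the exceptional curves over the isolated $\sigma_i$-fixed points are themselves $\sigma_i$-fixed (through the $\tfrac12(1,1)$-weight computation), and that a point of the $\gamma_0$-fixed curve $\tilde D$ can never be an isolated fixed point of an element of $\tG\setminus G$ carrying a free $G$-orbit. Once these are verified, Lemma~\ref{lem: bs g2} mechanically converts ``$\tT^\tau$ has no isolated point over $b$'' into base-point-freeness and the type $(0)$ conclusion.
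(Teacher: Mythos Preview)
Your proposal is correct and follows essentially the same approach as the paper. The paper's proof is a two-line argument: it observes (via Lemma~\ref{lem: theta neq 0 g5}) that every component of $f^*b$ is fixed by some $\gamma\in\tG\setminus\{\id_{\tS}\}$, and then invokes Lemma~\ref{lem: iso bs}; your version unpacks this same reasoning explicitly, carrying out the case analysis and verifying the behavior under the blow-up $\rho$ (in particular, that the exceptional curves over isolated $\sigma_i$-fixed points are themselves $\sigma_i$-fixed, and that the strict transform $\tilde D$ remains $\gamma_0$-fixed).
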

\begin{proof}
By Lemma~\ref{lem: theta neq 0 g5}, every component of $f^*b$ is fixed by some $\gamma\in \tG\setminus\{\id_{\tS}\}$. Therefore, there are no base points of $|K_{\tT}+\tilde h^*\gm|$ lying on $f^*b$ by Lemma~\ref{lem: iso bs}.
\end{proof}

\begin{lem}\label{lem: iso bs fibre}
If $\tilde h^*b$  contain an isolated point of $\tT^\tau$,  then the fiber $f^*b\subset S$ is as one of the following.
\begin{enumerate}[label=(\roman*)]
 \item\label{double g3} $f^*b = 2C$, where $C$ is a smooth curve of genus $3$. Up to relabelling, we have
\[
\#C^{\sigma_1}=4, \quad C^{\sigma_i}=\emptyset \text{ for }i=2,3.
\]

 \item\label{double ell 2 nodes}  $f^*b = 2C$, where $C$ is an elliptic curve with two nodes. Up to relabelling, we have
\[
\#C^{\sigma_1}=4, \quad C^{\sigma_2}= C_\sing,\quad \#C^{\sigma_3}= \emptyset.
\]
 \item\label{2 ell} $f^*b = C + D$ with $C$ and $D$ two smooth elliptic curves intersecting transversely at $4$ points. Moreover, the group $G$ acts freely on $f^*b$.
 
\end{enumerate}
In all cases, $\bigcup_{\gamma\in\tG\setminus G} S^\gamma$ has exactly $4$ isolated points, forming one orbit of $\tG$ and of $G$.
\end{lem}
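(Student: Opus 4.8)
The plan is to first exploit the hypothesis to kill the vertical fixed locus. Since $\tilde h^*b$ carries an isolated point of $\tT^\tau$, it cannot be a smooth genus-$2$ (type $(0)$) fiber: on a smooth fiber all six $\tau$-fixed points lie on the horizontal part of $\tT^\tau$ and none is isolated. Hence, by the preceding lemma (the one asserting that $\Theta_b\neq 0$ forces $\tilde h^*b$ to be of type $(0)$), we must have $\Theta_b=0$. I then record the two consequences that drive everything: no $\sigma_i$ fixes a component of $f^*b$, so $G$ acts faithfully on every component $C$, and by Lemma~\ref{lem: Z2n C} this gives $2\mid C^2$ and $2\mid K_S\cdot C$ for every component. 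Thus in $\sum_C m_C\,(K_S\cdot C)=K_S\cdot f^*b=8$ each $K_S\cdot C$ is even.

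\textbf{Step 2: produce the four points.}
Next I would feed $q$ into Lemma~\ref{lem: iso bs}: $\tilde\pi^{-1}(q)$ is a free $G$-orbit $\{p_1,\dots,p_4\}$ of points isolated in $\tS^\gamma$ for a single involution $\gamma\in\tG\setminus G$, so their full $\tG$-stabilizer is $\langle\gamma\rangle$. Since none of the $p_j$ is fixed by an element of $G$, they avoid $\Exc(\rho)$ and descend to four points $r_1,\dots,r_4\in f^*b$, forming a free $G$-orbit and a single $\tG$-orbit, each isolated in $S^\gamma$ with $\gamma$ acting as $(x,y)\mapsto(-x,-y)$. By Lemma~\ref{lem: fix vs sing fib}(i), $f^*b$ is singular (possibly only non-reduced) at each $r_j$; by Lemma~\ref{lem: cusp} none of the $r_j$ is a cusp; and since every component is $G$-invariant, any component through one $r_j$ passes through all four.

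\textbf{Step 3: classify $f^*b$.}
The remaining levers are: all $K_S\cdot C$ even, $K_S\cdot f^*b=8$, $p_a(f^*b)=5$, connectivity, and that $S/\tG\to B$ is a $\PP^1$-fibration, so every connected $\tG$-invariant subcurve $Y\subseteq f^*b$ has $p_a(Y/\tG)=0$. A reduced $(-2)$-curve $E\subset f^*b$ would carry a faithful $(\ZZ/2\ZZ)^2$-action whose three involutions each have two \emph{isolated} $S$-fixed points on $E$ (using $\Theta_b=0$, Proposition~\ref{prop: lifting}(i) to exclude the $+1$-eigendirection case, and Lemma~\ref{lem: transverse} for transversality); these six distinct points are crossings of $E$ with the rest, contradicting $E\cdot(f^*b-E)=2$. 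With $(-2)$-tails removed, the heavy part must be either one component of multiplicity $2$ or two of multiplicity $1$, each with $K_S\cdot C=4$. In the first case $C^2=0$ and $f^*b=2C$, $p_a(C)=3$; a Riemann--Hurwitz analysis of $\gamma$ on the normalization then splits into $C$ smooth of genus $3$ (case \ref{double g3}) or elliptic with two nodes (case \ref{double ell 2 nodes}), and the $\sigma_i$-fixed numerology is pinned down by Riemann--Hurwitz for each involution quotient together with $g(F/G)=2$. In the second case the two components meet at the four $r_j$, forcing $C\cdot D\ge 4$, hence $f^*b=C+D$ with $C,D$ smooth elliptic meeting transversally at exactly those four points on which $G$ acts freely (case \ref{2 ell}). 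Uniqueness of the orbit follows because a second isolated $\tau$-fixed point would, by Step~2, add four more singular points and push $K_S\cdot f^*b$ past $8$ or $p_a(f^*b)$ past $5$.

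\textbf{Main obstacle.}
The genuinely hard part will be the exhaustive elimination in Step~3: ruling out every parity-admissible configuration that is \emph{not} $2C$ or $C+D$ with each heavy component of degree $4$. Concretely, one must exclude $(-2)$-curve tails of multiplicity $\ge 2$ (where Lemma~\ref{lem: fix vs sing fib}(i) gives no leverage because the fiber is non-reduced everywhere along such a curve), exclude a heavy component with $C^2<0$ forcing $D\neq 0$, and exclude non-nodal contact at the $r_j$ — all via delicate genus and degree bookkeeping against $p_a(f^*b)=5$ and the genus-zero quotient $S/\tG$. Controlling the multiplicities and the local singularity types at the four points, using the $(-x,-y)$-linearization of $\gamma$ and the commuting action of $\tau$, is where the real work lies.
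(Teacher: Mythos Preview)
Your Steps~1 and~2 are sound and match the paper's setup; note however that the entire $(-2)$-curve discussion (and hence your ``Main obstacle'') is vacuous: by Notation~\ref{nota: g5} the divisor $K_S$ is $f$-ample, so no fiber contains any $(-2)$-curve, reduced or otherwise.

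The genuine gap is in Step~3. You assert the dichotomy ``one component of multiplicity $2$ or two of multiplicity $1$, each with $K_S\cdot C=4$'' without proof. From $\Theta_b=0$ and the $G$-action alone you only get $2\mid K_S\cdot C$, which still allows $K_S\cdot C=2$ and leaves many configurations open (e.g.\ $f^*b=2C_1+2C_2$ with $K_S\cdot C_i=2$, or $f^*b=2C+D$ with $K_S\cdot C=2$, $K_S\cdot D=4$, or four components each of degree $2$). Your sketch gives no mechanism to exclude these.

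The paper closes this gap with one extra idea you are missing: it proves that any component $C$ through the four points $r_j$ is \emph{$\tau$-invariant}. The argument is short: since $\gamma$ acts as $(-x,-y)$ at each $r_j$, every tangent direction at $r_j$ is $\gamma$-invariant, so if $\tau(C)\neq C$ then $(C\cdot\tau(C))_{r_j}\geq 2$ at each of the four points, giving $p_a(C+\tau(C))\geq 8>5$. Once $C$ is $\tau$-invariant, the full group $\tG\cong(\ZZ/2\ZZ)^3$ acts faithfully on $C$ (the stabilizer $\tG_C$ must fix the $r_j$, but $\tG_{r_j}=\langle\gamma\rangle$ and $r_j$ is isolated in $S^\gamma$). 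Now Lemma~\ref{lem: Z2n C} with $n=3$ gives $4\mid C^2$ and $g(\tilde C)\geq 1$, hence $K_S\cdot C=2p_a(C)-2-C^2\geq 4$. This single inequality, together with even multiplicity at the $r_j$'s, forces either $f^*b=2C$ with $p_a(C)=3$ or $f^*b=C+D$ with both components elliptic --- no further case elimination is needed. The smoothness of $C$ at the $r_j$ is also handled separately (your appeal to Lemma~\ref{lem: cusp} rules out cusps of $f^*b$, not nodes of $C$).

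In short: replace your global parity bookkeeping by the local observation that $C$ is $\tau$-invariant; the $\tG$-action then does the heavy lifting.
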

\begin{proof}
Recall the notation in the diagram~\eqref{eq: diag g5}. Let $\tilde q\in \tilde h^*b$ be an isolated point of $\tT^\tau$. By Lemma~\ref{lem: iso bs}, the inverse image $\tilde \pi^{-1}(\tilde q)\subset \tS$ is a free orbit of $G$, and hence not lies on the exceptional curves of $\rho\colon \tS\rightarrow S$. It follows that $\rho(\pi^{-1}(\tilde q))$ also consists of 4 points, say $p_i$ with $1\leq i\leq 4$. The image $q=\tilde\mu(\tilde q)\in S/G$ is a smooth point of $S/G$, and $\pi^{-1}(q) = \{p_i\}_{i=1}^{4}$. By Lemma~\ref{lem: iso bs}, we may assume that $\tG_{p_i} =\langle\tau\rangle$. Let $C\subset f^*b$ be an irreducible  component passing through $\{p_i\}_{i=1}^{4}$. 

\begin{claim}
 $C$ is invariant under $\tau$.
\end{claim}
\begin{proof}[Proof of the claim]
Since the $p_i$ are isolated points of $\bigcup_{\sigma\in G}S^{\tau\sigma}$, every tangent direction at $p_i\in S$ is preserved by $\tG_{p_i}=\langle\tau\rangle$. Suppose on the contrary that $\tau(C)\neq C$. Then the intersection number of $C$ and $\tau(C)$ at each $p_i$, $1\leq i\leq 4$, is at least 2. It follows that
\[
p_a(C+\tau(C)) = p_a(C) + p_a(\tau(C)) + C\cdot\tau(C) \geq 8
\]
which is absurd.
\end{proof}

 It follows that $\tG\cong (\ZZ/2\ZZ)^3$ acts faithfully on $C$. Since rational curves does not admit a faithful $(\ZZ/2\ZZ)^3$-action by Lemma~\ref{lem: Z2n C}, we have $g(\tilde C)\geq 1$, where $\tilde C$ denotes the normalization of $C$.

\begin{claim}\label{cla: C smooth}
The curve $C$ is smooth at $p_i$ for $1\leq i\leq 4$.
\end{claim}
\begin{proof}[Proof of the claim]
Suppose on the contrary that $C$ is singular at $p_i$, $1\leq i\leq 4$. Then
\[
5\leq 4+g(\tilde C) \leq  p_a(C)\leq p_a(f^*b) = 5.
\]
 Hence $g(\tilde C)=1$, $p_a(C)=5$, $f^*b=C$ and the singularities $p_i\in C$ are (ordinary) nodes or cusps. If $p_i\in C$ are nodes then $\tau$ fixes $\geq 8$ points on $\tilde C$, contradicting the Riemann--Hurwitz formula for the quotient map $\tilde C\rightarrow \tilde C/\langle\tau\rangle$. If $p_i\in C$ is a cusp then it cannot be an isolated $\tau$-fixed point by Lemma~\ref{lem: cusp}, which is also a contradiction.
\end{proof}
\begin{claim}\label{cla: C and D}
 Suppose there is another component $D$ of $f^*b$ through $p_i$'s. Then $C$ and $D$ are smooth elliptic curves intersecting transversely at the $p_i$'s and $f^*b=C+D$. Moreover, $G$ acts freely on $f^*b$. This gives the singular fiber in \ref{2 ell}. In this case, $\{p_i\mid 1\leq i\leq 4\}$ is exactly the set of isolated points of $\bigcup_{\gamma\in \tG\setminus G} S^\gamma$ by Lemma~\ref{lem: fix vs sing fib}.
\end{claim}
\begin{proof}[Proof of the claim]
As in Claim~\ref{cla: C smooth}, the curve $D$ is smooth at $p_i$'s and has a positive geometric genus. By the inequalities
\[
5\leq p_a(C) + p_a(D) + 4 - 1\leq p_a(C+ D)\leq5,
\]
we get $f^*b= C+D$ and it has nodes at $p_i$'s. Furthermore, $C$ and $D$ are (smooth) elliptic curves. Since $f^*b$ is reduced and $\sigma_i\in G$ does not fix horizontal curves for $1\leq i\leq 3$, the involutions $\sigma_i\in G$ do not fix any points on $f^*b\setminus\{p_i\}_{i=1}^{4}$. On the other hand, $G_{p_i} =\{\id_S\}$ for $1\leq i\leq 4$. In conclusion, $G$ acts freely on both $C$ and $D$.
\end{proof}

From now on, we assume that $C$ is the only component of $f^*b$ passing through $p_i$ ($1\leq i\leq 4$). Since the $p_i$ are isolated points in $S^\tau$ and $\tau\in \Aut_B(S)$, the multiplicity $\mult_C F_b = \mult_{p_i} F_b$ is even by Lemma~\ref{lem: fix vs sing fib}.  On the other hand, since $\tG$ acts faithfully on $C$, $4\mid C^2$ by Lemma~\ref{lem: Z2n C} and hence $K_S\cdot C = 2p_a(C)-2-C^2\geq 4$.  We have thus $8\leq K_S\cdot (mC) \leq K_S\cdot f^*b =8$. It follows that $m=2$, $f^*b=2C$, and $p_a(C)=3$.

If $C$ is singular then it has an even number of singularities due to the action of $G$. Since $p_a(C)= 3$ and the geometric genus of $C$ is positive, there are exactly $2$ singular points of $C$,  say $p_1$ and $p_2$, which form an orbit of $G$. We may assume that $G_{p_i}=\langle \sigma_2\rangle$. Thus the curve $C$ must have geometric genus $1$, and the singular points of $C$ are either nodes or cusps. The latter cannot occur by Lemma~\ref{lem: cusp}. So $C$ is an elliptic curve with 2 nodes. Applying the Riemann--Hurwitz formula to the quotient map $\tilde 
C\rightarrow \tC/\tG$ one sees easily that
\[
 \sum_{1\leq i \leq 3}\#(f^*b)^{\sigma_i} =  6.
\]
Up to relabeling $\sigma_1$ and $\sigma_3$, we have
\[
\tC^{\sigma_2} = \pi^{-1}(\{p_1, p_2\}),\quad \#\tC^{\sigma_1} =4, \quad \tC^{\sigma_3}=\emptyset,
\]
and hence the fiber $f^*b$ is as in (ii).

Now we can assume $C$ is smooth of genus $3$. By the Riemann--Hurwitz formula, applied to the quotient map $C\rightarrow C/\tG\cong\PP^1$, we have $\sum_{\gamma\in \tG\setminus\{\id_C\}}\#C^{\gamma}=20$. Note that, the horizontal part of $\sum_{\gamma\in \tG\setminus\{\id_C\}} S^\gamma$ intersect $C$ tranversally at 12 points (cf.~Lemma~\ref{lem: transverse}). It follows that 
\[
 \sum_{1\leq i \leq 3}\#(f^*b)^{\sigma_i} \leq  \sum_{\gamma\in \tG\setminus\{\id_S\}}\#C^{\gamma} - 12 - 4= 20- 12 - 4 =4.
\]
On the other hand, by the Riemann--Hurwitz formula, applied to $C\rightarrow C/G$, we infer that $\sum_{i=1}^3\#(f^*b)^{\sigma_i}\geq 4$ and $g(C/G)=1$. Up to relabelling the $\sigma_i$, we have $\#C^{\sigma_1}=4$, $C^{\sigma_i}=\emptyset$ for $i=2,3$ as in (i). This finishes the proof of Lemma~\ref{lem: iso bs}.
\end{proof}

\begin{cor}\label{cor: 1 fix pt}
For any $b\in B$ the fiber $\tilde h^*b$ of $\tilde h\colon\tT\rightarrow B$ contains at most one isolated points of $\tT^\tau$. 
\end{cor}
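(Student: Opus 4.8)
The plan is to argue by contradiction, using the local description of isolated $\tau$-fixed points from Lemma~\ref{lem: iso bs} together with the global count of isolated points of $\bigcup_{\gamma\in\tG\setminus G}S^\gamma$ provided by Lemma~\ref{lem: iso bs fibre}. Suppose that the fiber $\tilde h^*b$ contains two \emph{distinct} isolated points $q_1\neq q_2$ of $\tT^\tau$. The goal is to produce too many isolated points of $\bigcup_{\gamma\in\tG\setminus G}S^\gamma$ on $f^*b$.

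First I would transfer each $q_j$ back to $\tS$. By Lemma~\ref{lem: iso bs}, each preimage $\tilde\pi^{-1}(q_j)$ is a free $G$-orbit consisting of exactly $4$ points, each of which is an isolated point of $\bigcup_{\gamma\in\tG\setminus G}\tS^\gamma$. Since $q_1\neq q_2$, the orbits $\tilde\pi^{-1}(q_1)$ and $\tilde\pi^{-1}(q_2)$ are disjoint fibers of $\tilde\pi$, so they comprise $8$ distinct points of $\tS$. Because the composite $\tilde h\circ\tilde\pi$ coincides with $f\circ\rho$ in the diagram~\eqref{eq: diag g5} and the $q_j$ lie over $b$, all $8$ of these points lie on $(f\circ\rho)^{-1}(b)$.

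Next I would push them down to $S$ by $\rho$. The key point, already invoked in the proof of Lemma~\ref{lem: iso bs fibre}, is that a free $G$-orbit cannot meet the exceptional locus of $\rho$: the centers blown up are isolated fixed points of the $\sigma_i\in G$, so along each exceptional curve the relevant $\sigma_i$ lies in the $G$-stabilizer, forcing nontrivial stabilizers there. Hence $\rho$ is a local isomorphism at each of the $8$ points, and their images are $8$ distinct isolated points of $\bigcup_{\gamma\in\tG\setminus G}S^\gamma$, all lying on the fiber $f^*b$. This free-orbit/exceptional-curve observation is the only delicate step — ensuring the two orbits stay distinct and remain isolated after descending to $S$ — and it is exactly what keeps the count exact; the rest is bookkeeping with $\tilde\pi$ and the fibration structure.

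Finally this contradicts Lemma~\ref{lem: iso bs fibre}, which asserts that as soon as $\tilde h^*b$ carries an isolated $\tau$-fixed point, the isolated points of $\bigcup_{\gamma\in\tG\setminus G}S^\gamma$ on $f^*b$ form a single $G$-orbit of exactly $4$ points. Having exhibited $8$ such points is absurd, so $\tilde h^*b$ can contain at most one isolated point of $\tT^\tau$. (If $\tilde h^*b$ contains none, the statement is vacuous.) This completes the argument.
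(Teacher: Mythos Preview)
Your proof is correct and follows essentially the same approach as the paper's: both use Lemma~\ref{lem: iso bs} to identify each isolated $\tau$-fixed point on $\tilde h^*b$ with a free $G$-orbit of isolated points of $\bigcup_{\gamma\in\tG\setminus G}S^\gamma$ on $f^*b$, and then invoke the final assertion of Lemma~\ref{lem: iso bs fibre} that there are exactly four such points forming a single $G$-orbit. The paper phrases this as a direct count while you phrase it as a contradiction, but the substance is identical; your treatment of the exceptional-locus step is in fact more explicit than the paper's, which simply refers back to the argument already given in the proof of Lemma~\ref{lem: iso bs fibre}.
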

\begin{proof}
If $h^*b$ contains an isolated point of $\tT^\tau$ then $f^*b$ is as in Lemma~\ref{lem: iso bs}. Since $\bigcup_{\gamma\in\tG\setminus G} S^\gamma$ has exactly $4$ isolated points, forming one orbit of $\tG$ and of $G$, the fixed locus $(\tilde h^*b)^\tau$ contains exactly one point.
\end{proof}

\begin{cor}\label{cor: bp KT 2}
For sufficiently ample divisor $\gm$ on $B$, $|K_{\tT} + \tilde h^*\gm|$ has no fixed part and has at most one base point on a singular fiber.
\end{cor}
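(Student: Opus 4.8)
The plan is to read off both assertions directly from Horikawa's description of $|K_X+h^*\gm|$ for a relatively minimal genus $2$ fibration, recorded as Lemma~\ref{lem: bs g2}, combined with the sharp bound on isolated $\tau$-fixed points supplied by Corollary~\ref{cor: 1 fix pt}. First I would observe that $\tilde h\colon \tT\to B$ is indeed a relatively minimal genus $2$ fibration: its general fiber $F/G$ has genus $2$, and $K_{\tT}$ is relatively nef over $B$ by Lemma~\ref{lem: nef g5}. Its relative hyperelliptic involution is precisely the descended automorphism $\tau$ (since a genus $2$ curve is automatically hyperelliptic, and $\tau$ was constructed to lift $\bar\tau$). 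Hence Lemma~\ref{lem: bs g2} applies with $X=\tT$, $h=\tilde h$, and the involution $\tau$. Writing $|K_{\tT}+\tilde h^*\gm|=|M|+Z$ for the decomposition into mobile and fixed parts with $\gm$ sufficiently ample, the fixed part $Z$ is vertical, so $Z=\sum_{b\in B}Z_b$ with $Z_b=Z\wedge\tilde h^*b$.

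For the absence of a fixed part I would argue as follows. By Lemma~\ref{lem: bs g2}(i), $Z_b\neq 0$ if and only if $\tau$ has at least two isolated fixed points on $\tilde h^*b$. But Corollary~\ref{cor: 1 fix pt} guarantees that each fiber $\tilde h^*b$ contains at most one isolated point of $\tT^\tau$. Therefore $Z_b=0$ for every $b\in B$, so $Z=0$ and $|K_{\tT}+\tilde h^*\gm|=|M|$ has no fixed part.

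For the bound on base points I would invoke Lemma~\ref{lem: bs g2}(ii): the set $\Bs|M|_b:=\Bs|M|\cap\Supp(\tilde h^*b)$ is nonempty exactly when $\tilde h^*b$ carries an \emph{odd} number of isolated $\tau$-fixed points, in which case it consists of a single point. By Corollary~\ref{cor: 1 fix pt} this number is $0$ or $1$. If it is $0$, then $\Bs|M|_b=\emptyset$; if it is $1$ (which is odd), then $\Bs|M|_b$ is a single point. In either case $|M|=|K_{\tT}+\tilde h^*\gm|$ has at most one base point on each fiber, and in particular on each singular fiber, which is the assertion.

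The argument is essentially a bookkeeping step, since the substantive work has already been done: the fiber classification of Lemma~\ref{lem: iso bs} feeds into Corollary~\ref{cor: 1 fix pt}, and Horikawa's genus $2$ analysis has been imported as Lemma~\ref{lem: bs g2}. The only point deserving care is the verticality of $Z$ for $\gm$ sufficiently ample, so that the vanishing of all the $Z_b$ genuinely yields $Z=0$ rather than merely the vanishing of its vertical part; this verticality is built into the normalization underlying Lemma~\ref{lem: bs g2}, where $Z_b$ already exhausts $Z$, and so presents no real difficulty.
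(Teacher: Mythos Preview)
Your proof is correct and follows exactly the same approach as the paper, which simply cites Lemma~\ref{lem: bs g2} and Corollary~\ref{cor: 1 fix pt}; you have merely unpacked this one-line reference with appropriate care, including the verification via Lemma~\ref{lem: nef g5} that $\tilde h$ is relatively minimal so that Lemma~\ref{lem: bs g2} applies.
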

\begin{proof}
    This follows from Lemma~\ref{lem: bs g2} and Corollary~\ref{cor: 1 fix pt}.
\end{proof}




\begin{lem}\label{lem: no bp fiber}
Let $b\in B$ be such that $\Theta_b=0$ and $f^*b$ contains no isolated points of $\bigcup_{\gamma\in\tG\setminus G}S^\gamma$. Then $G$ acts faithfully on each component of $f^*b$, and the fixed loci $(f^*b)^{\sigma_i}$, $1\leq i\leq 3$, are pairwise disjoint. Moreover, $f^*b$ together with the $G$-action is as one of the following:
\begin{enumerate}
    \item $f^*b = C + \tau(C)$, where $C$ and $\tau(C)$ are two smooth rational curves intersecting each other at 6 points transversally, and 
    \[
    \bigcup_{i=1}^{3}(f^*b)^{\sigma_i}=C\cap \tau(C), \quad \#(f^*b)^{\sigma_i}=2,\text{ for $1\leq i\leq 3$}.
    \]
    \item $f^*b = 2C$, where $C$ is a smooth curve of genus $3$, and  the fixed loci of the $G$-action is as follows:
    \[
    \sum_{i=1}^3\#(f^*b)^{\sigma_i} =12.
    \]
    \item\label{g3 2 nodes} $f^*b=C$, where $C$ is a genus 3 curve with 2 nodes, and the fixed loci of the $G$-action is as follows:
\[
  \bigcup_{i=1}^{3} (f^*b)^{\sigma_i} =C_\sing.
\]
\item\label{g1 2 tacnodes} $f^*b=C$, where $C$ is an elliptic curve with 2 tacnodes, and the fixed loci of the $G$-action is as follows: 
\[
  \bigcup_{i=1}^{3}(f^*b)^{\sigma_i} =C_\sing.
\]
\item\label{ell 4 cusps} $f^*b=C$, where $C$ is an elliptic curve with 4 cusps, and $G$ acts freely on $C$.  
\item\label{ell 4 nodes free} $f^*b=C$, where $C$ is an elliptic curve with 4 nodes, and $G$ acts freely on $C$.  
\item\label{ell 4 nodes non free} $f^*b=C$, where $C$ is an elliptic curve with 4 nodes, and the 4 nodes form two $G$-orbits, say $\{p_1, p_2\}$ and $\{p_3, p_4\}$, such that $G_{p_1}\neq G_{p_3}$, and $p_i$ is an isolated fixed points of $G$. We have 
\[
  \bigcup_{i=1}^{3} (f^*b)^{\sigma_i}  =C_\sing.
\]

\end{enumerate}
\end{lem}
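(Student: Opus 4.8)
The plan is to settle the two qualitative assertions first and then classify $f^*b$ by a case analysis governed by the tight numerics $K_S\cdot f^*b=8$ and $p_a(f^*b)=5$ together with the $\tG$-action.

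\emph{Faithfulness and disjointness.} Since $G$ is vertical-curve-preserving (Notation~\ref{nota: g5}), every component $C$ of $f^*b$ is $G$-invariant. If the pointwise stabilizer $G_C$ were nontrivial, a generator (necessarily some $\sigma_i$) would fix $C$, making $C$ a vertical component of $\Theta_b$, contrary to $\Theta_b=0$; hence $G$ acts faithfully on each $C$. If a point $p\in f^*b$ lay in $(f^*b)^{\sigma_i}\cap(f^*b)^{\sigma_j}$ with $i\ne j$, then $G_p=G$, and Lemma~\ref{lem: Z22}(i) would furnish two $\sigma$-fixed curves through $p$; these are vertical by Proposition~\ref{prop: lifting}(i), again contradicting $\Theta_b=0$. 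Thus $|G_p|\le2$ throughout $f^*b$ and the loci $(f^*b)^{\sigma_i}$ are pairwise disjoint. In particular each $\sigma_i$-fixed point on $f^*b$ is isolated, so by Lemma~\ref{lem: fix vs sing fib}(i) it lies at a singular point of $f^*b$, and any singular point carries at most one nontrivial element of $G$. The classification then splits according to the $\tau$-action on the components (recall $\tau$ may permute components while $G$ fixes each), using that the image of $f^*b$ in the $\PP^1$-fibration $S/\tG\to B$ is a tree of rational curves.

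\emph{Case A: some component is not $\tau$-invariant.} If $\tau(C)\ne C$ then the $\tG$-orbit of $C$ is $\{C,\tau(C)\}$ with setwise stabilizer $G$, so $C\to C/G=\PP^1$ is a degree-$4$ cover onto the image of $f^*b$ in $S/\tG\to B$. Its Riemann--Hurwitz formula yields $2g(C^\nu)+6$ ramification (i.e.\ $\sigma_i$-fixed) points on $C$, all of which are isolated and hence must lie among the finitely many singular points of $f^*b$ on $C$; comparing this count with the bound coming from $K_S\cdot C\le\tfrac12 K_S\cdot f^*b=4$ forces $C\cong\PP^1$, $K_S\cdot C=4$, $f^*b=C+\tau(C)$ and $C\cdot\tau(C)=6$ (transversal by Lemma~\ref{lem: transverse}). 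The six nodes are then exactly $\bigcup_i(f^*b)^{\sigma_i}$, split $2+2+2$, giving type (1).

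\emph{Case B: every component is $\tau$-invariant.} Here $\tG$ acts on each component. If $f^*b=C$ is reduced and irreducible, it is singular with $p_a(C)=5$ and $\tG$ acts faithfully (a nontrivial $\tG_C$ would force $C$ smooth). Since $|G_p|\le2$, the singular points form $G$-orbits of even cardinality; writing $p_a(C)=g(C^\nu)+\delta=5$, the evenness of the number of singular points together with the impossibility of a faithful $(\ZZ/2\ZZ)^3$-action on a rational $C^\nu$ (Lemma~\ref{lem: Z2n C}(iv)) leaves only $g(C^\nu)\in\{1,3\}$, i.e.\ $\delta\in\{2,4\}$. The local behaviour then pins down the type: a cusp fixed by some $\sigma_i$ would produce a vertical or horizontal $\sigma_i$-fixed curve (Lemma~\ref{lem: cusp}), excluded by $\Theta_b=0$ and Proposition~\ref{prop: lifting}(i), so cusps occur only in free $G$-orbits (type (5)); a $\delta=2$ orbit must be two nodes, each an isolated branch-preserving (weight $\tfrac12(1,1)$) $\sigma_i$-fixed point (type (3)); and $\delta=4$ is realized by two tacnodes, four cusps, or four nodes. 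For four nodes the free/non-free dichotomy is decided at each node by whether the fixing involution preserves the branches---forcing an isolated fixed point that must then lie in $G$ (non-free, type (7))---or swaps them, yielding a horizontal fixed curve of a $\tau$-coset involution and no isolated fixed point (free, type (6)). Finally, for the non-reduced fibres I would pass to the relatively minimal genus-$2$ fibration $\tilde h\colon\tT\to B$ of diagram~\eqref{eq: diag g5}: the hypothesis that $\tilde h^*b$ carries no isolated $\tau$-fixed point (equivalent, by Lemma~\ref{lem: iso bs}, to the absence on $f^*b$ of isolated points of $\bigcup_{\gamma\in\tG\setminus G}S^\gamma$), together with Horikawa's fibre list (Lemma~\ref{lem: bs g2}) and Lemma~\ref{lem: nef g5}, restricts $\tilde h^*b$; pulling back through $\tilde\pi$ and pushing down through $\rho$ gives $f^*b=2C$ with $C$ smooth of genus $3$ and $\sum_i\#(f^*b)^{\sigma_i}=12$ (type (2)), while higher multiplicities and configurations with two or more invariant components are excluded by the same numerical budget and the tree structure of the fibres of $S/\tG\to B$.

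\emph{Main obstacle.} The crux is the local analysis at the singularities in Case B. Unlike the genus-$3$ situation of Section~\ref{sec: fib g3}, genus $5$ admits tacnodal and multiply nodal or cuspidal fibres, and the delicate points are (i) determining the $\tG$-local model of a tacnode and confirming it yields type (4), and (ii) separating the two four-nodal elliptic fibres by reading off, node by node, whether the fixing involution is branch-preserving or branch-swapping. Both hinge on combining the Cartan normal forms with the global constraint that no involution of $\tG\setminus G$ has an isolated fixed point on $f^*b$. A secondary difficulty is the multiple-fibre case, where one must work on the resolution $\tT$ rather than on the singular quotient $S/G$ in order to apply the genus-$2$ fibre classification correctly.
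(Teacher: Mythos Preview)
Your plan has the right shape, but you are missing the structural step the paper establishes first and then uses as the organizing principle throughout. Under the hypotheses, $\tilde h^*b$ carries no isolated $\tau$-fixed point, so it is of Horikawa type~(0); since $K_{S/G}|_{h^*b}$ is ample (because $K_S|_{f^*b}=\pi^*K_{S/G}|_{f^*b}$) and $S/G$ has only $A_1$-singularities, the relative canonical map identifies $(S/\tG)_U$ with the $\PP^1$-bundle $\PP(\tilde h_*\omega_{\tT})_U$ over a neighbourhood $U\ni b$. In particular $(S/\tG)_b$ is a single smooth $\PP^1$, not merely a tree. From this one reads off at once that $\Supp(f^*b)=C\cup\tau(C)$ for a single irreducible $C$; that $C/G\cong\PP^1$ when $C\neq\tau(C)$ (which you need for your Riemann--Hurwitz count in Case~A); and that $f^*b=C$ is reduced when $C=\tau(C)$ and $\tG_C=\{\id_S\}$ (since the pull-back of the reduced $\PP^1$-fibre is unramified at a general point of $C$). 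You invoke Horikawa only at the end for the non-reduced case, and the assertions ``tree of rational curves'' and ``configurations with two or more invariant components are excluded by the same numerical budget'' are left without justification; both need the $\PP^1$-bundle fact.

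Two smaller gaps. In the $\#C_\sing=2$, $\delta_p(C)=2$ case you declare tacnodes, but $\delta_p=2$ also allows an $A_4$-type unibranch point; the paper blows up once, observes that the resulting $\delta=1$ point is still fixed by $\tG_p\cong(\ZZ/2\ZZ)^2$ and hence cannot be a cusp by Lemma~\ref{lem: cusp}, so it is a node and the original point a tacnode. Your separation of the four-nodal types~(6)/(7) by ``branch-preserving versus swapping'' is on the right track but incomplete: the paper splits instead on $|\tG_p|\in\{2,4\}$, and in the latter case uses Riemann--Hurwitz on the normalization $\tC$ to force $G_{p_1}\neq G_{p_3}$.
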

\begin{proof}
Note that $G$ preserves each component of $f^*b$ by Remark~\ref{lem: e S^sigma}. Since $\Theta_b=0$, none of the $\sigma_i$ fixes any component of $f^*b$; in other words, $G$ acts faithfully on each component of $f^*b$. By Lemma~\ref{lem: Z22}, the fixed loci $(f^*b)^{\sigma_i}$, $1\leq i\leq 3$ are pairwise disjoint.

Since $f^*b$ contains no isolated points of $\bigcup_{\gamma\in\tG\setminus G}S^\gamma$, the corresponding fiber $\tilde h^*b\subset\tT$ does not contain isolated points of $\tT^\tau$ either. Thus $\tilde h^*b$ is of type (0) in the sense of \cite{Hor77}, and the relative canonical map $\tilde\lambda\colon\tT\dashrightarrow \PP(\tilde h_*\omega_{\tT})$ over $B$ has no indeterminacy over a neighborhood $U\ni b$. 
Since $K_S$ is ample on $f^*b$ and $K_S|_{f^*b} = \pi^*K_{S/G}|_{f^*b}$, we infer that $K_{S/G}$ is ample on $h^*b$. In other words, $S/G\rightarrow B$ is the relative canonical model of $\tT\rightarrow B$ over a neighborhood $U\ni b$, and we have the following commutative diagram:
\[
\begin{tikzcd}
\tT_U\arrow[rd] \arrow[r] & (S/G)_U \arrow[d] \arrow[r] & \PP(\tilde h_*\omega_{\tT})_U \arrow[ld]\\
& U &
\end{tikzcd}
\]
which is obtained from the corresponding part of \eqref{diag: can map} by restricting everything over $U$.
Since $S/G$ has at most $A_1$-singularities, according the structure theory of genus 2 fibration due to Horikawa \cite{Hor77}, $(S/G)_U\rightarrow \PP(\tilde h_*\omega_{\tT})_U$ is a (finite) double cover whose branch locus is a nodal curve, and the Galois group is obviously generated by $\tau\in\Aut_B(S/G)$. Thus we have $(S/\tG)_U=\PP(\tilde h_*\omega_{\tT})_U$, which is a $\PP^1$-bundle over $U$.
\begin{claim}
There is an irreducible component $C$ of $f^*b$ such that $\Supp(f^*b)=C\cup\tau(C)$.
\end{claim}
\begin{proof}
 Since $G$ preserves each component of the fiber $S_b$, the two fibers $S_b$ and $(S/G)_b$ have the same number of irreducible components. Since $(S/\tG)_b$ is irreducible and $\tG=\langle G, \tau\rangle$, $(S/G)_b$ has at most two irreducible components, and if there are indeed two then they are permuted by $\tau$. The claim follows.
 \end{proof}



\begin{claim}
If $C\neq \tau(C)$, then $f^*b$ is as in (i).
\end{claim}
 \begin{proof}
 Note that $G$ acts faithfully on $C$ and $\tau(C)$. If $C\neq \tau(C)$, then the quotient map $S_b\rightarrow (S/\tG)_b = \PP^1$ is \'etale around a general point of $S_b$. It follows that $S_b = C+\tau(C)$ is a reduced, and hence
 \[
 K_S\cdot C=K_S\cdot \tau(C) =\frac{1}{2}K_S\cdot f^*b = 4.
 \]
 We have then
 \begin{equation}\label{eq: pa C < 3}
 p_a(C)=1+\frac{1}{2}(K_S\cdot C+C^2) < 1 + \frac{1}{2}K_S\cdot C =3.
 \end{equation}

 Suppose on the contrary that $C$ has a singularity $p$. Then $G_p$ has order at most $2$ by Lemma~\ref{lem: Z22}, and the orbit $[p]_G$ contains at least two points, say $\{p,q\}$. Therefore, $p_a(C)\geq 2$, and the equality holds by \eqref{eq: pa C < 3}. It follows that $\{p,q\}=C_\sing$ and $G_p=G_q$ has order 2.  If $p\in C\cap \tau(C)$, then $p, q$ are also singularities of $\tau(C)$, and hence
 \[
 p_a(f^*b) = p_a(C+\tau(C)) \geq p_a(C) + p_a(\tau(C))+C\cdot\tau(C)-1\geq 2+2+4-1>5
 \]
 which is absurd. If $p\in C\setminus \tau(C)$, then $\gamma(p)\in \tau(C)\setminus C$ for any $\gamma\in \tG\setminus G$, and hence cannot be $p$. It follows that $\tG_p=G_p$, and $p$ is an isolated point of $\bigcup_{\gamma\in \tG\setminus \{\id_S\}} S^\gamma$. But then $S/\tG$ is singular at the image of $p$, which is a contradiction to an previous observation. 
 
Since $C\neq \tau(C)$, the natural map $C/G\rightarrow (f^*b)/G\cong \PP^1$ is birational, and, due to the smoothness of the target, must be an isomorphism. By the Riemann--Hurwitz formula for $C\rightarrow C/G$, we have
\[
\sum_{i=1}^3 C^{\sigma_i} = 2g(C) +6
\]
On the other hand, since the points of $C^{\sigma_i}$ are all isolated in $S^{\sigma_i}$, we have by Lemma~\ref{lem: fix vs sing fib}
\[
C^{\sigma_i} \subset \Sing(S_b)  = C\cap \tau(C)
\]
Thus
\[
5=p_a(f^*b) = g(C)+g(\tau(C)) + C\cdot\tau(C)-1 \geq 2g(C) + \sum_{i=1}^3 C^{\sigma_i} - 1 = 4g(C) + 5
\]
It follows that 
\[
g(C)=0,\quad  C\cap \tau(C) = \bigcup_{i=1}^{3} C^{\sigma_i},
\]
and $C$ and $\tau(C)$ intersect transversally. By the Riemann--Hurwitz formula, it is clear that $(f^*b)^{\sigma_i}=C^{\sigma_i}$ consists of 2 points for each $1\leq i\leq 3$, and $\sum_{i=1}^3 (f^*b)^{\sigma_i} = 6$.
\end{proof}

\begin{claim}
If $\tG_C\neq \{\id_S\}$, then $f^*b$ is as in (ii).
\end{claim}
\begin{proof}
Since $\Theta_b=0$, we have $G_C=\{\id_S\}$ and hence $\tG_C=\langle \gamma\rangle$ for some $\gamma\in \tG\setminus G$. Therefore, $\tau(C)=C$, and, being fixed  by a non-trivial automorphism, it is necessarily smooth. Around a general point of $C\subset S$ the fiber $f^*b$ is just the pull-back of the fiber of $\FF_e\rightarrow B$ over $b$ which is reduced. Since a general point of $C$ has a stabilizer of order 2, namely $\langle\gamma\rangle$, the multiplicity of $C$ in $f^*b$ is exactly 2. Thus we have $f^*b =2C$. Then $K_S\cdot C=\frac{1}{2}K_S\cdot f^*b =4$, and hence $g(C)=3$ by the adjunction formula. We have $g(C/G)=g(C/\tG) = 0$. Applying the Riemann--Hurwitz formula to the quotient map $C\rightarrow C/G$ we have $\sum_{i=1}^3\#(f^*b)^{\sigma_i} = 2g(C) +6 =12$.
\end{proof}
 
In the remaining part of the proof, we may assume that $C=\tau(C)$ and $\tG_C=\{\id_S\}$. Then $f^*b=mC$ for some $m$, and $C$ admits a faithful action of $\tG\cong (\ZZ/2\ZZ)^3$. It follows that $g(\tilde C)\geq 1$ by Lemma~\ref{lem: Z2n C}, where $\tilde C$ is the normalization of $C$. Around a general point of $C\subset S$ the fiber $f^*b$ is just the pull-back of the fiber of $\PP(\tilde h_*\omega_{\tT})$ over $b$, which is reduced. Since a general point of $C$ has a trivial stabilizer, we have $m=1$, that is, $f^*b=C$. 

Obviously the singular set $C_\sing$ of $C$ is a union of $G$-orbits. Since $\Theta_b=0$, the stabilizer $G_p$ has order at most 2 for any $p\in C$ by Lemma~\ref{lem: Z22}. Thus every $G$-orbit has an even number of points. In particular, $\#C_\sing$ is even. On the other hand, $p_a(C)\leq 5$ implies that $\#C_\sing \leq 5$. It follows that $\#C_\sing =2$ or $4$.

We proceed according to the number $\#C_\sing$.

\begin{claim}
 If $f^*b=C$ and $\#C_\sing = 2$, then $f^*b$ is as in one of the cases (iii) and (iv).
\end{claim}
\begin{proof}[Proof of the claim]
Let $p,\,q$ be the two singular points of $C$. Then $C_\sing$ forms one orbit for the actions of both $G$ and $\tG$, and the stabilizers $\tG_p$ and $G_p$ have order $4$ and $2$ respectively. The two elements $\{\gamma_1, \gamma_2\}:=\tG_p\setminus G_p$ generate $\tG_p$, and $\sigma:=\gamma_1\gamma_2$ generates $G_p$. Since $p$ and $q$ are isolated in $S^\sigma$, the automrophisms $\gamma_1, \gamma_2$ fix curves by Lemma~\ref{lem: Z22}, which are necessarily horizontal with respect to $f$.

We have
\begin{equation}\label{eq: paC C_sing=2}
5 = p_a(C) = g(\tC) + 2\delta_p(C)
\end{equation}
where $\nu\colon\tC\rightarrow C$ is the normalization and $\delta_p(C):=(\nu_*\sO_{\tC}/\sO_C)_p$. It follows that $\delta_p(C)\leq 2$. 

Suppose that $\delta_p(C)=1$. Then $g(\tC)=3$ by \eqref{eq: paC C_sing=2}. Since $\tG_p=(\ZZ/2\ZZ)^2$, $p\in C$ cannot be a cusp by Lemma~\ref{lem: cusp} again. Therefore, the two singularities of $C$ are nodes. Since $C^{\sigma_i}\subset C_\sing$, we have 
\[
\bigcup_{i=1}^{3}(f^*b)^{\sigma_i} = C_\sing.
\]
This gives the case (iii).

Suppose that $\delta_p(C)=2$.  Then $g(\tilde C) = 1$ by \eqref{eq: paC C_sing=2}. Let $S_2\xrightarrow{\rho_2}S_1\xrightarrow{\rho_1} S$ be the two blow-ups resolving the singularity $p\in C$. Then the action of $\tG$ lifts to $S_i$. Let $C_i\subset S_i$ be the strict transform of $C$ and $E_i\subset S_i$ the exceptional divisor of $\rho_i$. Then $E_1$ and $C_1$ intersect at a unique point, say $\{p_1\} = E_1\cap C_1$, which is necessarily fixed by $\tG_p$, $\delta_{p_1}(C_1)=\delta_p(C)-1=1$, and $(C_1\cdot E_1)_{p_1}=2$. By Lemma~\ref{lem: cusp}, $p_1\in C_1$ is not a cusp and thus $p_1\in C_1$ is node. Therefore, $p\in C$ is a tacnode. Since $C^{\sigma_i}$ is isolated in $S^{\sigma_i}$, it lies in $C_\sing$ and we have
\[
\bigcup_{i=1}^{3}(f^*b)^{\sigma_i} = C_\sing.
\]
\end{proof}


\begin{claim}
 If $f^*b=C$ and $\#C_\sing = 4$, then $f^*b$ is as in one of the cases (v) and (vi).
\end{claim}
\begin{proof}[Proof of the claim.]
If $\#C_\sing = 4$ then $g(\tC)=1$, and $\delta_p(C)=1$ for each $p\in C_\sing$. Thus $C_\sing$ consists of nodes or cusps of $C$. Since $C_\sing$ is the union of $\tG$-orbits and $\#C_\sing =4$ is less than $|\tG|=8$, the stabilizer $\tG_p$ is nontrivial for each $p\in C_\sing$. Thus $|\tG_p|\in \{2,4\}$.

Suppose that $p\in C$ is a cusp. By Lemma~\ref{lem: cusp}, $G_p$ is trivial, since there is no $\sigma_i$-fixed curve for $1\leq i\leq 3$. Therefore, 
\[
\#[p]_G=4 = \# C_\sing,
\]
and hence $[p]_G=C_\sing$. Since $\bigcup_{i=1}^{3}C^{\sigma_i}\subset C_\sing$, we infer that 
\[
\bigcup_{i=1}^{3}C^{\sigma_i} =\bigcup_{i=1}^{3} C_\sing^{\sigma_i} =\emptyset.
\]
In other words, $G$ acts freely on $C$. This gives the case \ref{ell 4 cusps}. 

Now we can assume that $C_\sing$ consists solely of nodes of $C$. 

If $|\tG_p|=2$ for some $p\in C_\sing$, then $C_\sing$ consists of one $\tG$-orbit. Note that $G_p$ must be trivial, since otherwise $G_p$ would fix the 8 points $\nu^{-1}(C_\sing)$ on the elliptic curve $\tC$, which is impossible by the Riemann--Hurwitz formula. Since $\sigma_i$ does not fix any curve, it cannot have fixed points on the smooth locus of $C$ either. Therefore, $G$ acts freely on $C$, giving the case \ref{ell 4 nodes free}. 

Finally, suppose that $|\tG_p|=4$ and $C_\sing$ is the union of two $\tG$-orbits, say $\{p_1, p_2\}$ and $\{p_3, p_4\}$. Since $\sigma_i\in G$ does not have horizontal fixed curves for $1\leq i\leq 3$, $G_{p_j}$ has order two for $1\leq j\leq 4$, and $p_j$ is an isolated fixed point of $G_{p_j}$. Then the local branches of $C$ at $p_j$ is preserved by $G_{p_j}$, and hence the two points $\nu^{-1}(p_j)\subset \tC$ is fixed by $G_{p_j}$. By the Riemann--Hurwitz formula, applied to $\tilde C\rightarrow \tilde C/G$, we infer that $G_{p_1}\neq G_{p_3}$ and the involution $\sigma\in G\setminus (G_{p_1}\cup G_{p_3})$ acts freely on $\tC$ and $C$. This gives the case \ref{ell 4 nodes non free}.
\end{proof}
\end{proof}

Combining Lemmas~\ref{lem: theta neq 0 g5}, \ref{lem: iso bs}, and \ref{lem: no bp fiber}, we obtain a classification of the singular fibers.
\begin{cor}\label{g5 ample sing fib}
Let $f\colon S\rightarrow B\cong \PP^1$ be as in Set-up~\ref{nota: g5}. Then any singular fiber $f^*b$, $b\in B$ is as one of the following:
\begin{enumerate}[label=(\arabic*)]
  \item  \label{item: g=3 2 nodes}
 $f^*b=C$, where $C$ is a genus 3 curve with two nodes, say $p$ and $p'$. We have 
 \[
 C^{\sigma_1}=\{p, p'\},\quad C^{\sigma_i}=\emptyset, \quad i\in\{2,3\}.
 \]
 \item\label{item: elliptic 4 nodes} $f^*b=C$, where $C$ is an elliptic curve with four nodes, denoted by $p_i$, $1\leq i\leq 4$.
    Up to relabelling of the involutions $\sigma_i\in G$, there are two possible actions of $G$:
    \begin{enumerate}
        \item $G$ acts freely on $C$.
        \item  $C^{\sigma_1}=\{p_1,p_2\}$, $C^{\sigma_2}=\{p_3, p_4\}$, and $C^{\sigma_3}=\emptyset$.
    \end{enumerate}
     \item \label{item: elliptic 4 cusps} 
 $f^*b=C$, where $C$ is an elliptic curve with four cusps, denoted by $p_i$, $1\leq i\leq 4$. $G$ acts freely on $C$. 
\item\label{item: g=1 2 tacnodes} $f^*b=C$, where $C$ is an elliptic curve with two tacnodes, say $p$ and $p'$. We have
\[
\bigcup_{i=1}^{3}(f^*b)^{\sigma_i} = C^{\sigma_1} = \{p,p'\}.
\]
\item\label{item: double g=3} $f^*b=2C$, where $C$ is a smooth curve of genus $3$. Up to relabelling the $\sigma_i\in G$, there are the following possible actions of $G$ on $C$:
    \begin{enumerate}
        \item $C^{\sigma_1}=C$, $C^{\sigma_2}=C^{\sigma_3}=\emptyset$.
     \item $\# C^{\sigma_1} =4$, and $C^{\sigma_2} = C^{\sigma_3}=\emptyset$.
   \item  $G$ acts faithfully on $C$, $g(C/G)=0$, so $\sum_{i=1}^3\# C^{\sigma_i}=12$;
    \end{enumerate}
      \item\label{item: double elliptic 2 nodes}
    $f^*b=2C$, where $C$ is an elliptic curve with two nodes, say $p$ and $p'$. $C^{\sigma_1}=\{p,p'\}$, $C^{\sigma_2}$ consists of $4$ points lying on the smooth locus of $C$, and $C^{\sigma_3}=\emptyset$. 
    \item\label{item: 2 elliptic at 4 pts} $f^*b=C+D$, where $C$ and $D$ are smooth elliptic curves intersecting each other transversely at 4 points. $G$ acts freely on $C$ and $D$. 
        \item\label{item: 2 rational at 6 pts} $f^*b=C+D$, where $C$ and $D$ are two smooth rational curves intersecting transversely at six points. We have 
        \[
        \#C^{\sigma_i} =\# D^{\sigma_i}=2 \text{ for $1\leq i\leq 3$,}\quad C\cap D = \bigcup_{i=1}^{3}C^{\sigma_i} = \bigcup_{i=1}^{3}D^{\sigma_i}.
        \]
        \item\label{item: double 2 elliptic at 2 pts} $f^*b=2(C+D)$, where $C$ and $D$ are two smooth elliptic curves intersecting transversely at two points $p$ and $p'$. Up to relabelling the involutions $\sigma_i\in G$ and the components of $f^*b$, the action of $G$ on $f^*b$ is described as follows: 
    \[
    C^{\sigma_1}=C,\quad C^{\sigma_i}=\emptyset \text{ for $i=2,3$},\quad \#D^{\sigma_1}=\#D^{\sigma_2}=4,\quad \#D^{\sigma_3}=0.
    \]
   \end{enumerate}
\end{cor}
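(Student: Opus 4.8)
The plan is to carry out a case analysis on the singular fiber $f^*b$ organized by a trichotomy that exactly matches the hypotheses of the three structural lemmas proved above. Precisely, for each $b\in B$ with $f^*b$ singular, exactly one of the following occurs: \textbf{(A)} $\Theta_b\neq 0$; \textbf{(B)} $\Theta_b=0$ and the fiber $\tilde h^*b$ of $\tilde h\colon \tT\to B$ contains an isolated point of $\tT^\tau$; \textbf{(C)} $\Theta_b=0$ and $\tilde h^*b$ contains no isolated point of $\tT^\tau$. These three alternatives are manifestly exhaustive and mutually exclusive, so it suffices to feed each into the appropriate lemma and record the output.

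In case (A) I would apply Lemma~\ref{lem: theta neq 0 g5}: its outcome (i), namely $f^*b=2\Theta_b$ with $\Theta_b$ smooth of genus $3$, $\Theta_b^{\sigma_1}=\Theta_b$ and $\Theta_b^{\sigma_2}=\Theta_b^{\sigma_3}=\emptyset$, is subcase (a) of item~\ref{item: double g=3}; its outcome (ii), $f^*b=2(\Theta_b+D)$ with two elliptic curves meeting transversely at two points and $D$ fixed by some $\gamma\in\tG\setminus G$, is item~\ref{item: double 2 elliptic at 2 pts} upon setting $C=\Theta_b$. In case (B) I would apply Lemma~\ref{lem: iso bs}, whose outcomes (i), (ii), (iii) are respectively subcase (b) of item~\ref{item: double g=3}, item~\ref{item: double elliptic 2 nodes}, and item~\ref{item: 2 elliptic at 4 pts}, after the relabelling of $\sigma_1,\sigma_2$ already built into the statements. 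In case (C) I would apply Lemma~\ref{lem: no bp fiber}, whose seven outcomes (i)--(vii) match, in order, item~\ref{item: 2 rational at 6 pts}, subcase (c) of item~\ref{item: double g=3}, item~\ref{item: g=3 2 nodes}, item~\ref{item: g=1 2 tacnodes}, item~\ref{item: elliptic 4 cusps}, and the two subcases (a),(b) of item~\ref{item: elliptic 4 nodes}. Thus every outcome of the three lemmas appears among the nine items, and conversely every item is produced.

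The single point demanding genuine care --- and the step I expect to be the main obstacle --- is that the hypotheses of Lemmas~\ref{lem: iso bs} and \ref{lem: no bp fiber} are phrased in two different languages: the former speaks of isolated points of $\tT^\tau$ on $\tilde h^*b$, the latter of isolated points of $\bigcup_{\gamma\in\tG\setminus G}S^\gamma$ on $f^*b$. To make the trichotomy genuinely dovetail with Lemma~\ref{lem: no bp fiber}, I would prove that, when $\Theta_b=0$, the fiber $\tilde h^*b$ contains an isolated point of $\tT^\tau$ if and only if $f^*b$ contains an isolated point of $\bigcup_{\gamma\in\tG\setminus G}S^\gamma$. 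The forward implication is precisely Lemma~\ref{lem: iso bs}(i)--(ii): an isolated $q\in\tT^\tau$ has free preimage $\tilde\pi^{-1}(q)$ consisting of isolated points of $\bigcup_{\gamma\in\tG\setminus G}\tS^\gamma$ lying off the $\rho$-exceptional locus, which descend to four such points on $f^*b$. For the converse, given an isolated point $p$ of some $S^{\tau\sigma}$ with $\sigma\in G$ on $f^*b$, I would argue via Lemma~\ref{lem: l-rank} and Lemma~\ref{lem: Z22} that $\tG_p=\langle\tau\sigma\rangle$ has order $2$: if instead $\tG_p\cong(\ZZ/2\ZZ)^2$, then by Lemma~\ref{lem: Z22}(i) two of its three involutions would fix curves through $p$, yet $\tau\sigma$ fixes $p$ in isolation and the nontrivial element of $G_p$ fixes no curve at all, since the $\sigma_i$ have no horizontal fixed curve by Proposition~\ref{prop: lifting} and no vertical fixed curve on $f^*b$ because $\Theta_b=0$; this is a contradiction. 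Hence $G_p$ is trivial, $p$ lies off the $\rho$-exceptional locus, and its image in $\tT$ is an isolated point of $\tT^\tau$. With this equivalence established, cases (B) and (C) partition the fibers with $\Theta_b=0$ exactly as needed, completing the combination of the three lemmas.
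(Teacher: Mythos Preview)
Your proposal is correct and matches the paper's approach exactly: the paper's own proof is the one-line remark ``Combining Lemmas~\ref{lem: theta neq 0 g5}, \ref{lem: iso bs}, and \ref{lem: no bp fiber}, we obtain a classification of the singular fibers.'' Your explicit verification that, when $\Theta_b=0$, the hypotheses ``$\tilde h^*b$ contains an isolated point of $\tT^\tau$'' and ``$f^*b$ contains an isolated point of $\bigcup_{\gamma\in\tG\setminus G}S^\gamma$'' are equivalent fills a small gap the paper leaves implicit, and your argument for it via Lemma~\ref{lem: Z22} is sound.
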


\begin{lem}\label{lem: equality g5}
Let $f\colon S\rightarrow B$ and $G$ be as in Notation~\ref{nota: g5}. Assume that $G$ is numerically trivial. Then we have the following equalities:
\begin{enumerate}
    \item $\sum_{b\in B} \epsilon_b(S) -\epsilon_b(S^{\sigma_i})  = 8e(B)$ for each $1\leq i\leq 3$.
   \item  $\sum_{b\in B} \epsilon_b(S^{\sigma}) - \frac{1}{3}\left(\epsilon_b(S^{\sigma_1})+\epsilon_b(S^{\sigma_2})+\epsilon_b(S^{\sigma_3}) \right)  =  8e(B)$.
\end{enumerate}
\end{lem}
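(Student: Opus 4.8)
The plan is to combine the topological Lefschetz fixed-point theorem with the Riemann--Hurwitz type formula of Lemma-Definition~\ref{lem: e S^sigma}, exactly as in the proof of Lemma~\ref{lem: equality g3}(ii). The crucial simplification available in the genus $5$ setting is that all three fixed loci $S^{\sigma_i}$ are vertical with respect to $f$ by Proposition~\ref{prop: lifting}(i), so there are no horizontal contributions to keep track of; this is what makes the three $\epsilon_b(S^{\sigma_i})$ enter symmetrically and ultimately yields part (ii) by averaging.

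First I would record the numerical inputs. Since $G$ is numerically trivial, each $\sigma_i$ acts trivially on $H^*(S,\QQ)$, so the topological Lefschetz fixed-point theorem gives $e(S^{\sigma_i}) = e(S)$ for $1\leq i\leq 3$. Moreover, because $S^{\sigma_i}$ is vertical, the horizontal part $H_{\sigma_i}$ of $S^{\sigma_i}$ is empty and hence $\deg_{\sigma_i}(f) = 0$. Applying Lemma-Definition~\ref{lem: e S^sigma} to $\id_S$ and to each $\sigma_i$ then yields
\[
e(S) = e(F)\,e(B) + \sum_{b\in B}\epsilon_b(S) = -8\,e(B) + \sum_{b\in B}\epsilon_b(S),
\qquad
e(S^{\sigma_i}) = \sum_{b\in B}\epsilon_b(S^{\sigma_i}),
\]
where I have used that a smooth fiber $F$ has $e(F) = 2-2g = -8$ for $g=5$.

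For part (i), I would equate $e(S^{\sigma_i}) = e(S)$ and substitute the two displayed expressions, obtaining $\sum_b \epsilon_b(S^{\sigma_i}) = -8\,e(B) + \sum_b \epsilon_b(S)$, which rearranges to $\sum_b \bigl(\epsilon_b(S) - \epsilon_b(S^{\sigma_i})\bigr) = 8\,e(B)$ for each $1\leq i\leq 3$. Part (ii) is then immediate: averaging the three identities of part (i) over $i=1,2,3$ gives $\sum_b \epsilon_b(S) - \tfrac{1}{3}\sum_{i=1}^{3}\sum_b \epsilon_b(S^{\sigma_i}) = 8\,e(B)$, which is the asserted equality.

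There is no serious obstacle in this argument; the statement is essentially a bookkeeping consequence of results already in place. The only point demanding care is the verification that $\deg_{\sigma_i}(f)=0$, that is, that none of the $\sigma_i$ fixes a horizontal curve. This is exactly the content of Proposition~\ref{prop: lifting}(i), which in turn rests on the Riemann--Hurwitz computation forcing $F^{\sigma_i}=\emptyset$ on a general fiber from the data $g(F)=5$ and $g(F/G)=2$; once this is granted, the symmetry of the three terms and hence part (ii) follow with no further input.
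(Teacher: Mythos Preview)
Your proof is correct and follows exactly the approach the paper intends: the paper's own proof simply says that (i) is proved as in Lemma~\ref{lem: equality g3} and (ii) follows from (i), and your argument carries this out verbatim, using Proposition~\ref{prop: lifting}(i) to ensure $\deg_{\sigma_i}(f)=0$ and then averaging to get (ii).
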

\begin{proof}
The proof of (i) is similar to that of Lemma~\ref{lem: equality g3}, and hence is left to the reader. (ii) is the consequence of (i).
\end{proof}

\begin{thm}\label{thm: g5}
Let $f\colon S\rightarrow B$ and $G\subset \Aut_B(S)$ as in Notation~\ref{nota: g5}. Assume that $G$ is numerically trivial. Then $g(B)\leq 1$, and $f$ is a quasi-bundle and $S$ is isogenous to a product of curves of unmixed type.
\end{thm}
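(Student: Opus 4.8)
The proof will run parallel to that of Theorem~\ref{thm: g3}, with the combinatorial input that in the genus~$3$ case came from the horizontal fixed curve $H$ now supplied by the signature identity of Lemma~\ref{lem: sign inv}. I start from the list of singular fibers in Corollary~\ref{g5 ample sing fib}. For each type I record the topological data attached to $f^{*}b$: the defect $\epsilon_b(S)=e(f^{*}b)-e(F)$ and the three numbers $\epsilon_b(S^{\sigma_i})=e\big((f^{*}b)^{\sigma_i}\big)$, $1\le i\le 3$ (note $\deg_{\sigma_i}(f)=0$ since $S^{\sigma_i}$ is vertical by Proposition~\ref{prop: lifting}). Computing with normalisations and Riemann--Hurwitz, one fills in a table as in the proof of Theorem~\ref{thm: g3} and checks that for every type the quantity $3\epsilon_b(S)-\sum_i\epsilon_b(S^{\sigma_i})$ is $\ge 0$, with equality exactly for the fibers $f^{*}b=2C$ on which $G$ acts faithfully on a smooth genus~$3$ curve $C$ (the last subcase of~\ref{item: double g=3}). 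Summing the three identities of Lemma~\ref{lem: equality g5}(i) gives
\[
\sum_{b\in B}\Big(3\epsilon_b(S)-\textstyle\sum_i\epsilon_b(S^{\sigma_i})\Big)=24\,e(B),
\]
so $e(B)\ge 0$, that is, $g(B)\le 1$.

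If $g(B)=1$ then $e(B)=0$ forces every summand above to vanish, so the only singular fibers are the faithful $2C$ fibers above; these have $\Theta_b=0$, whence $\Theta_1=\Theta_2=\Theta_3=0$, and Lemma~\ref{lem: sign inv} applied to any $\sigma_i$ yields $K_S^2=8\chi(\mathcal O_S)$. Then \cite[Lemma~5]{Ser95} shows that $f$ is a quasi-bundle and that $S$ is isogenous to a product; being a quasi-bundle, $S$ is of unmixed type.

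The substantive case is $g(B)=0$, where $e(B)=2$ and Lemma~\ref{lem: equality g5}(i) reads $\sum_b\big(\epsilon_b(S)-\epsilon_b(S^{\sigma_i})\big)=16$ for each $i$. These Euler-characteristic constraints are on their own insufficient, because several types of Corollary~\ref{g5 ample sing fib} have the same local topological contributions — for instance type~\ref{item: double elliptic 2 nodes} and type~\ref{item: double 2 elliptic at 2 pts} are indistinguishable for the topological Lefschetz data — and one checks that configurations containing fibers with singular or reducible reduced support are not excluded by these three equations. The extra input is the signature identity: applying Lemma~\ref{lem: sign inv} to each $\sigma_i$ gives
\[
K_S^2-8\chi(\mathcal O_S)=\sum_{C:\,C^{\sigma_i}=C}C^2 .
\]
By Lemma~\ref{lem: theta neq 0 g5} the only vertical $\sigma_i$-fixed curves are the smooth genus~$3$ curves with $f^{*}b=2C$ (where $C^2=0$) and the elliptic curves with $f^{*}b=2(C+D)$ of type~\ref{item: double 2 elliptic at 2 pts} (where $C^2=-2$). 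Hence $K_S^2-8\chi(\mathcal O_S)=-2\,n^{(i)}$, where $n^{(i)}$ counts the fibers of type~\ref{item: double 2 elliptic at 2 pts} whose distinguished elliptic component is $\sigma_i$-fixed; since the left-hand side is independent of $i$, the $n^{(i)}$ all coincide, and $K_S^2=8\chi(\mathcal O_S)$ holds precisely when type~\ref{item: double 2 elliptic at 2 pts} does not occur.

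The main obstacle is therefore to exclude, over a rational base, every singular fiber whose reduced support is not a smooth curve — above all type~\ref{item: double 2 elliptic at 2 pts}, which is invisible to the topological Lefschetz count. I expect this to require the finer geometry of the induced genus~$2$ fibration $h\colon T=S/G\to B$: using that $T$ has only $A_1$-singularities and that $h$ is the relative canonical model of the Horikawa double cover $\tilde h\colon\tT\to B$ set up in Lemmas~\ref{lem: bs g2}--\ref{lem: no bp fiber}, together with the action of the extra involutions $\gamma\in\tG\setminus G$, one should show that each such bad fiber would force a genus~$2$ fiber of $h$ incompatible with Horikawa's classification \cite{Hor77}. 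Once all singular fibers are shown to be of the form $2C$ with $C$ a smooth genus~$3$ curve, we obtain $\Theta_i^2=0$, hence $K_S^2=8\chi(\mathcal O_S)$ by Lemma~\ref{lem: sign inv}, and \cite[Lemma~5]{Ser95} finishes exactly as in the case $g(B)=1$: $f$ is a quasi-bundle and $S$ is isogenous to a product of unmixed type.
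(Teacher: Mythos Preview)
Your setup and the argument for $g(B)\le 1$ and for the case $g(B)=1$ are correct and essentially identical to the paper's: summing the three identities of Lemma~\ref{lem: equality g5}(i) and checking from Table~2 that $3\epsilon_b(S)-\sum_i\epsilon_b(S^{\sigma_i})\ge 0$ with equality only for type~(5c) is exactly how the paper proceeds.

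The gap is the case $g(B)=0$, which you yourself flag as incomplete (``I expect this to require\ldots'', ``one should show that\ldots''). The missing ingredient is not a fiber-by-fiber incompatibility with Horikawa's list, as you suggest, but rather Horikawa's \emph{global} formula for genus~$2$ fibrations \cite[Theorem~3]{Hor77}:
\[
K_{\tT}^2 = 2\chi(\mc O_{\tT}) - 3e(B) + \#\{\text{fibers of type }\I_k,\III_k,\V\}.
\]
Pulling this back to $S$ via $K_S=\pi^*K_{S/G}+\Theta$ and combining with your signature constraint $K_S^2=8\chi-\frac{2}{3}n_9$ yields the extra linear equation
\[
6n_{5a}+3n_{5b}+3n_6+3n_7+5n_9 = 9e(B),
\]
which together with the topological equation forces (for $e(B)=2$) exactly three possibilities: either $2n_{5a}+n_{5b}=6$ and all other $n_*=0$ (giving the quasi-bundle), or $n_9=3$ together with $n_6=1$, or $n_9=3$ together with $n_7=1$. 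The last two cases still have to be excluded separately; the paper does this by a parity argument on $e(S)$ for the $n_6=1$ case, and by a rather delicate analysis for the $n_7=1$ case: one studies the horizontal fixed curves of the involutions $\gamma_i\in\tG\setminus G$, pushes them down to sections of the Hirzebruch surface $\PP(h_*\omega_{S/G})\to\PP^1$, and derives a contradiction from the configuration of their triple points. Your signature observation $n_9^{(1)}=n_9^{(2)}=n_9^{(3)}$ is correct and is implicitly used (it gives $3\mid n_9$), but on its own it does not separate types~(6) and~(9), nor does it bound $n_9$.
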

\begin{proof}
Let $n_{1}, n_{2a}, n_{2b},\dots, n_9$ be the number of singular fibers as listed in (1), (2a), (2b), \dots, (9) of Lemma~\ref{g5 ample sing fib}. 

For singular fibers of type (5b), (6), and (7), the corresponding singular fiber of $\tilde h\colon \tT\rightarrow B$ is of type $\III_1$, $\III_1$, and $\I_1$ respectively in the sense of Horikawa \cite{Hor77}. By \cite[Theorem~3]{Hor77}, we have 
\[
K_{S/G}^2=K_{\tT}^2 = 2\chi(\sO_{S/G})-3e(B)+n_{5b}+n_6+n_7.
\]
As before, denote $M:=\pi^*K_{S/G}$. Then $K_S=M+\Theta$, and hence
\begin{equation}\label{eq: Horikawa}
\begin{split}
    K_S^2 & = (M+\Theta)^2\\
    & = M^2 + M\cdot\Theta + K_S\cdot\Theta  \\
    & = 4K_{S/G}^2 + (2K_S-\Theta)\cdot\Theta\\
    & = 4(2\chi(\sO_{S/G})-3e(B)+n_{5b}+n_6+n_7) + 8n_{5a} + 6n_9 \\
    & = 8\chi(\sO_S)-12e(B) + 4n_{5b}+4n_6+4n_7+ 8n_{5a} + 6n_9 
    \end{split}
\end{equation}
On the other hand, by Lemma~\ref{lem: sign inv}, we have $K_S^2 = 8\chi(\sO_S) + \Theta_i^2$ for $1\leq i\leq 3$. It follows that 
\begin{equation}\label{eq: sign inv g5}
    K_S^2 = 8\chi(\sO_S) + \frac{1}{3}\sum_{i=1}^3\Theta_i^2 = 8\chi(\sO_S)  -\frac{2}{3}n_9
\end{equation}
Combining \eqref{eq: Horikawa} and \eqref{eq: sign inv g5}, we obtain
\begin{equation}\label{eq: hol equ}
     6n_{5a} + 3n_{5b}+3n_6+3n_7+ 5n_9  = 9e(B)
\end{equation}
For all the other type of singular fibers $f^*b$, we have 
\[
s_b+ (K_S+M)\cdot Z_b - \frac{1}{3}\sum_{i=1}^3 \Theta_{i,b}^2 =0.
\]

It is also straightforward to compute the numbers $\epsilon_b(S)$, $\epsilon_b(S^{\sigma_i})$ for $1\leq i\leq 3$ and the type of $\tilde h^*b$ for each type of singular fibers. We list them in the following table,
 where  $e_1, e_2, e_3$ are nonnegative
integers with $e_1+ e_2+ e_3=12$,   0 in the last row, $\mathrm{I}_1$, $\mathrm{III}_1$ are type of fibers of genus $2$ in the sense of \cite{Hor77}.

\begin{center}
\begin{tabular}{|l|l|l|l|l|l|l|l|l|l|l|l|l|}
\hline  $f^*b$  & (1) & (2a) & (2b) & (3) & (4) & (5a) & (5b) & (5c) & (6) & (7) & (8) & (9)\\
\hline $\tilde{h}^*b$&0&0&0&0&0&0&$\mathrm{III}_1$&0&$\mathrm{III}_1$&$\mathrm{I}_1$&0&0\\
\hline $\epsilon_b(S)$ &2&4&4&8&6&4&4&4&6&4&6&6\\
\hline $\epsilon_b(S^{\sigma_1})$ &2&0&2&0&2&$-4$&4&$e_1$&2&0&2&2\\
\hline $\epsilon_b(S^{\sigma_2})$ &0&0&2&0&0&0&0&$e_2$&4&0&2&4\\
\hline $\epsilon_b(S^{\sigma_3})$ &0&0&0&0&0&0&0&$e_3$&0&0&2&0\\
\hline
\end{tabular}
\

\tablename  \ 2. $\epsilon_b(S)$ and $\epsilon_b(S^{\sigma_i})$ for a singular fiber in Corollary~\ref{g5 ample sing fib}
\end{center}
\

By Lemmas~\ref{lem: equality g5} we have 
\begin{equation}\label{eq: top equ} 
\frac{4}{3} n_{1} + 4 n_{2a} + \frac{8}{3}n_{2b} + 8 n_3 + \frac{16}{3} n_4 + \frac{16}{3} n_{5a} + \frac{8}{3}n_{5b}+ 4 n_{6} + 4 n_{7} + 4n_8 + 4n_9=8e(B)
\end{equation}

It follows from \eqref{eq: hol equ} and \eqref{eq: top equ} that $g(B)\leq 1$. If $g(B)=1$, then all numbers $n_1, n_{2a}, ...$ appearing on the left side of \eqref{eq: hol equ} and \eqref{eq: top equ} are zero. Thus the singular fibers of $f$ can only be of type (5c). It follows that, as in the proof of Theorem~\ref{thm: g3}, $f$ is quasi-bundle and $S$ is isogenous to a product.

In the remaining part of the proof, we assume that $g(B)=0$, so $e(B)=2$. One checks directly that, in this case, the only nonnegative integer solutions to \eqref{eq: hol equ} and \eqref{eq: top equ} satisfy one of the following:
\begin{enumerate}
    \item $2n_{5a}+n_{5b}=6$, $n_{5c}$ arbitrary, and all the other numbers are zero; 
    \item $n_9=3, n_{6}=1$, $n_{5c}$ arbitrary, and all the other numbers are zero;
    \item $n_9=3, n_{7}=1$, $n_{5c}$ arbitrary, and all the other numbers are zero.
\end{enumerate}
For the case (i) the singular fibers are all of the form $2C$, where $C$ is a smooth curve of genus 2, and we have $\Theta_i^2=0$ for each $1\leq i\leq 3$. It follows that $K_S^2=8\chi(\sO_S)$ by Lemma~\ref{lem: sign inv}. In turn, $f$ is a quasi-bundle and $S$ is isogenous to a product of unmixed type by \cite{Ser95}.

To complete the proof, we need to exclude the remaining cases (ii) and (iii).

\begin{claim}\label{cla: n6=0}
The case (ii) does not occur.
\end{claim}
\begin{proof}[Proof of the claim.]
Suppose on the contrary that case (ii) holds. Then there are $n_{5c}$ singular fibers of type (5c), $3$ fibers of type (9), and $1$ fiber of type (6). By Lemma~\ref{lem: sign inv}, applied to the involutions $\sigma_i$, we have $K_S^2= 8\chi(\sO_S)-2$.  By the Noether formula, we obtain
\begin{equation}\label{eq: g5 e 1}
    e(S) = 12\chi(\sO_S) - K_S^2 = 4\chi(\sO_S) +2.
\end{equation}
On the other hand, we have by Lemma~\ref{lem: e S^sigma} 
\begin{equation}\label{eq: g5 e 2}
   e(S) = -16 + 4n_{5c} + 6n_6 + 6n_9 =  4n_{5c} + 24
\end{equation}
We obtain a contradiction: The right hand side of \eqref{eq: g5 e 1} is not divided by $4$ while that of \eqref{eq: g5 e 2} is.
\end{proof}

\begin{claim}\label{cla: n7=0}
The case (iii) does not occur.
\end{claim}
\begin{proof}[Proof of the claim.]
Suppose on the contrary that case (iii) holds. Then there are $1$ fiber $f^*b_1$ of type (7), $3$ fibers $f^*b_j$ ($2\leq j\leq 4$) of type (9), and $n:=n_{5c}$ singular fibers $F_j:=f^*b_j$ ($5\leq j\leq n+4$) of type (5c).

Look at the fiber $f^*b_1= C+D$, where $C$ and $D$ are two smooth elliptic curves intersecting transversely at four points $p_1, p_2, p_3, p_4$, and $G$ acts freely on $C$ and $D$. The stabilizer $\tG_{p_i}$ are the same for $1\leq i\leq 3$, generated by some $\gamma_0\in \tG\setminus G$, and $\{p_i\}$ are isolated fixed points of $\gamma_0$. Let us denote $\gamma_i:=\gamma_0\circ\sigma_i$ for $1\leq i\leq 3$. Since $\gamma_0$ fixes the points $p_j\in C\cap D$ for $1\leq j \leq 4$ and $\sigma_i$ acts freely on $C$ and $D$ for $1\leq i\leq 3$, we infer that, by applying the Riemann--Hurwitz formula to $C\rightarrow C/\langle \gamma_0, \sigma_i\rangle$ (resp.~$D\rightarrow D/\langle \gamma_0, \sigma_i\rangle$), the fixed locus $C^{\gamma_i}$ (resp.~$D^{\gamma_i}$) consists of $4$ points lying on $C\setminus D$ (resp.~$D\setminus C$). Also, one has $(f^*b_1)^{\gamma_0} = C\cap D$, so $\gamma_0$ does not fix any horizontal curve. For $1\leq i\leq 3$, since $S_b^{\gamma_i}$ lies on the smooth locus of $S_b$, its elements are not isolated in $S^{\gamma_i}$ by Lemma~\ref{lem: fix vs sing fib} and hence there is a curve $H_i\subset S^{\gamma_i}$ such that $S_b^{\gamma_i} = S_b\cap H_i$.

Looking at a general fiber $F$ of $f$, we have
\[
\#F^{\gamma_0} = 0, \quad \#F^{\gamma_i} = H_i\cdot F = H_i\cdot S_b = 8\, \text{ for $1\leq i\leq 3$}
\]
and by Riemann--Hurwitz formula,
\[
g(F/\langle\gamma_0\rangle)=3,\quad g(F/\langle\gamma_i\rangle)=1\,\text{ for $1\leq i\leq 3$.}
\]

Let $\check\rho\colon \cS \rightarrow S$ be the blow-up of singularities $\{p_i\}_{i=1}^{4}$ of $f^*b_1$. Then the action of $\tG$ lifts to $\cS$. The induced fibration $\cS/G\rightarrow B$ has genus 2. Let $  P:=\PP(h_*\omega_{S/G})\rightarrow B$ be the projectivized relative cannoincal bundle. Since $B=\PP^1$, the $\PP^1$-bundle $P$ over $B$ is isomorphic to a Hirzebruch surface $\FF_e$ for some $e\geq 0$. Then we have morphisms $\cS/G \rightarrow \cS/\tG \rightarrow \FF_e$, where the first morphisms is the quotient map by the hyperelliptic involution of $\check h$ and the composition $\cS/G\rightarrow \FF_e$ is the relative canonical map (\cite{Hor77}):
\[
\begin{tikzcd}
    \cS \arrow[r]\arrow[d, "\check\rho"] & \cS/G \arrow[r] \arrow[d]& \cS/\tG\arrow[d]\\
    S\arrow[r]\arrow[rd, "f"'] & S/G\arrow[r, dashed] \arrow[d, "h"] & P=\PP(h_*\omega_{S/G})=\FF_e\arrow[ld]\\
    & B=\PP^1 &
\end{tikzcd}
\]

For $1\leq i\leq 3$, we take the horizontal part $\cH_i$ of $ \cS^{\gamma_i}=\check\rho^{-1}\left(S^{\gamma_i}\right).$
Let $\oH_i$ be the image of $\cH_i$ in $P$. Then $\oH_{i} \in |2\Delta + u_{i} P_b|$ for $1\leq i\leq 3$,  where $\Delta$ (resp.~$P_b$) is a section (resp.~a fiber) of $P\rightarrow B$, with $\Delta^2=-e$, and the $u_i$ are some non-negative integers. Since the $\oH_i$ $(1\leq i\leq 3)$ are reduced and have no common irreducible components, at most one of them, say $\oH_1$, contains $\Delta$ and $u_{1}\geq e$, and we have $u_{i}\geq 2e$ for $i=2,3$.

It is clear that the induced morphisms $\oH_i\rightarrow B$ is finite of degree 2 for each $1\leq i\leq 3$. By the description of the fibers $S_{b_i}$ for $1\leq i\leq 4$, we have 
\begin{itemize}[leftmargin=*]
\item the singularities of the union curve $\oH:=\bigcup_{i=1}^{3}\oH_i$ consists of two ordinary triple points over $b_1$ and one ordinary triple points over $b_i$ for each $2\leq i\leq 4$;
\item $P_b\cup\oH$ still has ordinary singularities for each fiber $P_b$ of $P\rightarrow B$;
\item $\oH_i$ is smooth over $b_1$ for each $1\leq i\leq 3$.
\end{itemize}
It follows that $\tH\rightarrow B$ is \'etale, where $\tH\rightarrow \oH$ is the normalization. Since $B=\PP^1$, we infer that $\tH$ is the disconnected union of 6 smooth rational curves, and hence $\oH$ is union of 6 sections of $P\rightarrow B$. Each $\oH_i$ is the union of two sections of $P\rightarrow B$, say $\oH_{i1}$ and $\oH_{i2}$. 
For $1\leq i\leq 3$, $1\leq j\leq 2$, there are nonnegative integers $u_{ij}$ such that
\begin{equation}\label{eq: nij}
    u_{i1}+u_{i2}=u_i,\quad \oH_{ij}\in|\Delta+u_{ij}P_b|,
\end{equation}
and $u_{ij}\geq e$ unless possibly $e>0$ and there is exactly one pair of indices $(i,j)$ such that $u_{ij}=0$. 
For $0\leq i,\,i'\leq 3$ and $1\leq j,\,j'\leq 2$, we have thus
\begin{equation}\label{eq: Hij intersect}
 \oH_{ij}\cdot \oH_{i'j'} = (\Delta+u_{ij} P_b)\cdot  (\Delta+u_{i'j'} P_b)= u_{ij}+u_{i'j'}-e,
\end{equation}
which is at least $e$, unless $e>0$ and one of $u_{ij}, \, u_{i'j'}$ is $0$. Therefore, there are at least 5 out of the 6 sections $\{\oH_{ij}\}_{ij}$, which pairwise intersect at $\geq e$ points. Since $\oH=\bigcup_{i=1}^{3} \oH_i$ possesses $5$ ordinary triple points, we infer by \eqref{eq: Hij intersect} that 
\[
5\sum_{(i,j)} u_{ij} - 15e=\sum_{(i,j)\neq(i',j')}\left(u_{ij}+u_{i'j'}-e\right) =\sum_{(i,j)\neq(i',j')}\oH_{ij}\cdot \oH_{i'j'} = 5\cdot C_3^2 = 15.
\]
It follows that
\begin{equation}\label{eq: sum nij}
\sum_{(i,j)} u_{ij} = 3(e+1).    
\end{equation}
Since $u_{ij}\geq e$ except for possibly one $(i,j)$, we infer that 
\begin{equation}\label{eq: sum nij geq 5e}
\sum_{(i,j)} u_{ij} \geq 5 e    
\end{equation}
Combining \eqref{eq: sum nij} and \eqref{eq: sum nij geq 5e}, we obtain $e\leq 1$.

Suppose that $e=1$. Then $\sum_{(i,j)}u_{ij}=6$ by \eqref{eq: sum nij}, and at most one of $u_{ij}$ can be $0$. Thus, up to reordering, 
\[
(u_{ij})=(0,1,1,1,1,2), \text{  or } (1,1,1,1,1,1).
\]
In the case $(u_{ij})=(0,1,1,1,1,2)$, since the section $\oH_{i_1j_1}$ with $u_{i_1j_1}=0$ only intersects $\oH_{i_2j_2}$ with $u_{i_2j_2}=2$, so does not pass through any triple points of $\oH$, contradicting the fact that each $\oH_{ij}$ passes through one of the triple point of $\oH$ over $b_1$. In the case $(u_{ij})=(1,1,1,1,1,1)$, none of $\oH_{ij}$ intersects the negative section $\Delta$ of $P\rightarrow B$, and hence each of them corresponds to a line on $\PP^2$, which is obtained by contracting $\Delta\subset P$. But, as one can visibly verify, there are no configration of $6$ lines with only ordinary triple points as singularities.

Now suppose that $e=0$. Then $\sum_{(i,j)}u_{ij}=3$ by \eqref{eq: sum nij}, and hence there are at least three of the $u_{ij}$ is $0$. It follows that, for some $1\leq i\leq 3$, both $\oH_{i1}$ and $\oH_{i2}$ has self-intersection $0$; moreover, we may take a pair of indices $(i',j')$ such that $i\neq i'$ and $\oH_{i'j'}^2=0$. Then $\oH_{i'j'}$ does not intersect $\oH_{i} = \oH_{i1}\cup \oH_{i2}$, which contradicts our knowledge that $\oH_{i'j'}$ intersect $\oH_i$ at a point over $b_1$.
\end{proof}
The proof of Theorem~\ref{thm: g5} is now complete.
\end{proof}
\section{Fibrations of genus 3 with \texorpdfstring{$(\ZZ/4\ZZ)$}{}-action}\label{sec: fib g3c4}
In this section we analyze fiber-preserving actions of $\ZZ/4\ZZ$ on
fibrations of genus $3$, with the following result.
\begin{thm}\label{thm: g3c4}
Let $S$ be a smooth projective regular surface, and $f\colon
S\rightarrow \mathbb{P}^1$ a  fibration of genus $3$ such that $K_S$
is $f$-ample (i.e., $S$ has no $(-1)$-curves or $(-2)$-curves contained
in a fiber of $f$). If there exists a numerically trivial
automorphism $\sigma$ of order $4$ of $S$ with $f\circ\sigma=f$,
then $\chi(\mc O_S)\leq2$.
\end{thm}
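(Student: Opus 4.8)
The plan is to assume $\chi(\mc O_S)\ge 2$ and derive a contradiction. Since $q(S)=0$ gives $\chi(\mc O_S)=1+p_g(S)$, this assumption means $p_g(S)\ge 1$, so $|K_S|\neq\emptyset$ and the numerically trivial $\sigma$ acts trivially on $H^0(S,\omega_S)$ and on $H^{0,2}(S)$. Writing $\tau=\sigma^2$, the first step is to show that $\sigma$ fixes \emph{no} curve. For a vertical fixed curve $C$, Lemma~\ref{lem: fix vs sing fib}(ii) forces $4\mid\mult_C f^*b$, so $f^*b\ge 4C$; together with $f$-ampleness ($K_S\cdot C\ge 1$) and $K_S\cdot f^*b=2g-2=4$ this forces $f^*b=4C$, whence $C\equiv\tfrac14 f^*b$, $C^2=0$, and adjunction gives $2p_a(C)-2=K_S\cdot C=1$, an impossibility. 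For horizontal fixed curves I would use Lemma~\ref{lem: fix vs base}(ii) (this is where $p_g>0$ enters): summing $3\Gamma_i\le Z$ over all horizontal $\sigma$-fixed curves $\Gamma_i$ gives $3\sum_i\Gamma_i\le Z\le K_S$, hence $3\,\#F^\sigma=3\sum_i(\Gamma_i\cdot F)\le K_S\cdot F=4$ for a general fibre $F$. Since an order-$4$ automorphism of the genus-$3$ curve $F$ has an even number of fixed points, $\#F^\sigma$ is even and therefore $0$. Thus $S^\sigma$ is a finite set of isolated points.

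The core of the argument is then the holomorphic Lefschetz fixed-point formula applied to $\sigma$. Because there are no fixed curves, only isolated-point contributions occur, and a fixed point of weight $\tfrac14(1,a)$ contributes $\big((1-\zeta)(1-\zeta^a)\big)^{-1}$ with $\zeta=i$; evaluating gives $\tfrac{i}{2}$ for weight $\tfrac14(1,1)$, $\tfrac{1+i}{4}$ for weight $\tfrac14(1,2)$, and $\tfrac12$ for weight $\tfrac14(1,3)$. On the cohomological side $\sigma$ is trivial on $H^0(\mc O_S)$, on $H^1(\mc O_S)=0$, and on $H^2(\mc O_S)\cong\overline{H^0(\omega_S)}$, so the Lefschetz number equals $\chi(\mc O_S)$. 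Taking the imaginary part of $\chi(\mc O_S)=\tfrac{i}{2}k_1+\tfrac{1+i}{4}k_2+\tfrac12 k_3$ yields $\tfrac12 k_1+\tfrac14 k_2=0$, hence $k_1=k_2=0$; the real part then gives $k_3=2\chi(\mc O_S)$, where $k_a$ counts isolated $\sigma$-fixed points of weight $\tfrac14(1,a)$ as in Lemma~\ref{lef}.

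To finish, I would feed $k_1=k_2=0$ (and the absence of fixed curves) into the second equation of Lemma~\ref{lef}, obtaining $2K_S^2-20\chi(\mc O_S)=0$, i.e. $K_S^2=10\chi(\mc O_S)$; equivalently the topological Lefschetz formula gives $e(S)=k_3=2\chi(\mc O_S)$ and Noether's formula then gives $K_S^2=12\chi(\mc O_S)-e(S)=10\chi(\mc O_S)$. But $S$ carries a fibration with fibres of genus $\ge 2$, so $\kappa(S)\ge 1$: its minimal model satisfies $K_{S_{\min}}^2=0$ when $\kappa(S)=1$ and $K_{S_{\min}}^2\le 9\chi(\mc O_S)$ when $\kappa(S)=2$ (Bogomolov--Miyaoka--Yau), while $K_S^2\le K_{S_{\min}}^2$ in either case. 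Hence $10\chi(\mc O_S)\le 9\chi(\mc O_S)$, forcing $\chi(\mc O_S)\le 0$ and contradicting $\chi(\mc O_S)\ge 2$. This proves $\chi(\mc O_S)\le 1$, a fortiori $\le 2$.

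The step I expect to be most delicate is arranging the exclusion of fixed curves so that the holomorphic Lefschetz formula applies in its clean isolated-point form. The vanishing of its imaginary part, giving $k_1=k_2=0$, is the one genuinely new input beyond the signature and topological relations packaged in Lemma~\ref{lef}, and it is exactly what pins down $K_S^2=10\chi(\mc O_S)$ rather than leaving a one-parameter family of fixed-point data; I would verify the weight bookkeeping carefully. The subtle point in excluding horizontal fixed curves is that the quotient $S/\langle\sigma\rangle$ could acquire non-canonical cyclic singularities, so I deliberately route that exclusion through Lemma~\ref{lem: fix vs base}(ii) and the parity of $\#F^\sigma$, avoiding any appeal to the geometric genus of the quotient.
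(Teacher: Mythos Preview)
Your argument that $\sigma$ fixes no curves is fine and matches the paper's Lemma~\ref{H-3}(iv). The gap is in your application of the holomorphic Lefschetz formula. Saying that a fixed point has weight $\tfrac14(1,a)$ only records the \emph{type} of the local action: it means the eigenvalues of $d\sigma_p$ are $(\zeta,\zeta^a)$ for \emph{some} primitive $4$th root $\zeta$, and that $\zeta$ may be $i$ or $-i$ depending on the point. The HLF contribution is $[(1-\zeta)(1-\zeta^a)]^{-1}$, which for $a=1$ equals $\pm\tfrac{i}{2}$ and for $a=2$ equals $\tfrac{1\pm i}{4}$, the sign depending on $\zeta$. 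So the imaginary part of the HLF gives only
\[
\tfrac12\,(k_1^{+}-k_1^{-})+\tfrac14\,(k_2^{+}-k_2^{-})=0,
\]
where $k_a^{\pm}$ counts weight-$\tfrac14(1,a)$ points with $\zeta=\pm i$; it does \emph{not} force $k_1=k_2=0$. In fact the real part of the HLF, namely $4\chi=k_2+2k_3$, is already a consequence of the relations in Lemma~\ref{lef} (with $m=0$), so your use of HLF adds nothing new. The paper's analysis actually arrives (in the main case) at $k_1=4$ and $k_2=4\chi-4$, which is perfectly consistent with the HLF but directly contradicts your claimed vanishing; consequently your conclusion $K_S^2=10\chi$ and the BMY contradiction both collapse.

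The paper's proof is necessarily much more hands-on: it first pins down the $\alpha=\sigma^2$-fixed horizontal curve $H$ (Lemma~\ref{H-3}), extracts the numerical identities of Corollary~\ref{equ6.1} and equations~\eqref{k2}--\eqref{k1}, and then classifies all possible singular fibres together with the $\sigma$-action on them (Lemmas~\ref{c-i}, \ref{k3'}, \ref{k3''}). Tabulating the local contributions and solving the resulting linear system forces $H^2=-2$, $k_1=4$, and very restricted fibre configurations; the contradiction with $\chi>2$ is then obtained via a hyperelliptic/nonhyperelliptic dichotomy in \S\ref{sec: fib g3c4}. Your clean Lefschetz shortcut would bypass all of this, but unfortunately it rests on conflating the weight type with a specific choice of primitive root.
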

 We begin by fixing notations.
 \begin{nota}\label{5.2}Let $f\colon
S\rightarrow B:=\mathbb{P}^1$ be as in Theorem~\ref{thm: g3c4} with $p_g(S)>0$. Let $F$ be a general fiber of $f$.

Let $M$ and  $Z$ be the moving part and  the fixed part of the
linear system $|K_S|$, respectively. (We have $M=0$ if $p_g(S)=1$.)
Let  $H$ and $V$ the horizontal part and the vertical part of $Z$
with respect to $f$, respectively.

 Let  $\sigma$ be  a  numerically trivial automorphism
of order $4$ of $S$ with $f\circ\sigma=f$, and $\alpha=\sigma^2$.
For $a=1, 2, 3$, let
$k_a$ be the number of isolated $\sigma$-fixed points  of weight
$\frac{1}{4}(1, a)$. 
\end{nota}

\begin{lem}\label{H-3} Let the notation be as above.  Then the following holds.
 \begin{enumerate}[leftmargin=*]
 \item Either $H$ is irreducible with
$H\cdot F=4$ or  $H$  has two irreducible components
 $\Gamma_1, \Gamma_2$ with
$\Gamma_1 \cdot F=\Gamma_2 \cdot F=2$;
\item $M\equiv (\chi(\mo_S)-2)F$;
\item
 $H$ is $\alpha$-fixed and  there are no  $\alpha$-fixed
curves apart from $H$;
\item There are no
$\sigma$-fixed curves.
\end{enumerate}
\end{lem}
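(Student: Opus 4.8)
The plan is to read everything off the action of $\sigma$ on a general fibre $F$ and on the space of canonical forms. Since $\sigma$ is numerically trivial and $p_g(S)>0$, it acts trivially on $H^0(S,\omega_S)=H^{2,0}(S)$, and so does $\alpha=\sigma^2$. As $S^\sigma$ is a proper subset, $\rho:=\sigma|_F$ is a nontrivial automorphism of the genus-$3$ curve $F$ of order exactly $4$: were its order $\le 2$ then $\alpha$ would fix general fibres pointwise, giving $S^\alpha=S$ and $\alpha=\mathrm{id}_S$. I would then apply Riemann--Hurwitz to $F\to F/\langle\sigma\rangle$, which gives $g(F/\langle\sigma\rangle)\le 1$. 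The restriction $H^0(S,\omega_S)\to H^0(F,\omega_F)$ is $\sigma$-equivariant with trivial source action, so its image lies in $H^0(F,\omega_F)^{\langle\rho\rangle}\cong H^0(F/\langle\sigma\rangle,\omega_{F/\langle\sigma\rangle})$, of dimension $g(F/\langle\sigma\rangle)$. If this genus were $0$ the image would vanish, forcing every canonical form to vanish along the moving fibre $F$, which is absurd; hence $g(F/\langle\sigma\rangle)=1$, the cover $F\to F/\langle\sigma\rangle$ is branched over exactly two points with all ramification of index $2$, so that $F^\rho=\emptyset$ while $F^\alpha$ consists of $4$ reduced points.

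With this established, (ii) is immediate: on a general $F$ the moving system $|M|$ of $|K_S|$ restricts to a system of projective dimension $\le g(F/\langle\sigma\rangle)-1=0$, so $\phi_{|M|}$ contracts general fibres; since $|M|$ has only isolated base points, $M|_F=0$ and thus $M\cdot F=0$, whence $M\equiv cF$ by the Hodge index theorem. Comparing $h^0(S,M)=p_g(S)$ with $h^0(\PP^1,\mc O(c))$ forces $c=\chi(\mc O_S)-2$. Consequently $H\cdot F=Z\cdot F=K_S\cdot F-M\cdot F=4$, and as $Z|_F$ equals the reduced degree-$4$ ramification divisor of $F\to F/\langle\sigma\rangle$, the divisor $H$ is reduced and meets $F$ transversally in the four points of $F^\alpha$.

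For (iv) I would treat the two cases apart. A vertical $\sigma$-fixed curve $\Gamma$ satisfies $4\mid\mathrm{mult}_\Gamma f^*b$ by Lemma~\ref{lem: fix vs sing fib}, so $K_S\cdot f^*b=4$ forces $f^*b=4\Gamma$, giving $\Gamma^2=0$ and $p_a(\Gamma)=\tfrac32$, a contradiction; a horizontal $\sigma$-fixed curve would meet a general $F$ in a point of $F^\rho=\emptyset$, again impossible. For (i) it then remains only to exclude a section. Since $H\subseteq S^\alpha$ and $S^\alpha$ is smooth, the components of $H$ are disjoint; if $\Gamma$ were a component with $\Gamma\cdot F=1$ then $\Gamma\cong\PP^1$, and $\sigma\Gamma\ne\Gamma$ because $\rho$ fixes none of the four points, so $\sigma\Gamma$ is a disjoint component numerically equivalent to $\Gamma$. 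Hence $\Gamma^2=\Gamma\cdot\sigma\Gamma=0$, and adjunction gives $K_S\cdot\Gamma=-2$, contradicting $K_S\cdot\Gamma\ge M\cdot\Gamma=\chi(\mc O_S)-2\ge 0$. A reduced horizontal divisor with $H\cdot F=4$ and no component of degree $1$ has components of degrees summing to $4$, each $\ge 2$, i.e.\ one of degree $4$ or two of degree $2$, which is exactly (i).

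Finally, for (iii) the containment $H\subseteq S^\alpha$ is already in hand, so I would only need to rule out any further $\alpha$-fixed curve. By Lemma~\ref{lem: fix vs base} every $\alpha$-fixed curve lies in $Z$, so a new one must be a vertical curve $D\le V$; Lemma~\ref{lem: fix vs sing fib} makes $\mathrm{mult}_D f^*b$ even, and $K_S\cdot f^*b=4$ pins the fibre to the shape $f^*b=2D$ (with $D$ smooth, being in $S^\alpha$). Here the order-$4$ structure enters: $\alpha$ reverses the normal direction of $D$, so $\sigma$ rotates it by a primitive fourth root of unity, while $f^*b=2D$ means $f-b=u\,x^2$ near a general point $x=0$ of $D$; the identity $f\circ\sigma=f$ then forces the unit $u$ to be anti-invariant along $D$, so $\sigma|_D$ can have no fixed point on $D$. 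For $D$ of genus $2$ this contradicts the non-existence of a fixed-point-free involution. \emph{I expect the main obstacle to be precisely this final bookkeeping of vertical $\alpha$-fixed curves}: once $f^*b=2D$ is reached, the residual genus-$1$ possibility (where a free involution does exist) is not excluded by the local weight computation alone, and closing it appears to require feeding the local fixed-point data ($k_1,k_2,k_3$ and the normal weights) into the global relations of Lemma~\ref{lef}, i.e.\ the equivariant signature and topological Lefschetz formulae, together with $K_S\cdot f^*b=4$.
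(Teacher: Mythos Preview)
Your arguments for (ii) and (iv) are correct and close to the paper's, and your treatment of (i) --- excluding sections via $\Gamma^2=\Gamma\cdot\sigma\Gamma=0$ and adjunction --- is a clean alternative to the paper's terse remark that ``any section of $f$ must be $\sigma$-fixed''.

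The genuine gap is in (iii), and it is not where you think it is. Your assertion that ``$K_S\cdot f^*b=4$ pins the fibre to the shape $f^*b=2D$'' is unjustified: with $m:=\mathrm{mult}_D f^*b$ even and $K_S$ $f$-ample you only get $(m,K_S\cdot D)\in\{(2,1),(2,2),(4,1)\}$. In fact the two cases $f^*b=mD$ are \emph{immediately} excluded, because $H$ and $D$ are disjoint (both lie in the smooth locus $S^\alpha$), so $H\cdot D=0$ while $H\cdot f^*b=4$. Hence the only surviving case is $(m,K_S\cdot D)=(2,1)$, i.e.\ $f^*b=2\Theta+A$ with $K_S\cdot\Theta=1$, $K_S\cdot A=2$, $H\cdot A=4$, and this is precisely the case your proposal does not touch.

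Your self-diagnosis is therefore off: there is no ``residual genus-$1$'' issue (if $f^*b=2D$ then $D^2=0$ forces $p_a(D)=2$, and anyway that case is already impossible), and the fix does \emph{not} require the global signature or Lefschetz relations of Lemma~\ref{lef}. What the paper does is a purely local case analysis of $A$ (irreducible, reducible, non-reduced), using the bound $(H+\Theta)\cdot C\le 2g(C)+2$ for any smooth $\alpha$-invariant curve $C\not\subset S^\alpha$ (Lemma~\ref{t1}) together with adjunction and the connectedness of $f^*b$. Your local weight computation (showing $\sigma^*(f^*t)\ne f^*t$ at a $\sigma$-fixed point on a doubled $\alpha$-fixed curve) is correct and is in fact exactly the tool the paper uses to eliminate one of the subcases --- namely $A=2C$ with $C^2=-1$, where $p=\Theta\cap C$ has weight $\tfrac14(1,2)$ and $f^*t=x^2y^2$ locally --- but it does not by itself handle the remaining configurations of $A$.
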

\begin{pf} (i)
Note that $g(F/\langle\sigma\rangle)=1$ since $p_g(S/\langle\sigma\rangle)=p_g(S)>0$.  Applying the Hurwitz formula to the quotient map $\pi\rest F \colon F\to F/\langle\sigma\rangle$
we have that $\pi\rest F$ has exactly two branch points and the
ramification indices are both two.  Clearly
  the stabilizer  of $\left<\sigma\right>$ at every ramification point of $\pi\rest F$ is
  $\left<\alpha\right>$.

  Since $F$ is a general fiber, there is exactly an
   $\alpha$-fixed (possibly reducible) horizontal curve $H'$ with $H'\cdot F=4$.
 Note that $H'$ is contained in the fixed part of $|K_S|$ by Lemma~\ref{lem: fix vs base} and $H\cdot F\leq K_S\cdot F=4$.
  We must have $H=H'$. Note that any section of $f$ must be $\sigma$-fixed. So there are
  two possibilities for $H$ as given in (i) of the lemma.

\medskip

(ii) We have  $M\cdot F=(K_S-H-V)\cdot F=0$ by (i).  This yields (ii).

\medskip

(iii)-(iv) We have showed that $H$ is a reduced $\alpha$-fixed curve
and any component of $H$ is not $\sigma$-fixed. So (iii) and (iv) would follow
if there are no $\alpha$-fixed curves apart from $H$.

 Otherwise, let $\Theta$ be any irreducible $\alpha$-fixed
curve not contained in $H$. Then $\Theta<V$ (cf.~Lemma~\ref{lem: fix vs base}).  So
$\Theta<F'$ for some fiber   $F'$ of $f$. Let $m=\op{mult}_\Theta
F'$.  By  (ii) of Lemma~\ref{lem: fix vs sing fib}, we have $2 \mid m $. Since
$H\cdot F'=4$ and $H\cdot \Theta=0$, we have $F'\not=m\Theta$. This implies
$K_S\cdot \Theta=1$ and $ m=2$, since $K_S\cdot F'=4$.
So we may write $F'=2\Theta+A$ for some effective divisor $A$. Then
we have $K_S\cdot A=2$ and $H\cdot A=H\cdot F'=4$.

Let $C$ be an irreducible component of $A$. Since $H$ and $\Theta$ are $\alpha$-fixed, either $C$ is $\alpha$-fixed and $(H+\Theta)\cdot C =0$ or $C$ is not $\alpha$-fixed and by Lemma~\ref{t1}
\begin{align}
\label{leq2}(H+\Theta)\cdot C\leq 2g(C)+2.
\end{align}

First we assume that $A$ is not reduced. Then $A=2C$ with
 $K_S\cdot C=1$.
By the adjunction formula, we have $C^2=-1$ or $-3$. From
$0=C\cdot F'=2C\cdot \Theta+2C^2$, follows that $C\cdot \Theta=-C^2$.

If  $C$ is a $(-3)$-curve, then $C\cdot \Theta=3$, which contradicts \eqref{leq2}.

If  $C^2=-1$,   then $p:=\Theta\cap C$ is a $\sigma$-fixed point of
weight $\frac{1}{4}(1, 2)$. Take suitable local coordinates $(x, y)$ of $S$ around $p$, and a local coordinate $t$ of $B$ around $f(p)$, such that
\[
p=(0,0), \quad C=(x=0),\quad \Theta=(y=0),\quad  \sigma(x,y) = (-x, \xi y),\quad f^*t=x^2y^2.
\]
where $\xi$ is a primitive $4$-th root of $1$. But then $\sigma^*(f^*t) = -f^*t \neq f^*t$, a contradiction to the fact that $\sigma$ induces the trivial action on $B$.

Now we assume that $A$ is reducible. Then $A=C+C'$ with
$K_S\cdot C=K_S\cdot C'=1$. Since $F'$ is connected, we have $A\cdot \Theta>0$ and we may assume $C
\Theta>0$. By the adjunction formula, we have $C^2=-1$ or $-3$.
 From
$0=C\cdot F'=2C\cdot \Theta+C^2+C\cdot C'$, we get $C^2\leq-2$. So $C$ is a
$(-3)$-curve, $C\cdot \Theta=1$ and $C\cdot C'=1$. From $0=\Theta\cdot F'=2\Theta^2+\Theta\cdot C+\Theta\cdot C'$,
 we get $\Theta\cdot C'> 0$. So $C'$ is also a $(-3)$-curve.

 By (\ref{leq2}), we have $(H+\Theta)\cdot C\leq2 $ and $(H+\Theta)\cdot C'\leq2 $.
  So $(H+\Theta)\cdot(C+C')\leq4 $.
 Since
$H\cdot (C+C')=4$, we have $\Theta\cdot (C+C')\leq0$, which is absurd.

Finally we assume that $A$ is irreducible. From $A=F'-2\Theta$,  we
get $A^2=4\Theta^2\leq-4$.
  So $A$ is a $(-4)$-curve by the adjunction formula.  By (\ref{leq2}), we have
   $H\cdot A\leq2 $, a
contradiction since $H\cdot A=4$. \qed\end{pf}
 \begin{cor}\label{equ6.1}We have
\begin{enumerate}
\item $K_S^2=8\chi(\mc O_S)+H^2$;
\item
$k_3-k_1=K_S^2-8\chi(\mc O_S)$;
\item
 $k_1+k_2+k_3=12\chi(\mc O_S)-K_S^2$;
\item
 $(H+K_S)\cdot V=16$.
\end{enumerate}
\end{cor}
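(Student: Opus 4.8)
The plan is to read all four identities off the three tools assembled earlier — Lemma~\ref{lem: sign inv} (the involution signature formula), Lemma~\ref{lef} (the order-$4$ equivariant signature and Lefschetz formulas), and the structural Lemma~\ref{H-3} — supplemented by Noether's formula. The crucial simplifications provided by Lemma~\ref{H-3} are that $\sigma$ has \emph{no} fixed curves and that $H$ is the \emph{unique} $\alpha$-fixed curve, where $\alpha=\sigma^2$. Throughout I would abbreviate $\chi:=\chi(\mo_S)$ and record that, $F$ being a general (genus $3$) fiber, $K_S\cdot F=4$ and $F^2=0$.

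For (i) I would apply Lemma~\ref{lem: sign inv} to the involution $\alpha$: it is numerically trivial because $\sigma$ is, and by Lemma~\ref{H-3}(iii) its fixed curves are exactly the irreducible components of $H$. The one point needing care is that when $H=\Gamma_1+\Gamma_2$ is reducible the formula of Lemma~\ref{lem: sign inv} produces $\Gamma_1^2+\Gamma_2^2$ rather than $H^2$; but the fixed locus $S^\alpha$ is smooth, so its one-dimensional components are pairwise disjoint, forcing $\Gamma_1\cdot\Gamma_2=0$ and hence $\Gamma_1^2+\Gamma_2^2=H^2$. This yields $K_S^2=8\chi+H^2$.

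Parts (ii) and (iii) are immediate specializations of Lemma~\ref{lef} to the case $m=0$ (no $\sigma$-fixed curves). Formula \eqref{sign1} then collapses to $K_S^2-8\chi=k_3-k_1$, which is (ii). For (iii) I would use that numerical triviality of $\sigma$ makes the topological Lefschetz number equal to $e(S)$, while $S^\sigma$ consists only of the $k_1+k_2+k_3$ isolated fixed points; combining this with Noether's formula $e(S)=12\chi-K_S^2$ gives (iii) precisely (equivalently, this is \eqref{top} with the curve sum empty).

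Finally, for (iv) I would exploit the decomposition $K_S=M+H+V$ together with $M\equiv(\chi-2)F$ from Lemma~\ref{H-3}(ii). Since $V$ is vertical, $M\cdot V=(\chi-2)F\cdot V=0$ and $M^2=0$; dotting the decomposition with $V$ gives $K_S\cdot V=H\cdot V+V^2$, so that $(H+K_S)\cdot V=2H\cdot V+V^2$. Expanding $K_S^2=(M+H+V)^2=2M\cdot(H+V)+(H+V)^2=8\chi-16+H^2+2H\cdot V+V^2$ (using $M\cdot H=(\chi-2)(H\cdot F)=4(\chi-2)$) and comparing with part (i) forces $2H\cdot V+V^2=16$, i.e.\ $(H+K_S)\cdot V=16$. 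The whole corollary is thus bookkeeping layered on top of the earlier lemmas; the only genuinely substantive inputs are the structural facts in Lemma~\ref{H-3}, so the main obstacles — the disjointness used in (i) and the numerical relation $M\equiv(\chi-2)F$ used in (iv) — have effectively already been cleared there.
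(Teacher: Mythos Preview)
Your proof is correct and follows essentially the same route as the paper: Lemma~\ref{lem: sign inv} applied to $\alpha$ for (i), the formulas \eqref{sign1} and \eqref{top} of Lemma~\ref{lef} with $m=0$ for (ii)--(iii), and the decomposition $K_S=M+H+V$ together with $M\equiv(\chi-2)F$ for (iv). Your explicit treatment of the reducible case in (i) (disjointness of $\Gamma_1,\Gamma_2$) is a nice point the paper leaves implicit, and your squaring of $M+H+V$ in (iv) is a minor cosmetic variant of the paper's expansion of $K_S\cdot(M+H+V)$.
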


 \begin{pf} (i) follows from Lemma~\ref{H-3} (ii) and Lemma~\ref{lem: sign inv}.

\medskip

(ii) and (iii) follow from Lemma~\ref{H-3} (i), (\ref{sign1}) and
(\ref{top}).

\medskip

(iv)   Since $K_S\equiv M+H+V$ and $M\equiv(\chi(\mc
O_S)-2)F$,  we have
 \begin{align*}
 K_S^2=&K_S\cdot M+ (M+H+V)\cdot H+K_S \cdot V\\
 =&(K_S+H)\cdot M+(H+V)\cdot H+K_S \cdot V\\
 =&
8\chi(\mc O_S)-16+ H^2+(H+K_S)\cdot V.
 \end{align*}
 Combining this  equality with (i), we obtain (iv).
 \qed\end{pf}

\begin{nota}
We have  a commutative diagram
\[\begin{tikzcd}
   \oS  \arrow{r}{\bar \mu}\arrow{d}{\eta'}  &\oY:=\oS/\bar \alpha\arrow{d} \\
S\arrow{r}{\mu}  & Y:=S/\alpha
\end{tikzcd}\]
where  $\eta$ is the blowup of all
 isolated fixed points of $\alpha$,
  $\bar \alpha $  is the lift of $\alpha$
 on $\oS$. Let  $j: \oY\to B$ be the fibration induced by
 $f$, so that $f\circ\eta=j\circ \bar\mu$.\end{nota}
\begin{lem}\label{min} The elliptic fibration $j$ is relatively minimal.\end{lem}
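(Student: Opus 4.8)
The plan is to argue by contradiction: if $j$ is not relatively minimal, some fibre of $j$ contains a $(-1)$-curve $\bar E$. The heart of the argument is a clean comparison of canonical divisors over $\oY$. Since $\eta'$ blows up the isolated $\alpha$-fixed points, $K_{\oS}=\eta'^{*}K_S+\sum_iE_i$; since $\bar\mu$ is the quotient by the involution $\bar\alpha$, whose fixed divisor is the reduced ramification divisor $\tilde R=\eta'^{*}H+\sum_iE_i$ (the $E_i$ become pointwise fixed, and $\eta'^{*}H$ is the strict transform of $H$ because the blown-up points avoid $H$), we also have $K_{\oS}=\bar\mu^{*}K_{\oY}+\tilde R$. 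Subtracting, the exceptional curves cancel and
\[
\bar\mu^{*}K_{\oY}=\eta'^{*}(K_S-H).
\]
Putting $\Gamma:=\bar\mu^{*}\bar E$ and $C:=\eta'_{*}\Gamma$ — a nonzero effective vertical divisor containing no $E_i$, since $\bar\mu(E_i)\subset\bar R$ while $\bar E\not\subset\bar R$ — the projection formula gives $(K_S-H)\cdot C=\bar\mu^{*}K_{\oY}\cdot\Gamma=2K_{\oY}\cdot\bar E=-2$. As $K_S-H\equiv M+V\equiv(\chi(\mo_S)-2)F+V$ by Lemma~\ref{H-3}(ii) and $C$ is vertical, this reads
\[
V\cdot C=-2 .
\]

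First one checks $\bar E\not\subset\bar R$: otherwise $\bar\mu^{*}\bar E=2D_0$ with $D_0\to\bar E$ an isomorphism, forcing $2D_0^{2}=\bar E^{2}=-1$, which is absurd. The structure of $\Gamma=\bar\mu^{-1}(\bar E)$ is then governed by $\bar E\cap\bar R$. If $\bar E\cap\bar R=\emptyset$, the double cover $\Gamma\to\bar E\cong\PP^1$ is étale, hence trivial, so $\Gamma=D_1\sqcup D_2$ is a disjoint union of $(-1)$-curves swapped by $\bar\alpha$. Each $D_i$ differs from every $E_j$ (the latter are $\bar\alpha$-fixed), so $C_i:=\eta'(D_i)$ is a genuine vertical curve on $S$ with $K_S\cdot C_i=\eta'^{*}K_S\cdot D_i=K_{\oS}\cdot D_i-\sum_jE_j\cdot D_i\le -1<0$, contradicting the $f$-ampleness of $K_S$.

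If $\bar E\cap\bar R\neq\emptyset$, the cover is connected, so $\Gamma$ and hence $C$ is irreducible. From $\Gamma^{2}=2\bar E^{2}=-2$ and $C^{2}=\Gamma^{2}+\sum_i(\mult_{p_i}C)^{2}$, together with the fact that $V\cdot C=-2$ forces $C\subset\Supp V$ and $C^{2}<0$, one finds $C^{2}\in\{-2,-1\}$; then adjunction, $f$-ampleness and $K_S\cdot f^{*}b=4$ give $p_a(C)=1$, $f^{*}b=C+D$ and $H\cdot C=K_S\cdot C+2\in\{3,4\}$, with $K_S\cdot C=2$ exactly when $C^{2}=-2$ (equivalently, when $C$ avoids the isolated $\alpha$-fixed points). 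In the case $K_S\cdot C=2$ we have $H\cdot C=4=H\cdot f^{*}b$, so $H\cap D=\emptyset$ and $C$ is the unique component of $f^{*}b$ meeting $H$. Because $\sigma$ acts trivially on $H^{0}(\omega_S)$ it preserves the fixed part $Z=H+V$ and its horizontal part $H$, and being fibre-preserving it must then fix $C$, hence also $D$. Thus $\sigma$ permutes the nonempty set $C\cap D$ of at most two points; a permutation of at most two points squares to the identity, so $\alpha=\sigma^{2}$ fixes $C\cap D$ pointwise. But any point of $C\cap D$ lies on $C$ (so it is not an isolated $\alpha$-fixed point $p_i$) and on $D$, which is disjoint from $H$ (so it is not on the $\alpha$-fixed curve $H$); hence it is not $\alpha$-fixed — a contradiction.

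The remaining case $K_S\cdot C=1$ is where I expect the real difficulty. Here $C^{2}=-1$, $C$ is an arithmetic-genus-$1$ fixed component of $|K_S|$ meeting exactly one isolated $\alpha$-fixed point $p$, and $f^{*}b=C+D$ has a node at $p=C\cap D$; the parity argument above breaks down precisely because now $C\cap D=\{p\}$ is $\alpha$-fixed. To eliminate this configuration I would exploit the full order-$4$ symmetry at $p$: preserving both the fibre and the base forces $\sigma(x,y)=(ix,-iy)$, i.e.\ weight $\tfrac14(1,3)$, and $\sigma$ then restricts to an order-$4$ automorphism of the branch $D$. I would derive a contradiction from a closer analysis of this node fibre, combining Lemma~\ref{lem: singular'} with the global restrictions on the fixed-point numbers $k_1,k_2,k_3$ furnished by the Lefschetz and signature identities (Lemma~\ref{lef} and Corollary~\ref{equ6.1}); this is the main obstacle of the proof. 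Once this last case is excluded, no fibre of $j$ contains a $(-1)$-curve, so $j$ is relatively minimal.
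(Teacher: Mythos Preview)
Your derivation of $\bar\mu^{*}K_{\oY}=\eta'^{*}(K_S-H)$ and hence $(K_S-H)\cdot C=-2$ is correct and is exactly the identity the paper uses. The disconnected case is fine, and your treatment of $K_S\cdot C=2$ is valid (note that $\sigma\in\Aut_\QQ(S)$ preserves each vertical curve by \cite[Lemma~3.1]{Cai04}, so you do not need the ad~hoc argument via $H$). The genuine gap is precisely the case you flag, $K_S\cdot C=1$, $C^2=-1$, and your proposed attack through Lemma~\ref{lef} and Corollary~\ref{equ6.1} would require global input to exclude a single local configuration, which is unlikely to close cleanly.

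What you are missing is the use of the induced involution $\bar\sigma$ on $\oY$ (descending from the order-$4$ automorphism $\sigma$), not just $\bar\alpha$. Since $g(F/\langle\sigma\rangle)=g(F/\langle\alpha\rangle)=1$, the map $\bar\sigma$ restricts to a translation on a general fibre of $j$ and therefore has no horizontal fixed curve. Now $\bar E$ is $\bar\sigma$-invariant (because $C$ is $\sigma$-invariant) but not $\bar\sigma$-fixed (Lemma~\ref{lem: fix vs sing fib}~(ii) would force even multiplicity). If $\op{mult}_{\bar E}j^{*}b=1$, then $\bar E$ meets the rest of the fibre in a single point $q$, and the involution $\bar\sigma|_{\bar E}$ of $\PP^1$ has an isolated fixed point on $\bar E\setminus\{q\}$; by Lemma~\ref{lem: fix vs sing fib}~(i) that point would have to be singular in $j^{*}b$, contradicting smoothness of the fibre along $\bar E\setminus\{q\}$. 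Hence $\op{mult}_{\bar E}j^{*}b\geq 2$, so $\op{mult}_{C}f^{*}b\geq 2$, whence $H\cdot C\leq\tfrac12 H\cdot f^{*}b=2$. Combined with your identity $H\cdot C=K_S\cdot C+2\geq 3$ this is an immediate contradiction, and it disposes of \emph{both} of your residual cases at once --- no fibre-by-fibre analysis is needed.
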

\begin{proof}
Otherwise, let $\oC$ be a $(-1)$-curve contained in $j^*b$ for some
$b\in B$,  $\tilde C=\bar\mu^*\oC$, and $C=\eta_*\tC$.

\begin{claim} $C$ is an irreducible curve with $C^2\geq-2$
 and $\op{mult}_C f^*b\geq2$.\end{claim}
\begin{proof}[Proof of the claim.]Since each irreducible component of
$(f\circ\eta)^*b$ is $\ti\alpha$-invariant, we have that  $\tilde C$
is an irreducible curve with $\tilde C^2=-2$. Hence $ C$ is an
irreducible curve with $C^2\geq-2$.

We show that  $\op{mult}_{\oC}j^*b\geq2$: otherwise, since $\bar
C^2=-1$, we have that $\oC$ and $j^*b-\oC$  meets exactly at
one point, say $q$,  and $j^*b$ is smooth along $\oC\setminus\{q\}$.

Let $\bar \sigma$  be the involution of $\oY$ induced by
$\sigma$.

   Since $g(F/\langle\alpha\rangle)=1$,
    we have that $\bar \sigma$ is a translation of  $F/\langle\alpha\rangle$, a general fiber of $j$.
     Hence  $\bar \sigma$ has no horizontal fixed curves.

  Since $C$ is $ \sigma$-invariant, we have
$\oC$ is $\bar \sigma$-invariant. By (ii) of Lemma~\ref{lem: fix vs sing fib}, $\oC$ is not $\bar \sigma$-fixed.
     Since $\oC$ is a smooth rational curve and $\bar \sigma$ has no horizontal
      fixed curves,   there is an isolated $\bar \sigma$-fixed point  on $\oC\setminus\{q\}$.
      By (i) of Lemma~\ref{lem: fix vs sing fib}, such a point is a singular point of $j^*b$,
       a contradiction since $j^*b$ is smooth along $\oC\setminus\{q\}$.

From
  $f\circ\eta=j\circ\bar{\mu}$,
we have $\op{mult}_Cf^*b=\op{mult}_{\oC}j^*b\geq2$.
   \end{proof}
By the claim above, we have $\op{mult}_C f^*b\geq2$. So $H\cdot C\leq2$
since $H\cdot f^*b=4$.

By Lemma~\ref{H-3}, we have $K_S-H=\mu^*K_Y$.  So \[
K_S\cdot C-H\cdot C=K_Y\cdot\mu_*C =K_{\oY}\cdot\lambda^*(\mu_*C)=2K_{\bar
Y}\oC=-2.\]

  From $H\cdot C\leq2$ and  $K_S\cdot C-H\cdot C=-2$
we have
 $H\cdot C=2$ and $K_S\cdot C=0$, a contradiction since $K_S$ is $f$-ample by
 assumption.
\end{proof}
\begin{lem}\label{k-1'}Let $b_1,\cdots, b_l\in B$ be the image of the support of $V$ under
$f$. For $i=1,\cdots, l$, write $f^*b_i=m_iC_i$, where $m_i$ is the
multiplicity of $f^*b_i$ and $C_i$ is a $1$-connected  effective
divisor on $S$.  We have  \begin{enumerate}
\item  $l=4$ and $m_i=2$ for all $i$;
 \item$V=\sum_{i=1}^4 C_i$;
\item  $j^*b_i$ ($i=1, \cdots, 4$) is  a multiple
fiber of $j$, and the multiplicity of $j^*b_i$ is $2$;
\item if $q\in S$ is an isolated $\alpha$-fixed point and $q\in C_i$
for some $i$, then  $\op{mult}_q C_i$ is divisible by $2$.
 \end{enumerate}
 \end{lem}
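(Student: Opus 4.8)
The plan is to transfer $V$ to the relatively minimal elliptic fibration $j\colon\oY\to B$ provided by Lemma~\ref{min} and to read off its shape from the canonical bundle formula. First I would note that $\chi(\mo_{\oY})=\chi$: since $\alpha$ is numerically trivial it acts trivially on $H^0(S,\omega_S)$, so $p_g(Y)=p_g(S)$ and $q(Y)=0$, giving $\chi(\mo_Y)=\chi$; moreover $Y$ has only $A_1$-singularities by Lemma~\ref{lem: Z2n quotient}, so the resolution $\lambda\colon\oY\to Y$ is crepant and $\chi(\mo_{\oY})=\chi(\mo_Y)=\chi$. For the relatively minimal elliptic fibration $j$ over $B=\PP^1$ the canonical bundle formula then reads
\[
K_{\oY}\equiv(\chi-2)\oF+\sum_k(m_k-1)\oG_k,
\]
where $\oF$ is a general fibre and $j^*b_k=m_k\oG_k$ are the multiple fibres.

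Next I would descend this along the commutative square $\mu\circ\eta=\lambda\circ\bar\mu$. By Lemma~\ref{H-3} we have $K_S-H=\mu^*K_Y$ and the moving part of $|K_S|$ is $M\equiv(\chi-2)F$, so $V=(K_S-H)-M$. Pulling the displayed formula back through $\bar\mu$ and pushing down by $\eta$ (using $f\circ\eta=j\circ\bar\mu$, so that $\eta_*\bar\mu^*\oF=F$, while over the image of a multiple fibre the generically étale double cover $\bar\mu$ matches the $f$- and $j$-multiplicities) identifies $V$ with the contribution of the multiple fibres: each such fibre is $f^*b_i=m_iC_i$ with $C_i=\eta_*\bar\mu^*\oG_i\equiv\frac1{m_i}F$, and
\[
V=\sum_i(m_i-1)C_i.
\]
Since $H\cdot F=K_S\cdot F=4$ by Lemma~\ref{H-3}, we get $(H+K_S)\cdot C_i=8/m_i$, and Corollary~\ref{equ6.1}(iv) becomes
\[
16=(H+K_S)\cdot V=8\sum_i\Bigl(1-\frac1{m_i}\Bigr),\qquad\text{so}\qquad\sum_i\Bigl(1-\frac1{m_i}\Bigr)=2.
\]

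The crux is to promote this numerical identity to $m_i=2$ for every $i$; note the identity by itself also admits, for instance, three fibres of multiplicity $3$. Here I would use the extra order-$4$ symmetry: $\alpha=\sigma^2$ acts trivially on $Y$, so $\sigma$ descends to a numerically trivial involution $\bar\sigma$ of $\oY$ which, exactly as in the proof of Lemma~\ref{min}, is a fibrewise translation with no horizontal fixed curve. Analysing the action of this order-$2$ translation on a multiple fibre $j^*b_k=m_k\oG_k$—through the local structure of the multiple fibre together with Lemma~\ref{lem: fix vs sing fib}, or by comparing the multiple fibres of $j$ with those of the induced genus-one fibration $S/\sigma\to B$—should force $m_k=2$. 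Granting this, the identity yields $l=4$ and $V=\sum_{i=1}^4 C_i$, which is (i) and (ii); that $f^*b_i=2C_i$ already has multiplicity $2$ along the component meeting $\oG_i$ (where $\bar\mu$ is étale) then gives (iii). I expect this multiplicity computation to be the main obstacle, as it is the only genuinely non-formal input.

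Finally, (iv) is a local computation that I would carry out once (i) is known. Let $q\in C_i$ be an isolated $\alpha$-fixed point; it is an $A_1$-point $\bar q\in Y$ resolved by a single $(-2)$-curve $\oE\subset\oY$, which is a component of the reduced multiple fibre $\oG_i$ over $b_i$. Writing $E\subset\oS$ for the $\eta$-exceptional curve over $q$, one has $\bar\mu^*\oE=2E$ because $E$ is $\bar\alpha$-fixed, so $\bar\mu$ ramifies along it. Comparing coefficients of $E$ in $\eta^*f^*b_i=\bar\mu^*j^*b_i=2\bar\mu^*\oG_i$ gives
\[
2\,\mult_qC_i=\mult_q(f^*b_i)=4\,\mathrm{coeff}_{\oE}(\oG_i),
\]
whence $\mult_qC_i=2\,\mathrm{coeff}_{\oE}(\oG_i)$ is even, as claimed.
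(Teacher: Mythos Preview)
Your overall plan is reasonable and your treatments of (iii) and (iv) are essentially the paper's, but the step you yourself flag as the ``main obstacle''---forcing $m_i=2$---is a genuine gap, and the fix you propose does not work. An order-$2$ fibrewise translation $\bar\sigma$ on a relatively minimal elliptic fibration does not constrain the multiplicity of a multiple fibre: a fibre of multiplicity $3$ (your own counterexample to the numerical identity) or $4$ can perfectly well carry such a translation, and neither Lemma~\ref{lem: fix vs sing fib} nor comparison with $S/\langle\sigma\rangle\to B$ yields the desired restriction. So as written the argument does not close.

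The paper's route is more elementary and avoids the canonical bundle formula altogether. From $K_{\oY}^2=0$ (Lemma~\ref{min}) one gets $(K_S-H)^2=(\mu^*K_Y)^2=0$, and since $M\equiv(\chi-2)F$ satisfies $M^2=M\cdot V=0$, this gives $V^2=0$; hence the part of $V$ over each $b_i$ is $u_iC_i$ for some integer $u_i\geq1$. The point you are missing is that the genus of $f$ already pins down $m_i$: since $m_i(K_S\cdot C_i)=K_S\cdot F=4$, one has $m_i\in\{1,2,4\}$. The value $m_i=4$ is impossible by adjunction, as it would give $K_S\cdot C_i=1$ with $C_i^2=0$, hence $2p_a(C_i)-2=1$. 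The value $m_i=1$ is impossible because then $C_i\equiv F$ and, $B$ being rational, $|M+C_i|$ strictly contains $|M|$, contradicting that $C_i$ lies in the fixed part. Thus $m_i=2$; then $1\leq u_i\leq m_i-1$ forces $u_i=1$, and Corollary~\ref{equ6.1}(iv) yields $(H+K_S)\cdot V=4l=16$, so $l=4$. No appeal to $\bar\sigma$ is needed.
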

\begin{pf}
(i)-(ii) By (ii) of Lemma~\ref{H-3}, we have $K_S=\mu^*K_Y+H$. By
Lemma~\ref{min}, we have $K_{\bar{Y}}^2=0$ and hence $K_Y^2=0$. So
we have $V^2=(M+V)^2=(K_S-H)^2=0$. This implies  $V=\sum_{i=1}^l
u_iC_i$ for some integers $u_i\geq1$. Now since $C_i$ is contained
in the fixed part of $|K_S|$ and $B$ is a rational curve, (note that
$m_i=1$ or $2$ since $m_iC_i\cdot K_S=4$,) we must have $m_i=2$ and
$u_i=1$.

Since $V=\sum_{i=1}^l C_i$, by (iv) of Corollary~\ref{equ6.1}, we
have $l=4$.

\medskip

(iii)   Write $j^*b_i=v_i R_i$, where $v_i$ is the multiplicity of
$j^*b_i$ and $R_i$ is a $1$-connected  effective divisor on $\bar
Y$. By the Hurwitz formula, $K_{\oS}-\bar{\mu}^*K_{\bar{Y}}$ is an $\bar\alpha$-fixed
divisor, we have
$\sum_{i=1}^l\oC_i=\sum_{i=1}^l(v_i-1)\bar{\mu}^*R_i$,
 where $\oC_i$ is the strict transform of $C_i$.  This
implies $v_i=2$.

(iv)  Let $E=\eta^*q$. Then $E$ is $\bar\alpha$-fixed and
$\op{mult}_E (f\circ\eta)^*b_i=2n$, where $n=\op{mult}_q C_i$. So
from $(f\circ\eta)^*b_i=\bar\mu^*(j^*b_i)$, we have $\op{mult}_{\bar
E} j^*b_i=n$, where $\oE=\bar\mu(E)$.
  Since $j^*b_i=2R_i$ by (iii),  we have that $n$ is divisible by $2$.\qed\end{pf}

 To classify singular fibers of $f$, we will use frequently  the following two simple
 observations.
\begin{lem}\label{412}
Let $C'$ and $C''$ be two irreducible components of a fiber $f^*b$, and $p\in
C'\cap C''$ is a $\sigma$-fixed point.  Then either
$(C'.C'')_p\geq2$
 or  $p\not \in
H$.
\end{lem}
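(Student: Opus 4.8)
The plan is to argue by contradiction on the local structure at $p$: assuming $p\in H$, I would show that the two components $C'$ and $C''$ cannot meet transversally, i.e. that $(C'\cdot C'')_p\geq 2$. The whole argument is local and rests on identifying the weight of $\sigma$ at $p$ together with a rigidity statement for $\sigma$-invariant curve germs.

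First I would fix the local normal form. Since there are no $\sigma$-fixed curves by Lemma~\ref{H-3}(iv) and $\sigma(p)=p$, the point $p$ is isolated in $S^\sigma$, so by Cartan's lemma (\cite[Section~4]{Car57}) there are coordinates $(x,y)$ centered at $p$ with $\sigma(x,y)=(\xi x,\xi^a y)$, where $\xi$ is a primitive $4$-th root of unity and $a\in\{1,2,3\}$; then $\alpha=\sigma^2$ acts by $(x,y)\mapsto(-x,(-1)^a y)$. Next I would pin down the weight using the hypothesis $p\in H$. By Lemma~\ref{H-3}(iii) the curve $H$ is $\alpha$-fixed and, by assumption, passes through $p$. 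If $a$ were odd then $\alpha(x,y)=(-x,-y)$ would have $p$ as an \emph{isolated} fixed point, so no $\alpha$-fixed curve could pass through $p$, a contradiction. Hence $a=2$, so $\alpha(x,y)=(-x,y)$, the fixed locus $S^\alpha$ equals $\{x=0\}$ near $p$, and therefore $H$ coincides with the smooth germ $\{x=0\}$ at $p$.

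The heart of the proof is then the following. The components $C',C''$ are vertical, hence $\sigma$-invariant since the numerically trivial $\sigma$ is vertical-curve-preserving (the Remark preceding Lemma-Definition~\ref{lem: e S^sigma}, i.e. \cite[Lemma~3.1]{Cai04} or \cite[Principle~1]{CatLiu21}). If either curve is singular at $p$ then already $(C'\cdot C'')_p\geq \mult_p C'\cdot\mult_p C''\geq 2$, so I may assume both are smooth at $p$. Their tangent lines at $p$ are $\sigma$-invariant, and since the eigenvalues $\xi,-1$ of $d\sigma_p$ are distinct, the only invariant lines are the axes $\{x=0\}$ and $\{y=0\}$. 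The decisive rigidity is that a smooth $\sigma$-invariant germ tangent to $\{x=0\}$, written $x=\psi(y)$ with $\psi(0)=\psi'(0)=0$, must satisfy $\psi(-y)=\xi\psi(y)$; comparing Taylor coefficients $b_k$ gives $b_k\bigl((-1)^k-\xi\bigr)=0$, and since $\xi\neq\pm1$ this forces $\psi\equiv 0$, so the germ is exactly $\{x=0\}=H$. As $C',C''$ are vertical while $H$ is horizontal, neither can be tangent to $\{x=0\}$; hence both are tangent to $\{y=0\}$, they share a tangent line, and so $(C'\cdot C'')_p\geq 2$, as desired. (One could alternatively phrase the transversal case via Lemma~\ref{lem: transverse}, but the direct coefficient computation handles all multiplicities at once.)

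The step I expect to be the main obstacle—really the key insight rather than a long computation—is the last one: recognizing that at a $\tfrac14(1,2)$-point the two available $\sigma$-invariant tangent directions are the coordinate axes, that the axis $\{x=0\}$ is pre-empted by the $\alpha$-fixed horizontal curve $H$, and that the rigidity above forbids any further $\sigma$-invariant germ from using that direction. Once this is in place, both vertical components are squeezed onto the single remaining direction $\{y=0\}$, which immediately yields the tangency. Everything else is routine bookkeeping with the Cartan coordinates and the parity of $a$.
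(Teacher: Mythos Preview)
Your proof is correct and follows essentially the same approach as the paper: both identify the weight at $p$ as $\tfrac14(1,2)$ via the $\alpha$-fixed curve $H$, and both exploit that at such a point there are only two $\sigma$-invariant tangent directions. The paper packages the endgame slightly differently---assuming $(C'\cdot C'')_p=1$ and $p\in H$, it observes that $T_pC'$, $T_pC''$, $T_pH$ would be three distinct $\sigma$-invariant lines in $T_pS$, forcing $\sigma$ to act as a scalar (weight $\tfrac14(1,1)$), a contradiction---whereas you argue directly that the $\{x=0\}$ direction is occupied by $H$ and hence unavailable to the vertical $C',C''$; but the underlying idea is identical, and your germ-rigidity computation (while a bit more than strictly needed) is a valid substitute for the paper's appeal to transversality of $H$ with $C'$ via Lemma~\ref{lem: transverse}.
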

\begin{pf} Suppose on the contrary that $(C'.C'')_p=1$
 and  $p \in H$. Then $C'$ and $C''$ are smooth at $p$, and they meet
transversely in  $p$. Since $H$ is $\alpha$-fixed
(cf.~Lemma~\ref{H-3}), $p$ is an isolated
$\sigma$-fixed point
 of
weight $\frac{1}{4}(1, 2)$  and $H$ meets
  $C'$ (resp.~$C''$) transversely in  $p$.  This implies that there  are at least three
$\sigma$-invariant  one dimensional linear subspaces of $T_{p}S$,
and hence
 the projective transformation of $\mathbb{P}(T_{p}S)\simeq\mathbb{P}^1$ induced by $\sigma$ must be the identity map. It follows that $p$ is an isolated
$\sigma$-fixed point
 of
weight $\frac{1}{4}(1, 1)$, a contradiction. \qed\end{pf}

\begin{lem}\label{rational}Let $C$ be an irreducible component of a fiber $f^*b$.
 Let $\tC$ be the normalization of $C$ if $C$ is singular,
and $\ti\alpha, \, \tilde{\sigma}$ the induced action of $\alpha,\,
\sigma$ on $\tC$, respectively.

(i) If $H$ and $C$ meet transversely in two points, which are not
$\sigma$-fixed, then $C\not\simeq \mathbb{P}^1$.

(ii)  If $\ti C$ has two $\ti\alpha$-fixed points, which are not
$\ti\sigma$-fixed, then $\ti C\not\simeq \mathbb{P}^1$.
\end{lem}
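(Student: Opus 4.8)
The plan is to reduce both statements to a single elementary fact about $\PP^1$: a nontrivial automorphism of finite order has \emph{exactly} two fixed points, whereas the identity fixes everything. Combined with the trivial observation that $\sigma(x)=x$ forces $\al(x)=\sigma^2(x)=x$ — so that on any invariant curve $D$ one has $D^\sigma\subseteq D^\al$ — this will pin down the fixed loci too rigidly to be compatible with the hypotheses. The structural inputs are Lemma~\ref{H-3}(iii)--(iv): the horizontal curve $H$ is the \emph{only} $\al$-fixed curve, and there are no $\sigma$-fixed curves at all. Since $\sigma$ is numerically trivial it preserves every vertical curve, so the (vertical) component $C$ and its normalization $\tC$ carry induced actions of $\sigma$ and $\al=\sigma^2$.

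For part (i) I would suppose $C\cong\PP^1$ and let $p_1,p_2$ be the two transverse points of $H\cap C$. As $p_i\in H$ and $H$ is $\al$-fixed, each $p_i$ is $\al$-fixed, so $\{p_1,p_2\}\subseteq C^\al$. The restriction $\al|_C$ is nontrivial: otherwise $C$ would be a second $\al$-fixed curve (it is vertical, hence $C\neq H$), contradicting Lemma~\ref{H-3}(iii). Thus $\al|_C$ is an involution of $\PP^1$ with $\#C^\al=2$, forcing $C^\al=\{p_1,p_2\}$. Likewise $\sigma|_C\neq\id$ by Lemma~\ref{H-3}(iv), so $\#C^\sigma=2$; but $C^\sigma\subseteq C^\al=\{p_1,p_2\}$ then gives $C^\sigma=\{p_1,p_2\}$, i.e.\ both $p_i$ are $\sigma$-fixed, contradicting the hypothesis.

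For part (ii) the same argument is run directly on $\tC$. Suppose $\tC\cong\PP^1$ with distinct $\ti\al$-fixed points $q_1,q_2$. If $\ti\al=\id$ on $\tC$, then $\al$ fixes $C$ pointwise, so $C$ is an $\al$-fixed curve $\neq H$, again contradicting Lemma~\ref{H-3}(iii); hence $\ti\al|_{\tC}$ is an involution with $\tC^{\ti\al}=\{q_1,q_2\}$. Then $\ti\sigma\neq\id$ (otherwise $\ti\al=\ti\sigma^2=\id$), so $\#\tC^{\ti\sigma}=2$, and $\tC^{\ti\sigma}\subseteq\tC^{\ti\al}=\{q_1,q_2\}$ forces $q_1,q_2$ to be $\ti\sigma$-fixed, contrary to the hypothesis.

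The calculations here are all immediate; the one point that genuinely needs care — and the step I would treat as the crux — is the verification that in each case the relevant involution restricts \emph{nontrivially} to $C$ (resp.\ $\tC$). This is precisely where the uniqueness of $H$ among $\al$-fixed curves (Lemma~\ref{H-3}(iii)) is indispensable: without it one could not guarantee that $\al$ acts on the rational curve with exactly two fixed points, and the fixed-point count driving both contradictions would collapse.
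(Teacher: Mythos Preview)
Your proof is correct and follows essentially the same route as the paper's: both reduce to the elementary observation that on $\PP^1$ a nontrivial finite-order automorphism has exactly two fixed points, whence $C^\sigma\subseteq C^\al$ forces equality once both restrictions are nontrivial. The paper compresses this into the single remark that ``for each automorphism $\sigma$ of order $4$ of a smooth rational curve, the $\sigma^2$-fixed points are $\sigma$-fixed,'' while you explicitly verify the nontriviality of $\al|_C$ and $\ti\al|_{\tC}$ via Lemma~\ref{H-3}(iii)--(iv); this extra care is well placed, since that is exactly what guarantees the restricted automorphism has order $4$ rather than $2$ or $1$.
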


\begin{pf}   Note that $C$ is $\sigma$-invariant (\cite[Lemma 3.1]{Cai04}).
 Since $H$ is $\alpha$-fixed (cf.~Lemma~\ref{H-3}), the two points in $H\cap C$ are $\alpha$-fixed under the assumption of (i).

Now (i) and (ii) follow from the simple fact that for each
automorphism $\sigma$ of order $4$ of a smooth rational curve, the
$\sigma^2$-fixed points are $\sigma$-fixed. \qed\end{pf}
\begin{lem}\label{c-i} For each $b\in B$, if $f^*b=2C$ is a multiple fiber
  then the possible configurations for   $C$ and the action of $\sigma$ on $C$ are
   as  follows:
  \begin{enumerate}
\item
  $C$ is a smooth curve of genus $2$, and  $H$ meets $C$ transversely in two
points of weight $\frac{1}{4}(1, 2)$.
\item  $C$ is   an irreducible curve
with a single  node $p$
 of
weight $\frac{1}{4}(1, 1)$. $C$ and $H$  meet transversely in two
points, which are not $\sigma$-fixed.
\item
  $C$ is an irreducible curve with a single node $p$
 of
weight $\frac{1}{4}(1, 3)$, and  $H$ meets $C$ transversely in two
points
 of
weight $\frac{1}{4}(1, 2)$.
\item  $C$ is an irreducible curve with one tacnode $p$
 of
weight $\frac{1}{4}(1, 2)$.
\item
 $C=C'+C''$, where    $C'$ and $C''$ are $(-3)$-curves
 meeting in two points,  a node of
weight $\frac{1}{4}(1, a)$, where $a=1$ or $3$,  and a tacnode  of
$C$
 of
weight $\frac{1}{4}(1, 2)$.
\item
 $C=C'+C''$, where    $C'$ and $C''$ are elliptic curves
 meeting transversely  in one point
 of
weight $\frac{1}{4}(1, 3)$.  $C'$ (resp.~$C''$) and $H$ meet
transversely in one point
 of
weight $\frac{1}{4}(1, 2)$.
\end{enumerate}
\end{lem}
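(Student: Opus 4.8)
The plan is to extract all numerical constraints from $f^*b = 2C$ and then run a case analysis on the geometry of $C$, using Cartan linearization to read off the weights of $\sigma$ at the relevant fixed points. First I would record the basic invariants: from $f^*b = 2C$ and $K_S\cdot f^*b = 4$ one gets $K_S\cdot C = 2$ and $C^2 = 0$, whence $p_a(C) = 2$ by adjunction, and $H\cdot C = \tfrac12 H\cdot f^*b = 2$ since $H\cdot F = 4$ by Lemma~\ref{H-3}(i). Because $H$ is the only $\alpha$-fixed curve and it is horizontal (Lemma~\ref{H-3}(iii)), no component of the vertical curve $C$ is $\alpha$-fixed, so $\alpha$ acts nontrivially on each component and $\sigma$ acts with full order $4$ on $C$. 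Finally, $f$-ampleness forces $K_S\cdot C_j\geq 1$ for every component $C_j$, so $C$ has at most two components, and if it has two then each meets $K_S$ once.

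For the irreducible case I would stratify by the geometric genus $g(C^\nu) = p_a(C) - \delta \in\{0,1,2\}$. When $C$ is smooth ($\delta = 0$) we are in case (1); here the two points of $H\cap C$ are $\alpha$-fixed, and since $C$ is smooth Lemma~\ref{lem: transverse} shows the intersection is transverse at two distinct points, while the weight $\tfrac14(1,2)$ is forced by linearizing $\sigma$ at such a point and using that there are no $\sigma$-fixed curves (Lemma~\ref{H-3}(iv)). When $\delta = 1$ the unique singular point is $\sigma$-fixed; a cusp is excluded via Lemma~\ref{lem: cusp} (its branch structure is incompatible with the order-$4$ linear action and the absence of a second $\alpha$-fixed curve), so the singularity is a node, and analyzing whether $\alpha$ fixes or interchanges the two branches --- it must fix them, else $\sigma^2$ would act as a transposition on the two branches while $\sigma$ only permutes two objects --- pins the node weight to $\tfrac14(1,1)$ or $\tfrac14(1,3)$; the location and type of $H\cap C$ then separate cases (2) and (3). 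When $\delta = 2$ the normalization is rational, and I would exclude the two-node and node/cusp configurations: if $\alpha$ interchanged the branches of a node it would produce an $\alpha$-fixed curve germ, forcing $H$ through that node and through its $\sigma$-image, hence $H\cdot C\geq 4$, a contradiction; combined with Lemma~\ref{rational}(ii) (on $\PP^1$ the $\alpha$-fixed points are $\sigma$-fixed) this leaves a single tacnode of weight $\tfrac14(1,2)$, i.e.\ case (4).

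For the reducible case $C = C'+C''$ with $K_S\cdot C' = K_S\cdot C'' = 1$, I would first show $\sigma$ fixes each component: $\sigma$ fixes the crossing point $q=C'\cap C''$, and at a transverse crossing it cannot interchange the two branches, since a diagonalized order-$4$ map fixes each of its eigendirections; hence it cannot swap the components. Then adjunction with $p_a(C) = 2$ and $C^2 = 0$ forces either two rational $(-3)$-curves with $C'\cdot C'' = 3$, necessarily realized as one transverse node plus one tacnode (case (5)), or two elliptic curves of self-intersection $-1$ meeting transversely in one point (case (6)). The weights at the crossing and tacnode, and at the points where $H\cdot C' = H\cdot C'' = 1$, are then determined by linearizing $\sigma$ there, invoking Lemma~\ref{412} to locate $H$ away from the transverse crossings, using the invariance $f^*t\circ\sigma = f^*t$ (as in the proof of Lemma~\ref{H-3}(iii)) to constrain the admissible weights, and comparing the rotation numbers of the induced order-$4$ automorphisms on the individual components.

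The main obstacle is exactly this exhaustive local bookkeeping: excluding cusps and multi-node configurations and nailing down the precise weights $\tfrac14(1,a)$ requires combining four rigidifying inputs --- Cartan linearizability of the $\sigma$-action at each fixed point, the nonexistence of $\sigma$-fixed curves, the uniqueness of the $\alpha$-fixed curve $H$, and the triviality of the $\sigma$-action on $B$ --- together with the multiplicity constraints of Lemmas~\ref{lem: fix vs sing fib}, \ref{lem: singular'} and \ref{k-1'}(iv). The most delicate points are verifying that each purported configuration is genuinely $\sigma$-equivariantly realizable, so that the weight assignment is consistent with $f^*t$ being $\sigma$-invariant, and showing that the correlations asserted in (2)--(3) and (5)--(6) between the singularity weight and the type of $H\cap C$ are forced rather than merely possible.
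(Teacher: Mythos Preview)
Your overall strategy matches the paper's: record $K_S\cdot C=2$, $C^2=0$, $p_a(C)=2$, $H\cdot C=2$, split into irreducible versus reducible, and stratify the irreducible case by geometric genus. Two places, however, need more than you indicate.

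First, in the reducible case with $C'\cdot C''=1$ you jump straight to ``two elliptic curves'', but adjunction only gives $p_a(C')=p_a(C'')=1$; each could a priori be a nodal or cuspidal rational curve. The paper rules these out separately: a nodal $C'$ would produce at least three $\tilde\alpha$-fixed points on its normalization $\tilde C'\cong\PP^1$, which is impossible; a cuspidal $C'$ forces $H$ through the cusp by Lemma~\ref{lem: cusp}, whence $H\cdot C'\geq 2$, hence $H\cdot C''=0$, making $C''$ a $(-2)$-curve, contrary to $f$-ampleness of $K_S$. You should not skip this step.

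Second, and more seriously, excluding weight $\frac14(1,1)$ at the crossing point in case (vi) is not achieved by the $f^*t$-invariance argument you invoke. At a transverse crossing with $f^*b=2C'+2C''$ one has $f^*t=x^2y^2\cdot(\text{unit})$ locally, and for $\sigma(x,y)=(\zeta x,\zeta^a y)$ one gets $\sigma^*(x^2y^2)=\zeta^{2(1+a)}x^2y^2$, which equals $x^2y^2$ for \emph{both} $a=1$ and $a=3$; so this test cannot separate them. The paper's argument is genuinely different: assuming $a=1$, it passes to the point $q'\in H\cap C'$ (which has weight $\frac14(1,2)$) and, by a computation in the spirit of Lemma~\ref{lem: fix vs base}, produces a holomorphic $2$-form $\omega$ with $\sigma^*\omega=-\omega$, contradicting the triviality of the $\sigma$-action on $H^0(\omega_S)$. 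You will need this $2$-form argument (or an equivalent global obstruction), not just local fiber invariance, to pin down $a=3$ here.
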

\begin{pf}
   Since  $K_S\cdot C=2$ and $K_S$ is $f$-ample, $C$ has at most two irreducible components.

\medskip

 {\bf Case 1.  $C=C'+C''$ is reducible.}
  Since $K_S\cdot C'=K_S\cdot C''=1$,
by the adjunction formula we have that  either
  $C'$
 and $C''$ are  $(-3)$-curves with $C'\cdot C''=3$ or they are curves of arithmetic genus one
  with $C'\cdot C''=1$.
In the former case, by  Lemma~\ref{412}, we have that   $C$ is as in
(v) of Lemma~\ref{c-i}.

In the latter case, if $C'$ is singular, then the singular point,
say  $q$, of $C'$ must be $\sigma$-fixed. If $q$ is a node of $C'$,
there are at least three $\ti{\alpha}$-fixed points on $\ti{C'}$,
the normalization of $C'$, a contradiction since $\ti{C'}$ is a
smooth rational curve; if $q$ is a cusp of $C'$, by Lemma~\ref{lem: cusp}
and (iii) of Lemma~\ref{H-3}, we have that $H$ passes through $q$
and hence $H\cdot C'\geq2$. Thus $H\cdot C''=0$, and so $C''$ is a $(-2)$-curve, a
contradiction since $K_S$ is ample.

So $C'$ and $C''$ are elliptic curves
 meeting transversely  in one point, which  is an isolated $\sigma$-fixed point
 of
weight $\frac{1}{4}(1, a)$, where $a=1$ or $3$. The case $a=1$ does
not occur: otherwise, by Lemma~\ref{lem: fix vs base}, we have $C<V$. Clearly
$H$ meets $C'$ transversely in one point $q'$, which is isolated
$\sigma$-fixed  of weight
 $\frac{1}{4}(1, 2)$.  Consider the action of $\sigma$ on the local
 neighborhood
  of $q'$,  by an argument as in Lemma~\ref{lem: fix vs base}, there
exists a holomorphic $2$-form $\omega\in H^0(\omega_S)$ such that
$\sigma^*\omega=-\omega$,  a contradiction.  Thus  $C$ is as in (vi)
of Lemma~\ref{c-i}.

\medskip

{\bf Case 2.
 $C$ is irreducible.} If $C$ is smooth,  then $C$ is as in (i) of
Lemma~\ref{c-i}.

If $C$ is singular, let $\ti C$ be the normalization of $C$, and
$\ti \alpha$ the induced involution  of $\alpha$ on $\ti C$.

If $g(\ti C)=1$, let $p\in C$ be the unique singular point of $C$.
Then $p$ is $\sigma$-fixed. If $p$ is a node of $C$,  then $C$ is as
in (ii) or (iii) of Lemma~\ref{c-i}.

 If $p$ is a cusp of $C$, then
 $p$ is an isolated $\sigma$-fixed point
 of
weight $\frac{1}{4}(1, 2)$ by Lemma~\ref{lem: cusp}, and   $H\cap C=\{p\}$
since $H\cdot C=2$. Then  $\sigma$ has a fixed point $q$ on
$C\setminus\{p\}$ (since  $\ti\sigma$ has two fixed points on $\ti
C)$). $q$ is an isolated $\sigma$-fixed point
 of
weight $\frac{1}{4}(1, a)$, where $a=1$ or $3$, a contradiction by
Lemma~\ref{lem: singular'}.

Now we consider the case $g(\ti C)=0$. Since $p_a(C)=2$, $C$ has at
most two singular points.

First we assume $C$ has two singular points, say $p, q$. We have $p$
and $q$ are $\alpha$-fixed.

If  $p$ and $q$ are cusps, then $H$ passes through both $p$ and $q$
by Lemma~\ref{lem: cusp}. This implies $H\cdot C\geq4$, a contradiction since
$H\cdot C=2$.

If  $p$ is a node  and $q$ is a cusp, then $H$ passes through  $q$
and $p$ is an isolated $\sigma$-fixed point. Clearly $\alpha$ fixes
the two local branches of $C$ at $p$. This implies that $\ti \alpha$
has at least three fixed points on $\ti C$, which is impossible
since $\ti C\simeq\mathbb{P}^1$.

Now we may assume that  $p$ and $q$ are nodes. If both $p$ and $q$
are $\sigma$-fixed, then $\alpha$ fixes the two local branches of
$C$ at both $p$ and $q$. This implies that $\ti \alpha$ has at least
four fixed points on $\ti C$, which is impossible since $\ti
C\simeq\mathbb{P}^1$.

So we may assume  $\sigma(p)=q$.
  If $H$
 passes through $p$ then $H$ passes through  $q$ since $\sigma(p)=q$ and
 $H$ is $\sigma$-invariant. This implies $H\cdot C\geq4$, a contradiction since $H\cdot C=2$.

So we have that $p, q$ are isolated $\alpha$-fixed. This implies
$\alpha$ fixes the two local branches of $C$ at both $p$ and $q$,
and hence $\ti \alpha$ has at least four fixed points on $\ti C$,
which is impossible since $\ti C\simeq\mathbb{P}^1$.

 Second we assume that  $C$ has only one singular point, say $p$. Then $p$
is $\sigma$-fixed.

If $p$ is a cusp of $C$, then $\sigma$ has exactly one fixed point
$q$ on $C\setminus\{p\}$ (since $\ti \sigma$ has two fixed points on
$\ti C$ and the inverse image of $p$ in $\ti C$ is one point), and
there are no $\alpha$-fixed points on $C\setminus\{p, q\}$. By
Lemma~\ref{lem: singular'}, $H$ passes through $q$. So $\{
q\}\subseteq H\cap C\subseteq \{p, q\}$. This implies either $H\cdot C=1$
or $H\cdot C\geq3$ since $(H.C)_q=1$ and $(H.C)_p\geq2$, a contradiction
since $H\cdot C=2$.

If  $p$ is a tacnode of $C$, then $\alpha$ fixes the two local
branches of $C$ at $p$.  This implies that $\alpha$ has no fixed
points on $C\setminus\{p\}$. So $C$ as in (iv) of Lemma~\ref{c-i}.
\qed\end{pf}

\begin{cor}\label{c-i'} For $i=1,\cdots, 4$, let $f^*b_i=2C_i$ be as in Lemma~\ref{k-1'}.
  Then  the possible configurations for  $C_i$ and the action of $\sigma$ on $C_i$ are
   as in (ii),   (iv) or (v)  of Lemma~\ref{c-i}.
\end{cor}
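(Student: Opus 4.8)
The plan is to invoke the global elliptic structure built in this section and discard three of the six configurations of Lemma~\ref{c-i}, leaving exactly (ii), (iv), (v). The decisive extra input is the involution $\bar\sigma$ of $\oY$ induced by $\sigma$: as recorded in the proof of Lemma~\ref{min}, the fibration $j\colon\oY\to B$ is relatively minimal and $\bar\sigma$ acts on a general fibre of $j$ as a \emph{translation}, with no horizontal fixed curves. I would first isolate the one fact about $\bar\sigma$ that does all the work: a non-trivial fibre-preserving translation of order two on a relatively minimal elliptic surface is fixed-point-free on the smooth locus of every fibre, so its isolated fixed points can only sit at the nodes of singular fibres, and it has no fixed points at all on a smooth (possibly multiple) fibre. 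Indeed, on a smooth fibre a non-zero translation is fixed-point-free, while the smooth locus of a reducible fibre is a torsor under a one-dimensional group on which a non-trivial order-two translation acts freely, leaving only the finitely many nodes as candidate fixed points.

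Next I would, for each of the configurations, transport the $\sigma$-fixed points of $C_i$ to the reduced fibre $R_{b_i}$ of $j$ through the quotient $\mu\colon S\to S/\alpha$ and the resolution $\oY\to S/\alpha$. Since $\mu$ is branched exactly along $H$ (Lemma~\ref{H-3}) and $j^*b_i=2R_{b_i}$ is a genuine multiplicity-two fibre (Lemma~\ref{k-1'}(iii), the evenness in Lemma~\ref{k-1'}(iv) guaranteeing that no odd-coefficient exceptional $(-2)$-curve lowers the multiplicity to one), a $\sigma$-fixed point $p\in C_i$ descends to a $\bar\sigma$-fixed point of $j^*b_i$, lying on the smooth locus of that fibre precisely when $p$ is a smooth point of $C_i$. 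The crux is then to locate $H\cap C_i$: in configurations (i), (iii) and (vi) the intersection $H\cap C_i$ consists of $\sigma$-fixed points of weight $\frac14(1,2)$ sitting on the smooth locus of $C_i$, whereas in (ii) these points are not $\sigma$-fixed, and in (iv) and (v) the curve $H$ meets $C_i$ only at its tacnode, respectively at the tacnode where the two $(-3)$-curves are tangent.

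I would finish as follows. In (i), (iii) and (vi) the two smooth weight-$\frac14(1,2)$ points of $H\cap C_i$ descend to $\bar\sigma$-fixed points on the smooth locus of $j^*b_i$, contradicting the principle above, so these cases are impossible; by contrast the $\sigma$-fixed data of (ii), (iv) and (v) descends only to nodes of the relevant fibres, and no contradiction arises. The exclusions even admit sharper, self-contained forms that I would record as a sanity check: in (i) the statement is that $\bar\sigma$ restricts to an involution of the smooth elliptic curve $R_{b_i}=C_i/\alpha$ with exactly two fixed points, and in (vi) that $\alpha$ restricts to an involution of the elliptic component $C'$ with exactly two fixed points, both impossible since an involution of an elliptic curve has $0$ or $4$ fixed points. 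The main obstacle I anticipate is pinning down the first principle cleanly, namely controlling the fixed locus of the order-two translation $\bar\sigma$ on the multiple and reducible fibres of $j$ (in particular verifying that $\bar\sigma$ does not restrict to the identity on any reduced multiple fibre), together with the careful bookkeeping that places $H\cap C_i$ on the smooth locus in exactly the three configurations to be ruled out.
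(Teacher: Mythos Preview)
Your approach is correct in spirit and genuinely different from the paper's, though one case needs a small patch.

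\textbf{Comparison with the paper.} The paper excludes (iii) and (vi) by a local holomorphic argument: at a weight-$\frac14(1,2)$ point $q\in H\cap C_i$ (with $C_i<V$), a generic canonical form has local divisor exactly $H+C_i$, and the eigenvalue computation forces the leading term to lie in the $(-1)$-eigenspace of $\sigma^*$, producing a nonzero $\omega$ with $\sigma^*\omega=-\omega$, contrary to numerical triviality. For (i) the paper uses Riemann--Hurwitz on $C_i\to C_i/\langle\sigma\rangle$ together with Lemma~\ref{k-1'}(iv). Your route is different: you pass to the elliptic quotient $\oY$ and count fixed points of $\bar\sigma$ (or $\alpha$) on elliptic curves, invoking the ``$0$ or $4$ fixed points'' dichotomy. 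This is a clean, more geometric alternative; the paper's method has the advantage of being purely local and not needing the structure of $j^*b_i$.

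\textbf{The gap.} Your sanity checks for (i) and (vi) are complete and correct: in both you implicitly use Lemma~\ref{k-1'}(iv) to confine isolated $\alpha$-fixed points to the singular locus of $C_i$, so that $\alpha\rest{C'}$ (respectively $\bar\sigma\rest{R_{b_i}}$) is a nontrivial involution of an elliptic curve with exactly two fixed points, which is impossible. But for (iii) you rely solely on the translation principle, and this principle is not obviously available on a \emph{multiple} fibre: Lemma~\ref{lem: fix vs sing fib}(i) gives no information there since $j^*b_i=2R_i$ is singular everywhere, and the Néron-model reasoning you sketch requires more justification. The fix is immediate and in the same spirit as your other sanity checks. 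In configuration (iii) the normalization $\tC_i$ is an elliptic curve, the lift $\tilde\sigma$ has order $4$ (since $\alpha\rest{C_i}\neq\id$), and $\tilde\sigma$ fixes the two branch points over the weight-$\frac14(1,3)$ node \emph{and} the two weight-$\frac14(1,2)$ points of $H\cap C_i$: four fixed points for an order-$4$ automorphism of an elliptic curve, which is impossible (such an automorphism has $0$ or $2$). Equivalently, $\bar\sigma$ restricts to a nontrivial involution of the rational component $R'_i=\tC_i/\tilde\alpha$ with four fixed points, again impossible. Either observation completes your argument without ever invoking the translation principle on the multiple $I_2$ fibre.
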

\begin{pf}
If $C_i$ is as in (i)  of Lemma~\ref{c-i}, denote $C_i$ by $C$,
applying the Hurwitz formula
  to the quotient map $\varpi:C\to C/\sigma$,  we have  that  either $g(C/\sigma)=0$
  and   $\alpha$  has $6$ fixed points on $C$, or $g(C/\sigma)=1$
  and $\varpi$ has only one branch point. In the former case,  since $H\cdot C=2$,
   we get a contradiction by (iv) of Lemma~\ref{k-1'}.  The latter case does not occur
   since any non-\'etale Abelian cover over an elliptic curve has at least two branch points.

If $C_i$ is as in (iii) or (vi)  of Lemma~\ref{c-i}, then $H$ meets
$C_i$ transversely in two points, which are  isolated
$\sigma$-fixed  of weight
 $\frac{1}{4}(1, 2)$.  Consider the action of $\sigma$ on the local
 neighborhood
  of one of such points,  by an argument as in Lemma~\ref{lem: fix vs base},  there
exists a holomorphic $2$-form $\omega\in H^0(\omega_S)$ such that
$\sigma^*\omega=-\omega$,  a contradiction.\qed\end{pf}
\begin{cor} \label{c-i''}$k_1\leq4$.\end{cor}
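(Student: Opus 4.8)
The plan is to localize every isolated $\sigma$-fixed point of weight $\frac14(1,1)$ onto the four multiple-fibre components $C_1,\dots,C_4$ of $V$, and then to observe that each $C_i$ can carry at most one such point. Since there are four curves, this yields $k_1\leq 4$.

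First I would exploit the fact that at a weight $\frac14(1,1)$ point $q$ the involution $\alpha=\sigma^2$ acts as $-\mathrm{id}$ on $T_qS$, so that $q$ is an \emph{isolated} fixed point of $\alpha$. On the other hand, Lemma~\ref{lem: fix vs base}(i) (applied with $a=1\neq r-1=3$) forces $q\in\Bs|K_S|$. The moving part $M$ of $|K_S|$ satisfies $M\equiv(\chi(\mc O_S)-2)F$ (Lemma~\ref{H-3}(ii)) and is pulled back from $B$, hence base-point-free; therefore $\Bs|K_S|=\Supp Z=\Supp H\cup\Supp V$. The point $q$ cannot lie on $H$: the curve $H$ is pointwise $\alpha$-fixed (Lemma~\ref{H-3}(iii)), so $\alpha$ has eigenvalue $+1$ along $T_qH$, contradicting $\alpha=-\mathrm{id}$ at $q$. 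Consequently $q\in\Supp V=\bigcup_{i=1}^4 C_i$ by Lemma~\ref{k-1'}(ii).

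Next I would pin down the position of $q$ on the component $C_i$ containing it. Since $q$ is an isolated $\alpha$-fixed point lying on $C_i$, Lemma~\ref{k-1'}(iv) gives that $\mathrm{mult}_q C_i$ is divisible by $2$; in particular $q$ is a \emph{singular} point of $C_i$. I then invoke the classification of Corollary~\ref{c-i'}: each $C_i$ is of type (ii), (iv) or (v) of Lemma~\ref{c-i}. Reading off the singular points together with their weights, the only singular point of weight $\frac14(1,1)$ that can occur is the node in type (ii), or the node in type (v) in the subcase $a=1$; the tacnodes all have weight $\frac14(1,2)$, and the type-(v) node has weight $\frac14(1,3)$ when $a=3$. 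Hence each $C_i$ carries at most one isolated $\sigma$-fixed point of weight $\frac14(1,1)$.

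Summing over $i=1,\dots,4$ (the $C_i$ lie in distinct fibres $f^*b_i$ and are therefore pairwise disjoint) gives $k_1\leq 4$. The conceptual heart of the argument, and the step I expect to require the most care, is the localization carried out in the second paragraph: the characterization of weight $\frac14(1,1)$ points through the $-\mathrm{id}$-action of $\alpha$, which simultaneously forces them into $\Bs|K_S|$ and excludes them from the $\alpha$-fixed curve $H$, so that they must land on $V$. Once this is in place, Lemma~\ref{k-1'}(iv) together with Corollary~\ref{c-i'} reduces the count to routine bookkeeping.
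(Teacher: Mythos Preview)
Your proof is correct and follows essentially the same approach as the paper's: localize every weight-$\tfrac14(1,1)$ point onto one of the four curves $C_i$ via Lemma~\ref{lem: fix vs base} (together with the observation that such a point cannot lie on the $\alpha$-fixed curve $H$, a step the paper leaves implicit), and then read off from Corollary~\ref{c-i'} that each $C_i$ carries at most one such point. Your appeal to Lemma~\ref{k-1'}(iv) to see that $q$ is singular on $C_i$ is a harmless extra step not present in the paper's terser three-line version.
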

\begin{pf}For each $p\in S$, if $p$ is an isolated
$\sigma$-fixed point
 of
weight $\frac{1}{4}(1, 1)$, by Lemma~\ref{lem: fix vs base}, we have $p\in C_i$
for some $i$, where $C_i$ is as in Lemma~\ref{k-1'}. By
Corollary~\ref{c-i'}, there are at most one isolated $\sigma$-fixed
point
 of
weight $\frac{1}{4}(1, 1)$ on $C_i$. So by (ii) of Lemma~\ref{k-1'},
we have $k_1\leq4$. \qed\end{pf}

\begin{lem}\label{k3'}For each $b\in B$, if the fiber $F':=f^*b$ is singular
and irreducible,
 then the possible configurations for $F'$ and the action of $\sigma$ on $F'$
  are as follows.
 \begin{enumerate}
\item
 $F'$ has a single node
  of weight $\frac{1}{4}(1, 3)$, and $\sigma$ fixes the two local branches of $F'$ at
  $p$. $H$ meets $F'$   transversely in four points,
  and   $\sigma$ maps  two of them to the others.
\item
 $F'$ has  exactly two nodes,  $H$ passes though
 them,  $\sigma$ maps one the the other, and the lift of $\sigma$ on the normalization of
 $ F'$ is
  an translation of order four.
  \item
 $F'$ has  three nodes, say $p_1, p_2, p_3$.  $H$ passes though
 $p_1$ and  $p_2$,  $\sigma(p_1)=p_2$,
 $p_3$
 is of weight $\frac{1}{4}(1, 3)$, and $\sigma$ fixes the two local branches
 of $F'$ at $p_3$.
\item
$F'$ has one tacnode $p$
 of
weight $\frac{1}{4}(1, 2)$.  The normalization of $F'$ is an elliptic
curve. $ H$ meets $F'\setminus\{p\}$ transversely  in two points,
which are not $\sigma$-fixed.
\item
$F'$ has a single  singular point $p$
 of
weight $\frac{1}{4}(1, 2)$, $\op{mult}_pF'=3$, and   $F'$ has exactly
two local branches  at $p$.
\item
$F'$ has two cusps
 of
weight $\frac{1}{4}(1, 2)$.   One of them is not an ordinary cusp.
\item
$F'$ has one tacnode $p$,
  which is  an isolated $\sigma$-fixed point
 of
weight $\frac{1}{4}(1, 2)$.  The normalization of $F'$ is a rational
curve.
\end{enumerate}
 \end{lem}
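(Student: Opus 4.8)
The plan is to analyze the reduced irreducible singular fiber $F'=f^*b$ through its normalization $\nu\colon\tilde F'\to F'$, organizing the case analysis by the geometric genus $g:=g(\tilde F')$ and exploiting the rigidity of the $\sigma$- and $\alpha$-actions established in Lemma~\ref{H-3}. Since the multiple-fiber case $f^*b=2C$ is already settled in Lemma~\ref{c-i} (and $f^*b=mC$ with $m\geq 3$ is impossible by the adjunction formula since $K_S\cdot f^*b=4$), I may assume $F'$ is reduced, so that $p_a(F')=3$ and $g+\delta=3$ with $\delta:=\sum_p\delta_p(F')\geq 1$ the total delta-invariant. Being vertical, $F'$ is $\sigma$-invariant, so $\sigma$ permutes $F'_{\sing}$ and lifts to an automorphism $\tilde\sigma$ of $\tilde F'$ of order dividing $4$, with $\tilde\alpha:=\tilde\sigma^2$.

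First I would set up a local dictionary at a $\sigma$-fixed singular point $p$. Since $\mult_p F'\leq 3$ on a curve of arithmetic genus $3$, Lemma~\ref{lem: singular'}(1) rules out weight $\frac14(1,1)$ there, leaving only $\frac14(1,2)$ and $\frac14(1,3)$ as admissible weights. Reading off the $\alpha$-action, weight $\frac14(1,2)$ forces $p$ onto an $\alpha$-fixed curve, which is $H$ by Lemma~\ref{H-3}(iii), whereas weight $\frac14(1,3)$ makes $p$ isolated in $S^\alpha$, hence $p\notin H$. Combined with Lemma~\ref{lem: singular'}(2), an ordinary node fixed by $\sigma$ with both branches preserved must have weight $\frac14(1,3)$ and lie off $H$; and by Lemma~\ref{lem: cusp} applied to the involution $\alpha$, any $\alpha$-fixed cusp lies on $H$ with local intersection $(H\cdot F')_p=3$, forcing weight $\frac14(1,2)$. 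These constraints, together with the global budget $H\cdot F'=4$, are what pin down the configurations.

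The case analysis then proceeds by $g$. For $g=2$ ($\delta=1$) the unique singular point is $\sigma$-fixed; a node yields configuration (i) (weight $\frac14(1,3)$, with $\sigma$ preserving the branches, and the four transverse points of $H\cap F'$ permuted by $\sigma$), while a single cusp is excluded by the Lemma~\ref{lem: singular'} argument (as in Case~2 of Lemma~\ref{c-i}): a second $\sigma$-fixed point forced by Riemann--Hurwitz on $\tilde F'$ would have weight $\frac14(1,1)$ or $\frac14(1,3)$, which is incompatible with $\mult=2$. For $g=1$ ($\delta=2$) I would separate the subcases two nodes, one tacnode, and two cusps: two nodes swapped by $\sigma$ give (ii), with $\tilde\sigma$ a fixed-point-free translation of order $4$ on the elliptic curve $\tilde F'$ (the alternative of two $\sigma$-fixed nodes being excluded, since four $\tilde\alpha$-fixed points on an elliptic curve is too many for an involution); a tacnode gives (iv); and two ordinary cusps are ruled out since both, being $\alpha$-fixed, would lie on $H$ with $(H\cdot F')=3$ apiece, exceeding the budget $4$. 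For $g=0$ ($\delta=3$, so $\tilde F'\cong\PP^1$) I would use Lemma~\ref{rational}, that an order-$4$ automorphism of $\PP^1$ has its $\tilde\alpha$-fixed points among its $\tilde\sigma$-fixed points, to control the distribution of the singularities over $\tilde F'$: three nodes give (iii), a triple point with two branches gives (v), and the remaining cusp/tacnode distributions give (vi) and (vii) after the local weight analysis, with all other combinations excluded by the intersection count against $H$ and the Riemann--Hurwitz fixed-point count.

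The main obstacle will be the bookkeeping in the $g=0$ stratum together with the exclusion arguments throughout: one must simultaneously track, at each singular point, whether it is $\sigma$-fixed or lies in a genuine $\sigma$-orbit, its admissible local weight, whether it sits on $H$, and its contribution both to $H\cdot F'=4$ and to the fixed-point count of $\tilde\sigma$ on $\tilde F'$. The delicate part is ruling out the numerically borderline configurations (the genus-$2$ cusp, the two ordinary cusps in genus $1$, and the spurious genus-$0$ singularity types), where the contradiction emerges only after combining the base-locus restriction of Lemma~\ref{lem: fix vs base}, the weight obstruction of Lemma~\ref{lem: singular'}, and the contact computation at cusps and tacnodes from Lemma~\ref{lem: cusp}.
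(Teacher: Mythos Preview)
Your reorganization by the geometric genus $g(\tilde F')\in\{0,1,2\}$ rather than by $u=\#(H\cap F')\in\{1,2,3,4\}$ as the paper does is a legitimate alternative, and you have identified the right toolkit. The two schemes do the same local work; the paper's $u$-slicing just front-loads the tightest constraint $H\cdot F'=4$, so the $H$-bookkeeping is automatic in each case, whereas in your scheme you must redo it inside every $g$-subcase. That is exactly where your outline leaks.

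Concretely, your $g=1$ enumeration ``two nodes, one tacnode, two cusps'' omits two configurations you must dispose of: one node plus one ordinary cusp (distinct types force both to be $\sigma$-fixed; the cusp eats $(H\cdot F')_p=3$ via Lemma~\ref{lem: cusp}, and the single leftover transversal smooth point of $H\cap F'$ is then forced to be $\sigma$-fixed, contradicting Lemma~\ref{lem: fix vs sing fib}), and a single $A_4$ point (at weight $\tfrac14(1,2)$ the only $\sigma$-semi-invariant ramphoid cusp has tangent along $H$, giving $(H\cdot F')_p=5>4$). Your exclusion of two $\sigma$-fixed nodes is also misstated: four $\tilde\alpha$-fixed points on an elliptic curve is exactly the Riemann--Hurwitz count for a nontrivial involution, not ``too many''; the contradiction is that such nodes, having weight $\tfrac14(1,3)$, lie off $H$, so $H$ contributes four \emph{further} smooth $\alpha$-fixed points, and it is the total of eight that fails. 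Finally, the $g=0$ paragraph is too coarse to count as a plan: you neither exclude the mixed node/cusp $(1,1,1)$-partitions, nor separate the $(1,2)$ partition giving (vi) (an $A_2+A_4$ pair, both on $H$) from the single $\delta_p=3$ points giving (v) or (vii), nor explain why the triple point in (v) has exactly two branches. All of this is repairable with the lemmas you cite, but the proposal as written would not close the classification.
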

\begin{pf}Let $\ti F'$ be the
normalization of $F'$, and $\ti \sigma$, $\ti \alpha$  the induced
automorphism of $\sigma$, $\alpha$ on $\ti F'$, respectively. Let
$p_1, \cdots, p_u$ be all points in $F'\cap H$,  where $u$ is  the
cardinal number of
 $F'\cap H$. We have $u\leq4$ since $H\cdot F'=4$. We distinguish between
 four cases according to the value of $u$.

\medskip

\noindent{ \bf Case 1.  $u=4$.} In this case $p_1, \cdots, p_4$ are
smooth points of $F'$ since $H\cdot F'=4$. Since  $\ti \alpha$ has at
least four fixed points on $\ti F'$ (the inverse image of $p_1,
\cdots, p_4$), we have $g(\ti F')=1$ or $2$.

If $g(\ti F')=2$, then   the singular point  $p$ of $ F'$
  is $\sigma$-fixed. Since $p\not\in H$ and
$\op{mult}_pF'=2$, by Lemma~\ref{lem: singular'}, $p$ is an isolated
$\sigma$-fixed
 point of weight $\frac{1}{4}(1, 3)$ and  $p$ is a node of $F'$.
Applying the Hurwitz formula to the quotient map $\ti F'\to \ti
F'/\ti \sigma$, we have that $\ti \sigma$ has two fixed points. So
$\sigma$ fixes the two local branches of $F'$ at $p$. Thus $F'$ is
as in (i) of Lemma~\ref{k3'}.

 If $g(\ti F')=1$,  note that the inverse image $\ti p_i$ of  $p_i$
are four $\ti \alpha$-fixed points on $\ti F'$, $\ti \alpha$ has no
fixed points
 on $\ti F'\setminus\{\ti p_1, \cdots, \ti p_4\}$.
 This implies that there are exactly   two nodes on $F'$ and $\sigma$ maps
 one to the other.
 Thus $\ti \sigma$ has no fixed points on $\ti F'$. Applying the Hurwitz formula
to the quotient map $\ti F'\to \ti F'/\ti \sigma$, we have that it
has eight ramification points of ramification index two. This
implies that $\ti \alpha$ has eight fixed points on $\ti F'$, a
contradiction.

\medskip

\noindent{\bf Case 2. $u=3$.} Note that since $H$ is $\alpha$-fixed,
we have that $(H.F')_{p_i}>1$ if and only if  $p_i$ is a singular
point of $F'$. Since $H\cdot F'=4$, we may assume that $p_1$ and $p_2$ are
smooth points of $F'$ and $\op{mult}_{p_3}F'=2$. Then $\sigma$ maps
$p_1$ to $p_2$ by Lemma~\ref{lem: fix vs sing fib},  and $p_3$ is an
isolated $\sigma$-fixed point
  of weight $\frac{1}{4}(1, 2)$.
By (ii) of Lemma~\ref{rational}, we have   $g(\ti F')\geq1$.

If $g(\ti F')=2$, then $p_3$ is a cusp of $F'$ by Lemma~\ref{lem:
singular'} and $\alpha$ has no fixed points on $F'\setminus\{p_1,
p_2, p_3\}$ by Lemma~\ref{lem: fix vs sing fib}. Thus $\ti \alpha$ has
exactly three fixed points (the inverse image of $p_1, p_2, p_3$) on
$\ti F'$, and we get a contradiction by applying the Hurwitz formula
to the quotient map $\ti F'\to \ti F'/\ti \alpha$.

  Now we assume  $g(\ti F')=1$. If $F'\setminus\{p_1,
p_2, p_3\}$ is smooth, then $F'$ is as in  (iv) of Lemma~\ref{k3'}.

 If $F'\setminus\{p_1,
p_2, p_3\}$ is singular, let $p\in F'\setminus\{p_1, p_2, p_3\}$ be
the (unique) singular point. Since $\op{mult}_pF'=2$ and $p\not\in
H$, by Lemma~\ref{lem: singular'}, $p$ is an isolated $\sigma$-fixed
 point of weight $\frac{1}{4}(1, 3)$ and  $p$ is a node of $F'$.
Then $\ti \alpha$   has exactly at least five fixed points (the
inverse image of $p_1, p_2, p_3$ and $p$) on $\ti F'$, which is
absurd.

\medskip

\noindent{\bf Case 3. $u=2$.}
 If  $\sigma(p_i)=p_i$ for $i=1$, $2$, then  $p_1, p_2$ are
isolated $\sigma$-fixed of weight
 $\frac{1}{4}(1, 2)$, and they are cusps of $F'$ by
Lemma~\ref{lem:
  singular'}. So $g(\ti F')\leq1$.

If $g(\ti F')=1$, then $\alpha$ has two isolated fixed points
 on $F'\setminus\{p_1, p_2\}$, which is smooth,   a contradiction by Lemma~\ref{lem: fix vs sing fib}.

If  $g(\ti F')=0$, then  $F'\setminus\{p_1, p_2\}$ is smooth
(otherwise, $\ti \alpha$
  has at least three fixed points on $\ti F'$, a contradiction), and
$F'$ is as in (vi) of Lemma~\ref{k3'}.

 Now we assume $\sigma(p_1)=p_2$. Since $H\cdot F'=4$,  $p_1, p_2$ are
singular points of $F'$. So $g(\ti F')\leq1$.

 If $g(\ti F')=1$,  then $F'\setminus\{p_1, p_2\}$ is smooth, and either  $F'$ is as in (ii)  of Lemma~\ref{k3'} or $p_1,
p_2$ are cusps of $F'$. In the latter case, besides  the inverse
image of $p_1$ and $p_2$,  $\ti \alpha$ has two fixed points on $\ti
F'$ and hence on  $F'\setminus\{p_1, p_2\}$. Clearly these two
points are isolated $\alpha$-fixed,   a contradiction by
Lemma~\ref{lem: fix vs sing fib} since  $F'\setminus\{p_1, p_2\}$ is
smooth.

If $g(\ti F')=0$,  then either  $F'$ is as in (iii) of
Lemma~\ref{k3'} or  $p_1, p_2$ are cusps of $F'$.  The latter case
is excluded  by (ii) of Lemma~\ref{rational}.

\medskip

 \noindent{\bf Case 4. $u=1$.}
Since $H\cdot F'=4$ and $p_a(F')=3$,  we have $\op{mult}_pF'=2$ or $3$. If
$\op{mult}_pF'=3$, then $F'\setminus\{p\}$ is smooth, and $F'$ as in
(v) of Lemma~\ref{k3'}.

If  $\op{mult}_pF'=2$, then $p$ is a cusp or a tacnode of $F'$ by
Lemma~\ref{lem: singular'}. First we assume $p$ is a cusp. Then the
inverse image $\ti p\in \ti F'$ of $p$ is $\ti \sigma$-fixed. If
$g(\ti F')=2$, then $\alpha$ has isolated fixed points
 on $F'\setminus\{p_1\}$, which is smooth,   a contradiction by
 Lemma~\ref{lem: fix vs sing fib}.

Since  $g(\ti F')\leq1$, there are exactly two $\ti \sigma$-fixed
points, $\ti p$ and  $\ti p'$, on $\ti F'$. Let $ p'\in F'$ be the
image of $\ti p'$. Then either $p'$ is a smooth point of $F'$ or
$p'$ is a cusp of $F'$.  Since $\op{mult}_{p'}F'\leq2$, by
Lemma~\ref{lem: singular'}, $p'$ is an isolated $\sigma$-fixed point
of weight $\frac{1}{4}(1, 2)$. This is absurd since $p'\not \in H$.

 Now we assume $p$ is a tacnode. Then $g(\ti F')\leq1$ and  the
inverse image $\ti p_1, \ti p_2\in \ti F'$ of $p$ are $\ti
\alpha$-fixed. If $g(\ti F')=1$, then $ F'\setminus\{ p\}$ is smooth
and there are two $\ti \alpha$-fixed points on $\ti F'\setminus\{\ti
p_1, \ti p_2\}$. This implies that there are two $ \alpha$-fixed
points on $ F'\setminus\{ p\}$, which is impossible by
Lemma~\ref{lem: fix vs sing fib}.

If  $g(\ti F')=0$, then $\ti \alpha$ has no fixed points on  $\ti
F'\setminus\{\ti p_1, \ti p_2\}$. So $ F'\setminus\{ p\}$ must be
smooth, and  $F'$ as in (vii) of Lemma~\ref{k3'}. \qed\end{pf}

\begin{lem}\label{k3''}For each   $b\in B$,  if the fiber $F':=f^*b$ is reducible but
 not a multiple fiber,
 then the possible configurations for $F'$ and the action of $\sigma$ on $F'$
  are as follows.
\begin{enumerate}
\item[(viii)]
 $F'=C_1+C_2$, where $C_1, C_2$ are elliptic curves meeting transversally
  in two points of weight $\frac{1}{4}(1, 3)$.
\item[(ix)]
 $F'=C_1+C_2$, where $C_1$ is a $(-3)$-curve and  $C_2$ is an elliptic curve.
 They  meet  tangentially in one point of weight $\frac{1}{4}(1, 2)$  and
  transversely in one  point
  of weight $\frac{1}{4}(1, 3)$.
\item[(x)]
$F'=C_1+C_2$, where $C_1$ and $C_2$ are $(-4)$-curves meeting in two
points
 of
weight $\frac{1}{4}(1, 2)$.
\item[(xi)]
$F'=C_1+C_2+C_3$, where $C_1$ and $C_2$ are $(-3)$-curves, $C_3$  is a
$(-4)$-curve. They meet in three points, say $p_1, p_2, p_3$: $C_1$
and $C_2$ meet transversely in $p_3$
 of
weight $\frac{1}{4}(1, 3)$, $C_i$  and $C_3$ meet tangentially in
$p_i$  of weight $\frac{1}{4}(1, 2)$ for $i=1$, $2$.
\item[(xii)]
$F'=C_1+C_2+C_3$, where $C_1$ and $C_2$ are $(-3)$-curves, $C_3$  is
an elliptic curve. They meet in three points, say $p_1, p_2, p_3$:
$C_1$ and $C_2$ meet tangentially  in $p_3$
 of
weight $\frac{1}{4}(1, 2)$,  $C_i$  and $C_3$ meet transversely in
$p_i$ of weight $\frac{1}{4}(1, 3)$ for $i=1$, $2$.
\item[(xiii)]
$F'=C_1+\cdots+C_4$, where  $C_1, \cdots, C_4$  are $(-3)$-curves
meeting in four points, say $p_1, \cdots, p_4$: $C_1$ and $C_2$,
$C_3$ and $C_4$ meet tangentially in  $p_1$, $p_2$ of weight
$\frac{1}{4}(1, 2)$, respectively;
 $C_1$ and $C_3$, $C_2$ and $C_4$ meet transversely  in
$p_3$, $p_4$ of weight $\frac{1}{4}(1, 3)$, respectively.
\end{enumerate}
 \end{lem}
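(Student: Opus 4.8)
The plan is to run the same $\sigma$-equivariant local analysis used for the irreducible fibers in Lemma~\ref{k3'}, now organized around the decomposition of $F'$ into components. Since $\sigma$ is numerically trivial it preserves every vertical curve, so each irreducible component of $F'$ is $\sigma$-invariant; moreover, by Lemma~\ref{H-3}(iii)--(iv) none of them is $\sigma$-fixed or $\alpha$-fixed, while $H$ is the unique $\alpha$-fixed curve and meets $F'$ in $H\cdot F'=4$ points, all $\alpha$-fixed. First I would pin down the numerical type: writing $F'=\sum_i m_iC_i$ with the $C_i$ distinct and irreducible, $f$-ampleness of $K_S$ gives $K_S\cdot C_i\geq1$, so from $4=K_S\cdot F'=\sum_i m_i(K_S\cdot C_i)$ one reads off that $F'$ has at most four components and only finitely many multiplicity/degree patterns occur. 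For each pattern the adjunction formula fixes $C_i^2$ and $p_a(C_i)$, and the relations $C_i\cdot F'=0$ together with $F'^2=0$ determine all the mutual intersection numbers $C_i\cdot C_j$. This singles out the reduced patterns with degree sequence $(K_S\cdot C_i)$ equal to $(2,2)$, $(1,3)$, $(1,1,2)$ and $(1,1,1,1)$ as the candidates matching (viii)--(xiii).

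The next step is to discard the spurious numerical solutions. Patterns of shape $2(C_1+C_2)$ are multiple fibers, already covered by Lemma~\ref{c-i}; the genuinely non-reduced reducible patterns (such as $2C_1+C_2$) I would exclude by passing to the relatively minimal elliptic fibration $j\colon\bar Y\to B$ of Lemma~\ref{min}, whose multiplicity-$2$ fibers are accounted for in Lemma~\ref{k-1'}, combined with the multiplicity constraint of Lemma~\ref{lem: fix vs sing fib}(ii). The wrong genus assignments within a fixed degree pattern (a rational component where the statement requires an elliptic one, or conversely) are ruled out via Lemma~\ref{rational}: an order-$4$ automorphism of $\mathbb{P}^1$ has its $\alpha$-fixed points automatically $\sigma$-fixed, so a rational component cannot carry two $\alpha$-fixed points, arising from $H$ or from intersections with other components, that $\sigma$ does not fix.

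Finally, for each surviving configuration I would determine the local weights $\tfrac14(1,a)$ at the singular and intersection points. The inputs are: Lemma~\ref{lem: singular'}, tying the weight at an isolated $\sigma$-fixed point of multiplicity $2$ to whether the point is an ordinary node or a tacnode; Lemma~\ref{412}, which forbids a transversal intersection at a $\sigma$-fixed point lying on $H$ and thereby eliminates weight $\tfrac14(1,1)$ there; Lemma~\ref{lem: cusp} applied to the involution $\alpha$, forcing $H$ through any $\alpha$-fixed cusp; and the Riemann--Hurwitz formula on each normalized component $\tilde C_i$ for both $\tilde\sigma$ and $\tilde\alpha$, matching the fixed-point counts with $H\cap C_i$ and with the points $C_i\cap C_j$. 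The weights at the intersection points are then forced, and bookkeeping over the at-most-four components yields exactly the list (viii)--(xiii).

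I expect the main obstacle to be the combinatorial bookkeeping in this last step. At every distinguished point one must simultaneously track whether it lies on $H$, whether $\sigma$ fixes it or swaps it with another point, its weight, and the local intersection multiplicity, and then verify that the Hurwitz counts on each component are consistent with the global intersection numbers $C_i\cdot C_j$. Distinguishing tangential from transversal intersections (tacnodes versus ordinary nodes) and handling the configurations in which $\sigma$ permutes two singular points rather than fixing them, exactly as in the four-case split of Lemma~\ref{k3'}, is where the argument is most delicate.
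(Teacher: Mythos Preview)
Your overall toolkit (adjunction, Lemmas~\ref{rational}, \ref{412}, \ref{lem: singular'}, \ref{lem: cusp}, and Riemann--Hurwitz on the normalized components) is the same as the paper's, and your organization by the degree pattern $(K_S\cdot C_i)$ is a reasonable alternative to the paper's scheme, which instead splits on $u:=\#(H\cap F')\in\{1,2,3,4\}$ and, inside the case $u=2$, on whether $\sigma$ fixes or swaps the two points. The paper's scheme is tightly tied to the basic identity $H\cdot C_i=K_S\cdot C_i$ for every vertical component (coming from $K_S-H=\mu^*K_Y$ with $Y=S/\langle\alpha\rangle$ having an elliptic fibration), which lets one read off simultaneously the degree pattern and the distribution of $H$ among the $C_i$; you use $H\cdot F'=4$ but never invoke this componentwise equality, and without it several of the exclusions become harder to pin down.

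There is, however, a genuine gap in your handling of the non-reduced patterns such as $F'=2C_1+C_2$, $2C_1+C_2+C_3$, $3C_1+C_2$. You propose to rule these out by passing to the elliptic fibration $j\colon\overline{Y}\to B$ and invoking Lemma~\ref{k-1'} together with Lemma~\ref{lem: fix vs sing fib}(ii). Neither lemma applies here: Lemma~\ref{k-1'} concerns only the genuine multiple fibers $f^*b_i=2C_i$ supporting $V$, not reducible non-multiple fibers; and Lemma~\ref{lem: fix vs sing fib}(ii) needs a $\sigma$- or $\alpha$-fixed vertical curve, whereas Lemma~\ref{H-3}(iii)--(iv) says there are none. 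The paper excludes these cases directly on $S$: from $\sum m_iH\cdot C_i=4$ and $H\cdot C_i=K_S\cdot C_i>0$ it reads off $H\cdot C_i$, applies adjunction and $C_i\cdot F'=0$ to compute the self-intersections, and then for $2C_1+C_2$ uses Lemma~\ref{rational}(i) on the resulting $(-4)$-curve $C_2$ to force both points of $H\cap C_2$ to be singular on $F'$, hence to lie on $C_1$, contradicting $H\cdot C_1=1$; for $2C_1+C_2+C_3$ it uses Lemma~\ref{412} at the forced common point. You will need an argument of this direct kind.
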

\begin{pf}Write $F'=m_1C_1+m_2C_2+\cdots$ with $m_1\geq m_2\geq\cdots$, where $C_i$ are irreducible components of
$F'$ and $m_i=\op{mult}_{C_i}F'$.
 Note that for each   $C_i$, $H\cdot C_i>0$  since $K_S$ is ample
 by assumption. We have
 \[m_1H\cdot C_1+m_2H\cdot C_2+\cdots=4.\]

 First  we show that $m_1=1$, that is, $F'$ is reduced.

If $F'=3C_1+C_2$, then $K_S\cdot C_i=1$ for  $i=1$, $2$, and
$C_2^2=(-3C_1)^2=9C_1^2\leq-9$, which is absurd by the adjunction
formula.

If $F'=2C_1+C_2$, then  $K_S\cdot C_1=H\cdot C_1=1$,
 $K_S\cdot C_2=H\cdot C_2=2$, and  $C_2^2=(-2C_1)^2=4C_1^2\leq-4$. So $C_2$ is a
$(-4)$-curve.

 Since $C_2$ is smooth, $H$ meets
$C_2$ transversely in two points, say $p_1$, $p_2$. By (i) of
Lemma~\ref{rational}, $p_1$ and $p_2$ are isolated $\sigma$-fixed
points (of weight
 $\frac{1}{4}(1, 2)$). This implies  $F'$ is singular at $p_1$  and
 $p_2$ by Lemma~\ref{lem: fix vs sing fib}. This yields
 $p_1, p_2\in C_1$ (since $C_2$ is smooth and $m_2=1$),  a contradiction
 since $H\cdot C_1=1$.

If $F'=2C_1+C_2+C_3$, then  $K_S\cdot C_i=H\cdot C_i=1$,  $C_i^2=-1$  or $-3$
for all $i$. We may assume $C_2^2\geq C_3^2$.
    From $(-2C_1)^2=(C_2+C_3)^2= C_2^2+C_3^2+2C_2C_3$, we have $C_1^2=-1$, and
    either
\begin{enumerate}
\item[(a)]
     $C_2C_3=1$ and $C_2$ and $C_3$   are
$(-3)$-curves, or
\item[(b)]  $C_2C_3=0$,  $C_2^2=-1$ and $C_3$   is a $(-3)$-curve.
\end{enumerate}
The latter case does not occur: otherwise, from $F'C_3=0$, we get
$2C_1\cdot C_3-3=0$, which is absurd.

In case (a), we have $C_1\cdot C_2=C_1\cdot C_3=1$. Since $H\cdot C_i=1$,  $H$ meets
$C_i$ transversely in one smooth point, say $p_i$, which is
isolated $\sigma$-fixed.
  By Lemma~\ref{lem: fix vs sing fib}, $p_i$ is  a singular point of  $F'$.
 This implies that
  $p_3\in C_1$
or $p_3\in C_2$ since $C_3$ is smooth and $m_3=1$,  and $p_2\in C_1$
or $p_2\in C_3$ since $C_2$ is smooth and $m_2=1$.

If either $p_3\in C_2$ or $p_2\in C_3$ then $\{p_2\}=\{p_3\}=C_2\cap
C_3$, a contradiction by Lemma~\ref{412}.

Now we assume  $p_2, p_3\in C_1$.  Then  $p_2=p_1=p_3$ since
$H\cdot C_1=1$. In this case $(C_2.C_3)_{p_1}=1$, we get a contradiction
again by Lemma~\ref{412}.

From now on we assume $F'$ is reduced. Write $F'=C_1+\cdots+C_t$
with $K_S\cdot C_1\leq \cdots\leq K_S\cdot C_t$, where $t$ is the number of
irreducible components of $F'$. Let $p_1, \cdots, p_u$ be all points
in $F'\cap H$, where $u$ is the cardinal number of
 $F'\cap H$.  We distinguish several
cases according to the value of $u$:

\medskip

 \noindent{\bf
Case 1. $u=4$.}

 In this case $p_1, \cdots, p_4$ are smooth points of $F'$ since $H\cdot F'=4$,
 and  $\sigma$ maps two of them to the others since $p_i$ is not isolated
 $\sigma$-fixed by Lemma~\ref{lem: fix vs sing fib}.

 For $i=1, 2,\cdots$,  since $C_i$ is $\sigma$-invariant,
 if $p_j\in C_i$ for some $j$ then
 $\sigma(p_j)\in C_i$.
 This implies  $H\cdot C_i\geq2$.   So
 $t=2$ and
 $K_S\cdot C_i=H\cdot C_i=2$ for $i=1$, $2$. By the adjunction formula we have that  either
  $C_1$
 and $C_2$ are  $(-4)$-curves with $C_1\cdot C_2=4$ or they are curves of arithmetic genus one
  with $C_1\cdot C_2=2$.
In the former case, we get a contradiction by (i) of
Lemma~\ref{rational}.

In the latter case, if $C_i$ is singular, we get a contradiction by
(ii) of Lemma~\ref{rational}.  So $C_1$ and $C_2$ are elliptic
curves and $F'$ is as in (viii) of Lemma~\ref{k3''}.

\medskip

\noindent{\bf Case 2. $u=3$.}

By the argument as in Case 2 of Lemma~\ref{k3'},  we may assume that
$p_1$ and $p_2$ are smooth points of $F'$,  $\op{mult}_{p_3}F'=2$,
$\sigma$ maps $p_1$ to $p_2$,  and $p_3$ is an isolated
$\sigma$-fixed point
  of weight $\frac{1}{4}(1, 2)$.

Assume that $p_1\in C_j$ for some $j$.  Then $H\cdot C_j\geq2$
 since $p_2=\sigma(p_1)\in C_j$,  and so $t\leq3$.
  Note that for any $i\not=j$, $H\cdot C_i\leq4-H\cdot C_j\leq2$.
 Reindexing if necessary, we may assume $j=t$.

If $t=3$ then $K_S\cdot C_i=H\cdot C_i=1$ for $i=1$, $2$, and $K_S\cdot C_3=H\cdot C_3=2$.
 By the adjunction formula and
Lemma~\ref{rational}, we have  $C_3$ is an elliptic curve with
$C_3^2=-2$.

We have $\{p_3\}=H\cap C_i$ for $i=1$, $2$. By Lemma~\ref{412},
$(C_1.C_2)_{p_3}\geq2$. So $C_1$
 and $C_2$ are  $(-3)$-curves with $C_1\cdot C_2=2$, and  $C_i$  and $C_3$ meet
 transversely in
one point for $i=1$, $2$, which is an  isolated $\sigma$-fixed point
 of
weight $\frac{1}{4}(1, 3)$. So $F'$ is as in (xii) of
Lemma~\ref{k3''}.

Now we assume $t=2$. If $p_3\in C_2$ then $K_S\cdot C_2=H\cdot C_2=3$ (and hence
$K_S\cdot C_1=H\cdot C_1=1$).
 By the adjunction formula and Lemma~\ref{rational},
we have  $C_2$ is an elliptic curve with $C_2^2=-3$.

 Note that for $i=1, 2$,  $p_i\not\in C_1$ since $p_i$ is a smooth point of
 $F'$. So $p_3\in C_1$ since $H\cdot C_1=1$.
By Lemma~\ref{412}, $(C_1.C_2)_{p_3}\geq2$. So $C_1$
  is a  $(-3)$-curve meeting    $C_2$ tangentially in $p_3$ and
 transversely in
one other point, which is an  isolated $\sigma$-fixed point
 of
weight $\frac{1}{4}(1, 3)$. In this case $F'$ is as in (x) of
Lemma~\ref{k3''}.

If $p_3\not\in C_2$ then $p_3$ is a singular point of $C_1$ (since
it is a singular point of $F'$),  and hence $H\cdot C_1=2$.
 Since
$K_S\cdot C_1=H\cdot C_1=2$  and  $C_1$ is singular, we have $C_1^2=-2$,
$C_1\cdot C_2=2$.   If
 $C_1$  and  $C_2$ meet transversely in two points, say $p',p''$,
 then there are three $\ti \alpha$-fixed points, the inverse image of
$p',p''$ and $p_3$ (note that $p_3$ is a cusp of $C_1$ by
Lemma~\ref{lem: singular'}),  on $\ti C_1\simeq\mathbb{P}^1$, a
contradiction.

If
 $C_1$ and $C_2$ meet tangentially  in one point, say $p$, then $p$ is an
  isolated $\sigma$-fixed point
 of
weight $\frac{1}{4}(1, 2)$ by Lemma~\ref{lem: singular'}. This is a
contradiction since $p\not=p_i$ for any $i$.

\medskip

 \noindent{\bf Case 3.
 $u=2$ and $\sigma(p_1)=p_2$.}

In this case $F'\cap H$ consists of  a single $\sigma$-orbit. This
implies that $p_1, p_2\in C_i$ for each irreducible component $C_i$
of $F'$, and hence   $t=2$ and  $H\cdot C_1=H\cdot C_2=2$.  So
   $K_S\cdot C_1=K_S\cdot C_2=2$,
    $C_1^2=C_2^2=-2$ and $C_1\cdot C_2=2$. (The case $C_1^2=C_2^2=-4$ is excluded by (i) of Lemma~\ref{rational}.)

Since both $p_1$ and $p_2$ are smooth points of $C_1$ (otherwise,
  $H\cdot C_1\geq(H.C_1)_{p_1}+(H.C_1)_{p_2}\geq4$,
a contradiction), and $p_1, p_2$ are singular points of $F'$.(since
$H\cdot F'=4$),  we have $\{p_1, p_2\}=C_1\cap C_2$.

   If $C_1$ is an elliptic curve, then $\alpha$ has two isolated fixed points on $C_1$,
    which are smooth points
   of $F'$, a contradiction by Lemma~\ref{lem: fix vs sing fib}.
   If  $C_1$ is singular, by (ii) of Lemma~\ref{rational} we get a
   a contradiction.

\medskip

\noindent{\bf Case 4. $u=2$ and $\sigma(p_i)=p_i$ for $i=1$, $2$.}
Since $p_1, p_2$ are isolated $\sigma$-fixed, they are singular
points of $F'$.

\medskip

\noindent{\bf Case 4.1.
 $t=4$.} In this case $K_S\cdot C_i=H\cdot C_i=1$ and $C_i^2=-1$ or $-3$
for all $i$. Reindexing if necessary, we may assume that $C_1$ and
$C_2$ pass though $p_1$ but not $p_2$, and  $C_3$ and $C_4$ pass
though $p_2$ but not $p_1$.

By Lemma~\ref{412}, we have $(C_1.C_2)_{p_1}\geq2$ and
$(C_3.C_4)_{p_2}\geq2$. This implies   $C_i^2$ is a $(-3)$-curve for
all $i$.
 Since $F'$ is $1$-connected, we have that $C_1+C_2$ and
$C_3+C_4$ meets transversely in two points. So $F'$ is as in (xiii)
of Lemma~\ref{k3''}.

\medskip

\noindent{\bf Case 4.2.
 $t=3$.}  In this case $H\cdot C_i=K_S\cdot C_i=1$ for $i=1$, $2$,  and
$H\cdot C_3=K_S\cdot C_3=2$. Thus $C_1$ passes though $p_1$ but not $p_2$, and
$C_2$ passes though $p_2$ but not $p_1$.  For $i=1$, $2$, $p_i$ is a
smooth point of $C_i$ since $H\cdot C_i=1$.

 For $i=1$, $2$, since $p_i$ is a singular  point
of $F'$,  we have that  $C_3$ passes though both $p_1$ and $p_2$.
Then $(C_i.C_3)_{p_i}\geq2$ for $i=1$, $2$ by Lemma~\ref{412}. This
implies that $C_1$  and $C_2$ are $(-3)$-curves and $C_3$ is a
$(-4)$-curves, and so  $F'$ is as in (xi) of Lemma~\ref{k3''}.

\medskip

\noindent{\bf Case 4.3.  $t=2$ and $K_S\cdot C_1=K_S\cdot C_2=2$.} We may assume
$p_i\in C_i$ for $i=1$, $2$.

Since $H\cdot C_1=H\cdot C_2=2$, we have that either $p_i$ is a singular point
of $C_i$ for $i=1$, $2$,  or $p_1\in C_2$ and $p_2\in C_1$. In the
latter case, by Lemma~\ref{412}, $(C_1.C_2)_{p_i}\geq2$ for $i=1$,
$2$. Since $K_S\cdot C_1=K_S\cdot C_2=2$, we have $C_i^2=-4$, and so  $F'$ is as
in (ix) of Lemma~\ref{k3''}.

In the former case, note that $p_a(C_i)=1$,  $p_i$ is the unique
singular point of $C_i$.  If
 $C_1$ and  $C_2$ meet transversely in two points, say $p'$ and
 $p''$,
 then there are at least three $\ti \alpha$-fixed points, the inverse image
 of $p'$
$p''$ and $p_1$,  on $\ti C_1\simeq\mathbb{P}^1$, a contradiction.

 If
 $C_1$ and  $C_2$ meet tangentially  in one point, say $q$, then    $q$ is
 isolated $\sigma$-fixed of weight $\frac{1}{4}(1, a)$, where $a=1$ or
 $3$ (since $q\not\in H$), a contradiction by
Lemma~\ref{lem: singular'}.

\medskip

\noindent{\bf Case 4.4.  $t=2$, $K_S\cdot C_1=1$ and $K_S\cdot C_2=3$.} In
this case $H\cdot C_1=1$ and $H\cdot C_2=3$. If  $p_i\in C_1$ for some $i$, say
$i=1$, then $p_1$ is a smooth point of $C_1$ (since $H\cdot C_1=1$). Since
$p_1$ is a singular point of $F'$, we have $p_1\in C_2$. Then
$(C_1.C_2)_{p_1}\geq2$ by Lemma~\ref{412}. Since $K_S\cdot C_1=1$, this
implies that $C_1$ is a $(-3)$-curve, and hence $C_1\setminus\{p_1\}$
and  $C_2\setminus\{p_1\}$ meet transversely in one point, say $q$,
which is $\sigma$-fixed.

Since  $K_S\cdot C_2=3$ and $C_2^2=-3$, we have $p_a(C_2)=1$. This implies
that $p_2$ is the unique singular point of $C_2$. By Lemma~\ref{lem:
singular'}, $p_2$ is a cusp of $C_2$. Thus there are at least three
$\ti \sigma$-fixed points (the inverse image of $p_1$, $p_2$ and
$q$) on $\ti C_2\simeq\mathbb{P}^1$, a contradiction.

\medskip

\noindent{\bf Case 5. $u=1$.}

Since $H\cdot F'=4$,  we have $2\leq\op{mult}_pF'\leq4$.

If $t\geq3$   then $K_S\cdot C_1=1$. By Lemma~\ref{412}, we have
$(C_1.C_j)_p\geq2$ for $1<j\leq t$. This implies  $C_1^2\leq2-2t$, a
contradiction by the adjunction formula.

Now we assume $t=2$. If $\op{mult}_pC_1=1$ and $\op{mult}_pC_2=3$,
then $p_a(C_2)\geq3$, which is impossible.

 If $\op{mult}_pC_1=\op{mult}_pC_2=1$, then $H\cdot C_i=1$ for $i=1$,
$2$, and hence $H\cdot F'=2$, a contradiction since $H\cdot F'=4$.

 If
$\op{mult}_pC_1=\op{mult}_pC_2=2$, then $K_S\cdot C_i=H\cdot C_i=2$,
$p_a(C_i)\geq1$ for $i=1$, $2$,   and $(C_1.C_2)_p\geq4$. So
$C_i^2=-C_1\cdot C_2\leq-4$, a contradiction by the adjunction formula.
\qed\end{pf}

\begin{lem}\label{H irred}$H$ is irreducible.\end{lem}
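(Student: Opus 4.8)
The plan is to argue by contradiction using Lemma~\ref{H-3}(i): if $H$ were reducible, then $H=\Gamma_1+\Gamma_2$ with two irreducible bisections satisfying $\Gamma_1\cdot F=\Gamma_2\cdot F=2$. Since $\Gamma_1,\Gamma_2$ exhaust the $\alpha$-fixed curves (Lemma~\ref{H-3}(iii)) and fixed loci of involutions are smooth, both $\Gamma_i$ are smooth, and each $\varpi_i:=f|_{\Gamma_i}\colon\Gamma_i\to B=\PP^1$ is a double cover. Because $\sigma$ is numerically trivial it preserves $Z$ and hence $H$, so $\sigma$ permutes $\{\Gamma_1,\Gamma_2\}$ while $\alpha=\sigma^2$ fixes each $\Gamma_i$ pointwise. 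I would first record the numerical identity underlying the corollaries in this reducible setting: applying Lemma~\ref{lem: sign inv} to the involution $\alpha$ gives $K_S^2=8\chi(\mo_S)+\Gamma_1^2+\Gamma_2^2$, whereas expanding $K_S\equiv M+H+V$ with $M\equiv(\chi(\mo_S)-2)F$ (Lemma~\ref{H-3}(ii)) gives $K_S^2=8\chi(\mo_S)-16+H^2+(H+K_S)\cdot V$; comparing the two yields $(H+K_S)\cdot V=16-2\,\Gamma_1\cdot\Gamma_2$. This isolates $\Gamma_1\cdot\Gamma_2$ as the key quantity and shows that it is precisely the count of multiple fibers in Lemma~\ref{k-1'} that will eventually force $\Gamma_1\cdot\Gamma_2=0$.

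Next I would split according to the action of $\sigma$ on the two components. If $\sigma$ preserves each $\Gamma_i$, then $\sigma|_{\Gamma_i}$ commutes with $\varpi_i$ and is nontrivial (otherwise $\Gamma_i$ would be $\sigma$-fixed, contradicting Lemma~\ref{H-3}(iv)); hence $\sigma|_{\Gamma_i}$ is the deck involution of the double cover $\Gamma_i\to\PP^1$, and its $2g(\Gamma_i)+2\geq2$ ramification points are, in suitable Cartan coordinates of the form $(x,y)\mapsto(-x,\pm iy)$, isolated $\sigma$-fixed points of weight $\tfrac14(1,2)$ lying on $H$, at which $\Gamma_i$ is tangent to its fiber. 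If instead $\sigma$ swaps $\Gamma_1$ and $\Gamma_2$, then $\sigma$ restricts to an isomorphism $\Gamma_1\cong\Gamma_2$ over $B$ and every $\sigma$-fixed point of $H$ lies on $\Gamma_1\cap\Gamma_2$; here I would analyze the two $\sigma$-orbits on a general fiber (each orbit meeting both components) together with the induced action on $\Gamma_1\cap\Gamma_2$, again producing forced isolated $\tfrac14(1,2)$-points or a direct conflict with the value of $\Gamma_1\cdot\Gamma_2$.

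The decisive step is then to confront these forced configurations with the singular-fiber classification already obtained. Each isolated weight-$\tfrac14(1,2)$ point on $H$ forces its fiber to be singular there (Lemma~\ref{lem: fix vs sing fib}(i), Lemma~\ref{lem: singular'}), so I would run through the multiple-fiber types surviving in Corollary~\ref{c-i'} together with Lemmas~\ref{k3'} and \ref{k3''}, checking the precise incidence of $H$ with each fiber. In the types where $H$ passes through a $\tfrac14(1,2)$ singular point, the requirement that a single irreducible bisection $\Gamma_i$ be tangent there, contributing $(\Gamma_i\cdot F_b)_q=2$, either over-counts $H\cdot F_b=4$ or contradicts the recorded position of $H\cap C$ (e.g.\ those points being non-$\sigma$-fixed in Lemma~\ref{c-i}(ii) and Lemma~\ref{k3'}(iv)); combined with the identity $(H+K_S)\cdot V=16-2\,\Gamma_1\cdot\Gamma_2$ this should eliminate every remaining case. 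I expect this last case-by-case matching---rather than any single clean inequality---to be the main obstacle, since it requires verifying, for each admissible fiber type, that no assignment of the two bisections $\Gamma_1,\Gamma_2$ to the branches of $f^*b$ is compatible both with the recorded weights and with $\sigma$ permuting $\{\Gamma_1,\Gamma_2\}$.
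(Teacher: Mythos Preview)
Your proposal has the right opening moves---reducing to $H=\Gamma_1+\Gamma_2$ via Lemma~\ref{H-3}(i) and splitting according to whether $\sigma$ swaps or preserves the two components---but then diverges from the paper in a way that leaves a genuine gap.

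First, a small but telling point: your ``key quantity'' $\Gamma_1\cdot\Gamma_2$ is automatically zero. Both $\Gamma_i$ are components of the $\alpha$-fixed locus, which is smooth, so they are disjoint. Hence your identity collapses to $(H+K_S)\cdot V=16$, which is exactly Corollary~\ref{equ6.1}(iv) and holds regardless of whether $H$ is reducible; it gives no extra leverage.

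The more serious gap is that you never invoke the global Lefschetz/signature constraints of Corollary~\ref{equ6.1}(ii)--(iii) together with the bound $k_1\le 4$ of Corollary~\ref{c-i''}. These are the paper's engine. In the swap case, the paper observes that every weight-$\tfrac14(1,2)$ point lies on $H$ and is $\sigma$-fixed, hence would lie in $\Gamma_1\cap\Gamma_2=\emptyset$; thus $k_2=0$, and solving Corollary~\ref{equ6.1}(ii)--(iii) gives $k_1=10\chi-K_S^2$, contradicting $k_1\le 4$. In the invariant case the paper does compute, as you do, that the ramification points of $\varpi_i$ are exactly the $\tfrac14(1,2)$ points on $\Gamma_i$; but it then \emph{sums} these to get $k_2=K_SH+H^2+8=4\chi+2H^2+8$, feeds this into Corollary~\ref{equ6.1} to obtain $k_1=-2H^2-4$ and $k_3=-H^2-4$, and the constraints $k_1\le 4$, $k_3\ge 0$ pin down $H^2=-4$, $k_1=4$. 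Only now does a short fiber check occur: $k_1=4$ together with $\#(H\cap C_i)=2$ (which in the reducible case follows from $\Gamma_j\cdot C_i=1$ and $\Gamma_1\cap\Gamma_2=\emptyset$) forces each multiple fiber to be of type (ii) in Lemma~\ref{c-i}, and then on the elliptic normalization $\tilde C_i$ one finds four $\tilde\sigma$-fixed points (the preimages of $\Gamma_1\cap C_i$, $\Gamma_2\cap C_i$, and the two branches at the $\tfrac14(1,1)$ node), impossible for an order-$4$ automorphism of an elliptic curve.

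Your proposed route---a direct fiber-by-fiber matching of $\tfrac14(1,2)$ incidences against Lemmas~\ref{c-i}, \ref{k3'}, \ref{k3''} without first pinning down $H^2$ and $k_1$---has no clear termination: the non-multiple fibers of Lemmas~\ref{k3'}--\ref{k3''} can absorb arbitrarily many $\tfrac14(1,2)$ points, so local incidence checks alone will not produce a contradiction. The missing idea is the global count $k_1\le 4$, which is what actually closes the argument.
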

\begin{pf}Otherwise, by (i) of Lemma~\ref{H-3},
 we have  $H=\Gamma_1+\Gamma_2$, where $\Gamma_i$ ($i=1, 2$) is
an irreducible curve with $\Gamma_iF=2$.

 If $\sigma(\Gamma_1)=\Gamma_2$ then $k_2=0$. Combining this
equality with Corollary~\ref{equ6.1}, by an elementary calculation
we have
\begin{align*} k_1=10\chi(\mc
O_S)-K_S^2.
\end{align*} Since $k_1\leq4$
(cf.~Corollary~\ref{c-i''}), we have $K_S^2\geq10\chi(\mc O_S)-4$, a
contradiction.

  Now we may assume that
$\Gamma_i$ is $\sigma$-invariant.

 For $i=1,2$, let $f_i:
\Gamma_i\to B$ be the restriction of $f$ to $\Gamma_i$, and $R_i$
the ramification divisor of $f_i$. By the Hurwitz formula, we have
\[\deg R_i=K_S\Gamma_i+\Gamma_i^2+4.\]
So \begin{align*} k_2&=\deg R_1+\deg R_2 =K_SH+H^2+8\\&=4\chi(\mc
O_S)+2H^2+8.
\end{align*}
 Combining this equality with Corollary~\ref{equ6.1}, by an elementary calculation we have
\begin{align*} k_1=-2H^2-4, \  \ \
k_3=-H^2-4.
\end{align*}

The two equalities above and the inequality  $k_1\leq4$
(cf.~Corollary~\ref{c-i''}) yield $H^2=-4$ and $k_1=4$.

Let $f^*b_i=2C_i$ ($i=1, \cdots ,4$) be multiple fibers of $f$  with
$C_i<V$ (cf.~Lemma~\ref{k-1'}). Since $k_1=4$, by
Corollary~\ref{c-i'}, note that $H\cap C_i$ consists of two points,
we have that $C_i$ is as in (ii) of Lemma~\ref{c-i} for all $i$.
Then we have that there are at least four $\ti \sigma$-fixed points
on the normalization $\tC_i$ of $C_i$: the inverse image of
$\Gamma_1\cap C_i$, $\Gamma_2\cap C_i$  and the node of $C_i$. This
is absurd since $\tC_i$ is an elliptic curve and $\ti\sigma$
is of order $4$. \qed\end{pf}

\subsection{Proof of Theorem~\ref{thm: g3c4}} Let $f: S\to B$, $\sigma$, $H$, $k_a$ be as in Notation~\ref{5.2}. Assume that $\chi(\mc O_S)>2$. We will get a contradiction. 
By Lemma~\ref{H irred}, $H$ is irreducible. Let $\oH=H/\sigma$
and $\pi \colon H\to \oH$ be the quotient map. Let $\varpi\colon \oH \to
B$ be the map induced by $f\rest H$, the restriction of $f$ to $H$.
 We have $f\rest H=\varpi\circ\pi$.  Let $R_\pi$ be  the ramification
divisor of $\pi$. By the Hurwitz formula, we have
\begin{equation}\label{k2}k_2=\deg R_\pi=K_SH+H^2+4-4\oH=4\chi(\mc
O_S)+2H^2+4-4\oH,\end{equation} where $\oH=g(\oH)$.
Combining this with Corollary~\ref{equ6.1}, we have
\begin{align} \label{k1}k_1=&-2H^2-2+2\oH,\\
k_3=&-H^2-2+2\oH.
\end{align}

For each $b\in B$, we set
\[
r(b)=
\begin{cases}
1 & \text{if $\varpi$ is branched at $b$;} \\
r(b)=0, & \text{otherwise.}
\end{cases}
\]
If $f^*b$ is a smooth fiber, we have $r(b)=0$.
By definition, $$\sum_{b\in B}r(b)=\deg R_\varpi=2\oH+2.$$

For each  $b\in B$, let $ e_f(b)=e(f^*b)-e(F)$, where  $F$ is
 a general fiber of
$f$.
 Recall
that $e_f(b)\geq0$ and the equality holds if and only if $f^*b$ is
smooth, and that $e_f(b)$ satisfies the following
equality(\cite[11.4, 11.5, p.~118]{BHPV04}):
\begin{align}\label{es16} e(S)=4(g-1)(g(B)-1)+ \sum_{b\in B}e_f(b).\end{align}

For each $b\in B$, $a=1, 2, 3$,  we denote by  $k_a(b)$    the
number of isolated $\sigma$-fixed
 points of weight $\frac{1}{4}(1, a)$ contained in $f^*b$. If $f^*b$ is a smooth fiber,
we have $k_a(b)=0$. By definition, $$\sum_{b\in B}k_a(b)=k_a.$$

For a singular fiber  $f^*b$, by Lemmas~\ref{c-i}, \ref{k3'} and
\ref{k3''}, we have that $e_f(b)$, $k_a(b)$ and $r(b)$ are given in
the following tables, where v(1) means  a fiber $f^*b$ being as in
(v) of Lemma~\ref{c-i} with $a=1$.
\begin{center}
\begin{tabular}{|c|c|c|c|c|c|c|c|c|}
\hline         $f^*b$    &  i  & ii & iii & iv
      & v(1)  & v(3)   &vi     \\
\hline $e_f(b)$   &  $2$ & $3$& 3 & $5$  &$6$ & $6$ & 3 \\
\hline $k_1(b)$ &  $0$ & $1$& 0 & $0$  &$1$ & $0$ &  0 \\
\hline $k_2(b)$ &  $2$ & $0$& 2 & $1$  &$1$ & $1$ &  2 \\
\hline $k_3(b)$ &  $0$ & $0$& 1 & $0$  &$0$ & $1$ &  1 \\
\hline $r(b)$  &  $0$ & $1$& 0 & $1$  &$1$ & $1$ &  0 \\
 \hline
\end{tabular}

\

\tablename  \ 1. $f^*b$    is as in Lemma~\ref{c-i}
 \end{center}
\

\

\begin{center}
\begin{tabular}{|c|c|c|c|c|c|c|c|c|c|c|c|c|c|}
\hline         $f^*b$       &  i  & ii & iii & iv
      & v  & vi  & vii &viii& ix & x &xi &xii &xiii    \\
\hline $e_f(b)$ &  $1$ & $2$& 3 & $3$  &$5$ & $6$ & 5&2& 4&6& 7&5& 8 \\
\hline $k_1(b)$ &  $0$ & $0$& 0 & $0$  &$0$ & $0$ & 0&0& 0&0 &0&0&0\\
\hline $k_2(b)$ &  $0$ & $0$& 0 & $1$  &$1$ & $2$ & 1&0& 1&2 &2&1&2\\
\hline $k_3(b)$ &  $1$ & $0$& 1 & $0$  &$0$ & $0$ & 0&2& 1&0 &1&2&2\\
\hline $r(b)$   &  $0$ & $1$& 1 & $0$  &$1$ & $0$ & 1&0& 0&0&0&0& 0\\
 \hline
\end{tabular}

\

\tablename  \ 2. $f^*b$    is as in Lemmas~\ref{k3'} and
\ref{k3''}
 \end{center}

Let $x_1$ (resp.~$x_2, \dots, x_7$)  be the number of fibers as in
(i) (resp.~(ii), (iii), (iv), (v(1)), (v(3)),  (vi)) of
Lemma~\ref{c-i}. Let $y_1$ (resp.~$y_2, \dots, y_{13}$)  be the
number of fibers as in (i) (resp.~(ii), $\dots$, (xiii)) of
Lemmas~\ref{k3'} and \ref{k3''}. Table 1 and Table 2 yield the
following five equations.
\begin{equation}\label{equef}\begin{split}
&2x_1+3x_2+3x_3+5x_4+6x_5+6x_6+3x_7+y_1+2y_2+3y_3\\+&3y_4+5y_5+6y_6+5y_7
2y_8+ 4y_9+6y_{10}+7y_{11}+5y_{12}+8y_{13}
=e(S)+8.\end{split}\end{equation}
\begin{equation}\label{equk1} x_2+x_5=k_1.\end{equation}
\begin{equation}\label{equk2}\begin{split}
&2x_1+2x_3+x_4+x_5+x_6+2x_7+y_4+y_5\\
+&2y_6+y_7+y_9+2y_{10}+2y_{11}+y_{12}+2y_{13}=k_2.\end{split}\end{equation}
\begin{equation}\label{equk3}
x_3+x_6+x_7+y_1+y_3+2y_8+y_9+y_{11}+2y_{12}+2y_{13}=k_3.\end{equation}
\begin{equation}\label{equbarh}
x_2+x_4+x_5+x_6+y_2+y_3+y_5+y_7=\sum_{b\in B}r(b)=2\bar
h+2.\end{equation} By Corollary~\ref{c-i'}, we have
\begin{equation}\label{equv} x_2+x_4+x_5+x_6\geq4.\end{equation}

  Subtracting (\ref{equk2}) and $2\times$(\ref{equbarh})  from (\ref{equef}), we
get
\begin{equation}\label{equef''}\begin{split}
&x_2+x_3+2x_4+3x_5+3x_6+x_7+y_1+y_3+2y_4+2y_5\\
+&4y_6+2y_7+ 2y_8
+3y_9+4y_{10}+5y_{11}+4y_{12}+6y_{13}=-3H^2.\end{split}\end{equation}

By  (\ref{equv}) and (\ref{equef''}) we have  $H^2\leq-2$.

By (\ref{equbarh}) and (\ref{equv}), we have $\oH\geq1$, and
$\oH=1$ if and only if $x_2+x_4+x_5+x_6=4$ and
$y_2=y_3=y_5=y_7=0$.

Combining the two inequalities above with (\ref{k1}),
 note that
 $k_1\leq4$
(cf.~Corollary~\ref{c-i''}), we obtain $H^2=-2$, $\oH=1$,  and
hence  $k_1=4$, $k_3=2$.

Now subtracting  (\ref{equk1}) and (\ref{equk3}) from
(\ref{equef''}), we have
\[2x_4+2x_5+2x_6+2y_4+2y_5+4y_6+ +2y_7+2y_9+4y_{10}+4y_{11}+2y_{12}+4y_{13}=-3H^2-k_1-k_3=0.\] This implies
\begin{equation}\label{equef''2}
x_4=x_5=x_6=y_4=y_5=y_6=y_7=y_9=y_{10}=y_{11}=y_{12}=y_{13}=0.
\end{equation}
Combining (\ref{equef''2}) with (\ref{equk1}) and (\ref{equk2}), we
have
\begin{equation}\label{equk21}
 x_2=4,\ \ \ \  x_1+x_3+x_7=2\chi(\mc O_S)-2.   \end{equation}

Now we distinguish two cases to get a contradiction.

\medskip

\noindent{\bf Case 1.  $f$ is hyperelliptic.} In this case, since $x_2>0$, $f$ has
a singular fiber $2C$ as in Lemma~\ref{c-i} (ii).  Let $\tau$ be the
hyperelliptic involution of $f$, and $G$ the group generated by
$\tau$ and $\sigma$. Then $G$ acts faithfully on $C$ (since $C$ is
singular). Let $F$ be a general fiber of $f$. Since
$\alpha(=\sigma^2)$ is a bi-elliptic involution of $f$, and
$\gamma:=\tau\circ\alpha$ induces identity on
$H^0(\omega_F)_\alpha^{-1}$, we have $g(F/\gamma)=2$. So $\gamma$
has no horizontal fixed curves. This implies the node $p$ of $C$ is
an isolated $\gamma$-fixed point, and so $\gamma$ fixes the local
branches of $C$ at $p$. Then $G$ fixes the local branches of $C$ at
$p$ (since $p$ is an isolated $\sigma$-fixed
 point of weight $\frac{1}{4}(1, 1)$ and $G$ is generated by $\gamma,
 \sigma$). This implies $G$  is cyclic,
a contradiction since $G$ is not cyclic.

\medskip

\noindent{\bf Case 2.  $f$ is  nonhyperelliptic.}  In this case, the
natural morphism of sheaves
 $$r\colon S^2(f_*\omega_{S/B})\to f_*\omega_{S/B}^2 $$
is generically surjective, and
 $\textrm{Ker}\  r=0$.
 Let $\mathcal M=\textrm{Coker}\  r$.
 Then $\mathcal M=\bigoplus_{b\in B}\mathcal{M}_b$,
 where $\mathcal{M}_b$ is the stalk of $\mathcal{M}$
at $b\in B$, which is  an $\mathcal{O}_{B,b}$-module of finite
length. Let $H(S/B, b)=\textrm{length} \ \mathcal{M}_b$. Using the
Riemann-Roch theorem on $S$ and the Leray spectral sequence, we
obtain: (Reid, cf.~\cite{Reid})
\begin{equation}\label{reid}K_{S/B}^2=3\chi_f+\sum_{b\in B}H(S/B, b).\end{equation}

Note that if $F':=f^*b$ is a multiple fiber, then the canonical ring
$R(F', K_{F'})$ needs $3$ generators in degree $2$(cf.~\cite[Theorem
III 5.1]{ML88}), so that $H(S/B, b)\geq3$. Hence we have
\begin{align*}K_S^2&\geq3\chi(\mc O_S)-10+3\sum_{i=1}^7 x_i\\ &=
3\chi(\mc O_S)-10+3(2\chi(\mc O_S)-2+4) \ \ \textrm{( by
(\ref{equk21}))}\\&=9\chi(\mc O_S)-4.\end{align*} Combining this
with $K_S^2=8\chi(\mc O_S)-2$ (cf.~(i) of Corollary~\ref{equ6.1}),
we get $\chi(\mc O_S)\leq2$, a contradiction.

This finishes the proof of  Theorem~\ref{thm: g3c4}.

\section{Regular surfaces of general type with \texorpdfstring{$|\Aut_\QQ(S)|=4$}{}}\label{sec: G4}

Let $S$ be a  minimal  smooth projective regular surface of general type with $|\Aut_\QQ(S)|=4$ and $p_g(S)\geq 2$. For simplicity of notation we write $G=\au$. Then quotient surface $S/G$ has at most quotient singulaities with weights $\frac{1}{4}(1,a)$ with $1\leq a\leq 3$ and $\frac{1}{2}(1,1)$. Note that both weights $\frac{1}{4}(1,a)$ and $\frac{1}{2}(1,1)$ are $A_1$-singularity. Let $\mu\colon\tT\rightarrow S/G$ be the minimal resolution, and $\eta\colon\tT\rightarrow T$ the birational morphism onto the minimal model. Let $\tS$ be the minimal resolution of the singularities of $S\times_{S/G} \tT$, and $\rho\colon\tS\rightarrow S$ the induced birational map. Then the action of $G$ lifts to $\tS$. Since $G$ acts trivially on $H^0(S, \omega_S)$, we have natural identification of the vector spaces $H^0(\tS, K_{\tS})$, $H^0(S, K_S)$, $H^0(\tT, K_{\tT})$, and $H^0(T, K_{T})$ via appropriate pull-backs. We have the following commutative diagram:
\begin{equation}\label{diag: can map}
 \begin{tikzcd}
\tS \ar[rr, "\tilde\pi"]\ar[d, "\rho"'] & &\tT\ar[d, "\mu"] \ar[rrd, dashed, "\phi_{\tT}"]\arrow[rr, "\eta"] && T\arrow[d, dashed, "\phi_T"]\\
  S \ar[rr,"\pi"] \arrow[rrrr, bend right,dashed, "\phi_S"]& & S/G\arrow [rr, dashed, "\phi_{S/G}"] && \PP^{p_g-1}
 \end{tikzcd} 
\end{equation}
where $p_g$ is the common geometric genus of the surfaces $\tS, S, \tT$ and $T$. Since $p_g(T)=p_g(S)\geq 2$, we have $\kappa(T)\in \{1,2\}$. We proceed according to the value of $\kappa(T)$.

 \subsection{The case \texorpdfstring{$\kappa(T)=1$}{}}
In this subsection, we assume that $\kappa(T)=1$. Let $f_{\tT}\colon \tT\rightarrow B$ and $f_T\colon T\rightarrow B$ be the Iitaka fibrations, defined by the pluri-canonical system $|mK_{\tT}|$ and $|mK_T|$ for $m$ sufficiently large and divisible. Then $f_{\tT}$ and $f_T$ are elliptic fibrations such that $f_{\tT} = f_T\circ\eta$. Since $q(\tT) = q(S) = 0$, we have $B\cong \PP^1$. Suppose that $p_g\geq 4$. Then $K_{\tT}\cdot \tilde C\geq K_{T}\cdot \eta_*\tilde C\geq 4$ for any horizontal curve $C$. Since $S/G$ has at most $\frac{1}{4}(1,n)$-singularities, the exceptional locus $\Exc(\eta)$ of $\eta$ has at most $(-2)$ or $(-4)$-curves. It follows that $\Exc(\eta)$ is vertical with respect to $\eta$, and hence $f_{\tT}$ induces a fibration $f_{S/G}\colon S/G\rightarrow B$. 
\begin{lem}\label{lem: fib k=1}
Let $f=f_{S/G}\circ\pi\colon S\rightarrow B$. If $p_g\geq 4$, then the following holds.
\begin{enumerate}[leftmargin=*]
 \item
$f$ has connected fibers.
\item 
The canonical map $\phi_S = \varphi_\gb\circ f$, where $\varphi_\gb\colon B\rightarrow \PP^{p_g-1}$ is the embedding induced by a complete linear system $|\gb|$ of degree $p_g-1$. In particular, the rational map $\phi_S$ is a morphism.
\item The fibers of $f$  are invariant under the action of $G$. In other words, $f\circ\sigma=f$  for each $\sigma\in G$.
\end{enumerate}
\end{lem}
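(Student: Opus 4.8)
The plan is to establish the three assertions in the order (iii), then (ii), and finally deduce (i) from the identity produced in (ii). Assertion (iii) is immediate from the construction: since $f=f_{S/G}\circ\pi$, every fibre $f^{-1}(b)$ is the $\pi$-preimage of a fibre of $f_{S/G}$, hence is $G$-invariant, i.e.\ $f\circ\sigma=f$ for all $\sigma\in G$. The starting point for the rest is the observation used in the introduction that a numerically trivial automorphism acts trivially on $H^0(S,\omega_S)$; therefore the canonical map factors through the quotient, $\phi_S=\phi_{S/G}\circ\pi$, and under the identifications $H^0(S,\omega_S)\cong H^0(T,\omega_T)$ of diagram~\eqref{diag: can map} the map $\phi_{S/G}$ is the one induced by $\phi_T$.

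First I would analyse $\phi_T$ using $\kappa(T)=1$. Since the Iitaka fibration of $T$ is the elliptic fibration $f_T\colon T\to B$ with $B\cong\PP^1$, the canonical system $|K_T|$ is composed with $f_T$: writing $H^0(T,\omega_T)=H^0(B,f_{T*}\omega_T)$ and using the canonical bundle formula for elliptic surfaces, $f_{T*}\omega_T$ is a line bundle on $B$ of degree $\chi(\sO_T)-2=p_g-1$. Hence $\phi_T=\varphi_\gb\circ f_T$, where $|\gb|$ is the complete linear system of a divisor $\gb$ of degree $p_g-1$ on $B\cong\PP^1$; as $h^0(B,\gb)=p_g$ this is the system $|\sO_{\PP^1}(p_g-1)|$, so $\varphi_\gb$ embeds $B$ as a rational normal curve of degree $p_g-1$. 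Pulling this back through $\mu,\eta,\pi$ and combining with $\phi_S=\phi_{S/G}\circ\pi$ gives $\phi_S=\varphi_\gb\circ f$, which is (ii); in particular $\phi_S$ is a morphism, being the composition of the morphism $f$ with the embedding $\varphi_\gb$. Along the way one records the key identity $\sO_S(M)\cong f^*\sO_B(\gb)$ for the moving part $M$ of $|K_S|$, valid because $K_S=\pi^*K_{S/G}+R$ (with $R$ the ramification divisor of $\pi$) and the moving part of $|K_{S/G}|$ is pulled back from $B$; in particular $|M|$ is base-point free.

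Finally I would derive connectedness (i) from this identity by a dimension count. Let $S\xrightarrow{g}B'\xrightarrow{\nu}B$ be the Stein factorization of $f$; since $g$ has connected fibres and $q(S)=0$, pullback of $1$-forms gives $g(B')\le q(S)=0$, so $B'\cong\PP^1$ and $\nu$ has some degree $d\ge1$. Then $f_*\sO_S=\nu_*\sO_{B'}$, and using $\sO_S(M)\cong f^*\sO_B(\gb)$ and the projection formula twice,
\[
p_g=h^0(S,K_S)=h^0(S,M)=h^0\bigl(B',\nu^*\sO_B(\gb)\bigr)=d(p_g-1)+1.
\]
As $p_g-1\ge3>0$, this forces $d=1$, i.e.\ $\nu$ is an isomorphism and $f$ has connected fibres. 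The main obstacle is the middle step: one must justify carefully that for the elliptic surface $T$ the canonical system is composed with the Iitaka pencil and that $f_{T*}\omega_T$ has degree exactly $p_g-1$, which requires handling the multiple-fibre contributions in the canonical bundle formula and checking that the exceptional locus of $\eta$ stays vertical so that $f$ is genuinely defined on $S/G$. Once this is in place, assertions (ii) and (i) follow formally, and (iii) is automatic.
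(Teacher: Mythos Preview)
Your proof is correct and follows essentially the same route as the paper: both use the canonical bundle formula for the elliptic surface $T$ to identify the moving part of $|K_S|$ with $f^*|\gb|$, and both prove connectedness via the Stein factorization together with a dimension count on $H^0$. Your execution is in fact slightly cleaner in two places: your argument for (iii) directly from $f=f_{S/G}\circ\pi$ is more immediate than the paper's (which deduces (iii) from (ii) via the $G$-equivariance of $\phi_S$), and for (i) your exact equality $p_g=d(p_g-1)+1$ is crisper than the paper's strict inequality $h^0(B,\gb)<h^0(\tilde B,j^*\gb)\le h^0(S,K_S)$, though both rest on the same identification $M\cong f^*\gb$.
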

\begin{proof}
 (i) Let $f\colon S\xrightarrow{f_1} \tilde B \xrightarrow{j} B$ be the Stein factorization of $f$, where $j$  is a finite morphism between two smooth 
 curves. We have $g(\tilde B)=0$ since $q(\tS) = q(S) = 0$.  
  By the canonical bundle formula for elliptic fibrations (\cite[V, Thm 12.1]{BHPV04}) we have
\begin{equation}\label{eq: KT}
 |K_{T}| = f_T^* |\gb| +Z_T
\end{equation}
where $\gb=\omega_B\otimes(h_*\mc O_T)^\vee$ is a divisor on $B$ of degree $p_g-1$, and $Z_T$ is a vertical effective divisor. It follows that $|K_{\tT}|=f_{\tT}^* |\gb| +Z_{\tT}$ and, in turn, $|K_{S/G}|=f_{S/G}^* |\gb| +Z_{S/G}$, where $Z_{\tT}$ and $Z_{S/G}$ are vertical divisors with respect to the fibrations $f_{\tT}$ and $f_{S/G}$ respectively.

Suppose on the contrary that $\deg j\geq 2$. Then $\deg j^*\gb = 2 \deg \gb>\deg \gb>0$, and hence
\[
h^0(T, K_T) = h^0(B, \gb) < h^0(\tilde B, j^*\gb) \leq h^0(S, K_S)
\]
where the last inequality is because $f_1^*j^*\gb = \pi^*h^*\gb$ can be identified with a sub-divisor of $K_{S}$. This contradicts the fact that $\pi^*\colon H^0(T, K_T) \rightarrow H^0(S, K_S)$ is an isomorphism. Hence $\deg j = 1$ and $j$ is an isomorphism. In turn, $f$ has connected fibers.

(ii) We have isomorphisms of linear sysmtems 
\[
f^*|\gb| = \pi^*h^*|\gb|\cong  \pi^* |K_T| \cong |K_S|
\]
It follows that $\phi_S=\varphi_\gb\circ f$.

(iii) is clear, since $\phi_S$ is equivariant with respect to the action of $G$ and $G$ (pointwise) fixes $\varphi_\gb(B)\subset\PP^{p_g-1}$.
\end{proof}

We will denote $g=g(F)$, where $F$ is a general fiber of $f$.
\begin{prop}[{\cite[Proposition~2.1]{Be79}}]
If $\chi\geq 21$, then $2\leq g \leq 5$.
\end{prop}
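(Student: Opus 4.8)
The plan is to derive both inequalities from the numerical description of the canonical system in Lemma~\ref{lem: fib k=1}, combined with the nefness of $K_S$ and the Bogomolov--Miyaoka--Yau inequality. Write $|K_S| = |M| + Z$ for the decomposition into moving and fixed parts. By Lemma~\ref{lem: fib k=1}(ii) we have $|M| = f^*|\gb|$ with $\deg\gb = p_g - 1 = \chi - 2$, so numerically $M \equiv (\chi - 2)F$, where $F$ is a general fiber of $f$. Since $F^2 = 0$, the adjunction formula gives $K_S\cdot F = 2g - 2$. These are the only inputs I will need.

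For the lower bound I will first rule out $g = 0$: a general fiber would then be a smooth rational curve with $K_S\cdot F = -2 < 0$, contradicting the nefness of $K_S$ (recall $S$ is minimal of general type). To exclude $g = 1$, I will use that $f\colon S\rightarrow B$ is relatively minimal---automatic since the minimal surface $S$ carries no $(-1)$-curve---so that if the general fiber were elliptic, $f$ would be a relatively minimal genus-one fibration, forcing $\kappa(S)\leq 1$ by the canonical bundle formula for elliptic fibrations. This contradicts $\kappa(S) = 2$. Hence $g \geq 2$.

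For the upper bound I compute
\[
K_S^2 = K_S\cdot M + K_S\cdot Z = (\chi - 2)(2g - 2) + K_S\cdot Z \geq (\chi - 2)(2g - 2),
\]
where the inequality uses that $K_S$ is nef and $Z$ is effective. Feeding in the Bogomolov--Miyaoka--Yau bound $K_S^2 \leq 9\chi$ yields $(\chi - 2)(2g - 2)\leq 9\chi$. If $g \geq 6$ then $2g - 2 \geq 10$, and this forces $10(\chi - 2)\leq 9\chi$, i.e. $\chi \leq 20$; so the hypothesis $\chi \geq 21$ rules out $g\geq 6$, giving $g\leq 5$. The computations here are entirely routine, and the only point requiring care is matching the numerical constant: $g = 6$ is precisely the borderline case that the threshold $\chi \geq 21$ is designed to exclude. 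The one genuinely structural step is the exclusion of $g = 1$, where the key is to recognize $f$ as a relatively minimal genus-one fibration and appeal to $\kappa(S) = 2$.
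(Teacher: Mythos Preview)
Your proof is correct and is essentially Beauville's original argument, which the paper simply cites without reproducing. The upper bound via $K_S^2 \geq K_S\cdot M = (\chi-2)(2g-2)$ combined with $K_S^2\leq 9\chi$ is exactly \cite[Proposition~2.1]{Be79}, and your exclusion of $g\leq 1$ (nefness for $g=0$, Kodaira dimension for $g=1$) is the standard way to handle the lower bound.
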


By \eqref{eq: KT} we know
\[
 |K_S| = \pi^*|K_{{S/G}} | + R = f^*|\gb| + R + \pi^*Z_{S/G}
\]
where $R$ is the ramification divisor of $\pi$. It follows that $M=f^*\gb$ and $Z=R + \pi^*Z_{S/G}$ are the moving part and the fixed part of  $|K_S|$ respectively. We write $Z=H+V$, where $H$ (resp.~$V$) is the horizontal part (resp.~vertical part) with respect to $f$. Then $H\leq R$ and $\pi^*Z_{S/G}\leq V$. Let $H=\sum_{1\leq i\leq t}n_iH_i$ be the irreducible decomposition of $H$. Then $n_i+1 = |G_{H_i}|\in \{2,4\}$; thus $n_i=1$ or $3$. We may assume that $n_1\geq n_2\geq \dots \geq n_t$.

We have $M\equiv (p_g-1)F = (\chi-2)F$, where $\chi=\chi(\sO_S)$. For a general fiber $F$ of $f\colon S\rightarrow B$, one has $M\cdot F =0$, and
\[
F\cdot H = F\cdot R = F\cdot K_S = 2g-2
\]
where $g=g(F)$ is the genus of $F$. Therefore,
\begin{equation}\label{eq: mh} 
H\cdot M=K_S\cdot M= (\chi-2) K_S\cdot F =(2g-2)(\chi-2),
\end{equation}
and hence
\begin{equation}\label{eq: K^2}
K_S^2=K_S\cdot (M+Z)=(2g-2)(\chi-2)+K_S\cdot Z.
\end{equation}
\begin{lem}\label{lem: bound KZ}
 We have 
 \[
 K_S\cdot Z\geq K_S\cdot H \geq \frac{2g-2}{n_1+1}(\chi(\sO_S)-2) + \sum_{i=1}^t \frac{n_i^2}{n_1+1}(2g(H_i)-2)\]
\end{lem}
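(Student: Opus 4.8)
The plan is to reduce everything to a component-by-component estimate. Since $S$ is minimal of general type, $K_S$ is nef; as $V$ is effective this gives $K_S\cdot V\ge 0$, and from $Z=H+V$ we immediately obtain $K_S\cdot Z=K_S\cdot H+K_S\cdot V\ge K_S\cdot H$, which disposes of the first inequality. It then remains to bound $K_S\cdot H=\sum_i n_i\,K_S\cdot H_i$ from below.

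The heart of the argument is the single-component inequality
\[
(n_i+1)\,K_S\cdot H_i\ \ge\ M\cdot H_i+n_i\bigl(2g(H_i)-2\bigr),
\]
valid for each $i$. To prove it I would write $K_S=M+H+V$ and intersect with $H_i$, obtaining $K_S\cdot H_i=M\cdot H_i+H\cdot H_i+V\cdot H_i$. Since $H=n_iH_i+\sum_{j\ne i}n_jH_j$, one has $H\cdot H_i-n_iH_i^2=\sum_{j\ne i}n_j\,H_i\cdot H_j\ge 0$ because distinct irreducible curves meet non-negatively, and $V\cdot H_i\ge 0$ because $V$ is vertical and effective while $H_i$ is horizontal. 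Hence $K_S\cdot H_i\ge M\cdot H_i+n_iH_i^2$. Substituting the adjunction relation $H_i^2=2g(H_i)-2-K_S\cdot H_i$ and rearranging produces exactly the displayed single-component inequality.

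To finish, I would pass from the weight $n_i+1$ to the uniform weight $n_1+1$. Because $n_1\ge n_i$ and $K_S\cdot H_i\ge 0$, we have $(n_1+1)K_S\cdot H_i\ge (n_i+1)K_S\cdot H_i\ge M\cdot H_i+n_i(2g(H_i)-2)$. Multiplying by $n_i\ge 0$ and summing over $i$ gives
\[
(n_1+1)\,K_S\cdot H\ \ge\ M\cdot H+\sum_i n_i^2\bigl(2g(H_i)-2\bigr)=(2g-2)(\chi-2)+\sum_i n_i^2\bigl(2g(H_i)-2\bigr),
\]
where the evaluation $M\cdot H=(2g-2)(\chi-2)$ is \eqref{eq: mh}. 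Dividing by $n_1+1$ yields the asserted bound.

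I expect the only delicate point to be the handling of the weights and signs: the components $H_i$ may be rational, so $2g(H_i)-2$ can be negative, and one must not divide the single-component inequality by $n_i+1$ termwise, as that would reverse on the negative terms. Routing the comparison through $(n_1+1)K_S\cdot H_i\ge(n_i+1)K_S\cdot H_i$, which only uses $K_S\cdot H_i\ge 0$, avoids this issue cleanly; it is precisely the nefness of $K_S$ (and the non-negativity of the intersection numbers $H_i\cdot H_j$ and $V\cdot H_i$) that makes the whole estimate go through.
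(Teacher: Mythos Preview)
Your proof is correct and follows essentially the same route as the paper's: both arguments reduce to the component-wise estimate $(n_1K_S+H+V)\cdot H_i\ge n_i(2g(H_i)-2)$ via $H\cdot H_i\ge n_iH_i^2$, $V\cdot H_i\ge 0$, adjunction on the smooth curve $H_i$, and the bump $n_1\ge n_i$ together with nefness of $K_S$, then multiply by $n_i$, sum, and invoke \eqref{eq: mh}. The only cosmetic difference is that you first isolate $(n_i+1)K_S\cdot H_i\ge M\cdot H_i+n_i(2g(H_i)-2)$ before passing to $n_1+1$, whereas the paper works directly with $n_1K_S$ from the start.
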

\begin{proof}
Since $Z\geq H$ and $K_S$ is nef, the first inequality is clear. 

Note that 
$(n_1K_S+H+V)\cdot H_i \geq (n_1K_S+n_iH_i)\cdot H_i\geq n_i(2g(H_i)-2)$ for $1\leq i\leq t$. So we have 
\begin{equation}\label{eq: bound KH}
(n_1+1)K_S\cdot H - M\cdot H=(n_1K_S+H+V)\cdot H \geq \sum_i^t n_i^2(2g(H_i)-2).
\end{equation}
Now the second inequality of the lemma follows from \eqref{eq: bound KH} and \eqref{eq: mh}.
\end{proof}

\begin{lem}\label{lem: g neq 5}
$g=5$ does not occur.
\end{lem}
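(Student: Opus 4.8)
The plan is to specialise the identity~\eqref{eq: K^2} to $g=5$ and play the lower bound for $K_S\cdot Z$ furnished by Lemma~\ref{lem: bound KZ} against the Bogomolov--Miyaoka--Yau inequality $K_S^2\le 9\chi$, which is available since $S$ is minimal of general type. Assume $g=5$; then \eqref{eq: K^2} reads $K_S^2=8(\chi-2)+K_S\cdot Z$, and Lemma~\ref{lem: bound KZ} gives
\[
K_S\cdot Z\ \ge\ \frac{8}{n_1+1}(\chi-2)+\sum_{i=1}^{t}\frac{n_i^2}{n_1+1}\bigl(2g(H_i)-2\bigr).
\]
The coefficient of $\chi$ in the resulting lower bound for $K_S^2$ is $8+\tfrac{8}{n_1+1}$, namely $12$ when $n_1=1$ and $10$ when $n_1=3$; in both cases it strictly exceeds $9$. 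This is exactly the feature special to $g=5$: for $g\le 4$ the analogous coefficient $(2g-2)\bigl(1+\tfrac{1}{n_1+1}\bigr)$ is at most $9$, so such a crude comparison with Bogomolov--Miyaoka--Yau cannot by itself exclude those genera.

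First I would bound the error term $\sum_i\tfrac{n_i^2}{n_1+1}(2g(H_i)-2)$ from below. Its only negative contributions come from the rational components $H_i$, for which $2g(H_i)-2=-2$. Since $M=f^*\gb$ is vertical, we have $H\cdot F=K_S\cdot F=2g-2=8$, so that $\sum_i n_i(H_i\cdot F)=8$ and in particular $\sum_i n_i\le 8$. Using $n_i\le n_1$ one obtains $\sum_{g(H_i)=0}n_i^2\le n_1\sum_i n_i\le 8n_1$, whence the error term is at least $-\tfrac{16n_1}{n_1+1}$, i.e.\ at least $-8$ if $n_1=1$ and at least $-12$ if $n_1=3$.

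Feeding these estimates into $K_S^2\le 9\chi$ then yields $9\chi\ge 12\chi-32$ in the case $n_1=1$, so $\chi\le 10$, and $9\chi\ge 10\chi-32$ in the case $n_1=3$, so $\chi\le 32$. Either bound contradicts the standing hypothesis $\chi\ge 56$ (in fact $\chi\ge 33$ already suffices), so $g=5$ cannot occur.

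The step I expect to be most delicate is the case $n_1=3$, which gives only the weaker inequality $\chi\le 32$ and which can occur only if some horizontal curve is fixed pointwise by a cyclic group of order $4$, forcing $\Aut_\QQ(S)\cong\ZZ/4\ZZ$. If one wished to lower the numerical threshold, one would refine the error-term estimate in this case by exploiting that such a curve meets a general fibre $F$ only in points of ramification index $4$ for the quotient $F\to F/\Aut_\QQ(S)$, whose target has genus $1$; the Riemann--Hurwitz count $\deg(\mathrm{Ram})=8$ on $F$ then limits the number of such points, hence the number and self-intersection of the rational horizontal fixed curves, and tightens the constant. In any event this bookkeeping of the rational horizontal fixed curves is the only genuinely fiddly part of the argument, and it is not needed under the hypothesis $\chi\ge 56$.
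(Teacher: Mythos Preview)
Your argument is correct and follows the same route as the paper's: combine the $g=5$ specialisation of \eqref{eq: K^2} with Lemma~\ref{lem: bound KZ} and the Bogomolov--Miyaoka--Yau inequality. The only difference lies in how the error term $\sum_i\tfrac{n_i^2}{n_1+1}(2g(H_i)-2)$ is estimated---the paper reduces the coefficients via $\tfrac{n_i^2}{n_1+1}\ge\tfrac{n_i}{4}$ and then invokes Riemann--Hurwitz for $F\to F/G$ to obtain $K_S\cdot Z\ge 2\chi$ (contradiction already for $\chi\ge 21$), whereas your explicit bound on the negative contributions from possible rational $H_i$ yields the slightly weaker threshold $\chi\ge 33$, still comfortably within the main theorem's hypothesis $\chi\ge 56$.
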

\begin{proof}
Suppose that $g=5$. Since $n_1+1\leq 4$, we have $\frac{2g-2}{n_1+1} =\frac{8}{n_1+1}\geq 2$ and $\frac{n_i^2}{n_1+1}\geq\frac{n_i}{4}$.  By Lemma~\ref{lem: bound KZ}, we have
\[
K_S\cdot Z \geq 2(\chi-2) + \sum_{1\leq i\leq t}\frac{n_i}{4}(2g(H_i)-2)  \geq 2(\chi-2) + \frac{1}{2}\sum_{1\leq i\leq t}n_i = 2\chi
\]
where the last equality is implied by the Riemann--Hurwitz formula for $F\rightarrow F/G$.
By \eqref{eq: K^2}, we obtain
\[
K_S^2 = 8(\chi-2) + K_S\cdot Z\geq 8(\chi-2) + 2\chi > 9\chi
\]
provided that $\chi\geq 21$, which is a contradiction to the Bogomolov--Miyaoka--Yau inequality.
\end{proof}

\begin{lem}\label{lem: g neq 4}
$g=4$ does not occur. 
\end{lem}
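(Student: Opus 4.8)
The plan is to argue by contradiction exactly as in Lemma~\ref{lem: g neq 5}, keeping the decomposition $K_S=M+H+V$ with $M\equiv(\chi-2)F$ and $H\cdot F=2g-2=6$, but with the essential new input being the equivariant signature formula of Lemma~\ref{lem: sign inv} rather than the Bogomolov--Miyaoka--Yau inequality, which turns out to be too weak when $g=4$. Throughout I write $s:=K_S^2-8\chi$. A preliminary remark is that, since $H$ has at most $H\cdot F=6$ irreducible components and each contributes $\geq -1$ to $\sum_i(g(H_i)-1)$, Lemma~\ref{lem: bound KZ} together with \eqref{eq: K^2} gives, whenever all coefficients $n_i=1$, the bound $K_S\cdot H\geq 3\chi-12$ and hence $K_S^2\geq 9\chi-24$; so $s\geq\chi-24>0$ because $\chi\geq 56$.

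Suppose first $G=\{\id_S,\sigma_1,\sigma_2,\sigma_3\}\cong(\ZZ/2\ZZ)^2$. As the stabilizer of a curve is cyclic, every horizontal fixed curve has stabilizer of order $2$, so all $n_i=1$ and the preliminary bound applies, giving $s>0$. Applying Lemma~\ref{lem: sign inv} to each of the three numerically trivial involutions $\sigma_i$ yields $K_S^2=8\chi+s_i$, where $s_i$ is the sum of the self-intersections of the $\sigma_i$-fixed curves; hence $s_1=s_2=s_3=s>0$. Since vertical fixed curves are proper subcurves of fibres and so have self-intersection $\leq 0$, while sections of $f$ are smooth rational curves of negative self-intersection, the positivity $s_i>0$ forces the horizontal $\sigma_i$-fixed locus to be a single irreducible double section $D_i$ with $D_i^2\geq s$. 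As $\sum_i D_i\cdot F=H\cdot F=6$, we get $H=D_1+D_2+D_3$. Feeding $D_i^2\geq s$ back into Lemma~\ref{lem: bound KZ} via $2g(D_i)-2=D_i^2+K_S\cdot D_i$ gives $K_S\cdot H\geq 6\chi-12+3s$, whereas \eqref{eq: K^2} gives the exact value $K_S\cdot Z=K_S^2-6(\chi-2)=2\chi+s+12\geq K_S\cdot H$. Hence $2\chi+s\leq 12$, contradicting $\chi\geq 56$ and $s>0$.

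For $G\cong\ZZ/4\ZZ$, let $\sigma$ be a generator and $\alpha=\sigma^2$ the unique involution, to which I apply Lemma~\ref{lem: sign inv}, writing $s=\sum_{\alpha\text{-fixed }D}D^2$. A Riemann--Hurwitz count on the cyclic cover $F\to F/G$ (with $F/G$ elliptic) shows, after excluding the option of three extra $\alpha$-fixed points by a parity argument on the induced $\sigma$-action, that on a general fibre either (ii) $\sigma$ acts freely and $\alpha$ fixes six points, so every horizontal fixed curve has coefficient $n_1=1$; or (i) $\sigma$ fixes two points and $H=3C_1$ for a single $\sigma$-fixed double section $C_1$ (coefficient $n_1=3$). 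In subcase (ii) the computation of the $(\ZZ/2\ZZ)^2$-case goes through: Lemma~\ref{lem: bound KZ} gives $K_S\cdot H\geq 6\chi-12+\sum_j\Gamma_j^2$ over the horizontal $\alpha$-fixed curves $\Gamma_j$, and since $\sum_j\Gamma_j^2\geq s$, comparison with $K_S\cdot H\leq 2\chi+s+12$ forces $\chi\leq 6$, a contradiction.

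The main obstacle is subcase (i) of $\ZZ/4\ZZ$. Here the coefficient $3$ weakens Lemma~\ref{lem: bound KZ} to $K_S\cdot H\geq 6\chi-12+9C_1^2$, and comparison with the upper bound only yields $\chi+2C_1^2\leq 6$; this is a contradiction precisely when $C_1^2\geq 0$ (more generally $C_1^2>-25$), but the bound alone does not exclude $C_1^2$ being very negative, so the positivity $s>0$ can no longer be read off directly. To control $C_1^2$ I would bring in the order-$4$ equivariant signature and Lefschetz relations of Lemma~\ref{lef}, in particular \eqref{sign1}, which expresses $K_S^2-8\chi$ through the squares of the $\sigma$-fixed curves and the weighted isolated-fixed-point numbers $k_1,k_3$; the precise $\tfrac14(1,a)$-weights of the two $\sigma$-fixed points on a general fibre must then be analysed exactly as in the genus-$3$ argument underlying Theorem~\ref{thm: g3c4}. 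I expect this weight bookkeeping to be the only genuinely delicate step; once $C_1^2>-25$ is secured, the displayed inequality closes the case.
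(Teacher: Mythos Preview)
Your treatment of the $(\ZZ/2\ZZ)^2$ case and of the $\ZZ/4\ZZ$ subcase where $H$ is reduced is correct and in fact cleaner than the paper's. By first establishing $s:=K_S^2-8\chi>0$ and then using Lemma~\ref{lem: sign inv} to force each involution to fix a single horizontal double section of positive square, you feed $D_i^2\geq s$ back into the genus bound and obtain the contradiction $2\chi+s\leq 12$ directly. The paper instead averages the three signature identities to get $K_S^2\leq 8\chi+\tfrac13\sum_i\Gamma_i^2$, pairs this with the crude lower bound $K_S^2\geq 9(\chi-2)+\sum_i(g(\Gamma_i)-1)$, and invokes $2$-connectedness of $|K_S|$ to finish; your structural argument avoids that detour.

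The remaining $\ZZ/4\ZZ$ subcase with $H=3C_1$ is, as you acknowledge, incomplete, and this is a genuine gap rather than routine bookkeeping. Pushing Lemma~\ref{lef} would require controlling the numbers $k_1,k_3$ of isolated $\sigma$-fixed points of given weight, and without a fibre-by-fibre classification (of the kind carried out for $g=3$ in Section~\ref{sec: fib g3c4}) there is no evident mechanism to force $C_1^2>-25$. The paper closes this case by a completely different route: the minimal model $X$ of $S/\langle\alpha\rangle$ carries an induced genus-$2$ fibration over $B$, and since the elliptic quotient $T$ contributes a sub-line-bundle $f_{T*}\omega_T\hookrightarrow f_{X*}\omega_X$ of degree $\chi$, the Hirzebruch invariant satisfies $e\geq\chi-1$. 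Xiao's inequality for genus-$2$ fibrations (\cite[Theorem~2.2~(ii)]{X85a}) then gives $K_X^2\geq 4\chi-10$, leading to a lower bound $K_S^2\geq\tfrac{19}{2}\chi-\tfrac{55}{2}$ that contradicts the upper bound $K_S^2\leq\tfrac{15}{2}\chi+3$ already implicit in your inequality $\chi+2C_1^2\leq 6$ once rewritten via Lemma~\ref{lem: sign inv}. So the missing ingredient is not weight analysis but the passage to the genus-$2$ fibration on $S/\langle\alpha\rangle$ together with Xiao's slope theory.
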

\begin{proof}
Suppose on the contrary that $g=4$. Then 
\begin{equation}\label{eq: g4 H}
\sum_{1\leq i\leq t} n_i H_i\cdot F = H\cdot F = 2g-2 =6.
\end{equation}
Since $n_i\in \{1,3\}$, there are the following possibilities by \eqref{eq: g4 H}:
\begin{enumerate}
    \item[(i)] $n_i=1$ for each $1\leq i\leq t$, equivalently, $H$ is reduced, each irreducible component of $H$ is $\sigma$-fixed for some involution $\sigma\in G$. 
    \item[(ii)] $n_1=3$, and $H=3H_\red$, where $H_\red$ is the reduced divisor of $H$, $G\cong \ZZ/4\ZZ$, and $H_\red$ is $G$-fixed.
\end{enumerate}
We first exclude the case (i). In this case, we have 
\begin{equation}\label{eq: K2 vs 8chi, g=4}
K_S^2\leq 8\chi + u \sum_{i=1}^t \Gamma_i^2
\end{equation}
where $u=\frac{1}{3}$ (resp.~$u=1$) if $G\cong(\ZZ/2\ZZ)^2$ (resp.~$G\cong \ZZ/4\ZZ$). Indeed, if $G\cong \ZZ/4\ZZ$, let $\sigma\in G$ be the unique element of order $2$, then $H$ is $\sigma$-fixed. By Lemma~\ref{lem: sign inv}, we obtain \eqref{eq: K2 vs 8chi, g=4} with $u=1$.

   If $G\cong(\ZZ/2\ZZ)^2$, write $G=\left<\op{id}_S, \sigma_1, \sigma_2, \sigma_3 \right>$.
 For $j=1, 2, 3$, let $\Theta_j$ be the divisorial part of the fixed
locus of $\sigma_j$, and $\Theta_{j, h}$ the horizonal part of $\Theta_j$ with respect to $f$.
 Since the stabilizer of each curve under the
action of $G$ is a cyclic group, we have that for $i\not=j$,
$\Theta_i$ and $\Theta_j$ have no common components.

 Since each irreducible component of $H$ is $\sigma_i$-fixed   for some
$i$,
 we have $\Theta_{1, h}+\Theta_{2, h}+\Theta_{3, h}=H$.
By
Lemma~\ref{lem: sign inv} we have $K_S^2 \leq 8\chi(\mc O_S)+ \Theta_{j,h}^2$, and hence we get \eqref{eq: K2 vs 8chi, g=4} with $u=\frac{1}{3}$.

On the other hand,
since $H$ is reduced in case (i),
by Lemma~\ref{lem: bound KZ}, we have
\[K_S H \geq 3(\chi(\mc O_S)-2)+
\sum_{i=1}^t (g(\Gamma_i)-1).\]
So by \eqref{eq: mh} we have
\begin{align}\label{eq: bound K2, g=4 case 1}
K_S^2\geq K_S\cdot M+K_SH\geq9(\chi(\mc O_S)-2)+
\sum_{i=1}^t (g(\Gamma_i)-1).\end{align}
Combining \eqref{eq: bound K2, g=4 case 1} and \eqref{eq: K2 vs 8chi, g=4}, and noting that $K_S\Gamma_i+\Gamma_i^2=2g(\Gamma_i)-2$ by the adjunction formula  and  $(K_S-\Gamma_i)\Gamma_i\geq2$ since $K_S$ is $2$-connected,
we get
\begin{equation*}
\begin{split}
\chi(\mc O_S)&\leq\begin{cases}&18 +\frac{1}{3}\sum_{i=1}^t(1-g(\Gamma_i) -K_S\Gamma_i),\ \ \textrm{if $u=\frac{1}{3}$}\\& 18 +\frac{1}{2}\sum_{i=1}^t(\Gamma_i^2 -K_S\Gamma_i), \ \ \ \ \    \textrm{if $u=1$}\end{cases}\\ &<18,
\end{split}
\end{equation*}
which
contradicts the assumption $\chi(\mc O_S)\geq21$.

Now we may assume the pair ($H$, $G$) is as the case (ii). Then $G\cong\ZZ/4\ZZ$, and
\[
 K_SH=3(M+3H_\red+V)H_\red\geq 3MH_\red+9H_\red^2 = 6(\chi(\mo_S)-2)+9H_\red^2.\]
So by \eqref{eq: K^2} we obtain
\begin{align}\label{eq: g4'}
K_S^2\geq 6(\chi(\mo_S)-2)+K_SH
\geq 12(\chi(\mo_S)-2)+9H_\red^2.
\end{align}

Combining \eqref{eq: g4'} with \eqref{eq: K2 vs 8chi, g=4}, we get
\begin{align}\label{eq: g4''}K_S^2\leq\frac{15}{2}\chi (\mo_S)+3.\end{align}

On the other hand, let $X$ be the minimal model of $S/\alpha$, and $f_X\colon X\rightarrow B$ the induced fibration. There are exactly two $\alpha$-fixed points, namely $H_1\cap F$, and the Riemann--Hurwitz formula implies that $g(F/\langle\alpha\rangle)=2$. Thus $f_{X*}\omega_X$ is a rank two vector bundle of degree $\chi$ over $B$, and the associated projective bundle $\PP:=\PP(f_{X*}\omega_X)$ is a ruled surface over $B$. Let $e$ be the largest number such that there is a section $\Gamma$ of $\PP\rightarrow B$ with $\Gamma^2=-e$. We have (cf.~\cite[page 7]{X85a}):
\begin{equation}\label{eq: e}
e = \max\{2\deg \sL - \deg f_{X*}\omega_X \mid \sL\subset\deg f_{X*} \text{ is a sub-line-bundle}\}.
\end{equation}
Since $f_X\colon X\rightarrow B$ is the composition of the induced rational map $X\dashrightarrow T$ with $f_T\colon T\rightarrow B$, the rank two vector bundle $f_{X*}\omega_X$ contains the line bundle $f_{T*}\omega_T$. Note that $\deg f_{T*}\omega_T =\chi$, so we have by \eqref{eq: e}
\[
d\geq 2 \deg f_{T*}\omega_T - f_{X*}\omega_X = \chi -1
\]
Therefore, by \cite[Theoremm 2.2, (ii)]{X85a}, we have
\[
K_X^2 \geq \chi + 3e-7 \geq 4\chi -10.
\]
By Lemma~\ref{lem: bound KZ}, we have $K_S\cdot H\geq \frac{3}{2}(\chi-2) + \frac{9}{4}(2g(H_1)-2) \geq \frac{3}{2}\chi - \frac{15}{2}$. We obtain
\[
K_S^2 \geq 2 K_X^2 + K_S\cdot H \geq 8\chi -20 + \frac{3}{2}\chi - \frac{15}{2} = \frac{19}{2}\chi -\frac{55}{2}
\]
which is a contradiction to \eqref{eq: g4''}, provided that $\chi\geq 21$.
\end{proof}

\begin{thm}\label{thm: can pencil g3}
Suppose that $\chi\geq 21$ and $\kappa(\tT)=1$. Then the canonical map is composed with a fibration $f\colon S\rightarrow B$ of genus $3$.
\end{thm}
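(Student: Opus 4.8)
The plan is to assemble the results already established in this subsection and then dispose of the single remaining case, namely that the fibration has genus $2$. Since $\chi\geq 21$ forces $p_g=\chi-1\geq 20\geq 4$, Lemma~\ref{lem: fib k=1} is available: the map $f=f_{S/G}\circ\pi\colon S\rightarrow B$ is a fibration with connected fibers over $B\cong\PP^1$, the canonical map factors as $\phi_S=\varphi_\gb\circ f$, and every fiber is $G$-invariant. Writing $g$ for the genus of a general fiber $F$, the bound $2\leq g\leq 5$ of \cite[Proposition~2.1]{Be79}, together with Lemmas~\ref{lem: g neq 5} and \ref{lem: g neq 4}, leaves only $g\in\{2,3\}$. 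Hence the entire content of the theorem reduces to excluding $g=2$; once this is done, $g=3$ and $\phi_S=\varphi_\gb\circ f$ exhibit the canonical map as composed with a genus-$3$ fibration.

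To rule out $g=2$, I would analyze the $G$-action on a general fiber. Because $f=f_{S/G}\circ\pi$ and $f_{S/G}$ is the elliptic fibration induced (via $\kappa(\tT)=1$) from the Iitaka fibration on $\tT$, the image $\pi(F)$ is a general smooth fiber of $f_{S/G}$, hence of genus $1$. Since $F$ is $G$-invariant and $G$ acts faithfully on it — a nontrivial $\sigma\in G$ fixing $F$ pointwise would force $F\subset S^\sigma$, impossible for general $F$ as $S^\sigma$ is a proper closed subset — the restriction of $\pi$ identifies $\pi(F)$ with the quotient $F/G$, so $g(F/G)=1$. Applying the Riemann--Hurwitz formula to the degree-$4$ Galois cover $F\to F/G$ with $g(F)=2$ and $g(F/G)=1$ gives
\[
2 = 4\,(2\cdot 1-2) + \sum_{Q}\tfrac{4}{e_Q}(e_Q-1),\qquad\text{i.e.}\qquad \sum_{Q}\Big(1-\tfrac{1}{e_Q}\Big)=\tfrac12,
\]
the sum running over the branch points $Q\in F/G$ with ramification index $e_Q$. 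As every point stabilizer is cyclic (so $e_Q\in\{2,4\}$), each summand is $\geq\tfrac12$; thus there is exactly one branch point, and it has $e_Q=2$.

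The contradiction then follows from elementary covering theory, and this uniform step is where the real work lies: an abelian ramified Galois cover of a curve cannot have a single branch point. Indeed, $F\to F/G$ corresponds to a surjection of $\pi_1\big((F/G)\setminus\{Q\}\big)$ onto the abelian group $G$, which factors through $H_1\big((F/G)\setminus\{Q\}\big)$; but a small loop around $Q$ is null-homologous there (it bounds in $F/G$), so the local monodromy, an element of order $e_Q=2$, must be trivial — a contradiction. Hence $g\neq 2$, and therefore $g=3$. The delicate point, and the crux of the whole argument, is precisely carrying out this last step uniformly for both $G\cong(\ZZ/2\ZZ)^2$ and $G\cong\ZZ/4\ZZ$ (for the former one could alternatively invoke Accola's formula as in Lemma~\ref{lem: g3 gen fib}); its content is that a genus-$2$ curve admits no faithful action of an abelian group of order $4$ with elliptic quotient.
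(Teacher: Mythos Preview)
Your proof is correct and complete. The overall structure (reduce to $g\in\{2,3\}$ via Beauville's bound and Lemmas~\ref{lem: g neq 5}, \ref{lem: g neq 4}, then exclude $g=2$) matches the paper, but your exclusion of $g=2$ is genuinely different from what the paper does.

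The paper simply invokes \cite{Cai06}, whose main results bound $\Aut_\QQ(S)$ for surfaces admitting a genus-$2$ fibration; the upshot is that such a surface cannot have $|\Aut_\QQ(S)|=4$, so the case is dismissed by citation. Your argument instead stays at the level of a single general fiber: you use that $F/G$ is a smooth fiber of the elliptic fibration $f_{S/G}$ (hence of genus $1$), apply Riemann--Hurwitz to the faithful degree-$4$ map $F\to F/G$ to force exactly one branch point of index $2$, and then observe that for an abelian Galois cover of a genus-$1$ curve the monodromy around a single puncture is a commutator and hence trivial --- contradicting $e_Q=2$. This is more elementary and fully self-contained; it also makes transparent exactly which feature of $G$ is being used (abelianness), handling $(\ZZ/2\ZZ)^2$ and $\ZZ/4\ZZ$ uniformly. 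The paper's route, by contrast, imports a stronger surface-level statement but hides the mechanism. Either approach suffices here; yours has the virtue of requiring no external input beyond the basic topology of a once-punctured torus.
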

\begin{proof}
 Lemmas~\ref{lem: g neq 5} and \ref{lem: g neq 4}, the Iitaka fibration $f_T\colon T\rightarrow B$ pulls back to a fibration $f\colon S\rightarrow B$ of genus $g=2$ or 3. The case $g=2$ is impossible by \cite{Cai06}. 
\end{proof}

\begin{thm}\label{thm: k=1}
Let $S$ be a regular surface of general type with $|\Aut_\QQ(S)|=4$ and $\chi(\sO_S)\geq 13$. If $K_S$ is ample and $\kappa(\tT)=1$ then $\Aut_\QQ(S)\cong (\ZZ/2\ZZ)^2$ and  $S$ is isogenous to a product of curves.
\end{thm}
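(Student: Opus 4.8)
The plan is to reduce everything to the analysis of genus $3$ fibrations carried out in Section~\ref{sec: fib g3}, together with the non-occurrence result of Section~\ref{sec: fib g3c4}. Write $G=\Aut_\QQ(S)$, so that $|G|=4$; since every group of order $4$ is abelian, we have either $G\cong(\ZZ/2\ZZ)^2$ or $G\cong\ZZ/4\ZZ$. As we are in the case $\kappa(\tT)=1$, the discussion preceding Lemma~\ref{lem: fib k=1} gives $B\cong\PP^1$, and Theorem~\ref{thm: can pencil g3} shows that the canonical map $\phi_S$ is composed with a fibration $f\colon S\rightarrow B\cong\PP^1$ of genus $3$. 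Because $K_S$ is ample it is in particular $f$-ample, so $f$ is relatively minimal and contains no $(-1)$- or $(-2)$-curves in its fibers; moreover $p_g(S)=\chi(\sO_S)-1>0$. By the Remark following the definition of vertical-curve-preserving automorphisms (which cites \cite[Lemma~3.1]{Cai04}), every element of $G$ preserves each vertical curve of $f$; in particular $G\subset\Aut_B(S)$ is fiber-preserving.

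First I would eliminate the cyclic case. If $G\cong\ZZ/4\ZZ$, choose a generator $\sigma$, which is a numerically trivial automorphism of order $4$ with $f\circ\sigma=f$. Theorem~\ref{thm: g3c4}, applied to the regular surface $S$ with its $f$-ample canonical class and the genus $3$ fibration $f\colon S\rightarrow\PP^1$, then forces $\chi(\sO_S)\leq 2$, contradicting the hypothesis $\chi(\sO_S)\geq 13$. Hence $G\cong(\ZZ/2\ZZ)^2$, which is the first assertion of the theorem.

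With $G\cong(\ZZ/2\ZZ)^2$ in hand, the fibration $f\colon S\rightarrow B$ together with its fiber-preserving numerically trivial $G$-action satisfies the hypotheses of Lemma~\ref{lem: g3 gen fib} (using $p_g(S)>0$), so $f$ is hyperelliptic and exactly one involution of $G$ fixes a horizontal curve; this places us in the framework of Notation~\ref{nota: g3}. I would then invoke Theorem~\ref{thm: g3}: since $K_S$ is $f$-ample and $G$ is numerically trivial, it yields $g(B)\leq 1$, that $f$ is a quasi-bundle of genus $3$, and that $S$ is isogenous to a product of curves. Combined with the previous paragraph, this proves both conclusions of Theorem~\ref{thm: k=1}. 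Consistently, the $\kappa(\tT)=1$ discussion already gives $B\cong\PP^1$, so one lands in the $g(B)=0$ branch of the proof of Theorem~\ref{thm: g3}.

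The substance of the argument is thus entirely imported from the two fibration analyses, and the only delicate point is the numerical bookkeeping: Theorem~\ref{thm: can pencil g3} is stated for $\chi(\sO_S)\geq 21$, whereas here only $\chi(\sO_S)\geq 13$ is assumed. The main obstacle I anticipate is closing this gap, i.e.~showing that under the stronger hypothesis that $K_S$ is ample the canonical map is still composed with a genus $3$ fibration for $\chi(\sO_S)$ as low as $13$. This amounts to re-running the genus estimates of Lemmas~\ref{lem: g neq 5} and \ref{lem: g neq 4} while exploiting the ampleness of $K_S$ (which upgrades intersection inequalities such as $(K_S-\Gamma_i)\cdot\Gamma_i\geq 2$ and removes the negative contributions of $(-2)$-curves in the bounds of Lemma~\ref{lem: bound KZ}), together with excluding $g=2$ via \cite{Cai06}. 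Once the genus $3$ reduction is secured in the stated range, the remaining steps reduce to citing Theorems~\ref{thm: g3c4} and \ref{thm: g3}.
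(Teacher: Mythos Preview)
Your approach is exactly the paper's: obtain a genus $3$ fibration $f\colon S\to\PP^1$ via Lemma~\ref{lem: fib k=1} and Theorem~\ref{thm: can pencil g3}, exclude $G\cong\ZZ/4\ZZ$ by Theorem~\ref{thm: g3c4}, and then apply Theorem~\ref{thm: g3}. The paper's own proof is a three-line citation of these same results.

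The numerical discrepancy you flag is real, and the paper's proof does not address it either: it simply invokes Theorem~\ref{thm: can pencil g3} (which is stated for $\chi\geq 21$) under the hypothesis $\chi\geq 13$, without further comment. So this is not a defect of your proposal relative to the paper; it is a gap in the paper itself (or, more charitably, an inconsistency between the bound stated in Theorem~\ref{thm: k=1} and the bound actually established by the chain of lemmas). Your suggestion to sharpen Lemmas~\ref{lem: g neq 5} and \ref{lem: g neq 4} using the ampleness of $K_S$ is a plausible route to repair this, but the paper does not carry it out; in practice the main result (Theorem~\ref{main1}) only needs $\chi\geq 56$, so the discrepancy is immaterial for the headline statement.
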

\begin{proof}
By Lemma~\ref{lem: fib k=1} and Theorem~\ref{thm: can pencil g3}, there is a genus 3 fibration $f\colon S\rightarrow B$ such that $\Aut_\QQ(S)$ preserves each fiber. By Theorem~\ref{thm: g3c4}, $\Aut_\QQ(S)\cong (\ZZ/2\ZZ)^2$. By Theorem~\ref{thm: g3}, $S$ is isogenous to a product of curves.
\end{proof}

\subsection{The case \texorpdfstring{$\kappa(T)=2$}{}}
In this subsection, we assume that $\kappa(\tT)=2$, where we recall that $\tT$ is the minimal resolution of $S/G$.

As before, we use the canonical map to describe the surfaces. Let $\Sigma:=\op{Im}(\phi_S) = \op{Im}(\phi_T)\subset\PP^{p_g-1}$ be the canonical image, and $\varphi_S\colon S\dashrightarrow\Sigma$ and $\varphi_{S/G}\colon S/G\dashrightarrow \Sigma$ the dominant rational maps induced by the canonical maps. We have $\varphi_S = \varphi_{S/G}\circ\pi$; see \eqref{diag: can map}. 

\begin{prop}\label{prop: deg can map}
If  $\chi\geq14$, then $\Sigma$ is a rational surface, and $\deg\varphi_S =4\deg {\varphi_{S/G}}=8$.
\end{prop}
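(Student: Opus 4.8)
The plan is to exploit the factorization of the canonical map through the quotient $\pi\colon S\to S/G$ and then to pin down the image by a degree estimate. Since $G$ acts trivially on $H^0(S,\omega_S)$, the map $\varphi_S$ is constant on the $G$-orbits, so it factors as $\varphi_S=\varphi_{S/G}\circ\pi$, which is exactly the content of the lower part of \eqref{diag: can map}. Once we know that $\Sigma$ is a surface, i.e. that $\varphi_S$ is generically finite, the degree is multiplicative along this composition and $\deg\varphi_S=|G|\cdot\deg\varphi_{S/G}=4\deg\varphi_{S/G}$. Thus the statement reduces to three assertions: that $\Sigma$ is a surface, that it is rational, and that $\deg\varphi_{S/G}=2$.

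First I would bound the degree from above. Writing $M=\varphi_S^*\mathcal O_\Sigma(1)$ for the moving part of $|K_S|$, one has $K_S^2\ge K_S\cdot M\ge M^2=\deg\varphi_S\cdot\deg\Sigma$, where the middle inequality holds because $K_S$ is nef and a general member of $|M|$ shares no component with the fixed part $Z$, and the last equality is the standard formula for the moving part of a generically finite map. A nondegenerate irreducible surface in $\PP^{p_g-1}$ has degree at least $p_g-2=\chi-3$, so $M^2\ge 4\deg\varphi_{S/G}(\chi-3)$. Combining this with the Bogomolov--Miyaoka--Yau inequality $K_S^2\le 9\chi$ gives $4\deg\varphi_{S/G}(\chi-3)\le 9\chi$; for $\chi\ge 14$ this forces $\deg\varphi_{S/G}\le 2$, since $\deg\varphi_{S/G}\ge 3$ would require $12(\chi-3)\le 9\chi$, i.e. $\chi\le 12$.

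The remaining, and genuinely harder, point is to identify the birational nature of $\Sigma$. Since $\Sigma$ is dominated by the regular surface $S$, it satisfies $q(\Sigma)=0$. Here I would invoke the structure theory of canonical maps of surfaces of general type, due to Beauville \cite{Be79} and Xiao: because the canonical map factors through the degree-$4$ map $\pi$, for $\chi$ large it can be neither composed with a pencil nor have an image of general type, and hence $\Sigma$ is a regular surface of negative Kodaira dimension, i.e. a rational surface. Concretely, in the surviving case $\deg\varphi_S=8$ the estimate of the previous paragraph yields $\deg\Sigma\le 9\chi/8$, which places $\Sigma$ well inside the range of surfaces of small degree in $\PP^{p_g-1}$ and so forces rationality. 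Once $\Sigma$ is known to be rational, the possibility $\deg\varphi_{S/G}=1$ is excluded at once: it would make $S/G$, and hence its minimal model $T$, birational to the rational surface $\Sigma$, contradicting $\kappa(T)=2$. Therefore $\deg\varphi_{S/G}=2$ and $\deg\varphi_S=8$.

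The main obstacle is precisely this last step. The inequality coming from Bogomolov--Miyaoka--Yau controls only the \emph{degree} of the canonical map, not the birational type of its image; distinguishing a rational image from one of general type, and ruling out that $\varphi_S$ is composed with a pencil, therefore cannot be achieved by the numerical argument alone and must rest on the finer classification of canonical maps. This is also where the hypothesis $\chi\ge 14$ is used in an essential way: it is exactly what is needed both to push $\deg\varphi_{S/G}$ below $3$ and to place $\deg\Sigma$ in the rational range once the degree has been determined.
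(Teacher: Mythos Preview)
Your framework is the paper's: factor $\varphi_S=\varphi_{S/G}\circ\pi$, bound $\deg\varphi_{S/G}$ via $K_S^2\ge(\deg\varphi_S)(\deg\Sigma)\ge 4\deg\varphi_{S/G}(\chi-3)$ against Bogomolov--Miyaoka--Yau, and deduce rationality from $\deg\Sigma\le\frac{9}{8}\chi$ via \cite[Lemme~1.4]{Be79}. But there are two genuine gaps, and your last paragraph mis-diagnoses them.

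First, your exclusion of $\deg\varphi_{S/G}=1$ is circular. You establish rationality only ``in the surviving case $\deg\varphi_S=8$'' and then use rationality to discard $\deg\varphi_S=4$; but nothing so far has eliminated that case. The paper breaks the circle by applying Beauville's dichotomy directly to $T$: either $p_g(\Sigma)=0$, or $\Sigma$ is a canonical surface, for which Castelnuovo's inequality gives $\deg\Sigma\ge 3\chi-10$, whence $K_S^2\ge 4\cdot\deg\varphi_T\cdot\deg\Sigma\ge 4(3\chi-10)>9\chi$ for $\chi\ge 14$. So $p_g(\Sigma)=0$, and since $p_g(T)=p_g(S)>0$ the map $\varphi_T$ cannot be birational; this gives $\deg\varphi_T\ge 2$ \emph{before} any rationality statement.

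Second, you leave the pencil case to an unspecified ``finer classification'', suggesting that no numerical argument can work. In fact the paper's argument is again numerical, but applied to $T$: if $\Sigma$ is a curve then the canonical map of the minimal general-type surface $T$ is composed with a pencil, and Horikawa's inequality \cite{Hor78} yields $\vol(K_T)\ge 3\chi-6$; since $K_S^2\ge 4\vol(K_T)$ one gets $K_S^2\ge 12\chi-24>9\chi$ for $\chi\ge 14$. The point you are missing in both steps is the same: the hypothesis $\kappa(T)=2$ lets you apply the sharp inequalities (Castelnuovo, Horikawa) to $T$ itself, and the factor $|G|=4$ coming from $\pi$ then pushes $K_S^2$ past the BMY bound.
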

\begin{pf}
If $\Sigma$ is a curve, then $\vol(K_T)\geq3\chi-6$ (cf.~\cite{Hor78}), and hence
\[
 K_S^2 \geq 4 \vol(K_T) \geq 4(3\chi(\mc O_T) - 6) = 12\chi - 24
 >9\chi
 \]
 provided $\chi\geq14$. This contradicts
the Bogomolov--Miyaoka--Yau
 inequality. Therefore, $\Sigma$ is a surface, and $\varphi_T$ and $\varphi$ are generically finite. We have $\deg\varphi_S= (\deg \pi)(\deg \varphi_T) = 4\deg \varphi_T$.

If $p_g(\Sigma)=p_g(S)$, then $\Sigma$ is a canonical surface, which
satisfied the Castelnuovo's inequality $\deg \Sigma\geq3\chi(\mc
O_S)-10$ (cf.~\cite[5.6]{Be79}). We have
$$K_S^2\geq|G|\deg\varphi_T\deg \Sigma\geq4(3\chi-10 )
>9\chi$$ provided $\chi\geq14$, contradicting the 
Bogomolov--Miyaoka--Yau
 inequality again. So we can assume
$p_g(\Sigma)=0$. Then $\deg \varphi_T\geq2$.

If $\deg \varphi_T\geq3$, then  $K_S^2\geq|G|\deg\varphi_T\deg
\Sigma\geq12(\chi-3 ) >9\chi$ provided $\chi(\mc
O_S)>12$, a contradiction. So we have  $\deg \varphi_T=2$ and hence
$\deg \varphi_T=8$.

It follows from the Bogomolov--Miyaoka--Yau inequality that 
\[
\deg\Sigma\leq \frac{1}{\deg\varphi_S} K_S^2 = \frac{1}{8} K_S^2\leq \frac{9}{8}\chi
\]
Thus $\Sigma$ is a rational surface by \cite[Lemme~1.4]{Be79}, noting that the irregularity $q(\Sigma) = q(S)=0$.
\qed
\end{pf}

\begin{prop}\label{prop: K^2 leq 8X}
 If  $\chi\geq22$ then  $K_S^2\leq 8\chi$.
\end{prop}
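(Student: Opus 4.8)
The plan is to reduce the inequality to a statement about the fixed curves of an involution in $G$, and then to kill those fixed curves using a fibration read off from the canonical map. Write $G=\Aut_\QQ(S)$ and pick an involution $\sigma\in G$ (the unique one if $G\cong\ZZ/4\ZZ$, any of the three otherwise). Since $\sigma$ acts trivially on $H^2(S,\QQ)$, Lemma~\ref{lem: sign inv} gives $K_S^2=8\chi(\sO_S)+\Theta_\sigma^2$, where $\Theta_\sigma$ is the divisorial part of $S^\sigma$; hence the assertion is \emph{equivalent} to $\Theta_\sigma^2\le0$. Moreover, since $\sigma$ acts trivially on $H^0(S,\omega_S)$, Lemma~\ref{lem: fix vs base}(ii) shows $\Theta_\sigma$ is contained in the fixed part $Z$ of $|K_S|$. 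Thus it suffices to exhibit a $G$-invariant fibration $f\colon S\to B$ for which $Z$, and hence $\Theta_\sigma$, is vertical, because then Zariski's lemma (\cite[III.8]{BHPV04}) forces $\Theta_\sigma^2\le0$.

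To produce such a fibration I would use Proposition~\ref{prop: deg can map}: for $\chi\ge14$ the canonical image $\Sigma\subset\PP^{p_g-1}$ is a rational surface and $\varphi_{S/G}\colon S/G\dashrightarrow\Sigma$ is generically $2$-to-$1$. Being nondegenerate, $\Sigma$ has degree $\ge p_g-2=\chi-3$, while the Bogomolov--Miyaoka--Yau inequality applied to $S$ bounds $\deg\Sigma=\tfrac18 M^2\le\tfrac18 K_S^2$ from above by $\tfrac98\chi$ (here $M$ is the moving part of $|K_S|$, so $M^2\le M\cdot K_S\le K_S^2$); for $\chi\ge22$ this places $(\Sigma,\varphi_{S/G})$ in the range governed by Xiao's analysis of surfaces with a degree-$2$ canonical map \cite{X86, X87}, which yields a genus-$2$ pencil $h\colon S/G\to B\cong\PP^1$ (the base is $\PP^1$ because $q(S)=0$). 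Being determined by the canonical data, $h$ is $G$-invariant, so it pulls back through $\pi\colon S\to S/G$ to a fibration $f=h\circ\pi$ whose general fiber $F$ is $G$-invariant with $g(F/G)=2$. Writing $F=\pi^*F_{S/G}$ and $K_S=\pi^*K_{S/G}+R$, the projection formula gives $K_S\cdot F=4\,K_{S/G}\cdot F_{S/G}+\overline{R}\cdot F_{S/G}=8+\overline{R}\cdot F_{S/G}$, where $\overline{R}=\pi_*R$ is the branch divisor; equivalently $2g(F)-2=8+H\cdot F$ with $H$ the horizontal part of $Z$. Hence the whole proposition is reduced to the single claim that $H=0$, i.e. that the cover $F\to F/G$ is unramified and $g(F)=5$.

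Establishing $H=0$ is the crux, and the main obstacle. The clean route is to show that no involution of $G$ fixes a horizontal curve: if $\sigma$ fixed a horizontal curve $C$, then $C$ would meet a general genus-$2$ fiber $F_{S/G}$, producing branch points of the $G$-cover $F\to F_{S/G}$ and forcing $g(F)>5$. I would exclude this by combining the Accola-type genus relation used in Lemma~\ref{lem: g3 gen fib} (applied to $G$ acting on $F$, together with the hyperelliptic involution of the genus-$2$ quotient, exactly as in Proposition~\ref{prop: lifting}) with the numerical triviality of $\sigma$ and Xiao's explicit description of the branch locus of the degree-$2$ canonical map; the hypothesis $\chi\ge22$ is used precisely to rule out the finitely many sporadic low-invariant configurations in which horizontal ramification could a priori persist. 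Once $H=0$, the fixed part $Z=V$ is vertical, so $\Theta_\sigma\le Z$ is vertical and we are in the situation of Proposition~\ref{prop: lifting}: $g(F)=5$, $g(F/G)=2$, and $S^\sigma$ is vertical. Zariski's lemma then gives $\Theta_\sigma^2\le0$, and Lemma~\ref{lem: sign inv} yields $K_S^2\le8\chi$.

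I expect the delicate work to be the transport of Xiao's structural results through the degree-$4$ quotient $\pi$ and the verification that $\overline{R}$ is vertical for $h$. A secondary technical point is to check that $h$ descends compatibly across the resolution $\mu\colon\tT\to S/G$ and the contraction $\eta\colon\tT\to T$, so that the genus-$2$ count is not disturbed by the $A_1$-singularities of $S/G$ or by the $(-1)$-curves contracted by $\eta$.
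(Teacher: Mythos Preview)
Your reduction via Lemma~\ref{lem: sign inv} to $\Theta_\sigma^2\le 0$ is exactly right, but from there your route diverges sharply from the paper's and runs into a circularity problem. You propose to obtain a genus-$2$ fibration on $T=S/G$ from Xiao's results and then show that $\Theta_\sigma$ (indeed all of $Z$) is vertical with respect to its pullback $f$. But in the paper's logic, both of these steps lie \emph{downstream} of Proposition~\ref{prop: K^2 leq 8X}: in Proposition~\ref{prop: g5} the bound $\vol(K_T)\le\frac14 K_S^2\le 2\chi$ (i.e.\ Proposition~\ref{prop: K^2 leq 8X}) is what puts $T$ into Xiao's range with $\chi\ge 56$, and the computation $g(F)=5$ (your $H=0$) uses \cite[Lemma~3]{X86} together with $K_S^2\le 8\chi$ again. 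Your claim that $\chi\ge 22$ already ``places $(\Sigma,\varphi_{S/G})$ in the range governed by Xiao'' is not justified; without the inequality you are trying to prove, you only have the BMY bound $\vol(K_T)\le\frac94\chi$, and you give no argument that Xiao's theorem applies at that threshold. Likewise, your sketch for $H=0$ appeals to Proposition~\ref{prop: lifting}, whose hypothesis is precisely $g(F)=5$, which is equivalent to what you are trying to show; the Accola relation alone does not force this, and you have not supplied a replacement argument.

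The paper's proof avoids all of this with a short, self-contained Hodge index argument that needs no fibration. Suppose some $\sigma$-fixed curve $C_1$ has $C_1^2=a>0$. Since the fixed curves $C_1,\dots,C_m$ are pairwise disjoint components of $Z$, Hodge index forces $C_i^2<0$ for $i\ge 2$, so Lemma~\ref{lem: sign inv} gives $K_S^2\le 8\chi+a$. On the other hand, $M+C_1\le K_S$ is nef, and Hodge index applied to $M,C_1$ together with $M^2\ge 8\deg\Sigma\ge 8(\chi-3)$ yields
\[
K_S^2\ \ge\ (M+C_1)^2\ \ge\ 8(\chi-3)+2\sqrt{8a(\chi-3)}+a.
\]
Comparing the two bounds gives $2\sqrt{8a(\chi-3)}\le 24$, hence $\chi\le 21$, contradicting $\chi\ge 22$. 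This is the idea you are missing: rather than forcing $\Theta_\sigma$ to be vertical, the paper exploits that a fixed curve of positive self-intersection, being in the base locus of $|K_S|$, must interact strongly with the mobile part $M$, and Hodge index makes this interaction too large to be compatible with the signature formula.
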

\begin{pf} Let  $C_1,\ \cdots,\ C_m$ ($m\geq0$) be the $\sigma$-fixed curves. By Lemma~\ref{lem: sign inv}, we have 
\begin{equation}\label{eq: bound K^2 above}
 K_S^2=8\chi+\sum_{i=1}^m C_i^2.
\end{equation}
Thus it suffices prove that $C_i^2\leq 0$ for any $1\leq i\leq m$. Suppose on the contrary that one of the $C_i$'s, say $C_1$, satisfies $a:=C_1^2>0$. Note that the $C_i$'s are pairwise disjoint and contained in the fixed part $Z$ of $|K_S|$. By the Hodge Index Theorem, we have $C_i^2<0$ for any $i\neq 1$, and
\begin{equation}\label{eq: bound K^2 below}
(M\cdot C_1)^2\geq M^2 C_1^2= aM^2\geq 8a\deg\Sigma \geq 8a(\chi -3)
\end{equation}
where $M=K_S-Z$ is the mobile part of $|K_S|$, and we used the general fact that $\deg\Sigma\geq p_g-2=\chi -3$ (\cite{EH87}). Since $K_S$ and $M+C_1$ are both nef and $K_S-(M+C_1)= Z-C_1\geq 0$, we have
\begin{align*}K_S^2 \geq (M+C_1)^2 = M^2+2M\cdot C_1+C_1^2 &\geq 8(\chi-3)+2\sqrt{a(8\chi-24)} +a.\end{align*}
On the other hand, 
  It follows from \eqref{eq: bound K^2 below} and \eqref{eq: bound K^2 above} that $-24+2\sqrt{a(8\chi-24)} \leq 0$ and a simple computation gives $\chi\leq 21$, which contradicts the assumption on $\chi$.
\qed\end{pf}


Now we apply Xiao's results \cite{X86, X87} to obtain fibration structures on $T$ and $S$. 

 \begin{prop}\label{prop: g5} If  $\chi\geq56$ then there is a fibration $f\colon S\rightarrow \PP^1$ of genus $ g=5$ such that the fibers of
are preserved under the action of $G$.
\end{prop}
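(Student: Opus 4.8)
The plan is to show that $T$ (equivalently $S/G$, or its smooth model $\tT$) carries a genus $5$ fibration over $\PP^1$ arising from Xiao's structure theory for surfaces of general type, and then to pull it back to $S$ via the quotient map $\pi$. The starting point is Proposition~\ref{prop: deg can map}, which for $\chi\geq 14$ tells us that the canonical image $\Sigma\subset\PP^{p_g-1}$ is a \emph{rational} surface and that $\deg\varphi_S=8$, so $\deg\varphi_T=2$; in particular the canonical map of $T$ is a generically $2$-to-$1$ map onto a rational surface of small degree. The key numerical input is Proposition~\ref{prop: K^2 leq 8X}, which gives $K_S^2\leq 8\chi$ for $\chi\geq 22$, and hence (since $K_S^2=4K_T^2+(\text{contribution of the fixed curves})$ via the degree-$4$ quotient and Lemma~\ref{lem: sign inv}) a correspondingly small bound on $K_T^2$ relative to $\chi(\sO_T)=\chi$. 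Surfaces whose canonical volume $K^2$ is small compared to $\chi$ and whose canonical image is a surface of low degree are precisely the situation governed by Xiao's work \cite{X86, X87}.

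The main step is to invoke Xiao's classification. First I would record the slope-type inequality: for $\chi\geq 56$ the pair $(K_T^2,\chi)$ lies in the range where \cite{X87} forces the existence of a pencil (a fibration $g_T\colon T\dashrightarrow \PP^1$) whose general fiber $\Phi$ is a curve of low genus, and the bound $K_S^2\le 8\chi$, combined with $q(S)=0$, will constrain the genus of that pencil. The reason one lands on genus $5$ rather than genus $2,3,4$ is that $G\cong(\ZZ/2\ZZ)^2$ acts on the general fiber $\Phi$ of the pulled-back fibration on $S$ with quotient of genus equal to $g(\Phi)$; since the quotient fibration on $T$ has the small genus predicted by Xiao (genus $2$), and since $g(F/G)=2$ for a smooth fiber $F$ of the fibration on $S$, the Riemann--Hurwitz/Accola relation forces $g(F)=5$. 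Concretely, I would argue that the genus-$2$ fibration on $T$ (or $\tT$) coming from Xiao's theorem has no multiple fibers over $\PP^1$ obstructing the lift, so that the preimage $f\colon S\to\PP^1$ under $\pi$ is a genuine fibration; its general fiber $F$ maps $4$-to-$1$ onto a genus $2$ curve, giving $g(F)=5$ by the Hurwitz formula once one checks $F^{\sigma_i}=\emptyset$ as in Proposition~\ref{prop: lifting}(i).

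The verification that the pencil on $T$ pulls back to a \emph{fibration} on $S$ with connected fibers, and that its fibers are $G$-invariant, is the routine part: connectedness follows from a Stein-factorization argument as in Lemma~\ref{lem: fib k=1}, using $q(S)=0$ so that the base is $\PP^1$; and $G$-invariance of the fibers is automatic because $\Aut_\QQ(S)$ is vertical-curve preserving (Remark after Lemma~\ref{lem: subcurve smaller g}) and the fibration is canonically defined on the quotient $T$, hence equivariant. That the base is $\PP^1$ and not a higher-genus curve uses $q(S)=0$ again. One must also confirm that the pulled-back fibration $f$ is relatively minimal of genus $5$; if $K_S$ is not yet assumed ample here, relative minimality should be arranged by passing to the relatively minimal model, but since the genus-$2$ fibration downstairs comes from Xiao's minimal-model setup this causes no difficulty.

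The hard part will be pinning down \emph{exactly} which of Xiao's cases occurs and extracting the numerical threshold $\chi\geq 56$. Xiao's theorems give existence of a low-genus pencil only once $K_T^2$ (equivalently $K_S^2$) is bounded above by an explicit linear function of $\chi$ with a specific slope; the number $56$ is presumably the smallest $\chi$ for which the inequality $K_S^2\leq 8\chi$ from Proposition~\ref{prop: K^2 leq 8X}, together with the rationality of $\Sigma$ and $\deg\varphi_T=2$, is incompatible with the non-existence of a genus-$2$ fibration on $T$ (i.e. forces $T$ into Xiao's ``small slope'' regime). So the real work is a careful bookkeeping: translate $K_S^2\le 8\chi$ into a bound on the slope $\lambda=K_T^2/\chi(\sO_T)$ of any putative fibration of $T$, compare with Xiao's slope bounds \cite{X87}, and rule out genera $2,3,4$ of the fibration on $S$ (the low genera having already been excluded in the $\kappa(T)=1$ analysis by Lemmas~\ref{lem: g neq 4} and \ref{lem: g neq 5} and by \cite{Cai06}, whose analogues I would reprove in this setting). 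Once the genus is forced to be $5$, the rest follows by assembling the above lifting and connectedness arguments.
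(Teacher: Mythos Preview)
Your overall strategy matches the paper's: use Xiao's theorems to produce a genus-$2$ fibration on $T$, pull it back to $S$, and identify the fiber genus as $5$. However, there are two genuine gaps.

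First, you have the genus bound backwards. Riemann--Hurwitz applied to $F\to F/G$ with $g(F/G)=2$ and $|G|=4$ gives $g(F)\geq 5$ immediately, so there is nothing to do for $g\in\{2,3,4\}$; the lemmas from the $\kappa(T)=1$ case are irrelevant here. What must be excluded is $g\geq 6$, and this is \emph{not} a consequence of Riemann--Hurwitz or of Proposition~\ref{prop: lifting} (which already assumes $g=5$, so invoking it would be circular). The paper handles this via \cite[Lemma~3]{X86}, which gives the slope inequality
\[
K_S^2 \;\geq\; \frac{2g-2}{2g-5}\bigl(g\chi - 7g + 20\bigr);
\]
combined with $K_S^2\leq 8\chi$ from Proposition~\ref{prop: K^2 leq 8X} and $\chi\geq 56$, this forces $g=5$. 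Your proposal never identifies this upper-bound step, and the threshold $56$ comes precisely from making this inequality bite together with the hypothesis of \cite[Theorem~1]{X87} (which needs $\vol(K_T)\leq \frac{1}{4}K_S^2\leq 2\chi$).

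Second, the connectedness argument you borrow from Lemma~\ref{lem: fib k=1} does not transfer. That lemma works because in the $\kappa(T)=1$ case the moving part of $|K_S|$ is $f^*|\gb|$, so a nontrivial Stein factor would inflate $h^0$. Here $\kappa(T)=2$, the canonical map is generically finite onto a surface, and the fibration $f$ is \emph{not} the canonical pencil; the $h^0$ comparison has no traction. The paper instead argues that if the Stein factor $j$ had degree $\geq 2$, then $\varphi_S$ would fail to separate the members of $|f^*\sO_B(1)|$, contradicting \cite[Lemma~2,~ii)]{X86}. You should replace your Stein-factorization sketch with this.

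A minor point: you repeatedly assume $G\cong(\ZZ/2\ZZ)^2$, but at this stage of the argument $G$ could still be $\ZZ/4\ZZ$; the proposition and its proof use only $|G|=4$.
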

\begin{pf} 
We will first find a genus two fibration on the quotient surface $T$: By Proposition~\ref{prop: deg can map}, the map $\varphi_T\colon T\rightarrow \Sigma$ is generically finite of degree 2. By pulling back a pencil of rational curves on the rational surface $\Sigma$, we infer the existence of a pencil of hyperelliptic curves on $T$.  By Proposition~\ref{prop: K^2 leq 8X}, we have $\vol(K_T)\leq \frac{1}{4}K_S^2 \leq 2\chi$. Now, under the assumption that $\chi\geq 56$, we can apply \cite[Theorem~1]{X87} to obtain a unique genus 2 fibration $h\colon T\rightarrow B\cong \PP^1$. Since $\varphi_T$ is generically finite of degree 2, it necessarily separate the fibers of $h$ and sends them to lines in $\Sigma\subset\PP^{p_g-1}$.

Let $f=h\circ\pi$, and let $f\colon S\xrightarrow{f_1}\tilde B\xrightarrow{j} B$ be the Stein factorization. We will show that $j$ is an isomorphism, and hence $f$ has connected fibers: Suppose on the contrary that $\deg j\geq 2$. Then, since $\varphi_S$ factors through $\pi\colon S\rightarrow T$, it does not separate the elements of the linear system $|f^*\sO_B(1)|$, and this contradicts \cite[Lemma 2, ii)]{X86}.

Since the fibers of $f$ are the inverse image of curves on the quotient surface $T=S/G$, they are preserved by $G$. 

It remains to shows that $g(F)=5$. Note that $g(F/G)=2$. By applying the Riemann--Hurwitz formula to $F\rightarrow F/G$, one sees easily that $g(F)\geq 5$. On the other hand, setting $g=g(F)$, we have
\[
\frac{2g-2}{2g-5}(g\chi - 7g + 20) \leq K_S^2 \leq 8\chi
\]
where the first inequality is by \cite[Lemma~3]{X86} and the second one is by Proposition~\ref{prop: K^2 leq 8X}. Under the assumption that $\chi \geq 56$, one obtains $g  = 5$ by a straightforward computation.\qed
\end{pf}

\begin{thm}\label{thm: k=2}
Let $S$ be a regular surface of general type with $|\Aut_\QQ(S)|=4$
and $\chi(\sO_S)\geq 56$. If $K_S$ is ample  and $\kappa(\tT)=2$ then $\Aut_\QQ(S)\cong
(\ZZ/2\ZZ)^2$ and  $S$ is isogenous to a product of curves.
\end{thm}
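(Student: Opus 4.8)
The plan is to reduce Theorem~\ref{thm: k=2} to the genus~$5$ machinery of Section~\ref{sec: fib g5}. First I would invoke Proposition~\ref{prop: g5}: since $\chi(\sO_S)\geq 56$, there is a fibration $f\colon S\rightarrow \PP^1$ of genus $g=5$ whose fibers are preserved by $G:=\au$ and whose general fiber $F$ satisfies $g(F/G)=2$. Because $K_S$ is ample it has no vertical $(-1)$- or $(-2)$-curves, so $K_S$ is $f$-ample; moreover $G$ is numerically trivial and hence vertical-curve preserving (the remark in Section~\ref{sec: local}). Consequently, as soon as we know $G\cong(\ZZ/2\ZZ)^2$, the full set of hypotheses in Notation~\ref{nota: g5} holds — including the existence of the lift $\tau$ of the hyperelliptic involution, via Proposition~\ref{prop: lifting} — and Theorem~\ref{thm: g5} applies verbatim to give that $f$ is a quasi-bundle and $S$ is isogenous to a product of curves of unmixed type. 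Thus the entire substance of the proof is to \emph{exclude the possibility $G\cong\ZZ/4\ZZ$}. This is the main obstacle: unlike the case $\kappa(\tT)=1$, where Theorem~\ref{thm: g3c4} kills $\ZZ/4\ZZ$ through the genus~$3$ pencil, here there is no genus~$3$ fibration on $S$ to fall back on, and one must argue directly on the genus~$5$ fibration.

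So suppose for contradiction that $G=\langle\sigma\rangle\cong\ZZ/4\ZZ$ with $\alpha:=\sigma^2$. Applying the Riemann--Hurwitz formula to $F\rightarrow F/G$ and to $F\rightarrow F/\langle\alpha\rangle$ and using $g(F/G)=2$, I would first record that both $\sigma$ and $\alpha$ act freely on a general fiber, with $g(F/\langle\alpha\rangle)=3$; hence $S^\sigma$ and $S^\alpha$ are vertical and, in particular, $\alpha$ has \emph{no} horizontal fixed curve. This is the crucial structural difference from the genus~$3$ situation of Notation~\ref{5.2}, where a horizontal $\alpha$-fixed curve $H$ with $H\cdot F=4$ is present. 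The heart of the argument is then a classification of the singular fibers $f^*b$ together with the $\sigma$-action, carried out with the order-$4$ local lemmas of Section~\ref{sec: local}: Lemma~\ref{lem: fix vs sing fib} and Lemma~\ref{lem: singular'} control the weights $\tfrac14(1,a)$ of isolated $\sigma$-fixed points against the fiber multiplicities, Lemma~\ref{k1k2} bounds the number of isolated $\sigma$-fixed points on a vertical curve, and Lemma~\ref{lem: cusp cyclic} restricts stabilizers at cusps. Together with the $f$-ampleness of $K_S$ (which forces $K_S\cdot f^*b=8$ to be split among few components of small $K_S$-degree), this should reduce the possibilities to a short explicit list, in the spirit of Corollary~\ref{g5 ample sing fib} and the tables of Section~\ref{sec: fib g3c4}.

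With the list in hand I would feed the local data into the global identities. Lemma~\ref{lem: sign inv} applied to $\alpha$ gives $K_S^2=8\chi(\sO_S)+\sum_j D_j^2$ with the $D_j$ the (vertical) $\alpha$-fixed curves, while Lemma~\ref{lef} applied to $\sigma$ supplies the equivariant-signature and Lefschetz relations among $K_S^2$, $\chi(\sO_S)$, the $\sigma$-fixed curves and the counts $k_1,k_2,k_3$ of isolated fixed points. Combining these with the topological identity $e(S^\sigma)=e(S)=12\chi(\sO_S)-K_S^2$ and with the fibration formula of Lemma-Definition~\ref{lem: e S^sigma} — where $\deg_\sigma(f)=0$ since $\sigma$ has no horizontal fixed curve, so $e(S)=\sum_b e\bigl((f^*b)^\sigma\bigr)$ — each singular-fiber type contributes fixed integers, and the bookkeeping becomes a small linear system in the numbers of fibers of each type. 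Imposing the bound $K_S^2\leq 8\chi(\sO_S)$ of Proposition~\ref{prop: K^2 leq 8X} (valid as $\chi(\sO_S)\geq 56$), I expect the only surviving solutions to force either $K_S^2>8\chi(\sO_S)$ outright or a value of $e(S)=12\chi(\sO_S)-K_S^2$ that is incompatible with the divisibility of $e(S)$ modulo~$4$ — exactly the parity obstruction exploited in Claims~\ref{cla: n6=0} and~\ref{cla: n7=0} of the proof of Theorem~\ref{thm: g5}. Either way one reaches a contradiction with $\chi(\sO_S)\geq 56$, ruling out $\ZZ/4\ZZ$.

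This completes the reduction: $G\cong(\ZZ/2\ZZ)^2$, whereupon Theorem~\ref{thm: g5} gives that $S$ is isogenous to a product of curves (of unmixed type). I anticipate that the genuinely laborious step is the singular-fiber classification of the second paragraph. In the $(\ZZ/2\ZZ)^2$ case the extra involution $\tau$ enlarges $G$ to $\tG\cong(\ZZ/2\ZZ)^3$ and rigidifies each fiber through the $\PP^1$-fibration $S/\tG\to B$; with only the cyclic group $\ZZ/4\ZZ$ at hand the configurations must instead be pinned down from the weights of $\sigma$ alone, and several borderline cases — nodal and tacnodal fibers, and reducible fibers assembled from $(-3)$- and $(-4)$-curves — will each require separate treatment before the numerical contradiction can be drawn.
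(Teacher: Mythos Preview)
Your reduction is correct and matches the paper: Proposition~\ref{prop: g5} supplies the genus~$5$ fibration, and once $G\cong(\ZZ/2\ZZ)^2$ is established, Theorem~\ref{thm: g5} finishes. The only content is ruling out $G\cong\ZZ/4\ZZ$.

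For that exclusion, however, the paper takes a much shorter route than the singular-fiber classification you propose. The key observation you are missing is that the \emph{entire} fixed part $Z$ of $|K_S|$ is vertical: by Proposition~\ref{prop: deg can map} the canonical map $\phi_S$ has degree $8$ and (from the proof of Proposition~\ref{prop: g5}) separates fibers of $f$, sending each to a line; hence $M\cdot F=\deg\phi_S=8=K_S\cdot F$ and $Z\cdot F=0$. This gives immediately $Z^2\leq 0$ and $\Theta^2\leq 0$ for every component $\Theta<Z$. Now Lemma~\ref{lem: fix vs base} puts every isolated $\sigma$-fixed point of weight $\tfrac14(1,1)$ or $\tfrac14(1,2)$ in $\Bs|K_S|$, so each such point lies either on a component of $Z$ or among the isolated base points of $|M|$; Lemma~\ref{k1k2} then bounds $k_1+k_2\leq 2K_S\cdot Z+t$ (with $t$ the number of blown-up base points). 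A short chain of inequalities using $K_S\cdot Z\leq M\cdot Z$ (from $Z^2\leq 0$), the estimate $\tilde M^2\geq 8(\chi-3)$, and Proposition~\ref{prop: K^2 leq 8X} yields $k_1+k_2\leq 24$. Finally, combining Lemma~\ref{lem: sign inv} (for $\alpha$) with \eqref{eq2} of Lemma~\ref{lef} and the verticality of the $\alpha$-fixed curves gives $4\chi(\sO_S)\leq 2(k_1+k_2)-(K_S^2-8\chi)\leq 72$, contradicting $\chi(\sO_S)\geq 56$.

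So no fiber-by-fiber classification is needed in the $\ZZ/4\ZZ$ case: the canonical-map structure (degree $8$, separating fibers) forces $Z$ vertical, and then two applications of the Lefschetz/signature formulas close the argument in a few lines. Your classification program would presumably also succeed, but it is a great deal of work for nothing; the numerical shortcut above is both the intended argument and the one you should present.
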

\begin{proof}
By Propositions~\ref{prop: g5}, there is a genus 5 fibration
$f\colon S\rightarrow B$ such that $\Aut_\QQ(S)$ preserves each
fiber. If $\Aut_\QQ(S)\cong (\ZZ/2\ZZ)^2$, by Theorem~\ref{thm: g5},
$S$ is isogenous to a product of curves.

Form now on we assume that $\Aut_\QQ(S)\cong \ZZ/4\ZZ$. We will
show that this case  does not occur.

Let  $M$ and  $Z$ be the moving  part and   the fixed part of
$\vert K_S\vert$,  respectively.   We have
$\phi_S^*\mathcal{O}_{\PP^m}(1)=M$, where $m=p_g(S)-1$. Since
$\phi_S$ separates fibers of $f$ and $\phi_S(F)\subset\PP^m$ is a
line (by the proof of Proposition~\ref{prop: g5}), we have  $\deg
\phi_S=\deg\phi_S\rest F=M\cdot F$. So $ZF=K_S\cdot F-M\cdot F=8-\deg\phi_S=0$ by
Proposition~\ref{prop: deg can map}. Thus $Z$ is vertical with
respective to  $f$. Consequently, we have
\begin{enumerate}
 \item[(a)] $Z^2\leq0$;
\item[(b)] $\Theta^2\leq0$ for each irreducible component $\Theta<Z$.
\end{enumerate}

Let $\rho:\tilde{S}\to S$ be the blowing-ups of a number of
(ordinary or infinitely near) base points $x_1,\ \cdots,\ x_t $ (
$t\geq0$) of $|M|$, and $r_i$ the multiplicity of $|M|$ at $x_i$.
Then the moving part $|\tilde M|$ of $|\rho^*M|$ is base-point-free,
and $\varphi_{\tilde M}=\varphi_M\circ\rho$. Since $\deg
\varphi_M=8$ by Proposition~\ref{prop: K^2 leq 8X} and $\deg \Sigma\geq p_g(S)-2$, we have
\begin{equation}\label{1}\begin{split}
    K_S^2&=
MZ+K_SZ +\sum_{i=1}^t r_i^2+\tilde M^2\\
 &\geq MZ+K_SZ +\sum_{i=1}^t r_i^2+8(\chi (\mc O_S)-3).
 \end{split}\end{equation}

Let $\sigma$ be a generator of $\Aut_\QQ(S)$, and $\alpha=\sigma^2$.
Now we show that
\begin{align}\label{k1+k2}
k_1+k_2\leq 24,
\end{align}
 where
$k_a$ is the number of isolated $\sigma$-fixed points  of weight
$\frac{1}{4}(1, a)$.

Indeed, for each irreducible component $\Theta<Z$, we have
$\Theta^2\leq0$ by (b), and hence there are at most $2K_S\cdot \Theta$
isolated $\sigma$-fixed points on $\Theta$ by Lemma~\ref{k1k2}. So
 by Lemma~\ref{lem: fix vs base}, we have
\begin{equation}\label{equ:1}
    k_1+k_2\leq  2K_SZ+t.\end{equation}
    
 By (a), we have $ K_SZ\leq MZ$. So we have
\begin{align*}2K_SZ+t&\leq MZ+ K_SZ +\sum_{i=1}^t r_i^2\\
&\leq K_S^2
 - 8\chi (\mc O_S)+24 \ \ \ \ \ \ \textrm{(by (\ref{1}))}\\
&\leq24. \ \ \ \ \ \ \textrm{(by  Prop.~\ref{prop: K^2 leq 8X})}
\end{align*}
Combining this inequality with (\ref{equ:1}), we obtain (\ref{k1+k2}).

 Let $D_1,\ \dots,\ D_m$ ($m\geq 0$) be
$\sigma$-fixed curves, and $D_{m+1},\ \dots,\ D_n$ ($n\geq m$) be
$\alpha$-fixed but not $\sigma$-fixed curves.
 We have
 \begin{align*} 2\sum_{i=1}^n D_i^2-4\chi(\mc O_S)&= 2K_S^2-20\chi
(\mc O_S) \ \ \ \  \textrm{(by Lemma~\ref{lem: sign inv})}\\ &\geq3\sum_{i=1}^m D_i^2-2(k_1+k_2)\ \ \ \ \ \textrm{(by (\ref{eq2}))}\\
&\geq 3\sum_{i=1}^n D_i^2-2(k_1+k_2)\ \ \  \ \textrm{ (by  (b) and Lemma~\ref{lem: fix vs base})}.
  \end{align*}
So \begin{align*}4\chi(\mc O_S)&\leq2(k_1+k_2)-\sum_{i=1}^n D_i^2\\
&=2(k_1+k_2)-(K_S^2-8\chi (\mc O_S)) \ \ \ \
   \ \textrm{ (by Lemma~\ref{lem: sign inv})}\\
&\leq2\times24+24 \ \ \  \textrm{ (by (\ref{k1+k2}) and  (\ref{1}))
}\\
&=72.\end{align*} This contradicts the assumption on $\chi(\mc O_S)$.
\end{proof}

\section{Examples}\label{sec: ex}
In \cite{Liu10} the second author constructed explicitly regular surfaces  $S$ with $\au\cong(\ZZ/2\ZZ)^2$ and $p_g(S)$ arbitrarily large. In this section,
by extending the spherical systems of the surfaces in \cite{BC04} we  construct many families of regular surfaces isogenous to a product of unmixed type with $\au=(\ZZ/2\ZZ)^2$ and $\ZZ/3\ZZ$ respectively and $p_g(S)$ arbitrarily large. Note that the surfaces in \cite[Example~2.8]{Liu10} can also be obtained in this way, see Example~\ref{ex: Z23} below.

\begin{defn}(\cite[Definition~3.1]{Cat00})\label{isogenous} Let $C,\,D$ be curves of genus at least two, and $G$ is a finite group acting (faithfully) on $C$ and $D$. If the diagonal action of $G$ on $C\times D$ is free then the smooth quotient surface $S:=(C\times D)/G$ is called a surface isogenous to a product of unmixed type.
\end{defn}

Given a surface isogenous to a product of unmixed type $S=(C\times D)/G$ its invariants satisfy
\[
 K_S^2 = 8\chi(\mc O_S) = \frac{8}{|G|}(g(C)-1)(g(D)-1)\text{ and } q(S)=g(C/G)+g(D/G).
\]
If $q(S)=0$ then $C/G\cong D/G\cong\PP^1$ and the surface is determined by two \emph{disjoint spherical systems} (\cite[Definition~1.2]{BC04}).

Here is an expression for the second cohomology of a surface isogenous to a product of unmixed type, reducing the computation of cohomology on surfaces to that on curves.
\begin{lem}[{\cite[4.5]{CLZ13}}]\label{lem: 2nd cohomology}
Let $S=(C\times D)/G$ be a surface isogenous to a product of unmixed
type. If $G$ is abelian then the second cohomology of $S$ is
\begin{equation}\label{eq: cohomology2}
 H^2(S,\mathbb{C}) = W \bigoplus  \left(\bigoplus_{\chi\in \widehat G}
  H^1(C,\mathbb{C})^{\chi}\otimes_\mathbb{C}
  H^1(D,\mathbb{C})^{\bar\chi}\right),
\end{equation}
where
$W=H^2(C,\mathbb{C})\otimes_\mathbb{C}H^0(D,\mathbb{C})\bigoplus
 H^0(C,\mathbb{C})\otimes_\mathbb{C}H^2(D,\mathbb{C})$.
\end{lem}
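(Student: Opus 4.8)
The plan is to compute the $G$-action on the Künneth decomposition of $H^2(C\times D,\CC)$ and then identify the $G$-invariant part with $H^2(S,\CC)$. First I would recall that since $S=(C\times D)/G$ with $G$ acting freely, the quotient map $q\colon C\times D\to S$ is \'etale, so pullback gives a canonical isomorphism $H^2(S,\CC)\cong H^2(C\times D,\CC)^G$; this is the standard statement that cohomology of a quotient by a free finite group action is the invariant part of the cohomology of the cover (with $\CC$-coefficients, so no torsion issues arise). The whole problem thus reduces to extracting the $G$-invariants from the Künneth formula.

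Next I would write down the Künneth decomposition
\[
H^2(C\times D,\CC)=\bigoplus_{i+j=2} H^i(C,\CC)\otimes_\CC H^j(D,\CC),
\]
which breaks into the three summands $H^2(C)\otimes H^0(D)$, $H^0(C)\otimes H^2(D)$, and $H^1(C)\otimes H^1(D)$. The $G$-action on $C\times D$ is diagonal, hence it acts on each tensor factor by the induced action on the cohomology of the respective curve, and the Künneth isomorphism is $G$-equivariant. On the degree-$0$ and degree-$2$ cohomology of a curve the $G$-action is trivial (these are spanned by the fundamental class and the point class, both $G$-invariant since $G$ acts by orientation-preserving homeomorphisms/biholomorphisms). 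Therefore $H^2(C)\otimes H^0(D)$ and $H^0(C)\otimes H^2(D)$ are already pointwise $G$-fixed, and together they contribute exactly the summand $W$ in the statement.

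For the remaining summand $H^1(C,\CC)\otimes_\CC H^1(D,\CC)$ I would use that $G$ is abelian: the representation $H^1(C,\CC)$ of $G$ decomposes into character eigenspaces $H^1(C,\CC)=\bigoplus_{\chi\in\widehat G}H^1(C,\CC)^\chi$, and similarly for $D$. Then
\[
\left(H^1(C,\CC)\otimes H^1(D,\CC)\right)^G=\bigoplus_{\chi,\psi}\left(H^1(C,\CC)^\chi\otimes H^1(D,\CC)^\psi\right)^G,
\]
and a vector in $H^1(C)^\chi\otimes H^1(D)^\psi$ is $G$-invariant precisely when $\chi\psi=\mathbf{1}$, i.e.\ $\psi=\chi^{-1}=\bar\chi$. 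This selects exactly the terms $H^1(C,\CC)^\chi\otimes_\CC H^1(D,\CC)^{\bar\chi}$, matching the second summand in \eqref{eq: cohomology2}. Assembling the three pieces yields the claimed decomposition.

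I do not expect a genuine obstacle here; the argument is a formal consequence of \'etale descent of cohomology together with the Künneth formula and elementary character theory for a finite abelian group. The only point demanding a little care is the equivariance of the Künneth isomorphism and the verification that $H^0$ and $H^2$ of a curve carry the trivial $G$-representation, which is where I would be most careful to state things cleanly; everything else is bookkeeping with the eigenspace decomposition. One should also note that the decomposition is as $G$-representations at the level of the cover, and that taking invariants commutes with the direct sum, so no subtlety enters when passing to the invariant subspace.
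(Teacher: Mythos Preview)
Your argument is correct and is the standard proof of this fact: identify $H^2(S,\CC)$ with $H^2(C\times D,\CC)^G$ via the free quotient, apply K\"unneth equivariantly, observe that $H^0$ and $H^2$ of the curves carry the trivial representation, and decompose $H^1(C)\otimes H^1(D)$ into character eigenspaces to extract the invariants. The paper does not actually prove this lemma; it simply quotes it from \cite[4.5]{CLZ13}, where the same argument is given. So your write-up matches the intended proof and could serve as a self-contained justification in place of the citation.
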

 The group $\Aut_\QQ(S)$ acts trivially on $W$, so it preserves the two natural fibrations $S\rightarrow C/G$ and $S\rightarrow D/G$.

Let $\Delta_G$ be the diagonal subgroup of $G\times G$. Assume $G$ is abelian. Then the quotient group $(G\times G)/\Delta_G$ has an action on $S=(C\times D)/G$, induced by the product action of $G\times G$ on $C\times D$. Write $\overline{(\sigma,\tau)}$ for the image of $(\sigma,\tau)\in G\times G$ in the quotient $G\times G/\Delta_G$. Note that any element of $(G\times G)/\Delta_G$ can be written as $\overline{(\sigma,1)}$ with $\sigma\in G$, and in this way $(G\times G)/\Delta_G$ is identified with $G$.
\begin{cor}\label{cor: element in aut0}
An element $\overline{(\sigma,1)}\in(G\times G)/\Delta_G$ acts trivially on $H^*(S,\Q)$ if and only if for every $\chi\in\widehat G$ such that $H^1(C,\CC)^\chi \neq 0, H^1(D,\CC)^{\bar\chi}\neq 0$ we have $\chi(\sigma)=1$.
\end{cor}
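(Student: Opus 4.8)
The plan is to reduce the question of triviality on the whole cohomology ring $H^*(S,\Q)$ to the single degree $H^2$, and then to diagonalize the action of $\overline{(\sigma,1)}$ on the Künneth-type decomposition supplied by Lemma~\ref{lem: 2nd cohomology}. Since the automorphism is defined over $\Q$, it acts trivially on $H^*(S,\Q)$ if and only if it acts trivially on $H^*(S,\CC)=H^*(S,\Q)\otimes_\Q\CC$, so I would work throughout with complex coefficients, and I would first record that $\overline{(\sigma,1)}$ is a well-defined automorphism of $S$ because $\Delta_G$ acts trivially on $S=(C\times D)/G$.

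First I would dispose of the degrees $\neq 2$. The groups $H^0(S,\CC)$ and $H^4(S,\CC)$ are one-dimensional, spanned by the unit and the fundamental class, on which any automorphism acts trivially. For $H^1$ and $H^3$ I would use that $H^i(S,\CC)=H^i(C\times D,\CC)^G$ together with the Künneth formula, so that $H^1(S,\CC)=H^1(C,\CC)^G\oplus H^1(D,\CC)^G$ and, using that $G$ acts trivially on the top classes of the curves, $H^3(S,\CC)=\bigl(H^1(C,\CC)^G\otimes_\CC H^2(D,\CC)\bigr)\oplus\bigl(H^2(C,\CC)\otimes_\CC H^1(D,\CC)^G\bigr)$. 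On each summand the element $\overline{(\sigma,1)}$ acts either as $\sigma$ restricted to a space of $G$-invariants, hence trivially, or as the identity coming from the second factor. Thus $\overline{(\sigma,1)}$ is automatically trivial in degrees $0,1,3,4$, and the whole question collapses to $H^2$.

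Next I would analyze $H^2(S,\CC)$ via Lemma~\ref{lem: 2nd cohomology}. On the summand $W$ the action is trivial, as already remarked after that lemma. On each remaining summand $H^1(C,\CC)^\chi\otimes_\CC H^1(D,\CC)^{\bar\chi}$, the automorphism $\overline{(\sigma,1)}$ is induced by the product automorphism acting as $\sigma$ on the $C$-factor and as the identity on the $D$-factor; hence it acts as the scalar $\chi(\sigma)$ on $H^1(C,\CC)^\chi$ and trivially on $H^1(D,\CC)^{\bar\chi}$, so it acts on the tensor product as multiplication by $\chi(\sigma)$. Consequently $\overline{(\sigma,1)}$ is trivial on $H^2(S,\CC)$ precisely when $\chi(\sigma)=1$ for every $\chi\in\widehat G$ for which the summand is nonzero, i.e. for which $H^1(C,\CC)^\chi\neq 0$ and $H^1(D,\CC)^{\bar\chi}\neq 0$. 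Combining with the previous paragraph then yields the corollary in both directions.

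The step requiring the most care, rather than a genuine obstacle, will be the bookkeeping in the eigenvalue computation: confirming that the character governing the $C$-factor is exactly $\chi$ while its conjugate $\bar\chi$ governs the $D$-factor, so that only the paired nonvanishing summands enter the criterion. Here one should note that whether the relevant scalar comes out as $\chi(\sigma)$ or $\bar\chi(\sigma)=\chi(\sigma)^{-1}$ (depending on the pullback versus pushforward convention) is immaterial, since $\chi(\sigma)=1$ if and only if $\bar\chi(\sigma)=1$, so the stated condition is convention-independent.
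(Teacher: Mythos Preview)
Your proposal is correct and is precisely the argument the paper has in mind: the corollary is stated without proof as an immediate consequence of Lemma~\ref{lem: 2nd cohomology}, and you have simply spelled out the details---reducing to degree $2$ via the K\"unneth decomposition on the remaining degrees, then reading off the scalar $\chi(\sigma)$ on each nonzero summand $H^1(C,\CC)^\chi\otimes H^1(D,\CC)^{\bar\chi}$. There is nothing to add.
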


The following result on the cohomology representation of the
group of automorphisms of an irreducible curve will be used in calculation  of the examples below.
\begin{thm}[{\cite[Proposition~2]{Br87} or
\cite[p.~244]{Br91}}]\label{thm: cohomology representation curves}
 Let $C$ be a smooth curve of genus $g(C) \geq 2$
and
 $G<\Aut(C)$ a finite abelian group of automorphisms.
 Let $\pi\colon C\rightarrow C/G$ be the quotient map and
 $r$ the number of branch points on $C/G$. Then, for $1\neq\chi\in \widehat G$,
\begin{equation}
 \dim_\CC H^1(C,\mathbb{C})^\chi=(2g(C/G)-2+r)-\sum_{j=1}^{r}l_{\sigma_j}(\chi)
\end{equation}
where $\langle\sigma_1\rangle,\dots,\langle\sigma_r\rangle$ are the
stabilizers of the points lying over the $r$ branch points of
$C\rightarrow C/G$,  and for any $\sigma\in G$,
\[l_{\sigma}(\chi)=\left\{
\begin{array}{ll}
 1  &  \text{if } \chi(\sigma)=1;\\
 0  &  \text{if } \chi(\sigma)\neq 1.
\end{array}
\right.\]
\end{thm}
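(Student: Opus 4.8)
The plan is to combine the topological Lefschetz fixed-point formula with orthogonality of characters, i.e.\ to run the Chevalley--Weil computation in the abelian case. Since $C$ is a connected curve and every $\sigma\in G$ preserves orientation and fixes the fundamental class, $G$ acts trivially on $H^0(C,\CC)$ and $H^2(C,\CC)$, both one-dimensional. Hence for $\sigma\neq\id_C$ the topological Lefschetz formula reads
\[
2-\op{tr}\bigl(\sigma\mid H^1(C,\CC)\bigr)=\sum_{i=0}^{2}(-1)^i\op{tr}\bigl(\sigma\mid H^i(C,\CC)\bigr)=\#C^\sigma,
\]
so that $\op{tr}(\sigma\mid H^1(C,\CC))=2-\#C^\sigma$ for every nontrivial $\sigma$, while $\op{tr}(\id_C\mid H^1)=2g(C)$.

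Next I would extract the $\chi$-isotypic dimension. Because $G$ is abelian and $\chi$ is one-dimensional, $\dim_\CC H^1(C,\CC)^\chi$ equals the multiplicity of $\chi$, given by the projection formula
\[
\dim_\CC H^1(C,\CC)^\chi=\frac{1}{|G|}\sum_{\sigma\in G}\overline{\chi(\sigma)}\,\op{tr}\bigl(\sigma\mid H^1(C,\CC)\bigr).
\]
Substituting the traces and using $\sum_{\sigma\in G}\overline{\chi(\sigma)}=0$ for $1\neq\chi$ (so that $\sum_{\sigma\neq\id_C}\overline{\chi(\sigma)}=-1$), this collapses to
\[
\dim_\CC H^1(C,\CC)^\chi=\frac{1}{|G|}\Bigl(2g(C)-2-\sum_{\sigma\neq\id_C}\overline{\chi(\sigma)}\,\#C^\sigma\Bigr).
\]

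The heart of the argument, and the step I expect to require the most care, is evaluating the fixed-point sum using the abelian hypothesis. Since $G$ is abelian, all points in the fibre over a branch point $\bar q_j\in C/G$ share the cyclic stabilizer $\langle\sigma_j\rangle$, of order $n_j:=|\sigma_j|$, and that fibre consists of $|G|/n_j$ points. A nontrivial $\sigma$ fixes one such point iff $\sigma\in\langle\sigma_j\rangle$, in which case it fixes all $|G|/n_j$ of them; moreover every fixed point lies over some branch point. Thus $\#C^\sigma=\sum_{j:\,\sigma\in\langle\sigma_j\rangle}|G|/n_j$ for $\sigma\neq\id_C$, and interchanging the order of summation gives
\[
\sum_{\sigma\neq\id_C}\overline{\chi(\sigma)}\,\#C^\sigma
=\sum_{j=1}^{r}\frac{|G|}{n_j}\Bigl(\sum_{\sigma\in\langle\sigma_j\rangle}\overline{\chi(\sigma)}-1\Bigr).
\]
The inner sum is a character sum over the cyclic group $\langle\sigma_j\rangle$: it equals $n_j$ when $\chi|_{\langle\sigma_j\rangle}$ is trivial, i.e.\ when $\chi(\sigma_j)=1$, and $0$ otherwise, hence equals $n_j\,l_{\sigma_j}(\chi)$. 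Therefore the fixed-point sum becomes $|G|\sum_{j}l_{\sigma_j}(\chi)-\sum_{j}|G|/n_j$.

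Finally I would eliminate the genus term via Riemann--Hurwitz for $\pi\colon C\to C/G$, namely
\[
2g(C)-2=|G|\bigl(2g(C/G)-2\bigr)+\sum_{j=1}^{r}\frac{|G|}{n_j}(n_j-1),
\]
so that $\tfrac{2g(C)-2}{|G|}=(2g(C/G)-2)+r-\sum_j\tfrac{1}{n_j}$. Plugging everything into the expression for $\dim_\CC H^1(C,\CC)^\chi$, the two occurrences of $\sum_j 1/n_j$ cancel, leaving exactly
\[
\dim_\CC H^1(C,\CC)^\chi=\bigl(2g(C/G)-2+r\bigr)-\sum_{j=1}^{r}l_{\sigma_j}(\chi),
\]
as claimed. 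The only genuine subtlety is the orbit/stabilizer bookkeeping over the branch points, which is where the abelian hypothesis (constancy of stabilizers along an orbit) is used decisively; the remaining steps are orthogonality relations and Riemann--Hurwitz.
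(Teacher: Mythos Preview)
Your proof is correct; this is exactly the Chevalley--Weil computation specialized to the abelian case, and every step checks out. Note, however, that the paper does not supply its own proof of this statement: it is quoted as a known result from \cite{Br87} and \cite{Br91}, so there is no ``paper's proof'' to compare against. Your argument is in fact the standard one underlying Broughton's formula.
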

\begin{cor}\label{cor: non0 es}
Let the notation be as in Theorem~\ref{thm: cohomology representation curves}.
If $g(C/G)=0$ then for any $\chi\in\widehat G$, $H^1(C,
 \CC)^\chi \neq 0$ if and only if there are distinct $j_1,j_2,j_3$ such that $\chi(\sigma_{j_i})\neq 1$ for $1\leq i\leq 3$.
\end{cor}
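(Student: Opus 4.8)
The plan is to read off the result directly from the dimension formula of Theorem~\ref{thm: cohomology representation curves}, specialized to the case $g(C/G)=0$. First I would dispose of the trivial character $\chi=1$, which is excluded from the hypothesis of that theorem. For $\chi=1$ the eigenspace $H^1(C,\CC)^{1}$ is the $G$-invariant part of $H^1(C,\CC)$, which is canonically $H^1(C/G,\CC)$ and hence has dimension $2g(C/G)=0$; so $H^1(C,\CC)^{1}=0$. On the other hand, for $\chi=1$ one has $\chi(\sigma_j)=1$ for every $j$, so there are no indices $j$ with $\chi(\sigma_j)\neq 1$, let alone three distinct ones. Thus both sides of the asserted equivalence are false and the statement holds vacuously for $\chi=1$.

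Next I would treat a nontrivial character $1\neq\chi\in\widehat G$. Plugging $g(C/G)=0$ into the formula of Theorem~\ref{thm: cohomology representation curves} gives
\[
\dim_\CC H^1(C,\CC)^\chi = (r-2) - \sum_{j=1}^{r}l_{\sigma_j}(\chi).
\]
By the definition of $l_{\sigma_j}(\chi)$, the sum $\sum_{j=1}^{r}l_{\sigma_j}(\chi)$ is exactly the number of indices $j$ for which $\chi(\sigma_j)=1$. Writing $s:=\#\{j : \chi(\sigma_j)=1\}$, so that $r-s=\#\{j : \chi(\sigma_j)\neq 1\}$, the formula becomes
\[
\dim_\CC H^1(C,\CC)^\chi = (r-s)-2.
\]

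Since the left-hand side is a nonnegative integer, I would then simply observe that $H^1(C,\CC)^\chi\neq 0$ holds if and only if $(r-s)-2\geq 1$, i.e.\ if and only if $r-s\geq 3$. But $r-s\geq 3$ means precisely that there are at least three indices $j$ with $\chi(\sigma_j)\neq 1$, equivalently that one can choose distinct $j_1,j_2,j_3$ with $\chi(\sigma_{j_i})\neq 1$ for $1\leq i\leq 3$. Combining this with the trivial-character case completes the proof. I do not anticipate a genuine obstacle here: the argument is a direct specialization and reinterpretation of the cited formula, and the only point requiring a little care is separating out the trivial character, for which the equivalence must be checked by hand rather than by the formula.
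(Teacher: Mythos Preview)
Your proof is correct and is precisely the natural direct derivation from the dimension formula of Theorem~\ref{thm: cohomology representation curves}; the paper states this corollary without proof, as it is an immediate consequence, and your argument is exactly what is implicitly intended. Your separate handling of the trivial character $\chi=1$ is a nice point of care, since the theorem is only stated for $\chi\neq 1$.
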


\begin{ex}[Surfaces $S$  with  $\au=(\ZZ/2\ZZ)^2$ and $\kappa(S/\au)=1$] (cf.~\cite[Example~2.8]{Liu10})\label{ex: Z23}
Let $G=\langle e_1,e_2,e_3\rangle\cong(\ZZ/2\ZZ)^3$. We have
\[
 |e_1|=|e_2|=|e_3|=2 \text{ and } \langle e_i\rangle \cap \langle e_j \rangle =1, \text{ for all } 1\leq i<j \leq 3.
\]
Let $C',\,D'$ be two smooth rational curves. Given a positive integer $r$ and $2r+6$ distinct points $p_1,\dots,p_{2r+6}$ of $C'$. Then the fundamental group of the punctured curve $ C'\setminus\{p_1,\dots,p_{2r+6}\}$ is
\[
 \pi_1(C'\setminus\{p_1,\dots,p_{2r+6}\})=\langle \gamma_1,\cdots,\gamma_{2r+6} \mid
  \gamma_1\dots\gamma_{2r+6}=1  \rangle
\]
and we have a surjective homomorphism of groups
\[
 \theta\colon \pi_1(C'\setminus \{p_1,\dots, p_{2r+6}\})\rightarrow G
\]
such that $\theta(\gamma_i)=e_1 + e_2$ (resp.~$e_1+e_3$, $e_1+e_2+e_3$) for any
$1\leq i\leq 2r + 2$ (resp.~$i=2r+3$ or $2r+4$, $i=2r+5$ or $2r+6$). By Riemann's existence theorem, we obtain a
$G$-covering $C\rightarrow C'$ such that $p_1,\dots,p_{2r+6}$ are
the branch points and $\langle e_1 + e_2 \rangle$ (resp.~$\langle e_1+e_3 \rangle$, $\langle e_1+e_2+e_3 \rangle$) is the stabilizer of the points lying over $p_i$ for $1\leq i\leq 2r$ (resp.~$i=2r+3$ or $2r+4$, $i=2r+5$ or $2r+6$).

Let $q_1,\dots,q_5$ be five distinct points on $D'$. Then
\[
 \pi_1(D'\setminus\{q_1,\dots,q_5\})=\langle \delta_1,\dots,\delta_5\mid\ \delta_1\cdots\delta_5=1 \rangle.
\]
So we can define a surjective homomorphism of groups
\[
 \vartheta\colon \pi_1(D'\setminus\{q_1,\dots,q_5\})\rightarrow G
\]
such that
\[
 \vartheta(\delta_1)=\vartheta(\delta_2)=e_1,\quad
\vartheta(\delta_3)= e_2,\quad
\vartheta(\delta_4)=e_3, \quad
\vartheta(\delta_5)=e_2+e_3.
\]
Again by Riemann's existence theorem there is a $G$-covering
$D\rightarrow D'$ such that $q_1,\dots,q_5$ are the branch points
and the stabilizers of the points lying over $q_1$ and $q_2$ (resp.~$q_3$, $q_4$, $q_5$) are both $\langle e_1\rangle$ (resp.~$\langle e_2\rangle$, $\langle e_3\rangle$, $\langle e_2+e_3\rangle$).

Let $G$ act on $C\times D$ diagonally. It is easy to see that the two spherical systems  of $C\rightarrow C'$ and $D\rightarrow D'$ are disjoint, so $G$ acts freely on $C\times D$. Therefore $S:=(C\times D)/G$ is a surface isogenous to a product of unmixed type. By Riemann--Hurwitz's formula we have $g(C)=4r+5$ and $g(D) = 3$. So
\[
 K_S^2= 8(r+1),\quad  \chi(\mc O_S)  = r +1\  \text{ and } \  q(S)=g(C')+g(D')=0.
\]

 Consider the character $\chi\in \widehat{G}$ such that
$\chi(e_1)=\chi(e_3)=-1$ and $\chi(e_2) = 1$. By Corollary~\ref{cor: non0 es} one checks easily that $\chi$ is the only character
 satisfying the following conditions:
\[
H^1(C,\mathbb C)^\chi\neq 0 \text{ and }H^1(D,\mathbb
C)^{\bar\chi}\neq 0.
\]
Then, by the expression of $H^2(S,\CC)$ in Corollary~\ref{cor: element in aut0}, $\ker\chi$ acts trivially on $H^2(S,\CC)$ and hence also on $H^*(S,\Q)$.
Since $\ker\chi=\langle e_2,e_1+e_3\rangle$, we have that  $\au$ contains a subgroup isomorphic to $(\ZZ/2\ZZ)^2$. On the other hand, we have $|\au|\leq 4$ if $\chi(\mc O_S) \geq 189$ by \cite{Cai04}. Hence we have $\au=\ker\chi\cong (\ZZ/2\ZZ)^2$ if $r\geq 188$. Our surfaces $S$ form an infinite series as $r$ varies.

The projection $C\times D\to C$ induces a fibration of genus $3$ on $S$. One sees easily that $g(D/\ker\chi)=1$ and $\kappa(S/\au)=1$. The canonical map of $S$ is then composed with this fibration of genus $3$.
\end{ex}

\begin{rmk}Example~\ref{ex: Z23} gives a new example of  surfaces whose canonical maps are composed with a fibration  of genus $3$.   See \cite[Exemple 2]{Be79}, \cite[Exemple]{X85b}, \cite[Remark, p.~600 ]{X87}, \cite[Examples 4.4, 4.6]{CL18}  for earlier examples. \end{rmk}

In Example~\ref{ex: Z23} we extended one of the spherical systems of \cite[3.1]{BC04} by adding $2r$ copies of $e_1+e_2$. A new series of surfaces with $\au = (\ZZ/2\ZZ)^2$ can be obtained if the other spherical system is extended.
\begin{ex}[Surfaces $S$ with  $\au=(\ZZ/2\ZZ)^2$ and $\kappa(S/\au)=2$]\label{ex: Z23'}
Let $G$ be a finite group isomorphic to $(\ZZ)^3$, with generators $e_1,\,e_2,\,e_3$. Then the 6-tuple $\mc A=(e_1+e_2, e_1+e_2, e_1+e_3, e_1+e_3, e_1+e_2+e_3, e_1+e_2+e_3)$ is a spherical system of $G$ and we can construct a $G$-covering $C\rightarrow \PP^1$ such that there are 6 branch points and the stabilizers over the branch points are exactly generated by the entries of $\mc A$. By the Riemann--Hurwitz formula we have $g(C) = 5$.

Given a positive integer $r$, consider another spherical system of generators of length $2r+5$:
\[
 \mc B=(e_1,\,\dots,\,e_1,\, e_2,\, e_3,\, e_2+e_3)
\]
where the first $2r+2$ entries are all $e_1$. Corresponding to $\mc B$ we obtain another $G$-covering $D\rightarrow \PP^1$ such that there are exactly $2r+5$ branch points and the stabilizers over the branch points are exactly generated by the entries of $\mc B$.  By the Riemann--Hurwitz formula we have $g(D) = 4r+3$.

Let $G$ act on $C\times D$ diagonally. Since the two spherical systems $\mc A$ and $\mc B$ are disjoint,  $G$ acts freely on $C\times D$. Therefore $S:=(C\times D)/G$ is a surface isogenous to a product of unmixed type and
\[
 K_S^2= 16r+8, \  \chi(\mc O_S)  = 2r +1, \   q(S)=0.
\]

Let  $\chi\in \widehat G$ such that
$\chi(e_1)=-1$ and $\chi(e_2) = \chi(e_3)=1$.
By Corollary~\ref{cor: non0 es} one checks easily that $\chi$ is the only character
 satisfying the following conditions:
\[
H^1(C,\mathbb C)^\chi\neq 0 \text{ and }H^1(D,\mathbb
C)^{\bar\chi}\neq 0.
\]
By the argument as in Example~\ref{ex: Z23}, we have that
$\au\cong\ker\chi=\langle e_2,e_3\rangle\cong (\ZZ/2\ZZ)^2$  and $\kappa(S/\au)=2$ if $r\geq 15$.
\end{ex}

Now we  extend the spherical systems of \cite[3.2, 3.3]{BC04} to get Example~\ref{ex: Z24} below.  Note that the two spherical systems of  both \cite[3.2]{BC04} and \cite[3.3]{BC04}  are in symmetric positions, so we obtain isomorphic surfaces if we extend either of them.
\begin{ex}[Surfaces $S$ with $\au=(\ZZ/2\ZZ)^2$ and $\kappa(S/\au)=2$]\label{ex: Z24}
Let $G$ be a finite group isomorphic to $(\ZZ/2\ZZ)^4$, with generators $e_1,\,e_2,\,e_3,\,e_4$. Then the following 5-tuple is a spherical system of $G$
\[
\mc A=(e_1,\, e_2,\, e_3,\, e_4,\, e:=e_1+e_2+ e_3 +e_4)
\]
and we can construct a $G$-covering $C\rightarrow \PP^1$ such that there are 5 branch points and the stabilizers over the branch points are exactly generated by the entries of $\mc A$. By the Riemann--Hurwitz formula we have $g(C) = 5$.

Given a positive integer $r$, consider another spherical system of generators of length $2r+5$:
\[
 \mc B=(e+e_1,\dots,e+e_1,e+e_2, e_1+ e_3,e_2+e_4, e_3+e_4)
\]
where the first $2r+1$ entries are all $e+e_1$. Corresponding to $\mc B$ we obtain another $G$-covering $D\rightarrow \PP^1$ such that there are exactly $2r+5$ branch points and the stabilizers over the branch points are exactly generated by the entries of $\mc B$.  By the Riemann--Hurwitz formula we have $g(D) = 8r+5$.

Let $G$ act on $C\times D$ diagonally. Since the two spherical systems $\mc A$ and $\mc B$ are disjoint,  $G$ acts freely on $C\times D$. Therefore $S:=(C\times D)/G$ is a surface isogenous to a product of unmixed type and
\[
 K_S^2= 16r+8,\ \chi(\mc O_S)  = 2r +1, \ q(S)=0.
\]

Consider the two characters $\chi_i$, $i=1,2$ such that
$\chi_i(e_2)=\chi_i(e_3) = \chi_i(e_4)= - 1$ and $\chi_i(e_1) = (-1)^i$. By Corollary~\ref{cor: non0 es} these are the only characters
$\chi$ satisfying the following conditions:
\[
H^1(C,\mathbb C)^\chi\neq 0 \text{ and }H^1(D,\mathbb
C)^{\bar\chi}\neq 0.
\]
 So  $\ker\chi_1\cap \ker\chi_2$ acts trivially on $H^2(S,\CC)$ by the expression of $H^2(S,\CC)$ in Corollary~\ref{cor: element in aut0}.
By a simple  calculation we have  \[\ker\chi_1\cap\ker\chi_2=\langle e_2+e_3,e_2+e_4\rangle\cong (\ZZ/2\ZZ)^2.\]
Then by the argument as in Example~\ref{ex: Z23},  we have $\au\cong\ker\chi_1\cap\ker\chi_2\cong (\ZZ/2\ZZ)^2$  and $\kappa(S/\au)=2$ if $r\geq 15$.
\end{ex}

Finally in this section we give an example of surfaces $S$ with $p_g(S)=0$ and $|\au|>4$.
\begin{ex}\label{ex: isog pg0}
Surfaces isogenous to a product of unmixed type with $p_g(S)=q(S)=0$ are classified in \cite{BCG08}. Let $S=(C\times D)/G$ be such a surface with $G$ abelian (see \cite{BC04}).
It is easy to see by Lemma~\ref{lem: 2nd cohomology} that
\[
 H^2(S,\mathbb{C}) = H^2(C,\mathbb{C})\otimes_\mathbb{C}H^0(D,\mathbb{C})\bigoplus
 H^0(C,\mathbb{C})\otimes_\mathbb{C}H^2(D,\mathbb{C}).
\]
More geometrically,
$H^2(S,\mathbb{C})$ is spanned by the cohomology classes of the fibers of two natural projections $S\rightarrow C/G$ and $S\rightarrow D/G$.

Now one sees that $(G\times G)/\Delta_G$ acts trivially on $H^2(S,\CC)$ and hence also on $H^*(S,\CC)$. In other words, $(G\times G)/\Delta_G$ is included in $\au$. Note that $G$ can be one of the following groups (\cite[Theorem~0.1]{BC04}): $(\ZZ/2\ZZ)^3$, $(\ZZ/2\ZZ)^4$, $(\ZZ/3\ZZ)^2$, $(\ZZ/5\ZZ)^2$. In particular, there are regular surfaces with $|\au|\geq 25$.

\emph{Question}: Is it true that
 $\au = (G\times G)/\Delta_G $ for the surfaces in \cite{BC04}?
\end{ex}

\end{document}